\documentclass{article}
\usepackage{amsmath,amsthm,amssymb,bbm}

\newcommand{\assign}{:=}
\newcommand{\tmop}[1]{\ensuremath{\operatorname{#1}}}
\newcommand{\tmstrong}[1]{\textbf{#1}}
\numberwithin{equation}{section}  
\newtheorem{corollary}{Corollary}
\newtheorem{definition}{Definition}
\newtheorem{proposition}{Proposition}
\newtheorem{theorem}{Theorem}
\newtheorem{lemma}{Lemma}

\newcommand{\XXint}[3]{{\setbox}0=\text{\ensuremath{#1 #2 #3 \int}}
{\vcenter{\text{\ensuremath{#2 #3}}}}{\kern}-.5{\tmwd}0}

\newcommand{\opn}[2]{\newcommand{\1}{\}} {\opn}{\Rm{Rm}} {\opn}{\Ric{Ric}}
{\opn}{\Rc{Rc}} {\opn}{\Scal{Sc}} {\opn}{\Tr{Tr}} {\opn}{\Trac{Tr}}
{\opn}detdet {\opn}{\diam{diam}} {\opn}{\dist{dist}} {\opn}{\Im}Im
{\opn}{\div}div {\opn}{\Ker{Ker}} {\opn}expexp {\opn}{\Vol{Vol}}
{\opn}{\exph{exph}} {\opn}{\Herm{Herm}} {\opn}{\End{End}} {\opn}{\Hess{Hess}}
{\opn}{\Vol{Vol}}}

\newcommand{\contract}{{\kern}-1.5pt{\vrule} width6.0pt height0.4pt depth0pt
{\vrule} width0.4pt height4.0pt depth0pt}
\newcommand{\retract}{{\kern}-1.5pt{\vrule} width0.4pt height4.0pt depth0pt
{\vrule} width6.0pt height0.4pt depth0pt}
\newcommand{\Openbox}{{\leavevmode} {\text{{\hfil}{\vrule}
width{\boxrulethickness} {\vbox} to{\Openboxwidth{{\advance}{\Openboxwidth}
-2{\boxrulethickness} {\hrule} height {\boxrulethickness}
width{\Openboxwidth}{\vfil} {\hrule} height{\boxrulethickness}}}{\vrule}
width{\boxrulethickness}{\hfil} }}}

\begin{document}

\title{The Soliton-Ricci Flow over Compact Manifolds}\author{\\
{\tmstrong{NEFTON PALI}}}\maketitle

\begin{abstract}
  We introduce a flow of Riemannian metrics over compact manifolds with
  formal limit at infinite time a shrinking Ricci soliton. We call this flow the
  Soliton-Ricci flow. It correspond to a Perelman's modified backward Ricci
  type flow with some special restriction conditions. The restriction
  conditions are motivated by convexity results for Perelman's
  $\mathcal{W}$-functional over convex subsets inside adequate subspaces of
  Riemannian metrics. We show indeed that the Soliton-Ricci flow is generated
  by the gradient flow of the restriction of Perelman's
  $\mathcal{W}$-functional over such subspaces. Assuming long time existence
  of the Soliton-Ricci flow we show exponentially fast convergence to a
  shrinking Ricci soliton provided that the Bakry-Emery-Ricci tensor is
  uniformly strictly positive with respect to the evolving metric. 
\end{abstract}

\section{Introduction}

The notion of Ricci soliton (in short RS) has been introduced by D.H. Friedon in \cite{Fri}.
It is a natural generalization of the notion of Einstein metric. The
terminology is justified by the fact that the pull back of the RS metric via
the flow of automorphisms generated by its vector field provides a Ricci flow.
\\
In this paper we introduce the Soliton-Ricci flow, (in short SRF) which is a
flow of Riemannian metrics with formal limit at infinite time a shrinking Ricci soliton.

Our interest in the SRF is motivated from the fact that it generates solutions
of the Soliton-K\"ahler-Ricci flow (in short SKRF) for K\"ahler initial data
(see \cite{Pal2}).

A remarkable formula due to Perelman \cite{Per} shows that the modified (and normalized)
Ricci flow is the gradient flow of Perelman's $\mathcal{W}$ functional with
respect to a fixed choice of the volume form $\Omega$. In this paper we will
denote by
$\mathcal{W}_{\Omega}$ the corresponding Perelman's functional. 
However
Perelman's work does not shows a priory any convexity concerning the
functional $\mathcal{W}_{\Omega}$.

The main attempt of this work is to fit the $\tmop{SRF}$ into a gradient
system picture. We mean by this the picture corresponding to the gradient flow
of a convex functional.

The SRF correspond to a Perelman's modified backward Ricci type flow with
$3$-symmetric covariant derivative of the $\Omega$-Bakry-Emery-Ricci (in short
$\Omega$-BER) tensor along the flow. The notion of SRF (or more precisely of $\Omega$-SRF) is inspired from the
recent work \cite{Pal1} in which we show convexity of Perelman's
$\mathcal{W}_{\Omega}$ functional along geodesics with $3$-symmetric covariant
derivative variation over points with non-negative $\Omega$-BER tensor.

This type of variations plays an important role also because they allow to
generate the $\Omega$-SKRF via the $\Omega$-SRF thanks to an ODE flow of complex
structures of Lax type (see \cite{Pal2}). 
\\
The surprising fact is that the $\Omega$-$\tmop{SRF}$ is a
{\tmstrong{forward}} and strictly parabolic heat type flow with respect to
such variations. However we can not expect to solve this flow equation for
arbitrary initial data. It is well known that backward
heat type equations, such as the backward Ricci flow (roughly speaking), can not be solved for arbitrary initial data.
\\
We will call {\tmstrong{scattering data}} some
special initial data which imply the formal existence of the
$\Omega$-$\tmop{SRF}$ as a formal gradient flow of the restriction of
Perelman's $\mathcal{W}$-functional over adequate subspaces of Riemannian
metrics.

To be more precise, we are looking for sub-varieties $\Sigma$ in the space of
Riemannian metrics such that at each point $g \in \Sigma$ the tangent space of
$\Sigma$ in $g$ is contained in the space of variations with $3$-symmetric
covariant derivative and such that the gradient of the functional
$\mathcal{W}_{\Omega}$ is tangent to $\Sigma$ at each point $g \in \Sigma$.

So at first place we want that the set of initial data allow a $3$-symmetric
covariant derivative of the variation of the metric along the
$\Omega$-$\tmop{SRF}$. The precise definition of the set of scattering data
and of the sub-varieties $\Sigma$ will be given in the next section.

\section{Statement of the main result}\label{main-res}

Let $\Omega > 0$ be a smooth volume form over an oriented Riemannian manifold
$(X, g)$ of dimension $n$. We remind that the $\Omega$-Bakry-Emery-Ricci tensor of $g$ is
defined by the formula
$$
\tmop{Ric}_g (\Omega) \;\; : = \;\; \tmop{Ric} (g)
   \;\, + \;\, \nabla_g \,d \log \frac{dV_g}{\Omega}\; . 
$$
A Riemannian metric $g$ is called a $\Omega$-Shrinking Ricci soliton (in short
$\Omega$-ShRS) if $g = \tmop{Ric}_g (\Omega)$. We observe that the set of
variations with $3$-symmetric covariant derivative coincides with the vector
space
\begin{eqnarray*}
  \mathbbm{F}_g \;\; \assign \;\; \left \{ v \in C^{\infty} \left( X,
  S_{_{\mathbbm{R}}}^2 T^{\ast}_X \right) \mid \hspace{0.25em} \nabla_{_{T_X,
  g}} v_g^{\ast} \;\,=\;\, 0 \right\}\;,
\end{eqnarray*}
where $\nabla_{_{T_X, g}}$ denotes the covariant exterior derivative acting on
$T_X$-valued differential forms and $v^{\ast}_g \assign g^{- 1} v$. We define
also the set of pre-scattering data $\mathcal{S}_{_{^{\Omega}}}$ as the
subset in the space of smooth Riemannian metrics $\mathcal{M}$ over $X$ given
by
\begin{eqnarray*}
  \mathcal{S}_{_{^{\Omega}}} \;\; := \;\; \Big\{ g \in \mathcal{M} \mid \hspace{0.25em}
  \nabla_{_{T_X, g}} \tmop{Ric}^{\ast}_g (\Omega) \;\,=\;\, 0 \Big\} \;.
\end{eqnarray*}
\begin{definition}
  {\tmstrong{$($The $\Omega$-Soliton-Ricci flow$)$}}. Let $\Omega > 0$ be a
  smooth volume form over an oriented Riemannian manifold $X$. A
  $\Omega$-Soliton-Ricci flow $($in short $\Omega$-$\tmop{SRF})$ is a flow of
  Riemannian metrics $(g_t)_{t \geqslant 0} \subset
  \mathcal{S}_{_{^{\Omega}}}$ solution of the evolution equation $ \dot{g}_t \,=\,
  \tmop{Ric}_{g_t} (\Omega) \,-\, g_t$.
\end{definition}

We equip the set $\mathcal{M}$ with the scalar product
\begin{equation}
  \label{Glb-Rm-m} G_g (u, v) \;\;=\;\; \int_X \left\langle \hspace{0.25em} u, v
  \right\rangle_g \,\Omega\;,
\end{equation}
for all $g\in\mathcal{M}$ and  all $u, v \in \mathcal{H} \, : =\, L^2 (X, S^2_{_{\mathbbm{R}}}
T_X^{\ast})$. We denote by $d_G$ the induced distance function. Let
$P_g^{\ast}$ be the formal adjoint of an operator $P$ with respect to a metric
$g$. We observe that the operator
\begin{eqnarray*}
  P^{\ast_{_{\Omega}}}_g \;\; : = \;\; e^f P^{\ast}_g  \left( e^{- f} \bullet
  \right)\;,
\end{eqnarray*}
with $f \assign \log \frac{d V_g}{\Omega}$, is the formal adjoint of $P$ with
respect to the scalar product (\ref{Glb-Rm-m}). We define also the
$\Omega$-Laplacian operator
\begin{eqnarray*}
  \Delta^{^{_{_{\Omega}}}}_g \;\;\assign \;\; \nabla_g^{\ast_{_{\Omega}}} \nabla_g
  \;\;=\;\; \Delta_g \;\,+\;\, \nabla_g f\; \neg\; \nabla_g\; .
\end{eqnarray*}
We remind (see \cite{Pal1}) that the first variation of the
$\Omega$-Bakry-Emery-Ricci tensor is given by the formula
\begin{equation}
  \label{var-Om-Ric} 
2\, \frac{d}{d t} \tmop{Ric}_{g_t} (\Omega) \;\;=\;\; -\;\,
  \nabla_{g_t}^{\ast_{_{\Omega}}} \mathcal{D}_{g_t} \, \dot{g}_t\;,
\end{equation}
where $\mathcal{D}_g \assign \hat{\nabla}_g - 2\, \nabla_g$, with
$\hat{\nabla}_g$ being the symmetrization of
$\nabla_g$ acting on symmetric 2-tensors.
Explicitly
\begin{eqnarray*}
  \hat{\nabla}_g \,\alpha\, (\xi_0, ..., \xi_p) \;\; : = \;\; \sum_{j = 0}^p \nabla_g\,
  \alpha \,(\xi_j, \xi_0, ..., \hat{\xi}_j, ..., \xi_p)\;,
\end{eqnarray*}
for all $p$-tensors $\alpha$. We observe that formula (\ref{var-Om-Ric})
implies directly the variation formula
\begin{equation}
  \label{col-vr-OmRc} 2 \,\frac{d}{d t} \tmop{Ric}_{g_t} (\Omega) \;\;=\;\; -\;\,
  \Delta^{^{_{_{\Omega}}}}_{g_t} \, \dot{g}_t\;,
\end{equation}
along any smooth family $(g_t)_{t \in (0, \varepsilon)} \subset \mathcal{M}$
such that $\dot{g}_t \in \mathbbm{F}_{g_t}$ for all $t \in (0, \varepsilon)$.
We deduce that the $\Omega$-$\tmop{SRF}$ is a {\tmstrong{forward}} and
strictly parabolic heat type flow of Riemannian metrics. In the appendix we
give a direct proof of the variation formula (\ref{col-vr-OmRc}) which shows
that the Laplacian therm on the right hand side is produced from the variation
of the Hessian of $f_t \assign \log \frac{d V_{g_t}}{\Omega}$. Moreover the
formula (\ref{col-vr-OmRc}) implies directly the variation formula
\begin{equation}
  \label{col-vrEnOmRc} 2\, \frac{d}{d t} \tmop{Ric}^{\ast}_{g_t} (\Omega) \;\;=\;\; -\;\,
  \Delta^{^{_{_{\Omega}}}}_{g_t} \, \dot{g}^{\ast}_t \;\,-\;\, 2\, \dot{g}_t^{\ast}
  \tmop{Ric}^{\ast}_{g_t} (\Omega) \;.
\end{equation}
It is quite crucial and natural from the technical point of view to introduce
a ''center of polarization'' $K$ of the space $\mathcal{M}$. We consider
indeed a section $K \in C^{\infty} (X, \tmop{End} (T_X))$ with $n$-distinct
real eigenvalues almost everywhere over $X$ and we define the vector space
\begin{eqnarray*}
  \mathbbm{F}^K_g \;\; := \;\; \Big\{ v \in \mathbbm{F}_g \mid \hspace{0.25em} 
  \left[ \nabla^p_g\, T, v^{\ast}_g \right]\; =\; 0\,,\;\, T \;=\;\mathcal{R}_g\,, K\,, \;\forall p
  \in \mathbbm{Z}_{\geqslant 0} \Big\}\; .
\end{eqnarray*}
(From the technical point of view is more natural to introduce this space in a
different way that we will explain in the next sections.) For any $g_0 \in
\mathcal{M}$ we define the sub-variety
$$
\Sigma_K (g_0) \;\;\assign\;\; \mathbbm{F}^K_{g_0} \cap \mathcal{M}\;. 
$$
It is a totally geodesic and flat sub-variety of the non-positively curved
Riemannian manifold $(\mathcal{M}, G)$ which satisfies the fundamental
property
$$
T_{\Sigma_{^{_K}} (g_0), g}\;\; =\;\;\mathbbm{F}^K_g\;,\quad \forall g \in \Sigma_K (g_0)\;,
$$
(see lemma \ref{flat-Scat-Space} in section \ref{Integ-F} below). We define the {\tmstrong{set of scattering
data}} with center $K$ as the set of metrics
\begin{eqnarray*}
  \mathcal{S}_{_{^{\Omega}}}^K & \assign & \left\{ g \in \mathcal{M} \mid
  \hspace{0.25em} \tmop{Ric}_g (\Omega) \in \mathbbm{F}^K_g \right\}\;,
\end{eqnarray*}
and the subset $ \mathcal{S}_{_{^{\Omega, +}}}^K : = \{ g \in
\mathcal{S}_{_{^{\Omega}}}^K \mid \hspace{0.25em} \tmop{Ric}_g (\Omega) > 0\}$. 
We observe that $\mathcal{S}_{_{^{\Omega, +}}}^K  \neq
\emptyset$ if the manifold $X$ admit a $\Omega$-ShRS. Moreover if $g\in\mathcal{S}_{_{^{\Omega, +}}}^K$ and if
$\dim_{_{\mathbbm{R}}}\mathbbm{F}^K_g=1$ then $g$ solves the $\Omega$-ShRS equation up to a constant factor $\lambda > 0$, i.e 
$\lambda \,g$ is a $\Omega$-ShRS.
With this
notations we can state our main result.

\begin{theorem}
  \label{Main-Teo}{\bf (Main result)}. Let $X$ be a $n$-dimensional
  compact and orientable manifold oriented by a smooth volume form $\Omega >
  0$ and let $K \in C^{\infty} (X, \tmop{End} (T_X))$ with almost everywhere
  $n$-distinct real eigenvalues over $X$. If $\mathcal{S}^K_{_{^{\Omega, +}}} 
  \neq \emptyset$ then hold the following statements.
  
  {\bf(A)} For any data $g_0 \in \mathcal{S}^K_{_{^{\Omega}}}$ and
  any metric $g \in \Sigma_K (g_0)$ hold the identities
  \begin{eqnarray*}
    \nabla_G \mathcal{W}_{\Omega} \,(g) & = & g \;\,-\;\, \tmop{Ric}_g (\Omega) \;\in\;
    T_{\Sigma_K (g_0), g}\;,\\
    &  & \\
    \nabla^{_{^{\Sigma_K (g_0)}}}_G D\, \mathcal{W}_{\Omega} \,(g)\, (v, v) & = &
    \int_X \left[ \left\langle v \tmop{Ric}^{\ast}_g (\Omega), v
    \right\rangle_g \;\,+\;\, \frac{1}{2}\, | \nabla_g \,v|^2_g  \right] \Omega\;,
  \end{eqnarray*}
for all $v\in T_{\Sigma_K (g_0), g}$. The functional $\mathcal{W}_{\Omega}$ is $G$-convex over the
  $G$-convex set
  \begin{eqnarray*}
    \Sigma^-_K (g_0) \;\; \assign \;\; \left\{ g \in \Sigma_K (g_0) \mid
    \tmop{Ric}_g (\Omega) \;\geqslant\; -\; \tmop{Ric}_{g_0} (\Omega) \right\}\;,
  \end{eqnarray*}
  inside the totally geodesic and flat sub-variety $\Sigma_K (g_0)$ of the
  non-positively curved Riemannian manifold $(\mathcal{M}, G)$.
  
  {\bf(B)} For all $g_0 \in \mathcal{S}^K_{_{^{\Omega}}}$ with
  $\tmop{Ric}_{g_0} (\Omega) \geqslant \varepsilon g_0$, $\varepsilon \in
  \mathbbm{R}_{> 0}$ the functional $\mathcal{W}_{\Omega}$ is $G$-convex over
  the $G$-convex sets
  \begin{eqnarray*}
    \Sigma^{\delta}_K (g_0) & \assign & \left\{ g \in \Sigma_K (g_0) \mid
    \tmop{Ric}_g (\Omega) \;\geqslant \;\delta \,g \right\}\;,\quad \forall \delta \;\in\; [0,
    \varepsilon)\;,
\\
\\
    \Sigma^+_K (g_0) & \assign & \left\{ g \in \Sigma_K (g_0) \mid 2
    \tmop{Ric}_g (\Omega) \;+\; g_0\, \Delta^{^{_{_{\Omega}}}}_{g_0} \log (g^{- 1}_0
    g) \;\geqslant\; 0 \right\} \;.
  \end{eqnarray*}
  In this case let $\overline{\Sigma}^+_K (g_0)$ be the closure of $\Sigma^+_K
  (g_0)$ with respect to the metric $d_G$. Then there exists a natural
  integral extension $\mathcal{W}_{\Omega} : \overline{\Sigma}^+_K (g_0)
  \longrightarrow \mathbbm{R}$ of the functional $\mathcal{W}_{\Omega}$ which
  is $d_G$-lower semi-continuous, uniformly bounded from below and $d_G$-convex
  over the $d_G$-closed and $d_G$-convex set $\overline{\Sigma}^+_K (g_0)$
  inside the non-positively curved length space $( \overline{\mathcal{M}}
  ^{d_G}, d_G)$.
  
  {\bf(C)} The formal gradient flow of the functional
  $\mathcal{W}_{\Omega} : \Sigma_K (g_0) \longrightarrow \mathbbm{R}$ with
  initial data $g_0 \in \mathcal{S}^K_{_{^{\Omega, +}}}$ represents a smooth
  solution of the $\Omega$-$\tmop{SRF}$ equation. Assume all time existence of
  the $\Omega$-$\tmop{SRF}$ $(g_t)_{t \geqslant 0} \subset \Sigma_K (g_0)$ and
  the existence of $\delta \in \mathbbm{R}_{> 0}$ such that $\tmop{Ric}_{g_t}
  (\Omega) \geqslant \delta g_t$ for all times $t \geqslant 0$. Then the
  $\Omega$-$\tmop{SRF}$ $(g_t)_{t \geqslant 0}$ converges exponentially fast
  with all its space derivatives to a $\Omega$-shrinking Ricci soliton
  $g_{\tmop{RS}} \in \Sigma^+_K (g_0)$ as $t \rightarrow + \infty$. 
\end{theorem}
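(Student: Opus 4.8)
The plan is to read (A) and (B) off the first two variations of $\mathcal{W}_{\Omega}$, exploiting that on $\Sigma_K(g_0)$ the endomorphisms $K$, $\mathcal{R}_g$, $g_0^{-1}g$ and their covariant derivatives are simultaneously diagonalizable, which reduces the tensor algebra to scalar computations on eigenvalues, and then to get (C) from the gradient-flow theory of a strongly convex functional. For part (A) I would start from Perelman's first-variation formula for $\mathcal{W}$ at fixed volume form (\cite{Per}, \cite{Pal1}), $D\mathcal{W}_{\Omega}(g)(v)=G_g\big(g-\tmop{Ric}_g(\Omega),\,v\big)$ for all $v\in\mathcal{H}$, which gives $\nabla_G\mathcal{W}_{\Omega}(g)=g-\tmop{Ric}_g(\Omega)$; that this lies in $T_{\Sigma_K(g_0),g}=\mathbbm{F}^K_g$ for $g\in\Sigma_K(g_0)$ is the inclusion $\Sigma_K(g_0)\subseteq\mathcal{S}^K_{\Omega}$, which should come from the structural results on $\Sigma_K(g_0)$ (cf. Lemma~\ref{flat-Scat-Space}): on $\Sigma_K(g_0)$ the commutativity of $K$, $\mathcal{R}_g$, $g_0^{-1}g$ is inherited by $\tmop{Ric}_g(\Omega)$, together with $\nabla_{T_X,g}\tmop{Ric}^{\ast}_g(\Omega)=0$. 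Since the $G$-gradient is then tangent and $\Sigma_K(g_0)$ is totally geodesic in $(\mathcal{M},G)$, the intrinsic Hessian of $\mathcal{W}_{\Omega}|_{\Sigma_K(g_0)}$ equals the restriction of the ambient second variation to $\mathbbm{F}^K_g$, which I would compute by differentiating twice along a $G$-geodesic $g_t=g_0\,e^{t v^{\ast}_{g_0}}$: using the geodesic equation $\ddot g_t=\dot g_t\,g_t^{-1}\dot g_t$, the variation formula (\ref{col-vr-OmRc}) (legitimate because $\dot g_t\in\mathbbm{F}_{g_t}$), the identity $\int_X\langle\Delta^{\Omega}_g u,u\rangle_g\,\Omega=\int_X|\nabla_g u|^2_g\,\Omega$ (integration by parts for $G$), and commutativity to rewrite products as traces of commuting $g$-symmetric endomorphisms; the zeroth-order terms cancel, leaving exactly $\int_X\big[\langle v\,\tmop{Ric}^{\ast}_g(\Omega),v\rangle_g+\tfrac12|\nabla_g v|^2_g\big]\,\Omega$.

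From this Hessian formula the convexity statements become questions of sign. On $\Sigma^{\delta}_K(g_0)$ one has $\tmop{Ric}^{\ast}_g(\Omega)\geq\delta\,\mathrm{Id}$ commuting with $v^{\ast}_g$, so $\langle v\,\tmop{Ric}^{\ast}_g(\Omega),v\rangle_g=\tmop{tr}\big((v^{\ast}_g)^2\,\tmop{Ric}^{\ast}_g(\Omega)\big)\geq\delta|v|^2_g\geq0$ pointwise, hence the Hessian integrand is non-negative; with the $G$-convexity of $\Sigma^{\delta}_K(g_0)$ itself (from flatness of $\Sigma_K(g_0)$ and the behaviour of the eigenvalues of $\tmop{Ric}_{g_t}(\Omega)$ along $\Sigma_K(g_0)$-geodesics, where the strict gap $\delta<\varepsilon$ is used) this yields $G$-convexity of $\mathcal{W}_{\Omega}$, and similarly over $\Sigma^+_K(g_0)$, whose defining inequality is precisely what makes the Hessian at the endpoint of the geodesic $g_t=g_0\,e^{t\log(g_0^{-1}g)}$ non-negative after one integration by parts and the comparison of $\Delta^{\Omega}_{g_t}$ with $\Delta^{\Omega}_{g_0}$ (which differ only by controlled commuting terms on $\Sigma_K(g_0)$). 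Over $\Sigma^-_K(g_0)$ the weaker condition $\tmop{Ric}_g(\Omega)\geq-\tmop{Ric}_{g_0}(\Omega)$ gives no pointwise positivity; here I would diagonalize in the common eigenframe ($g=g_0\,e^H$, $\tmop{Ric}^{\ast}_g(\Omega)=\mathrm{diag}(\rho_i)$, $\tmop{Ric}^{\ast}_{g_0}(\Omega)=\mathrm{diag}(\rho^0_i)$, $H=\mathrm{diag}(h_i)$), reduce the hypothesis to $\rho_i\geq-e^{-h_i}\rho^0_i$, and absorb the negative part into $\tfrac12\int_X|\nabla_g v|^2_g\,\Omega$ through a weighted Bochner-type estimate using both $\nabla_{T_X,g}v^{\ast}_g=0$ and $\nabla_{T_X,g_0}\tmop{Ric}^{\ast}_{g_0}(\Omega)=0$ (i.e. $g_0\in\mathcal{S}_{\Omega}$). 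The $d_G$-extension in (B) is then standard: $\mathcal{W}_{\Omega}$ is continuous and geodesically convex on the $d_G$-convex set $\Sigma^+_K(g_0)$, so convexity and $d_G$-lower semicontinuity pass to its $d_G$-closure by the usual relaxation/continuity-along-geodesics argument, the uniform lower bound coming from compactness of $X$ (finiteness of Perelman's $\mu$-type invariant), which also produces a unique minimizer on $\overline{\Sigma}^+_K(g_0)$ inside the non-positively curved length space $(\overline{\mathcal{M}}^{d_G},d_G)$.

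For part (C): by (A) the $G$-gradient $g-\tmop{Ric}_g(\Omega)$ is tangent to $\Sigma_K(g_0)$, so the formal gradient flow of $\mathcal{W}_{\Omega}|_{\Sigma_K(g_0)}$ is a flow within $\Sigma_K(g_0)\subseteq\mathcal{S}^K_{\Omega}\subseteq\mathcal{S}_{\Omega}$ solving $\dot g_t=\tmop{Ric}_{g_t}(\Omega)-g_t$, i.e. an $\Omega$-SRF, and by parabolic uniqueness it coincides with the $\Omega$-SRF issued from $g_0\in\mathcal{S}^K_{\Omega,+}$; smoothness follows since by (\ref{col-vr-OmRc}) the linearisation is $-\tfrac12\Delta^{\Omega}$ plus lower order, a forward strictly parabolic system (reducing on $\Sigma_K(g_0)$ to a parabolic system for the eigenvalue functions). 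Assume all-time existence with $\tmop{Ric}_{g_t}(\Omega)\geq\delta g_t$, $\delta>0$, for all $t$. Then $g_t$ stays in the set where, by (A), $\mathrm{Hess}\,\mathcal{W}_{\Omega}(v,v)\geq\delta\|v\|^2_{G_{g_t}}$, so $\mathcal{W}_{\Omega}$ is $\delta$-strongly $G$-convex there ((B) providing the $G$-convexity of that set, using $\tmop{Ric}_{g_0}(\Omega)\geq\varepsilon g_0$ for some $\varepsilon>\delta$, which one may arrange by shrinking $\delta$ slightly since $g_0\in\mathcal{S}^K_{\Omega,+}$ and $X$ is compact). A $\delta$-strongly convex $d_G$-lower semicontinuous functional on the $d_G$-complete $G$-convex set carrying the flow has a unique minimizer $g_{\tmop{RS}}$; along the flow $\tfrac{d}{dt}\mathcal{W}_{\Omega}(g_t)=G_{g_t}(\nabla_G\mathcal{W}_{\Omega}(g_t),\dot g_t)=-\|\dot g_t\|^2_{G_{g_t}}$, and the standard contraction/gradient-domination estimates for $\delta$-strongly convex functionals give $\mathcal{W}_{\Omega}(g_t)-\mathcal{W}_{\Omega}(g_{\tmop{RS}})\leq e^{-2\delta t}\big(\mathcal{W}_{\Omega}(g_0)-\mathcal{W}_{\Omega}(g_{\tmop{RS}})\big)$ and $d_G(g_t,g_{\tmop{RS}})\leq e^{-\delta t}d_G(g_0,g_{\tmop{RS}})$; combining, $\int_t^{\infty}\|\dot g_s\|^2_{G_{g_s}}\,ds=\mathcal{W}_{\Omega}(g_t)-\mathcal{W}_{\Omega}(g_{\tmop{RS}})\leq e^{-2\delta t}\big(\mathcal{W}_{\Omega}(g_0)-\mathcal{W}_{\Omega}(g_{\tmop{RS}})\big)$.

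The remaining, and I expect hardest, step is to promote this $d_G$ (i.e. $L^2$) convergence to $C^{\infty}$ convergence and to identify the limit. I would feed the exponential $L^2$-in-time decay of $\dot g_t$ into interior parabolic regularity for the uniformly parabolic $\Omega$-SRF (uniform parabolicity from $\tmop{Ric}_{g_t}(\Omega)\geq\delta g_t>0$, once the required uniform-in-time bounds on $g_t$ and its space derivatives are in hand): Schauder/$L^p$ estimates on the time windows $[t-1,t+1]$ give $\|\dot g_t\|_{C^k(X)}\leq C_k\,e^{-\delta t}$ for every $k$, so $(g_t)$ converges in each $C^k$, exponentially fast with all space derivatives, to a smooth metric $g_{\tmop{RS}}$; letting $t\to\infty$ in $\dot g_t=\tmop{Ric}_{g_t}(\Omega)-g_t$ with $\dot g_t\to0$ gives $\tmop{Ric}_{g_{\tmop{RS}}}(\Omega)=g_{\tmop{RS}}$, i.e. an $\Omega$-shrinking Ricci soliton; and $g_{\tmop{RS}}\in\Sigma_K(g_0)$ as a $C^{\infty}$-limit from $\Sigma_K(g_0)$, with $g_{\tmop{RS}}\in\Sigma^+_K(g_0)$ since $2\tmop{Ric}_{g_{\tmop{RS}}}(\Omega)=2g_{\tmop{RS}}>0$ and the flow, hence its limit, stays in the $d_G$-closed set $\overline{\Sigma}^+_K(g_0)$, smoothness placing $g_{\tmop{RS}}$ in $\Sigma^+_K(g_0)$ itself. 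The two delicate sub-points here are the uniform higher-order a priori estimates needed to run the parabolic regularity (the lower Ricci bound controls parabolicity and the time-integrated energy but not directly all space derivatives of $g_t$) and the verification that the flow does not leave $\Sigma^+_K(g_0)$.
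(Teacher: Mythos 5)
Your overall skeleton (Perelman's first variation, tangency of the gradient via the conservation results on $\Sigma_K(g_0)$, the Hessian via total geodesy) agrees with the paper, but the convexity assertions in (A) and (B) are exactly where your plan leaves placeholders instead of proofs, and this is the heart of the theorem. From the Hessian formula of (A) the zeroth-order term $\int_X\langle v\,\tmop{Ric}^{\ast}_g(\Omega),v\rangle_g\,\Omega$ is genuinely of indefinite sign on $\Sigma^-_K(g_0)$, and your proposed ``weighted Bochner-type estimate'' absorbing it into $\tfrac12\int_X|\nabla_g v|^2_g\,\Omega$, as well as the ``comparison of $\Delta^{\Omega}_{g_t}$ with $\Delta^{\Omega}_{g_0}$'' for $\Sigma^+_K(g_0)$, is precisely the missing idea, not a routine step. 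The paper's mechanism is the change of variables (\ref{chg-var}), $g\mapsto A=-\tfrac12\log(g_0^{-1}g)$, combined with the identity of lemma \ref{Cool-expr-EndOmRic}, $\tmop{Ric}^{\ast}_g(\Omega)=e^{A}\Delta^{\Omega}_{g_0}e^{A}+e^{2A}\tmop{Ric}^{\ast}_{g_0}(\Omega)$: it maps $G$-geodesics to lines and rewrites the second variation as in lemma \ref{convex-lm}, with Dirichlet term $|\nabla_{g_0}(e^{A}V)|^2_{g_0}$ and zeroth-order term governed by $\tmop{Ric}^{\ast}_{g}(\Omega)+e^{2A}\tmop{Ric}^{\ast}_{g_0}(\Omega)$, so that the defining inequality of $\Sigma^-_K(g_0)$ is literally the nonnegativity of that term, while for $\Sigma^+_K(g_0)$ one needs in addition the pointwise estimate $|U\nabla_{g_0}A|^2_{g_0}\leqslant\tmop{Tr}_{_{\mathbbm{R}}}[U^2\tmop{Ric}^{\ast}_{g_0}(\Omega)]$ (the inclusion $\mathbbm{T}^{K,++}_{g_0}\subset\mathbbm{T}^{K,+}_{g_0}$). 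The $G$-convexity of the sets themselves, which is part of the statement, is also obtained through this change of variables via formula (\ref{brk-RIC}) and elementary convexity of $\{A,A\}_{g_0}$ and of $e^{-2A}$, not from a qualitative ``behaviour of eigenvalues along geodesics''. Likewise the $d_G$-extension in (B) is not a generic relaxation argument: lower semicontinuity, the uniform lower bound and, above all, convexity of the natural integral extension on the closure rely on the uniform $W^{1,\infty}$ control of lemma \ref{bound-M} (a consequence of the defining inequality of $\Sigma^+_K(g_0)$) together with lemmas \ref{lsc-W} and \ref{fin-sc-Ext}; your appeal to a ``$\mu$-type invariant'' and a unique minimizer is not what is needed nor proved here.

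In (C) the identification of the formal gradient flow with the $\Omega$-SRF is fine, but your route to exponential $C^{\infty}$-convergence is circular, as you yourself flag: windowed Schauder/$L^p$ parabolic estimates require uniform-in-time control of $g_t$ and of its space derivatives, which is exactly what has to be produced. The paper does not use strong convexity or parabolic regularity theory at this stage: it derives $|\dot g_t|_{g_t}\leqslant C e^{-\delta t/2}$ by the scalar maximum principle applied to $|\dot g_t|^2_{g_t}$ (this is where $\tmop{Ric}_{g_t}(\Omega)\geqslant\delta g_t$ enters), then metric equivalence, a second maximum-principle argument for $|\nabla_{g_t}\dot g^{\ast}_t|$, and then — because curvature commutator terms defeat the maximum principle at higher order — a Hamilton-type interpolation scheme (\ref{interpI}), (\ref{interpII}) with additional curvature terms, run by induction on $p$ on $\int_X|\nabla^p_{g_t}\dot g^{\ast}_t|^2_{g_t}\,\Omega$ and exploiting intrinsic identities of the flow on $\Sigma_K(g_0)$ (e.g. $\mathcal{R}_{g_t}\equiv\mathcal{R}_{g_0}$ and the commutation $[\nabla_{g_t,\xi}\dot g^{\ast}_t,\nabla^p_{g_t}\dot g^{\ast}_t]=0$ coming from $\dot g_t\in\mathbbm{F}^K_{g_t}$). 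This interpolation machinery, together with a justification that the limit lies in $\Sigma^+_K(g_0)$ (which you assert but do not prove), is the substantive missing content of your part (C).
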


We wish to point out that the $G$-convexity of the previous sets is part of the
statement. Moreover it is possible to define a $d_G$-lower semi-continuous and
$d_G$-convex extension of the functional $\mathcal{W}_{\Omega}$ over the
closure of $\Sigma^{\delta}_K (g_0)$ with respect to the metric $d_G$. 
However this statement is not needed for our purposes. 
\\
In order to show the
convexity statements we need to perform a key change of variables which shows
in particular that the SRF equation over $\Sigma_K (g_0)$ corresponds to an
endomorphism-valued porous medium type equation. 
\\
The assumption on the uniform positive lower bound of the $\Omega$-Bakry-Emery-Ricci tensor in the 
statement {\bf(C)} allows to obtain the exponential decay of $C^1(X)$-norms via the maximum principle. 
This assumption seem to be reasonable in view of the $G$-convexity of the sets $\Sigma^{\delta}_K (g_0)$.
\\
The presence of some curvature therms in the evolution equation of higher order space derivatives turns 
off the power of the maximum principle. 
In order to show the
exponentially fast convergence of higher order space derivatives we use an interpolation method introduced by Hamilton 
in his proof of the
exponential convergence of the Ricci flow in \cite{Ham}. 
\\
The difference with the technique in \cite{Ham} is
a more involved interpolation process due to the presence of some extra curvature therms
which seem to be alien to Hamilton's argument. We are able to perform our interpolation 
process by using some intrinsic properties of the $\Omega$-SRF.

\section{Conservative differential symmetries}

In this section we show that some relevant differential symmetries are
preserved along the geodesics induced by the scalar product (\ref{Glb-Rm-m}).

\subsection{First order conservative differential symmetries}
We introduce first the cone $\mathbbm{F}^{\infty}_g$ inside the vector space
$\mathbbm{F}_g$ given by;
\begin{eqnarray*}
  \mathbbm{F}^{\infty}_g & \assign & \left \{ v \in C^{\infty} \left( X,
  S_{_{\mathbbm{R}}}^2 T^{\ast}_X \right) \mid \hspace{0.25em} \nabla_{_{T_X,
  g}} (v_g^{\ast})^p \;=\; 0\,,\;\, \forall p \,\in\, \mathbbm{Z}_{> 0} \right\} \\
  &  & \\
  & = & \left \{ v \in C^{\infty} \left( X, S_{_{\mathbbm{R}}}^2 T^{\ast}_X
  \right) \mid \hspace{0.25em} \nabla_{_{T_X, g}} e^{t v^{\ast}_g} \;=\; 0\,,\;\,
  \forall t \,\in\, \mathbbm{R} \right\} \;.
\end{eqnarray*}
We need also a few algebraic definitions. Let $V$ be a real vector space. We
consider the contraction operator
$$
\neg \,:\, \tmop{End} (V) \times \Lambda^2 V^{\ast} \longrightarrow \Lambda^2
   V^{\ast} \;, 
$$
defined by the formula
$$
H \;\neg\; (\alpha \wedge \beta) \;: =\; (\alpha \cdot H) \wedge \beta \;\,+\;\, \alpha
   \wedge (\beta \cdot H) \;, 
$$
for any $H \in \tmop{End} (V), \hspace{0.25em} u, v \in V$ and $\alpha, \beta
\in V^{\ast}$. We can also define the contraction operator by the equivalent
formula
$$
(H \;\neg\; \varphi) \,(u, v) \;\;: =\;\; \varphi\, (H u, v) \;\,+\;\, \varphi\, (u, H v)\,, 
$$
for any $\varphi \in \Lambda^2 V^{\ast}$. Moreover for any element $A \in
(V^{\ast})^{\otimes 2} \otimes V$ we define the following elementary
operations over the vector space $(V^{\ast})^{\otimes 2} \otimes V$;
\begin{eqnarray*}
  (A\, H) (u, v) & \assign & A (u, H v)\;,\\
  &  & \\
  (H A) (u, v) & \assign & H A (u, v)\;,\\
  &  & \\
  (H \bullet A) (u, v) & \assign & A (H u, v)\;,\\
  &  & \\
  (\tmop{Alt} A) (u, v) & \assign & A (u, v) \;\,-\;\, A (v, u) \;.
\end{eqnarray*}
Assume now that $V$ is equipped with a metric $g$. Then we can define the $g$-transposed $A_g^T \in
(V^{\ast})^{\otimes 2} \otimes V$ as follows. For any $v \in V$
\begin{eqnarray*}
  v \;\neg\; A_g^T \;\; : = \;\; \left( v \;\neg\; A \right)_g^T \;.
\end{eqnarray*}
We remind (see \cite{Pal1}) that the geodesics in the space of Riemannian metrics
with respect to the scalar product (\ref{Glb-Rm-m}) are given by the solutions of the
equation $\dot{g}_t^{\ast}\assign g_t^{- 1} \dot{g}_t = g_0^{- 1}
\dot{g}_0$. Thus the geodesic curves write explicitly as
\begin{equation}
  \label{expr-geod} g_t \;\; = \;\; g_0 \hspace{0.25em}
  e^{\hspace{0.25em} tg_0^{- 1} \dot{g}_0} \; .
\end{equation}
With this notations we can show now the following fact.

\begin{lemma}
  \label{invar-F}Let $(g_t)_{t \in \mathbbm{R}}$ be a geodesic such that
  $\dot{g}_0 \in \mathbbm{F}^{\infty}_{g_0} .$ Then $\dot{g}_t \in
  \mathbbm{F}^{\infty}_{g_t}$ for all $t \in \mathbbm{R}.$ 
\end{lemma}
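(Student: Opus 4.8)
The plan is to reduce the statement to a pointwise ODE/algebra computation along the geodesic $g_t = g_0\, e^{\,t\,v^\ast}$ with $v := \dot g_0$ and $v^\ast := g_0^{-1}\dot g_0$, using that $\dot g_t^\ast = g_0^{-1}\dot g_0 = v^\ast$ is constant along the geodesic. Recall that by definition $v \in \mathbbm{F}^\infty_{g_0}$ means $\nabla_{T_X, g_0}\, e^{s v^\ast} = 0$ for all $s \in \mathbbm{R}$; equivalently, each power $(v^\ast)^p$ is $\nabla_{g_0}$-parallel as a $T_X$-valued $1$-form in the exterior-derivative sense. The target is to show the analogous vanishing $\nabla_{T_X, g_t}\, e^{s v_{g_t}^\ast} = 0$, where $v_{g_t}^\ast := g_t^{-1} v$. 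Since $g_t^{-1} v = e^{-t v^\ast} g_0^{-1} v = e^{-t v^\ast} v^\ast = v^\ast e^{-t v^\ast}$ (the last equality because $v^\ast$ commutes with its own exponential), we have $v_{g_t}^\ast = v^\ast\, e^{-t v^\ast}$, so $e^{s v_{g_t}^\ast}$ is again a convergent power series in $v^\ast$ with $t$-dependent coefficients. Thus it suffices to show that every integer power $(v^\ast)^p$ remains parallel for the Levi-Civita connection $\nabla_{g_t}$ (in the covariant-exterior-derivative sense) for all $t$.

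The key step is therefore to understand how $\nabla_{T_X, g_t}$ differs from $\nabla_{T_X, g_0}$ along the geodesic, and to show this difference kills powers of $v^\ast$. I would write $\nabla_{g_t} = \nabla_{g_0} + \Gamma_t$, where $\Gamma_t \in C^\infty(X, T^\ast_X \otimes \mathrm{End}(T_X))$ is the difference of Christoffel tensors, and compute $\Gamma_t$ explicitly from $g_t = g_0\, e^{t v^\ast}$: it will be expressible as a universal algebraic expression in $g_0^{-1}\nabla_{g_0} g_t$, hence in $v^\ast$ and $\nabla_{g_0} e^{t v^\ast}$ — but $\nabla_{g_0} e^{t v^\ast} = 0$ by hypothesis (here one must be slightly careful to distinguish the full covariant derivative of the $(1,1)$-tensor $e^{tv^\ast}$ from the covariant exterior derivative; the hypothesis $v\in\mathbbm{F}^\infty$ as stated controls the latter, but the symmetry of $v$ and of $g_t$ should let one upgrade this, much as one does for the Levi-Civita connection itself). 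Once $\Gamma_t$ is controlled, the covariant exterior derivative $\nabla_{T_X, g_t}(v^\ast)^p$ splits into $\nabla_{T_X, g_0}(v^\ast)^p = 0$ plus commutator/bracket terms of the form $\Gamma_t \wedge (v^\ast)^p$ type expressions; these vanish because $\Gamma_t$ itself is built from $v^\ast$ and everything in sight commutes with $v^\ast$, together with the skew-symmetrization inherent in the wedge.

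Concretely, the order of operations is: (i) record $v_{g_t}^\ast = v^\ast e^{-tv^\ast}$ and reduce the claim to parallelism of $(v^\ast)^p$ under $\nabla_{g_t}$ for all $p \ge 0$; (ii) derive the first-order ODE $\tfrac{d}{dt}\big(\nabla_{T_X,g_t}(v^\ast)^p\big)$ by differentiating the Christoffel difference, expressing $\dot\Gamma_t$ through $v^\ast$ and $\nabla_{g_0} e^{tv^\ast}=0$; (iii) observe that this ODE has the form $\tfrac{d}{dt}A_t^{(p)} = (\text{commutator with } v^\ast)\,A_t^{(p)}$ or is simply homogeneous, with $A_0^{(p)} = \nabla_{T_X,g_0}(v^\ast)^p = 0$; (iv) conclude $A_t^{(p)} \equiv 0$ by uniqueness for linear ODEs, and resum over $p$ to get $\nabla_{T_X,g_t} e^{s v_{g_t}^\ast} = 0$. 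The main obstacle I anticipate is step (ii): getting a clean closed form for the Christoffel difference $\Gamma_t$ and verifying that it genuinely annihilates powers of $v^\ast$ — in particular making sure that the hypothesis, which only gives vanishing of the \emph{covariant exterior derivative} $\nabla_{T_X,g_0}e^{sv^\ast}$ rather than the full covariant derivative, is strong enough, and if not, extracting the extra symmetry (from $v$ being a symmetric $2$-tensor) needed to close the argument. A cleaner alternative for step (ii)–(iii), which I would try first, is to differentiate the identity $\nabla_{T_X,g_t}e^{sv_{g_t}^\ast}$ directly in $t$ using the first-variation formulas for the Levi-Civita connection recorded in \cite{Pal1}, which should immediately produce a homogeneous linear ODE in the quantity to be shown vanishing.
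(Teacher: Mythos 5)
Your plan has the right skeleton (reduce to the statement that every power $(v^\ast)^p$, $v^\ast:=g_0^{-1}\dot g_0=\dot g_t^\ast$, is killed by the covariant exterior derivative of $g_t$, and control the Christoffel difference $\Gamma_t=\nabla_{g_t}-\nabla_{g_0}$), but the decisive step rests on two claims that are false in general. First, the hypothesis $\dot g_0\in\mathbbm{F}^{\infty}_{g_0}$ only gives $\nabla_{T_X,g_0}e^{t v^\ast}=0$, i.e.\ total symmetry of the $3$-tensor $\nabla_{g_0}(g_0e^{tv^\ast})$; it cannot be ``upgraded'' to $\nabla_{g_0}e^{tv^\ast}=0$, which is a far stronger condition and fails for generic elements of $\mathbbm{F}^{\infty}_{g_0}$. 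What the symmetry does give is the closed formula $2\,\Gamma_{t,\xi}=e^{-tv^\ast}\nabla_{g_0,\xi}e^{tv^\ast}$, but $\Gamma_t\neq0$. Second, your reason for the vanishing of the bracket terms (``$\Gamma_t$ is built from $v^\ast$ and everything in sight commutes with $v^\ast$'') is wrong: $\Gamma_{t,\xi}$ involves $\nabla_{g_0,\xi}e^{tv^\ast}$, and $[\nabla_{g_0,\xi}v^\ast,v^\ast]=0$ is \emph{not} implied by $\dot g_0\in\mathbbm{F}^{\infty}_{g_0}$ — this commutation is precisely the extra requirement the paper later builds into $\mathbbm{F}_g(K)$ and $\mathbbm{E}_g$. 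The needed cancellation $[\Gamma_{t,\xi},(v^\ast)^p]\eta-[\Gamma_{t,\eta},(v^\ast)^p]\xi=0$ is true, but for a different reason: one must combine the exchange symmetry $\nabla_{g_0,\xi}e^{tv^\ast}\,\eta=\nabla_{g_0,\eta}e^{tv^\ast}\,\xi$ with the hypothesis applied to the \emph{products} $e^{tv^\ast}(v^\ast)^p$ (again power series in $v^\ast$), i.e.\ the full strength of $\mathbbm{F}^{\infty}$ for all powers; this identification is absent from your write-up and is the actual content of the lemma. (A minor point: the relevant endomorphism is $\dot g_t^\ast=g_t^{-1}\dot g_t=v^\ast$, constant in $t$, not $g_t^{-1}\dot g_0=v^\ast e^{-tv^\ast}$; your final reduction is nonetheless to the correct target.)

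Your fallback — differentiate in $t$ and conclude by Cauchy uniqueness for a homogeneous linear ODE — also does not close as stated, and this is where the paper's proof differs from your expectation. The paper performs exactly this differentiation via its variation formula (\ref{var-extD}) and finds that $\frac{d}{dt}\big[\nabla_{T_X,g_t}(\dot g_t^\ast)^p\big]$ involves $\nabla_{T_X,g_t}(\dot g_t^\ast)^{p+1}$: one obtains an infinite upper-triangular system coupling all powers, not a closed homogeneous ODE for a single unknown, so Cauchy uniqueness cannot be invoked power by power. The paper instead shows by induction on $k$ that all derivatives $\frac{d^k}{dt^k}\big|_{t=0}\nabla_{T_X,g_t}(\dot g_t^\ast)^p$ vanish and concludes from the real-analyticity in $t$ of $t\mapsto\nabla_{T_X,g_t}(\dot g_t^\ast)^p$ (the geodesic $g_0e^{tv^\ast}$ is entire in $t$). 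So to complete your argument you must either supply the algebraic cancellation described above (which yields a direct, ODE-free proof) or the all-derivatives-plus-analyticity device; as written, the proposal has a genuine gap at the decisive step.
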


\begin{proof}
  Let $H \in C^{\infty} \left( X, \tmop{End} (T_X) \right) \text{}$ and let
  $(g_t)_{t \in \mathbbm{R}} \subset \mathcal{M}$ be an arbitrary smooth
  family. We expand first the time derivative
  \begin{eqnarray*}
    \dot{\nabla}_{_{T_X, g_t}} H (\xi, \eta) & = & \dot{\nabla}_{g_t} H (\xi,
    \eta) \;\,-\;\, \dot{\nabla}_{g_t} H (\eta, \xi)\\
    &  & \\
    & = & \dot{\nabla}_{g_t} (\xi, H \eta) \;\,-\;\, \dot{\nabla}_{g_t} (\eta, H
    \xi)\\
    &  & \\
    & - & H \left[ \dot{\nabla}_{g_t} (\xi, \eta) \;\,-\;\, \dot{\nabla}_{g_t} (\eta,
    \xi) \right]\\
    &  & \\
    & = & \dot{\nabla}_{g_t} (H \eta, \xi) \;\,-\;\, \dot{\nabla}_{g_t} (H \xi,
    \eta)\;,
  \end{eqnarray*}
  since $ \dot{\nabla}_{g_t} \in C^{\infty} (X, S_{_{\mathbbm{R}}}^2
  T^{\ast}_X \otimes T_X)$ thanks to the variation identity (see \cite{Bes})
\begin{eqnarray}
  \label{Bess-var-LC-conn} 
2\, g_t \left( \dot{\nabla}_{g_t} (\xi, \eta), \mu
  \right) \;\,=\;\, \nabla_{g_t}\, \dot{g}_t (\xi, \eta, \mu) \;\,+\;\, \nabla_{g_t}\, \dot{g}_t
  (\eta, \xi, \mu) \;\,-\;\, \nabla_{g_t} \,\dot{g}_t (\mu, \xi, \eta)\; .\;\;
\end{eqnarray}
  We observe now that the variation formula (\ref{Bess-var-LC-conn}) rewrites
  as
  \begin{eqnarray*}
    2 \,\dot{\nabla}_{g_t} (\xi, \eta) \;\; = \;\; \nabla_{g_t} \, \dot{g}^{\ast}_t
    (\xi, \eta) \;\,+\;\, \nabla_{g_t} \, \dot{g}^{\ast}_t (\eta, \xi) \;\,-\;\, \left(
    \nabla_{g_t} \, \dot{g}^{\ast}_t \, \eta \right)_{g_t}^T  \xi\; .
  \end{eqnarray*}
  Thus
  \begin{eqnarray*}
    2\, \dot{\nabla}_{_{T_X, g_t}} H (\xi, \eta) & = & \nabla_{g_t}\, 
    \dot{g}^{\ast}_t (H \eta, \xi) \;\,+\;\, \nabla_{g_t} \, \dot{g}^{\ast}_t (\xi, H
    \eta) \;\,-\;\, \left( \nabla_{g_t} \, \dot{g}^{\ast}_t \, \xi \right)_{g_t}^T
     H \eta\\
    &  & \\
    & - & \nabla_{g_t} \, \dot{g}^{\ast}_t (H \xi, \eta) \;\,-\;\, \nabla_{g_t} \,
    \dot{g}^{\ast}_t (\eta, H \xi) \;\,+\;\, \left( \nabla_{g_t} \, \dot{g}^{\ast}_t
    \, \eta \right)_{g_t}^T H \xi \;.
  \end{eqnarray*}
  Applying the identity
  \begin{eqnarray*}
    \left( \nabla_{g_t} \, \dot{g}^{\ast}_t \, \xi \right)_{g_t}^T & = & -\;\,
    \left( \xi \;\neg\; \nabla_{_{T_X, g_t}}  \dot{g}^{\ast}_t \right)_{g_t}^T \;\,+\;\,
    \xi \;\neg\; \nabla_{g_t}  \dot{g}^{\ast}_t\;,
  \end{eqnarray*}
  we obtain the equalities
  \begin{eqnarray*}
    2 \,\dot{\nabla}_{_{T_X, g_t}} H (\xi, \eta) & = & \nabla_{g_t} \,
    \dot{g}^{\ast}_t (H \eta, \xi) \;\,-\;\, \nabla_{g_t} \, \dot{g}^{\ast}_t (H \xi,
    \eta)\\
     \\
    & + & \left( \xi \;\neg\; \nabla_{_{T_X, g_t}}  \dot{g}^{\ast}_t
    \right)_{g_t}^T  H \eta \;\,-\;\, \left( \eta \;\neg\; \nabla_{_{T_X, g_t}} 
    \dot{g}^{\ast}_t \right)_{g_t}^T  H \xi\\
    \\
    & = & - \;\,\left( H \;\neg\; \nabla_{_{T_X, g_t}}  \dot{g}^{\ast}_t \right)
    (\xi, \eta)\\
\\
& +& \nabla_{g_t} \, \dot{g}^{\ast}_t (\xi, H \eta) \;\,-\;\, \nabla_{g_t} \,
    \dot{g}^{\ast}_t (\eta, H \xi)\\
     \\
    & + & \tmop{Alt} \left[ \left( \nabla_{_{T_X, g_t}}  \dot{g}^{\ast}_t
    \right)_{g_t}^T H \right] (\xi, \eta) \;.
  \end{eqnarray*}
  We infer the variation formula
  \begin{eqnarray}
    2\, \dot{\nabla}_{_{T_X, g_t}} H & = & - \;\,H \;\neg\; \nabla_{_{T_X, g_t}} 
    \dot{g}^{\ast}_t \;\,+\;\, \nabla_{_{T_X, g_t}}  \left( \dot{g}^{\ast}_t H \right)
   \;\, -\;\, \dot{g}^{\ast}_t \,\nabla_{_{T_X, g_t}} H\nonumber
\\\nonumber
     \\
    & + & \tmop{Alt} \left[ \left( \nabla_{_{T_X, g_t}}  \dot{g}^{\ast}_t
    \right)_{g_t}^T H \right] \;.\label{var-extD}
  \end{eqnarray}
  Thus along any geodesic hold the upper triangular type infinite dimensional
  ODE system
  \begin{eqnarray*}
    2 \,\frac{d}{d t}  \left[ \nabla_{_{T_X, g_t}} ( \dot{g}^{\ast}_t)^p \right]
    \text{} & = & -\;\, ( \dot{g}^{\ast}_t)^p \;\neg\; \nabla_{_{T_X, g_t}} 
    \dot{g}^{\ast}_t 
\\
\\
&+& \nabla_{_{T_X, g_t}}  \left( \dot{g}^{\ast}_t
    \right)^{p + 1} \;\,-\;\, \dot{g}^{\ast}_t \,\nabla_{_{T_X, g_t}} (
    \dot{g}^{\ast}_t)^p\\
    &  & \\
    & + & \tmop{Alt} \left[ \left( \nabla_{_{T_X, g_t}}  \dot{g}^{\ast}_t
    \right)_{g_t}^T ( \dot{g}^{\ast}_t)^p \right]\;,
  \end{eqnarray*}
  for all $p \in \mathbbm{Z}_{> 0}$. We remind now that $\dot{g}^{\ast}_t
  \equiv \dot{g}^{\ast}_0$ and we observe the formula
  \begin{eqnarray*}
    \frac{d}{d t}  \left( \nabla_{_{T_X, g_t}}  \dot{g}^{\ast}_t
    \right)_{g_t}^T & = & \left[ \left( \nabla_{_{T_X, g_t}}  \dot{g}^{\ast}_t
    \right)_{g_t}^T, \dot{g}^{\ast}_t \right] \;+\; \left[ \frac{d}{d t}  \left(
    \nabla_{_{T_X, g_t}}  \dot{g}^{\ast}_t \right) \right]_{g_t}^T\; .
  \end{eqnarray*}
  This combined once again with the identity $\dot{g}^{\ast}_t \equiv
  \dot{g}^{\ast}_0$ and with the previous variation formula implies that for
  all $k, p \in \mathbbm{Z}_{\geqslant 0}$ hold the identity
  \begin{eqnarray*}
    \frac{d^k}{d t^k} _{\mid_{t = 0}}  \left[ \nabla_{_{T_X, g_t}} (
    \dot{g}^{\ast}_t)^p \right] & = & 0 \;.
  \end{eqnarray*}
  Indeed this follows from an increasing induction in $k$. The conclusion
  follows from the fact that the curves
  \begin{eqnarray*}
    t & \longmapsto & \nabla_{_{T_X, g_t}} ( \dot{g}^{\ast}_t)^p\;,
  \end{eqnarray*}
  are real analytic over the real line.
\end{proof}

Let now $A \in (V^{\ast})^{\otimes p} \otimes V$, $B \in (V^{\ast})^{\otimes
q} \otimes V$ and let $k = 1, \ldots q$. We define the generalized product
operation
\begin{eqnarray*}
  (A \,B) (u_1, \ldots, u_{p - 1}, v_1, \ldots, v_q) & \assign & A (u_1, \ldots,
  u_{p - 1}, B (v_1, \ldots, v_q)) \;.
\end{eqnarray*}
With this notations we define the vector space
\begin{eqnarray*}
\mathbbm{E}_g \;\, \assign \;\, \Big \{ v \in C^{\infty} \left( X,
  S_{_{\mathbbm{R}}}^2 T^{\ast}_X \right) \mid \hspace{0.25em}  \left[
  \mathcal{R}_g, v^{\ast}_g  \right] \;=\; 0\,, \;\left[ \mathcal{R}_g, \nabla_{g,
  \xi} v^{\ast}_g  \right] \;=\; 0\,,\; \forall \xi \in T_X \Big\}\;,
\end{eqnarray*}
and we show the following crucial fact.

\begin{lemma}
  \label{invar-Rm}Let $(g_t)_{t \in \mathbbm{R}} \subset \mathcal{M}$ be a
  geodesic such that $\dot{g}_0 \in \mathbbm{F}^{\infty}_{g_0} \cap
  \mathbbm{E}_{g_0}$. Then $\dot{g}_t \in \mathbbm{F}_{g_t}^{\infty} \cap
  \mathbbm{E}_{g_t}$ for all $t \in \mathbbm{R}$.
\end{lemma}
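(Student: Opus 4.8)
The plan is to mirror the proof of lemma \ref{invar-F}. Along the geodesic $(g_t)$ the endomorphism $A\assign g_0^{-1}\dot g_0=\dot{g}^{\ast}_t$ is constant in $t$, and lemma \ref{invar-F} already gives $\nabla_{T_X,g_t}(A^p)\equiv 0$ for every $p\in\mathbbm{Z}_{>0}$; in particular $\dot g_t\in\mathbbm{F}^{\infty}_{g_t}$ for all $t$, so the only thing left to propagate are the two conditions cutting out $\mathbbm{E}_{g_t}$, namely $[\mathcal{R}_{g_t},A]=0$ and $[\mathcal{R}_{g_t},\nabla_{g_t,\xi}A]=0$ for all $\xi$. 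By (\ref{expr-geod}) the curve $g_t=g_0\,e^{tA}$ is real-analytic in $t$ in any fixed local frame, and $\mathcal{R}_{g_t}$, $\nabla_{g_t}A$ and all their covariant derivatives are universal smooth expressions in $g_t$, $g_t^{-1}$ and the spatial derivatives of $g_t$; hence every tensor-valued curve occurring below is real-analytic on $\mathbbm{R}$, and it suffices to show that all $t$-derivatives at $t=0$ of the relevant commutators vanish.

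Next I would record the two variation formulas needed. First, the variation (\ref{Bess-var-LC-conn}) of the Levi-Civita connection which, using $\nabla_{T_X,g_t}A=0$ and the $g_t$-self-adjointness of $A$, collapses to an expression in $\nabla_{g_t}A$ alone. Second, the resulting variation $\dot{\mathcal{R}}_{g_t}$ of the curvature, which is the standard universal expression linear in $\nabla^2_{g_t}A$ (entering through $\dot{\nabla}_{g_t}$ and $\nabla_{g_t}\dot{\nabla}_{g_t}$) plus a zeroth-order term of schematic type $\mathcal{R}_{g_t}\ast A$. Since $A$ is $t$-independent, differentiating $[\mathcal{R}_{g_t},A]$ and $[\mathcal{R}_{g_t},\nabla_{g_t,\xi}A]$ in $t$ only brings in $\dot{\mathcal{R}}_{g_t}$ and the variation of the connection used to form $\nabla_{g_t,\xi}A$.

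The heart of the argument is to package this into a closed, triangular ODE system. I would introduce the family $\Theta^{q,r}_t\assign\big[\nabla^q_{g_t}\mathcal{R}_{g_t},\,\nabla^r_{g_t}A\big]$, $q,r\in\mathbbm{Z}_{\geqslant 0}$, and show that $\frac{d}{dt}\Theta^{q,r}_t$ is a frame-universal algebraic combination of the $\Theta^{q',r'}_t$ and their covariant derivatives with $q'+r'\leqslant q+r+1$, \emph{with no inhomogeneous term}. Establishing this requires trading the second covariant derivatives of $A$ produced inside $\dot{\mathcal{R}}_{g_t}$ against curvature by the Ricci commutation identity, using the second Bianchi identity on the $\nabla^q\mathcal{R}$ factors, and using the conservation laws $\nabla_{T_X,g_t}(A^p)\equiv 0$ from lemma \ref{invar-F} to antisymmetrize the remaining $\nabla^2_{g_t}A$ terms into commutators of $\Theta$-type. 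One then notes that at $t=0$ every $\Theta^{q,r}_0$ that can arise vanishes: $\Theta^{0,0}_0$ and $\Theta^{0,1}_0$ vanish by hypothesis; differentiating $[\mathcal{R}_{g_0},A]=0$ once covariantly gives $[\nabla_{g_0}\mathcal{R}_{g_0},A]=-[\mathcal{R}_{g_0},\nabla_{g_0}A]=0$; and an induction on $q+r$ using $\nabla_{T_X,g_0}(A^p)\equiv 0$ kills the remaining $\Theta^{q,r}_0$. An increasing induction on $k$, exactly as at the end of the proof of lemma \ref{invar-F}, then yields $\frac{d^k}{dt^k}\big|_{t=0}\Theta^{q,r}_t=0$ for all $k$, and real analyticity upgrades this to $\Theta^{q,r}_t\equiv 0$; in particular $\dot g_t\in\mathbbm{E}_{g_t}$ for every $t$, which together with lemma \ref{invar-F} proves the claim.

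The step I expect to be the obstacle is the closure/triangularity assertion: one must check that the $\nabla^2_{g_t}A$ contributions to $\dot{\mathcal{R}}_{g_t}$, after the Ricci and Bianchi identities and the conservation $\nabla_{T_X,g_t}(A^p)\equiv 0$ are applied, recombine \emph{exactly} into finitely many commutators from the $\Theta$-family, with nothing leaking into a term outside it. This is the genuinely computational part of the proof, and it is where the first-order conservation law of lemma \ref{invar-F} is indispensable.
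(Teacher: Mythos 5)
There is a genuine gap, and it sits exactly where you flag it — but the problem is worse than an unfinished computation. Your scheme needs \emph{all} the initial commutators $\Theta^{q,r}_0=[\nabla^q_{g_0}\mathcal{R}_{g_0},\nabla^r_{g_0}A]$ to vanish, and that does not follow from the hypothesis $\dot g_0\in\mathbbm{F}^{\infty}_{g_0}\cap\mathbbm{E}_{g_0}$: the definition of $\mathbbm{E}_{g_0}$ only gives $[\mathcal{R}_{g_0},A]=0$ and $[\mathcal{R}_{g_0},\nabla_{g_0,\xi}A]=0$. Differentiating the first does give $[\nabla_{g_0}\mathcal{R}_{g_0},A]=0$, but differentiating the second only yields $[\nabla_{g_0,\eta}\mathcal{R}_{g_0},\nabla_{g_0,\xi}A]+[\mathcal{R}_{g_0},\nabla_{g_0,\eta}\nabla_{g_0,\xi}A]=0$, i.e.\ the vanishing of a sum, not of the individual brackets; the identities $\nabla_{_{T_X,g_0}}(A^p)=0$ are symmetry statements and do not produce the missing commutation relations (this is precisely why the space $\mathbbm{F}^K_g$ has to be \emph{defined} by imposing all orders, and why the equivalence in (\ref{eq-def-FK}) is only an equivalence of the full families). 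So even granting the closure of your triangular system — which you also leave unverified — the induction cannot start, and the statement you would need at $t=0$ is simply not available.

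The paper avoids all of this with one observation that makes the whole $\Theta$-machinery unnecessary. Since lemma \ref{invar-F} gives $\dot g_t\in\mathbbm{F}_{g_t}$, the connection variation (\ref{Bess-var-LC-conn}) collapses to $2\,\dot\nabla_{g_t}=\nabla_{g_t}\dot g^{\ast}_t$ (formula (\ref{Pal-var-LC-conn})), and plugging this into the curvature variation (\ref{var-Rm}) and applying the Ricci identity (\ref{com-cov}) once gives the \emph{exact} identity $2\,\dot{\mathcal{R}}_{g_t}=[\mathcal{R}_{g_t},\dot g^{\ast}_t]$ — your "trade $\nabla^2 A$ against curvature" step, but performed inside the variation formula itself, so that nothing of higher order survives. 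Consequently $[\mathcal{R}_{g_t},\dot g^{\ast}_t]$ satisfies the linear ODE $2\frac{d}{dt}[\mathcal{R}_{g_t},\dot g^{\ast}_t]=\big[[\mathcal{R}_{g_t},\dot g^{\ast}_t],\dot g^{\ast}_t\big]$ with zero initial value, hence vanishes by Cauchy uniqueness; this forces $\mathcal{R}_{g_t}\equiv\mathcal{R}_{g_0}$, and then (\ref{F-var-LC}) together with the Jacobi identity gives $2\frac{d}{dt}[\mathcal{R}_{g_t},\nabla_{g_t,\xi}\dot g^{\ast}_t]=\big[[\mathcal{R}_{g_t},\nabla_{g_t,\xi}\dot g^{\ast}_t],\dot g^{\ast}_t\big]$, so the second commutator vanishes for all $t$ by the same Cauchy-uniqueness argument. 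No real-analyticity, no Bianchi identity, and no infinite family of commutators is needed; only the two quantities appearing in the definition of $\mathbbm{E}_{g_t}$ are evolved, and each obeys a closed linear ODE.
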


\begin{proof}
  We observe first that the variation identity (\ref{Bess-var-LC-conn})
  combined with the fact that $\dot{g}_t \in \mathbbm{F}_{g_t}$ implies the
  variation identity
  \begin{equation}
    \label{Pal-var-LC-conn} 2\,\dot{\nabla}_{g_t} \;\;=\;\; \nabla_{g_t} 
    \dot{g}^{\ast}_t \;.
  \end{equation}
  Thus the variation formula (see \cite{Bes})
  \begin{equation}
    \label{var-Rm}  \dot{\mathcal{R}}_{g_t} (\xi, \eta) \mu \;\; =\;\;
     \nabla_{g_t} \dot{\nabla}_{g_t} (\xi, \eta, \mu)
    \;\,-\;\, \nabla_{g_t} \dot{\nabla}_{g_t} (\eta,
    \xi, \mu) \;,
  \end{equation}
  rewrites as
  \begin{eqnarray*}
    2\, \dot{\mathcal{R}}_{g_t} (\xi, \eta) & = & \nabla_{g_t, \xi} \nabla_{g_t,
    \eta} \, \dot{g}^{\ast}_t \;\,-\;\, \nabla_{g_t, \eta} \nabla_{g_t, \xi} \,
    \dot{g}^{\ast}_t \;\,-\;\, \nabla_{g_t, \left[ \xi, \eta \right]} \,
    \dot{g}^{\ast}_t 
\\\\
&=& \left[ \mathcal{R}_{g_t} (\xi, \eta), \dot{g}^{\ast}_t
    \right]\; .
  \end{eqnarray*}
  We remind in fact the general identity
  \begin{equation}
    \label{com-cov} \nabla_{g, \xi} \nabla_{g, \eta}\, H \;\,-\;\, \nabla_{g, \eta}
    \nabla_{g, \xi}\, H \;\;=\;\; \left[ \mathcal{R}_g (\xi, \eta), H \right] \;\,+\;\,
    \nabla_{g, \left[ \xi, \eta \right]} \,H\;,
  \end{equation}
  for any $H \in C^{\infty} \left( X, \tmop{End} (T_X) \right)$. We
  deduce the variation identity
  \begin{equation}
    \label{col-vr-Rm} 2 \,\dot{\mathcal{R}}_{g_t} \;\;=\;\; \left[ \mathcal{R}_{g_t},
    \dot{g}^{\ast}_t \right]\;,
  \end{equation}
  (for any smooth curve $(g_t)_t$ such that $\dot{g}_t \in
  \mathbbm{F}_{g_t}$), and the variation formula
  \begin{eqnarray*}
    2\, \frac{d}{d t}  \left[ \mathcal{R}_{g_t}, \dot{g}^{\ast}_t \right] & = &
    \Big[ \left[ \mathcal{R}_{g_t}, \dot{g}^{\ast}_t \right],
    \dot{g}^{\ast}_t \Big] \;.
  \end{eqnarray*}
  Thus the identity $\left[ \mathcal{R}_{g_t}, \dot{g}^{\ast}_t \right] = 0$
  hold for all times by Cauchy uniqueness. We infer in particular
  $\mathcal{R}_{g_t} =\mathcal{R}_{g_0}$ for all $t \in \mathbbm{R}$ thanks to the identity
  (\ref{col-vr-Rm}). Using the variation formula
  \begin{equation}
    \label{F-var-LC} 2\, \dot{\nabla}_{g_t} \,H \;\;=\;\; 2\, \dot{\nabla}_{g_t} \cdot H \;\,-\;\,
    2\, H \,\dot{\nabla}_{g_t} \;\;=\;\; \left[ \nabla_{g_t} \, \dot{g}^{\ast}_t, H \right]\;,
  \end{equation}
  we deduce
  \begin{eqnarray*}
    2\, \frac{d}{d t}  \left[ \mathcal{R}_{g_t}, \nabla_{g_t, \xi} \,
    \dot{g}^{\ast}_t \right] & = & \Big[ \mathcal{R}_{g_t}, \left[
    \nabla_{g_t, \xi} \, \dot{g}^{\ast}_t, \dot{g}^{\ast}_t \right] \Big]\\
    &  & \\
    & = &  -\;\, \Big[ \nabla_{g_t, \xi} \, \dot{g}^{\ast}_t, \left[
    \mathcal{R}_{g_t}, \dot{g}^{\ast}_t \right] \Big]
\;\, -\;\, \Big[
    \dot{g}^{\ast}_t, \left[ \mathcal{R}_{g_t}, \nabla_{g_t, \xi} \,
    \dot{g}^{\ast}_t \right] \Big]\\
    \\
    & = & \Big[ \left[ \mathcal{R}_{g_t}, \nabla_{g_t, \xi} \,
    \dot{g}^{\ast}_t\right], \dot{g}^{\ast}_t \Big]\;,
  \end{eqnarray*} 
  by the Jacobi identity and by the previous result. We infer the conclusion
  by Cauchy uniqueness.
\end{proof}

\subsection{Conservation of the pre-scattering condition}

This sub-section is the hart of the paper. We will show the conservation of the
pre-scattering condition along curves with variations in $\mathbbm{F}_g \cap
\mathbbm{E}_g$. We need to introduce first a few other product notations. Let
$(e_k)_k$ be a $g$-orthonormal basis. For any elements $A \in
(T_X^{\ast})^{\otimes 2} \otimes T_X$ and $B \in \Lambda^2 T^{\ast}_X \otimes
\tmop{End} (T_X)$ we define the generalized products
\begin{eqnarray*}
  (B \ast A) (u, v) & \assign & B (u, e_k) A (e_k, v)\;,\\
   \\
  (B \circledast A) (u, v) & \assign & \left[ B (u, e_k)\,, e_k \;\neg\; A \right] v\;,\\
   \\
  (A \ast B) (u, v) & \assign & A (e_k, B (u, v) e_k)\; .
\end{eqnarray*}
We observe that the algebraic Bianchi identity implies
\begin{equation}
  \label{curv-alg-id} \tmop{Alt} (\mathcal{R}_g \circledast A) \;\;=\;\; \tmop{Alt}
  (\mathcal{R}_g \ast A) \;\,-\;\, A \ast \mathcal{R}_g\; .
\end{equation}
Let also $H \in C^{\infty} (X, \tmop{End} (T_X)$. Then hold the identity
\begin{equation}
  \label{ext-alg-prod} \nabla_{_{T_X, g}} H \ast \mathcal{R}_g \;\;=\;\; 2\, \nabla_g\, H
  \ast \mathcal{R}_g \;.
\end{equation}
We observe in fact the equalities
\begin{eqnarray*}
  \nabla_{_{T_X, g}} H \ast \mathcal{R}_g & = & \nabla_g \,H \ast \mathcal{R}_g
  \;\,-\;\, \nabla_g\, H \,(\mathcal{R}_g e_k, e_k) \;\;=\;\; 2\, \nabla_g H \ast \mathcal{R}_g\; .
\end{eqnarray*}
This follows writing with respect to the $g$-orthonormal basis $(e_k)$ the
identity \
\begin{eqnarray*}
  \mathcal{R}_g (\xi, \eta) & = & - \;\,\left( \mathcal{R}_g (\xi, \eta)
  \right)_g^T\;,
\end{eqnarray*}
which is a consequence of the alternating property of the $(4, 0)$-Riemann
curvature operator.

For any $A \in C^{\infty} (X, (T_X^{\ast})^{\otimes p + 1} \otimes T_X)$ we
define the divergence type operations
\begin{eqnarray*}
  \underline{\tmop{div} }_g \,A (u_1, \ldots, u_p) & \assign & \tmop{Tr}_g \big[
  \nabla_g A (\cdot, u_1, \ldots, u_p, \cdot) \big]\;,\\
  &  & \\
  \underline{\tmop{div}}^{^{_{_{\Omega}}}}_g A (u_1, \ldots, u_p) & : = &
  \underline{\tmop{div}}_g \,A (u_1, \ldots, u_p) \;\,-\;\, A (u_1, \ldots, u_p,
  \nabla_g f) \;.
\end{eqnarray*}
We remind that the once contracted differential Bianchi identity writes often as
$\underline{\tmop{div}}_g \mathcal{R}_g = - \nabla_{_{T_X, g}}
\tmop{Ric}^{\ast}_g$. This combined with the identity $\nabla_{_{T_X, g}}
\nabla^2_g f =\mathcal{R}_g \cdot \nabla_g f$ implies
\begin{equation}
  \label{Om-cntr-Bianc}  \underline{\tmop{div}}^{^{_{_{\Omega}}}}_g
  \mathcal{R}_g \;\;=\;\; -\;\, \nabla_{_{T_X, g}} \tmop{Ric}^{\ast}_g (\Omega) \;.
\end{equation}
With the previous notations hold the following lemma.

\begin{lemma}
  \label{F-var-pre-scat}Let $(g_t)_{t \in \mathbbm{R}} \subset \mathcal{M}$ be
  a smooth family such that $\dot{g}_t \in \mathbbm{F}_{g_t}$ for all $t \in
  \mathbbm{R}$. Then hold the variation formula
  \begin{eqnarray*}
    2\, \frac{d}{d t}  \left[ \nabla_{_{T_X, g_t}} \tmop{Ric}^{\ast}_{g_t}
    (\Omega) \right] & = & \underline{\tmop{div}}^{^{_{_{\Omega}}}}_{g_t} 
    \left[ \mathcal{R}_{g_t}, \dot{g}^{\ast}_t \right] \;\,+\;\, \tmop{Alt} \left(
    \mathcal{R}_{g_t} \circledast \nabla_{g_t}  \dot{g}^{\ast}_t \right) 
\\
\\
&-& 2\,
    \dot{g}_t^{\ast} \,\nabla_{_{T_X, g_t}} \tmop{Ric}^{\ast}_{g_t} (\Omega)\; .
  \end{eqnarray*}
\end{lemma}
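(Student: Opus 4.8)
The plan is to differentiate the composite tensor $\nabla_{_{T_X,g_t}}\tmop{Ric}^{\ast}_{g_t}(\Omega)$ by Leibniz, splitting the $t$-derivative into (i) the variation of the connection $\nabla_{_{T_X,g_t}}$ acting on the fixed-type tensor $\tmop{Ric}^{\ast}_{g_t}(\Omega)$, and (ii) the variation of $\tmop{Ric}^{\ast}_{g_t}(\Omega)$ itself, with the connection frozen. For piece (ii), the key input is the already-derived formula (\ref{col-vrEnOmRc}), namely $2\,\frac{d}{dt}\tmop{Ric}^{\ast}_{g_t}(\Omega)=-\Delta^{^{_{_{\Omega}}}}_{g_t}\dot g^{\ast}_t-2\,\dot g^{\ast}_t\tmop{Ric}^{\ast}_{g_t}(\Omega)$, valid precisely because $\dot g_t\in\mathbbm{F}_{g_t}$. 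Applying $\nabla_{_{T_X,g_t}}$ to this and recognizing $\nabla_{_{T_X,g}}\Delta^{^{_{_{\Omega}}}}_g(\bullet)$ in terms of $\underline{\tmop{div}}^{^{_{_{\Omega}}}}_g\nabla_g(\bullet)$ plus a curvature commutator coming from (\ref{com-cov}) should be the bookkeeping core; the commutator term is what will eventually recombine with the $\mathcal{R}_{g_t}\circledast\nabla_{g_t}\dot g^{\ast}_t$ term in the statement. For the $-2\,\dot g^{\ast}_t\tmop{Ric}^{\ast}_{g_t}(\Omega)$ summand one differentiates $\nabla_{_{T_X,g}}$ of a product; the term in which $\nabla_{_{T_X,g}}$ hits $\dot g^{\ast}_t$ vanishes since $\dot g_t\in\mathbbm{F}_{g_t}$, leaving exactly $-2\,\dot g_t^{\ast}\nabla_{_{T_X,g_t}}\tmop{Ric}^{\ast}_{g_t}(\Omega)$, which matches the last term of the claimed formula.

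For piece (i), the variation of the exterior covariant derivative operator, I would use the variation formula (\ref{var-extD}) for $\dot\nabla_{_{T_X,g_t}}H$ (equivalently (\ref{F-var-LC}) and (\ref{Pal-var-LC-conn})), specialized to $H=\tmop{Ric}^{\ast}_{g_t}(\Omega)$, since when $\dot g_t\in\mathbbm{F}_{g_t}$ the formula (\ref{var-extD}) collapses dramatically: $2\dot\nabla_{_{T_X,g_t}}H=\nabla_{_{T_X,g_t}}(\dot g^{\ast}_tH)-\dot g^{\ast}_t\nabla_{_{T_X,g_t}}H+\tmop{Alt}[(\nabla_{_{T_X,g_t}}\dot g^{\ast}_t)^T_{g_t}H]$, and the $\mathbbm{F}$-condition kills the two terms containing $\nabla_{_{T_X,g_t}}\dot g^{\ast}_t$. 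The surviving contributions have to be reorganized: the piece $\nabla_{_{T_X,g_t}}(\dot g^{\ast}_t\tmop{Ric}^{\ast})$ gets absorbed against a term from piece (ii), and the curvature pieces assemble into $\underline{\tmop{div}}^{^{_{_{\Omega}}}}_{g_t}[\mathcal{R}_{g_t},\dot g^{\ast}_t]$ after invoking the $\Omega$-twisted contracted Bianchi identity (\ref{Om-cntr-Bianc}), $\underline{\tmop{div}}^{^{_{_{\Omega}}}}_g\mathcal{R}_g=-\nabla_{_{T_X,g}}\tmop{Ric}^{\ast}_g(\Omega)$, together with (\ref{col-vr-Rm}), $2\dot{\mathcal{R}}_{g_t}=[\mathcal{R}_{g_t},\dot g^{\ast}_t]$.

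The main obstacle I anticipate is the curvature accounting: reconciling the several curvature-commutator terms — those from commuting $\nabla_{_{T_X,g}}$ past $\Delta^{^{_{_{\Omega}}}}_g$, those from $\dot{\mathcal{R}}_{g_t}$, and those from the $\tmop{Alt}$ term in (\ref{var-extD}) — into the single clean expression $\tmop{Alt}(\mathcal{R}_{g_t}\circledast\nabla_{g_t}\dot g^{\ast}_t)$. This is exactly where the algebraic identities (\ref{curv-alg-id}) (the Bianchi-derived $\tmop{Alt}(\mathcal{R}_g\circledast A)=\tmop{Alt}(\mathcal{R}_g\ast A)-A\ast\mathcal{R}_g$) and (\ref{ext-alg-prod}) are designed to be used, so the strategy is to push every stray curvature term through those two identities and check that the leftover $\ast$-type pieces cancel in pairs. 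A secondary bit of care is needed in matching the "$\Omega$-divergence" bookkeeping: whenever a bare $\underline{\tmop{div}}_{g_t}$ appears one must add and subtract the $A(\dots,\nabla_{g_t}f_t)$ correction and track the extra $\dot f_t$-type contribution coming from $\frac{d}{dt}\nabla_{g_t}f_t$, using that the Hessian-of-$f$ variation is controlled (this is the same mechanism flagged after (\ref{col-vr-OmRc})). Once those cancellations are verified the three displayed terms emerge; everything else is routine Leibniz expansion.
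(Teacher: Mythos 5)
Your overall route (differentiate $\nabla_{_{T_X,g_t}}\tmop{Ric}^{\ast}_{g_t}(\Omega)$ directly, splitting into the variation of the connection via (\ref{var-extD}) and the variation of $\tmop{Ric}^{\ast}_{g_t}(\Omega)$ via (\ref{col-vrEnOmRc})) is viable — it is essentially the appendix lemma \ref{var-pre-scat} specialized to $\mathbbm{F}$-variations, and it genuinely differs from the paper's proof, which instead rewrites the pre-scattering operator as $-\underline{\tmop{div}}^{^{_{_{\Omega}}}}_{g_t}\mathcal{R}_{g_t}$ at the outset via (\ref{Om-cntr-Bianc}) and differentiates the divergence operator and $\mathcal{R}_{g_t}$. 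However, there is a genuine gap in your piece (ii): the claim that in $\nabla_{_{T_X,g_t}}(\dot g^{\ast}_t\,\tmop{Ric}^{\ast}_{g_t}(\Omega))$ "the term in which $\nabla_{_{T_X,g}}$ hits $\dot g^{\ast}_t$ vanishes since $\dot g_t\in\mathbbm{F}_{g_t}$" is false. Writing $\rho^{\ast}_t:=\tmop{Ric}^{\ast}_{g_t}(\Omega)$, one has $\nabla_{_{T_X,g_t}}(\dot g^{\ast}_t\rho^{\ast}_t)(\xi,\eta)=\dot g^{\ast}_t\,\nabla_{_{T_X,g_t}}\rho^{\ast}_t(\xi,\eta)+\big(\nabla_{g_t,\xi}\dot g^{\ast}_t\big)\rho^{\ast}_t\eta-\big(\nabla_{g_t,\eta}\dot g^{\ast}_t\big)\rho^{\ast}_t\xi$, and the last two terms are not killed by the total symmetry of $\nabla_{g_t}\dot g_t$ alone; they vanish exactly when $\big[\nabla_{g_t,\mu}\dot g^{\ast}_t,\rho^{\ast}_t\big]$-type commutations hold, i.e. under $\mathbbm{E}_g$ or $\mathbbm{F}^K_g$ hypotheses which this lemma does not assume (take $g$ flat, $\dot g=\nabla_g d\,u$, $\rho^{\ast}=(\nabla_g d f)^{\ast}_g$ with generic $u,f$: the cross term is nonzero). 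If you nevertheless use this simplification, the residual identity your bookkeeping must produce would be $-\nabla_{_{T_X,g_t}}\Delta^{^{_{_{\Omega}}}}_{g_t}\dot g^{\ast}_t=\underline{\tmop{div}}^{^{_{_{\Omega}}}}_{g_t}[\mathcal{R}_{g_t},\dot g^{\ast}_t]+\tmop{Alt}(\mathcal{R}_{g_t}\circledast\nabla_{g_t}\dot g^{\ast}_t)$, which is equally false in general, so the curvature accounting cannot close.

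The missing ingredient is to keep the leftover $-\nabla_{_{T_X,g_t}}(\dot g^{\ast}_t\rho^{\ast}_t)$ and handle the combination $\nabla_{_{T_X,g_t}}\big[\Delta^{^{_{_{\Omega}}}}_{g_t}\dot g^{\ast}_t+\dot g^{\ast}_t\rho^{\ast}_t\big]$ as a whole, via the Weitzenb\"ock identity of lemma \ref{OmTX-Lap-RmLap}: since $\nabla_{_{T_X,g_t}}\dot g^{\ast}_t=0$ one has $0=\nabla_{_{T_X,g_t}}\nabla^{\ast_{_{\Omega}}}_{g_t}\nabla_{_{T_X,g_t}}\dot g^{\ast}_t=\nabla_{_{T_X,g_t}}\big[\Delta^{^{_{_{\Omega}}}}_{g_t}\dot g^{\ast}_t-\mathcal{R}_{g_t}\ast\dot g^{\ast}_t+\dot g^{\ast}_t\rho^{\ast}_t\big]-\mathcal{R}_{g_t}\nabla^{\ast_{_{\Omega}}}_{g_t}\dot g^{\ast}_t$; then one expands $\nabla_{_{T_X,g_t}}(\mathcal{R}_{g_t}\ast\dot g^{\ast}_t)$ with the differential Bianchi identity together with (\ref{curv-alg-id}) and (\ref{ext-alg-prod}), and concludes with (\ref{div-id-curv}) and (\ref{Om-cntr-Bianc}). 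These correction terms $\mathcal{R}_{g_t}\ast\dot g^{\ast}_t$ and $\dot g^{\ast}_t\tmop{Ric}^{\ast}_{g_t}(\Omega)$ — absent from your plan, which only invokes (\ref{com-cov}) — are precisely what generate the $\tmop{Alt}(\mathcal{R}_{g_t}\circledast\nabla_{g_t}\dot g^{\ast}_t)$ and $\underline{\tmop{div}}^{^{_{_{\Omega}}}}_{g_t}[\mathcal{R}_{g_t},\dot g^{\ast}_t]$ terms and the correct factor $-2\,\dot g^{\ast}_t\nabla_{_{T_X,g_t}}\tmop{Ric}^{\ast}_{g_t}(\Omega)$. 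With that replacement your argument goes through, but as stated the decisive step is wrong.
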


\begin{proof}
  We will show the above variation formula by means of the identity
  (\ref{Om-cntr-Bianc}). Consider any $B \in C^{\infty} (X, \Lambda^2
  T^{\ast}_X \otimes \tmop{End} (T_X))$. Time deriving the definition of the
  covariant derivative $\nabla_{g_t} B$ we deduce the formula
  \begin{eqnarray*}
    \dot{\nabla}_{g_t} B (\xi, u, v)\, w & = & \dot{\nabla}_{g_t}  \left( \xi, B
    (u, v) \,w \right) \;\,-\;\, B \left( \dot{\nabla}_{g_t} (\xi, u), v \right) w\\
    &  & \\
    & - & B \left( u, \dot{\nabla}_{g_t} (\xi, v) \right) w \;\,-\;\, B (u, v)
    \dot{\nabla}_{g_t} (\xi, w) \;.
  \end{eqnarray*}
  We infer the expression
  \begin{eqnarray}
    2\, \dot{\nabla}_{g_t} B (\xi, u, v)\,w & = & \nabla_{g_t, \xi} \,
    \dot{g}^{\ast}_t \,B (u, v) \,w \;\,-\;\, B \left( \nabla_{g_t, \xi}  \,\dot{g}^{\ast}_t
    u, v \right) w\nonumber
\\\nonumber
    &  & \\
    & - & B \left( u, \nabla_{g_t, \xi}  \,\dot{g}^{\ast}_t \,v \right) w \;\,-\;\, B
    \left( u, v \right) \nabla_{g_t, \xi} \, \dot{g}^{\ast}_t \,w\;,\qquad\label{var-covB}
  \end{eqnarray}
  thanks to the formula (\ref{Pal-var-LC-conn}). We fix now an arbitrary
  space-time point $(x_0, t_0)$ and we pick a local tangent frame $(e_k)_k$ in
  a neighborhood of $x_0$ which is $g_{t_0} (x_0)$-orthonormal at the point
  $x_0$ and satisfies $\nabla_{g_t} e_j (x_0) = 0$ at the time $t_0$ for all
  $j$. Then time deriving the therm
  \begin{eqnarray*}
    ( \underline{\tmop{div}}^{^{_{_{\Omega}}}}_{g_t} B) (\xi, \eta) & = &
    \nabla_{g_t, e_k} B \left( \xi, \eta \right) g^{- 1}_t e^{\ast}_k \;\,-\;\, B
    (\xi, \eta)\, \nabla_{g_t} f_t\;,
  \end{eqnarray*}
  and using the expression (\ref{var-covB}) we obtain the identity
  \begin{eqnarray}
    2\, \frac{d}{d t} \,( \underline{\tmop{div}}^{^{_{_{\Omega}}}}_{g_t} B) (\xi,
    \eta) & = & \nabla_{g_t, e_k} \, \dot{g}^{\ast}_t \,B \left( \xi, \eta \right)
    e_k \;\,-\;\, B \left( \nabla_{g_t, e_k} \, \dot{g}^{\ast}_t \,\xi, \eta \right) e_k\nonumber
\\\nonumber
    &  & \\
    & - & B \left( \xi, \nabla_{g_t, e_k} \, \dot{g}^{\ast}_t \,\eta \right) e_k
    \;\,-\;\, B \left( \xi, \eta \right) \nabla_{g_t, e_k} \, \dot{g}^{\ast}_t \,e_k\nonumber
\\\nonumber
    &  & \\
    & - & 2\, \nabla_{g_t, e_k} B \left( \xi, \eta \right) \dot{g}^{\ast}_t e_k
    \;\,-\;\, 2\, B (\xi, \eta)\; \frac{d}{d t} \; \mathcal{\nabla}_{g_t} f_t\;,\label{var-divB}\qquad\quad
  \end{eqnarray}
  at the space-time $(x_0, t_0)$. Moreover hold the elementary formula
  \begin{eqnarray*}
    2\, \frac{d}{d t}  \,\mathcal{\nabla}_{g_t} f_t & = & \mathcal{\nabla}_{g_t}
    \tmop{Tr}_{g_t}  \dot{g}_t \;\,-\;\, 2\, \dot{g}^{\ast}_t  \,\mathcal{\nabla}_{g_t}
    f_t\; .
  \end{eqnarray*}
  We observe also that at the space time point $(x_0, t_0)$ hold the trivial equalities
  \begin{eqnarray*}
    \mathcal{\nabla}_{g_t} \tmop{Tr}_{g_t}  \dot{g}_t & = & e_k \,. \left(
    \tmop{Tr}_{_{\mathbbm{R}}} \dot{g}^{\ast}_t \right) e_k\\
    &  & \\
    & = & e_k \,.\, g_t \left( \dot{g}^{\ast}_t \, e_j, e_j \right) e_k\\
    &  & \\
    & = & g_t \left( \nabla_{g_t, e_k} \, \dot{g}^{\ast}_t \, e_j, e_j
    \right) e_k\\
    &  & \\
    & = &  \nabla_{g_t} \, \dot{g}_t \,(e_k, e_j, e_j) \,e_k\\
    &  & \\
    & = &  \nabla_{g_t} \, \dot{g}_t \,(e_j, e_j, e_k) e_k\\
    &  & \\
    & = & g_t\left( \nabla_{g_t, e_j} \, \dot{g}^{\ast}_t \, e_j, e_k
    \right) e_k\\
    &  & \\
    & = & - \;\,\nabla_{g_t}^{\ast} \, \dot{g}^{\ast}_t\;,
  \end{eqnarray*}
thanks to the assumption $\dot{g}_t \in \mathbbm{F}_{g_t}$. We deduce the identity
  \begin{equation}
    \label{col-vr-Grad} 2\, \frac{d}{d t} \, \mathcal{\nabla}_{g_t} f_t \;\;=\;\; -\;\,
    \nabla_{g_t}^{\ast} \, \dot{g}^{\ast}_t \;\,-\;\, 2\, \dot{g}^{\ast}_t \,
    \mathcal{\nabla}_{g_t} f_t \;.
  \end{equation}
  Thus the identity (\ref{var-divB}) rewrites as follows;
  \begin{eqnarray*}
    2 \,\frac{d}{d t} \,( \underline{\tmop{div}}^{^{_{_{\Omega}}}}_{g_t} B) (\xi,
    \eta) & = & \left( \nabla_{g_t} \, \dot{g}^{\ast}_t \ast B \right) (\xi,
    \eta) 
\\
\\
&-& B \left( \nabla_{g_t, e_k} \, \dot{g}^{\ast}_t \,\xi, \eta \right) e_k
    \;\,-\;\, B \left( \xi, \nabla_{g_t, e_k} \, \dot{g}^{\ast}_t \,\eta \right) e_k\\
    &  & \\
    & - & 2 \,\nabla_{g_t, e_k} B \left( \xi, \eta \right) \,\dot{g}^{\ast}_t e_k
    \;\,+\;\, 2\, B (\xi, \eta) \,\nabla^{\ast_{_{\Omega}}}_{g_t}  \dot{g}^{\ast}_t\; .
  \end{eqnarray*}
  The assumption $\dot{g}_t \in \mathbbm{F}_{g_t}$ implies that the
  endomorphism $\nabla_{g_t, \bullet} \, \dot{g}^{\ast}_t \,\xi$ 
is
  $g_t$- symmetric. Thus we can choose a $g_{t_0}(x_0)$-orthonormal basis $(e_k) \subset
  T_{X, x_0}$ which diagonalize it at the space-time point $(x_0, t_0)$. It is
  easy to see that with respect to this basis hold the identity
  \begin{eqnarray*}
    B \left( \nabla_{g_t, e_k} \, \dot{g}^{\ast}_t \,\xi, \eta \right) e_k \;\; = \;\; -\;\,
    \left( B \ast \nabla_{g_t} \, \dot{g}^{\ast}_t \right) (\eta, \xi)\;,
  \end{eqnarray*}
  by the alternating property of $B$. But the therm on the left hand side is
  independent of the choice of the $g_{t_0}(x_0)$-orthonormal basis. In a similar way
  choosing a $g_{t_0}(x_0)$-orthonormal basis $(e_k)_k \subset T_{X, x_0}$ which
  diagonalizes $\nabla_{g_t, \bullet} \, \dot{g}^{\ast}_t \,\eta$ at the
  space-time point $(x_0, t_0)$ we obtain the identity
  \begin{eqnarray*}
    B \left( \xi, \nabla_{g_t, e_k} \, \dot{g}^{\ast}_t \,\eta \right) e_k \;\; = \;\;
    \left( B \ast \nabla_{g_t} \, \dot{g}^{\ast}_t \right) (\xi, \eta) \;.
  \end{eqnarray*}
  We infer the equality
  \begin{eqnarray*}
    2 \,\left( \frac{d}{d t} \, \underline{\tmop{div}}^{^{_{_{\Omega}}}}_{g_t}
    \right) B & = & \nabla_{g_t} \, \dot{g}^{\ast}_t \ast B \;\,-\;\, \tmop{Alt} \left(
    B \ast \nabla_{g_t} \, \dot{g}^{\ast}_t \right) 
\\
\\
&-& 2\, \nabla_{g_t, e_k} B\,
    \dot{g}^{\ast}_t e_k \;\,+\;\, 2\, B \,\nabla^{\ast_{_{\Omega}}}_{g_t} 
    \dot{g}^{\ast}_t\;,
  \end{eqnarray*}
  with respect to any $g_{t_0}(x_0)$-orthonormal basis $(e_k)$ at the arbitrary space-time point $(x_0,t_0)$. This combined with
  (\ref{col-vr-Rm}) and (\ref{curv-alg-id}) implies the equalities
  \begin{eqnarray*}
    2\, \frac{d}{d t}  \left( \underline{\tmop{div}}^{^{_{_{\Omega}}}}_{g_t}
    \mathcal{R}_{g_t} \right) & = & 2 \left( \frac{d}{d t} \,
    \underline{\tmop{div}}^{^{_{_{\Omega}}}}_{g_t} \right) \mathcal{R}_{g_t} \;\,+\;\,
    \underline{\tmop{div}}^{^{_{_{\Omega}}}}_{g_t}  \left[ \mathcal{R}_{g_t},
    \dot{g}^{\ast}_t \right]\\
    \\
    & = & - \;\,\tmop{Alt} \left( \mathcal{R}_{g_t} \circledast \nabla_{g_t} \,
    \dot{g}^{\ast}_t \right) \;\,-\;\, 2\, \nabla_{g_t, e_k} \mathcal{R}_{g_t} \,
    \dot{g}^{\ast}_t e_k  \\
     \\
    & + & 2\,\mathcal{R}_{g_t} \nabla^{\ast_{_{\Omega}}}_{g_t} 
    \dot{g}^{\ast}_t \;\,+\;\,\underline{\tmop{div}}^{^{_{_{\Omega}}}}_{g_t}  \left[
    \mathcal{R}_{g_t}, \dot{g}^{\ast}_t \right] \;.
  \end{eqnarray*}
  We observe now that for any smooth curve $(g_t)_{t \in \mathbbm{R}} \subset
  \mathcal{M}$ hold the identity
  \begin{eqnarray}
    \label{div-id-curv}  \underline{\tmop{div}}^{^{_{_{\Omega}}}}_{g_t} 
    \left[ \mathcal{R}_{g_t}, \dot{g}^{\ast}_t \right] 
&=&
 \nabla_{g_t, e_k}
    \mathcal{R}_{g_t} \, \dot{g}^{\ast}_t e_k \;\,-\;\,\mathcal{R}_{g_t} 
    \nabla^{\ast_{_{\Omega}}}_{g_t}  \dot{g}^{\ast}_t \nonumber
\\\nonumber
\\
&-& \nabla_{g_t} \,
    \dot{g}^{\ast}_t \ast \mathcal{R}_{g_t} \;\,-\;\, \dot{g}^{\ast}_t \,
    \underline{\tmop{div}}^{^{_{_{\Omega}}}}_{g_t} \mathcal{R}_{g_t} \;.
  \end{eqnarray}
But our assumption $\dot{g}_t \in \mathbbm{F}_{g_t}$ implies $\nabla_{g_t} \,
  \dot{g}^{\ast}_t \ast \mathcal{R}_{g_t} \equiv 0$ thanks to the identity
  (\ref{ext-alg-prod}). Thus we obtain the formula
\begin{eqnarray*}
2 \,\frac{d}{d t}  \left( \underline{\tmop{div}}^{^{_{_{\Omega}}}}_{g_t}
    \mathcal{R}_{g_t} \right) 
& = &
 -\;\, \tmop{Alt} \left( \mathcal{R}_{g_t}
    \circledast \nabla_{g_t} \, \dot{g}^{\ast}_t \right) \;\,-\;\,
    \underline{\tmop{div}}^{^{_{_{\Omega}}}}_{g_t}  \left[ \mathcal{R}_{g_t},
    \dot{g}^{\ast}_t \right] 
\\
\\
&-& 2\, \dot{g}^{\ast}_t \,
    \underline{\tmop{div}}^{^{_{_{\Omega}}}}_{g_t} \mathcal{R}_{g_t}\;,
\end{eqnarray*}
  which implies the required conclusion thanks to the identity (\ref{Om-cntr-Bianc}).
\end{proof}

\begin{corollary}
  \label{vr-Om-EndRc}{\tmstrong{$($Conservation of the pre-scattering
  condition$)$}} \\
Let $(g_t)_{t \in \mathbbm{R}} \subset \mathcal{M}$ be a
  smooth family such that $\dot{g}_t \in \mathbbm{F}_{g_t} \cap
  \mathbbm{E}_{g_t}$ for all $t \in \mathbbm{R}$. If $g_0 \in
  \mathcal{S}_{_{^{\Omega}}}$ then $g_t \in \mathcal{S}_{_{^{\Omega}}}$ for
  all $t \in \mathbbm{R}$.
\end{corollary}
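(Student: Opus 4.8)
The plan is to substitute the given family $(g_t)_{t\in\mathbbm{R}}$ into the variation formula of Lemma \ref{F-var-pre-scat} and to exploit the fact that the extra membership $\dot{g}_t\in\mathbbm{E}_{g_t}$ is tailored precisely to annihilate the two curvature terms appearing there, which collapses the identity to a homogeneous linear ODE with vanishing initial value. (Along a geodesic this membership would be automatic by Lemma \ref{invar-Rm}, but here it is part of the hypothesis.)

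First I would note that $\dot{g}_t\in\mathbbm{F}_{g_t}$ for every $t$, so Lemma \ref{F-var-pre-scat} applies and gives
\begin{eqnarray*}
2\, \frac{d}{dt}\left[\nabla_{_{T_X, g_t}}\tmop{Ric}^{\ast}_{g_t}(\Omega)\right] & = & \underline{\tmop{div}}^{^{_{_{\Omega}}}}_{g_t}\left[\mathcal{R}_{g_t}, \dot{g}^{\ast}_t\right] \;+\; \tmop{Alt}\left(\mathcal{R}_{g_t}\circledast \nabla_{g_t}\dot{g}^{\ast}_t\right)\\
& & \;-\; 2\,\dot{g}^{\ast}_t\,\nabla_{_{T_X, g_t}}\tmop{Ric}^{\ast}_{g_t}(\Omega)\;.
\end{eqnarray*}
Then I would invoke $\dot{g}_t\in\mathbbm{E}_{g_t}$: the relation $[\mathcal{R}_{g_t},\dot{g}^{\ast}_t]=0$ makes the tensor $[\mathcal{R}_{g_t},\dot{g}^{\ast}_t]$ identically zero, hence so is its $\underline{\tmop{div}}^{^{_{_{\Omega}}}}_{g_t}$; and unwinding the definition of $\circledast$ one gets $(\mathcal{R}_{g_t}\circledast\nabla_{g_t}\dot{g}^{\ast}_t)(u,v)=\big[\mathcal{R}_{g_t}(u,e_k),\,e_k\neg\nabla_{g_t}\dot{g}^{\ast}_t\big]v=\big[\mathcal{R}_{g_t}(u,e_k),\,\nabla_{g_t,e_k}\dot{g}^{\ast}_t\big]v$, so each of these brackets vanishes by the second defining relation $[\mathcal{R}_{g_t},\nabla_{g_t,\xi}\dot{g}^{\ast}_t]=0$ of $\mathbbm{E}_{g_t}$, whence the $\tmop{Alt}$ term is zero as well. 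Thus, writing $Y_t\assign\nabla_{_{T_X, g_t}}\tmop{Ric}^{\ast}_{g_t}(\Omega)$, the formula collapses to the pointwise linear ODE $\dot{Y}_t=-\,\dot{g}^{\ast}_t\,Y_t$.

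Finally I would close by Cauchy uniqueness: the pre-scattering hypothesis $g_0\in\mathcal{S}_{_{^{\Omega}}}$ says exactly $Y_0=0$, and for each fixed $x\in X$ the curve $t\mapsto Y_t(x)$ solves a linear homogeneous ODE in the finite dimensional space $\Lambda^2 T^{\ast}_{X,x}\otimes T_{X,x}$ with zero initial datum (no spatial derivatives of $Y$ enter, since $Y\mapsto\dot{g}^{\ast}_t\,Y$ is a pointwise bundle endomorphism). Hence $Y_t\equiv 0$, i.e. $g_t\in\mathcal{S}_{_{^{\Omega}}}$ for all $t\in\mathbbm{R}$. I do not expect any genuine obstacle here: essentially all the work sits in Lemma \ref{F-var-pre-scat}, and the only step demanding care is the bookkeeping check that membership in $\mathbbm{E}_{g_t}$ kills both the $\underline{\tmop{div}}^{^{_{_{\Omega}}}}_{g_t}[\mathcal{R}_{g_t},\dot{g}^{\ast}_t]$ term and the $\tmop{Alt}(\mathcal{R}_{g_t}\circledast\nabla_{g_t}\dot{g}^{\ast}_t)$ term — in particular the identity $e_k\neg\nabla_{g_t}\dot{g}^{\ast}_t=\nabla_{g_t,e_k}\dot{g}^{\ast}_t$ that recasts the $\circledast$ term as a sum of commutators of the forbidden shape.
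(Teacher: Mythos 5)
Your proposal is correct and is essentially the paper's own argument: apply Lemma \ref{F-var-pre-scat}, observe that $[\mathcal{R}_{g_t},\dot g^{\ast}_t]=0$ and $[\mathcal{R}_{g_t},\nabla_{g_t,\xi}\dot g^{\ast}_t]=0$ (the defining relations of $\mathbbm{E}_{g_t}$) annihilate the divergence and the $\circledast$ terms, and conclude from the resulting pointwise linear ODE $\dot Y_t=-\,\dot g^{\ast}_t Y_t$ with $Y_0=0$ by Cauchy uniqueness. Your explicit check that $e_k\neg\nabla_{g_t}\dot g^{\ast}_t=\nabla_{g_t,e_k}\dot g^{\ast}_t$, which the paper leaves implicit, is accurate.
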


\begin{proof}
  We observe that the assumption $\dot{g}_t \in \mathbbm{E}_{g_t}$ implies in
  particular the identity $\mathcal{R}_{g_t} \circledast \nabla_{g_t}  \dot{g}^{\ast}_t \equiv
  0$. By lemma \ref{F-var-pre-scat} we infer the variation formula
  \begin{eqnarray*}
    \frac{d}{d t}  \left[ \nabla_{_{T_X, g_t}} \tmop{Ric}^{\ast}_{g_t}
    (\Omega) \right] \;\; = \;\; -\;\, \dot{g}_t^{\ast} \,\nabla_{_{T_X, g_t}}
    \tmop{Ric}^{\ast}_{g_t} (\Omega)\;,
  \end{eqnarray*}
  and thus the conclusion by Cauchy uniqueness.
\end{proof}

The total variation of the pre-scattering operator is given in lemma
\ref{var-pre-scat} in the appendix. It provides in particular an alternative
proof of the conservation of the pre-scattering condition.

\subsection{Higher order conservative differential symmetries}

In this sub-section we will show that some higher order differential symmetries are
conserved along the geodesics. This type of higher order differential
symmetries is needed in order to stabilize the scattering conditions with
respect to the variations produced by the SRF. We observe first that given any
diagonal $n \times n$-matrix $\Lambda$ it hold the identity
\begin{eqnarray*}
  \left[ \Lambda, M \right] \;\;=\;\; \left( (\lambda_i - \lambda_j) M_{i, j}
  \right)\;,
\end{eqnarray*}
for any other $n \times n$-matrix $M$. Thus if the values $\lambda_j$ are all
distinct then $\left[ \Lambda, M \right] = 0$ if and only if $M$ is also a
diagonal matrix.

In this sub-section and in the sections \ref{sec-scat-dat}, \ref{Integ-F} that will follow 
we will always denote by $K \in \Gamma (X, \tmop{End} (T_X))$
an element with point wise $n$-distinct real eigenvalues, where 
$n =\dim_{_{\mathbbm{R}}} X$. 
\\
The previous remark shows that if $(e_k) \subset
T_{X, p}$ is a basis diagonalizing $K (p)$ then it diagonalizes any element $M
\in \tmop{End} (T_{X, p})$ such that $\left[ K (p), M \right] = 0$. 
\\
We deduce
that if also $N \in \tmop{End} (T_{X, p})$ satisfies $\left[ K (p), N \right]
= 0$ then $\left[ M, N \right] = 0$.
\\
We define now the vector space inside $\mathbbm{F}_g^{\infty}$
\begin{eqnarray*}
  \mathbbm{F}_g (K) \;\; \assign \;\; \Big\{ v \in \mathbbm{F}_g \mid
  \hspace{0.25em}  \left[ K, v^{\ast}_g \right] \;=\; 0\,, \;\left[ K, \nabla_g\,
  v^{\ast}_g  \right] \;=\; 0 \Big\} \;\;\subset\;\; \mathbbm{F}_g^{\infty}\;	 .
\end{eqnarray*}
We observe in fact the definition implies $\left[ \nabla_g \,v^{\ast}_g,
v^{\ast}_g \right] = 0$, and thus the last inclusion. With this notations hold
the following corollary analogue to lemma \ref{invar-F}.

\begin{corollary}
  \label{invar-FK}Let $(g_t)_{t \in \mathbbm{R}}$ be a geodesic such that
  $\dot{g}_0 \in \mathbbm{F}_{g_0} (K) .$ Then $\dot{g}_t \in
  \mathbbm{F}_{g_t} (K)$ for all $t \in \mathbbm{R}.$
\end{corollary}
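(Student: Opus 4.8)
The plan is to reduce the statement to the two preservation lemmas already established, namely Lemma~\ref{invar-F} (conservation of $\mathbbm{F}^{\infty}_g$) and Lemma~\ref{invar-Rm} (conservation of $\mathbbm{F}^{\infty}_g\cap\mathbbm{E}_g$), combined with a new ODE argument for the commutators involving $K$. First I would observe that $\dot g_0\in\mathbbm{F}_{g_0}(K)$ implies $\dot g_0\in\mathbbm{F}^{\infty}_{g_0}$ by the inclusion $\mathbbm{F}_g(K)\subset\mathbbm{F}^{\infty}_g$ noted just before the statement, so Lemma~\ref{invar-F} already gives $\dot g_t\in\mathbbm{F}^{\infty}_{g_t}$ for all $t$, and in particular $\nabla_{_{T_X,g_t}}\dot g^{\ast}_t=0$ and formula (\ref{Pal-var-LC-conn}), $2\dot\nabla_{g_t}=\nabla_{g_t}\dot g^{\ast}_t$, hold along the whole geodesic. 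It remains to show that the two conditions $[K,\dot g^{\ast}_t]=0$ and $[K,\nabla_{g_t}\dot g^{\ast}_t]=0$ are preserved; note $K$ is a fixed endomorphism, independent of $t$.

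For the first commutator, recall from the geodesic equation that $\dot g^{\ast}_t\equiv\dot g^{\ast}_0$ is constant in $t$ (this is exactly the content of (\ref{expr-geod}) and the line $\dot g^{\ast}_t=g_0^{-1}\dot g_0$). Hence $[K,\dot g^{\ast}_t]=[K,\dot g^{\ast}_0]=0$ trivially for all $t$ — no ODE is needed here. For the second commutator I would differentiate in time. Using the variation formula (\ref{F-var-LC}), $2\dot\nabla_{g_t}H=[\nabla_{g_t}\dot g^{\ast}_t,H]$ valid for any fixed endomorphism $H$, applied with $H=K$, together with the fact that $\nabla_{g_t}\dot g^{\ast}_t$ is the covariant derivative of an endomorphism and $\dot g^{\ast}_t$ is constant, one computes
\begin{eqnarray*}
  \frac{d}{dt}\big(\nabla_{g_t}\dot g^{\ast}_t\big) \;\;=\;\; \dot\nabla_{g_t}\cdot\dot g^{\ast}_t \;\;=\;\; \tfrac12\,[\nabla_{g_t}\dot g^{\ast}_t,\dot g^{\ast}_t]\;,
\end{eqnarray*}
exactly as in the derivation of the ODE systems in the proofs of Lemmas~\ref{invar-F} and~\ref{invar-Rm}. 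Therefore
\begin{eqnarray*}
  2\,\frac{d}{dt}\big[K,\nabla_{g_t}\dot g^{\ast}_t\big] \;\;=\;\; \Big[K,\,[\nabla_{g_t}\dot g^{\ast}_t,\dot g^{\ast}_t]\Big]\;.
\end{eqnarray*}
Now expand the right-hand side by the Jacobi identity: it equals $[[K,\nabla_{g_t}\dot g^{\ast}_t],\dot g^{\ast}_t]+[\nabla_{g_t}\dot g^{\ast}_t,[K,\dot g^{\ast}_t]]$, and the second term vanishes since $[K,\dot g^{\ast}_t]=0$ from the previous paragraph. So the quantity $C_t\assign[K,\nabla_{g_t}\dot g^{\ast}_t]$ satisfies the homogeneous linear ODE $2\dot C_t=[C_t,\dot g^{\ast}_t]$ with $C_0=0$, hence $C_t\equiv0$ for all $t$ by Cauchy uniqueness. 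Combining the three conclusions — $\dot g_t\in\mathbbm{F}^{\infty}_{g_t}$, $[K,\dot g^{\ast}_t]=0$, $[K,\nabla_{g_t}\dot g^{\ast}_t]=0$ — gives $\dot g_t\in\mathbbm{F}_{g_t}(K)$ for all $t$.

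The only mild subtlety, and the step I would be most careful about, is the precise form of the time derivative of $\nabla_{g_t}\dot g^{\ast}_t$: one must check that the extra Christoffel-symbol terms coming from $\dot\nabla_{g_t}$ organize exactly into the commutator $\tfrac12[\nabla_{g_t}\dot g^{\ast}_t,\dot g^{\ast}_t]$ and that no curvature term intervenes — this is where formula (\ref{F-var-LC}) and the constancy $\dot g^{\ast}_t\equiv\dot g^{\ast}_0$ do the work, paralleling verbatim the bookkeeping already carried out in the proof of Lemma~\ref{invar-Rm} for $[\mathcal{R}_{g_t},\nabla_{g_t,\xi}\dot g^{\ast}_t]$. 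Everything else is a direct application of Cauchy uniqueness for linear ODEs in the Banach space of smooth sections, exactly as in the earlier lemmas.
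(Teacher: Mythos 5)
Your proof is correct and follows essentially the same route as the paper's: reduce to Lemma \ref{invar-F}, observe $[K,\dot g^{\ast}_t]=[K,\dot g^{\ast}_0]=0$ directly from the geodesic equation, and kill $[K,\nabla_{g_t}\dot g^{\ast}_t]$ via the ODE obtained from (\ref{F-var-LC}) together with the Jacobi identity and Cauchy uniqueness. The only blemish is the phrase ``applied with $H=K$'': the displayed computation in fact applies (\ref{F-var-LC}) with $H=\dot g^{\ast}_t$ (constant in $t$), which is what is needed and exactly what the paper does.
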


\begin{proof}
  By lemma \ref{invar-F} we just need to show the identity $\left[ K,
  \nabla_{g_t} \, \dot{g}^{\ast}_t \right] \equiv 0$. In fact using the variation formula
  (\ref{F-var-LC}) we obtain
  \begin{eqnarray*}
    2 \,\frac{d}{d t} \, \left[ K, \nabla_{g_t} \, \dot{g}^{\ast}_t \right] \;\; = \;\;
    \Big[ K, \left[ \nabla_{g_t} \,
    \dot{g}^{\ast}_t, \dot{g}^{\ast}_t \right] \Big]\;\; =\;\; -\;\, \Big[
    \dot{g}^{\ast}_t, \left[ K, \nabla_{g_t} \, \dot{g}^{\ast}_t \right]
    \Big]\;,
  \end{eqnarray*}
  since $\left[ K, \dot{g}^{\ast}_t \right] \equiv 0$. Then the conclusion
  follows by Cauchy uniqueness.
\end{proof}

We define now the sub-vector space $\mathbbm{F}^K_g\subset \mathbbm{F}_g (K)$,
\begin{eqnarray*}
  \mathbbm{F}^K_g & \assign & \left \{ v \in \mathbbm{F}_g \mid
  \hspace{0.25em}  \left[ T, \nabla^p_{g, \xi}\, v^{\ast}_g \right] \;=\; 0\,,\; T \;=\; K\,,
  \mathcal{R}_g\,,\; \forall \xi \in T^{\otimes p}_X,\; \forall p \in
  \mathbbm{Z}_{\geqslant 0} \right\} \;,
\end{eqnarray*}
and we show the following elementary lemmas

\begin{lemma}
  \label{mut-exp}If $u, v \in \mathbbm{F}^K_g$ then $u\, v^{\ast}_g, u\,
  e^{v^{\ast}_g} \in \mathbbm{F}^K_g$.
\end{lemma}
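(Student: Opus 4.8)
The plan is to reduce everything to two kinds of facts: first, that $\mathbbm{F}^K_g$ is closed under taking products of the associated endomorphisms (at the level of the $(1,1)$-tensors $u^\ast_g, v^\ast_g$), and second, that the defining bracket conditions propagate through Leibniz rule when we differentiate such a product. Recall $w \in \mathbbm{F}^K_g$ means $w \in \mathbbm{F}_g$ (so $\nabla_{T_X,g} w^\ast_g = 0$) together with $[T, \nabla^p_{g,\xi} w^\ast_g] = 0$ for $T = K, \mathcal{R}_g$ and all $p \geq 0$. The key structural remark, already made in the paragraph preceding the lemma, is that if $[K(x), M] = 0$ and $[K(x), N] = 0$ at a point $x$ then $[M, N] = 0$, because both $M$ and $N$ are diagonalized by any basis diagonalizing $K(x)$ (using that $K$ has $n$ distinct eigenvalues). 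In particular all the iterated covariant derivatives $\nabla^p_{g} u^\ast_g$ and $\nabla^q_g v^\ast_g$ mutually commute pointwise, and each commutes with $K$.

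First I would treat $u\, v^\ast_g$. Its associated endomorphism is $(u\, v^\ast_g)^\ast_g = u^\ast_g v^\ast_g$ (the composition of endomorphisms). To check it lies in $\mathbbm{F}_g$, i.e. $\nabla_{T_X,g}(u^\ast_g v^\ast_g) = 0$: expand via the Leibniz rule for the covariant exterior derivative acting on $\mathrm{End}(T_X)$-valued forms,
\begin{eqnarray*}
\nabla_{T_X, g}(u^\ast_g v^\ast_g) \;=\; (\nabla_{T_X,g} u^\ast_g)\, v^\ast_g \;\,-\;\, u^\ast_g\,(\nabla_{T_X,g} v^\ast_g) \;\,+\;\, (\text{terms in } \nabla_g u^\ast_g \wedge \nabla_g v^\ast_g)\,,
\end{eqnarray*}
where the first two terms vanish since $u,v \in \mathbbm{F}_g$, and the remaining wedge-type term vanishes because $\nabla_{g,\xi} u^\ast_g$ and $\nabla_{g,\eta} v^\ast_g$ commute for all $\xi,\eta$ (both commute with $K$, hence with each other, by the structural remark) — so the alternation kills it. Concretely, one checks the clean identity $\nabla_{T_X,g}(MN) = (\nabla_{T_X,g}M)N - M(\nabla_{T_X,g}N) + [\nabla_g M \wedge \nabla_g N]$ and uses commutativity to collapse the bracket term. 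Then for the bracket conditions: by the Leibniz rule, $\nabla^p_{g,\xi}(u^\ast_g v^\ast_g)$ is a sum of terms $(\nabla^j_g u^\ast_g)(\nabla^{p-j}_g v^\ast_g)$; since $K$ commutes with each factor, it commutes with each product, hence with the sum, giving $[K, \nabla^p_g(u^\ast_g v^\ast_g)] = 0$. The same argument with $T = \mathcal{R}_g$ works: $\mathcal{R}_g$ commutes with every $\nabla^j_g u^\ast_g$ and every $\nabla^{p-j}_g v^\ast_g$ by hypothesis, and a derivation commuting with each factor of a product commutes with the product.

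For $u\, e^{v^\ast_g}$, whose endomorphism is $u^\ast_g e^{v^\ast_g}$, I would proceed by expanding the exponential as its power series $e^{v^\ast_g} = \sum_{k\geq 0} (v^\ast_g)^k / k!$ and applying the first part iteratively: each power $(v^\ast_g)^k$ lies in the "endomorphism algebra version" of $\mathbbm{F}^K_g$ (one shows $v\,(v^\ast_g)^{k-1} \in \mathbbm{F}^K_g$ by induction on $k$, using the product case already established, noting $v^\ast_g$ commutes with all $\nabla^p_g v^\ast_g$), hence so does $u^\ast_g (v^\ast_g)^k$, and the conditions pass to the series because $K$ and $\mathcal{R}_g$ are bounded endomorphisms acting by bracket and the series converges with all its covariant derivatives (finite-dimensional fibre, smooth data, so termwise differentiation is legitimate on the compact $X$). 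Alternatively — and this is cleaner — I would observe that $\nabla_{T_X,g} e^{v^\ast_g} = 0$ is already recorded in the second description of $\mathbbm{F}^\infty_g$, and that $[K, \nabla^p_g e^{v^\ast_g}] = 0$ because $\nabla^p_g e^{v^\ast_g}$ is a universal noncommutative-polynomial expression in $v^\ast_g$ and its covariant derivatives, all of which commute with $K$; then combine with the product rule to handle the extra factor $u^\ast_g$. The one point requiring genuine care — the main obstacle — is justifying that the wedge/alternation terms in the Leibniz expansion of $\nabla_{T_X,g}(MN)$ vanish: this is exactly where one needs the hypothesis that $\nabla_{g,\xi}u^\ast_g$ and $\nabla_{g,\eta}v^\ast_g$ commute for all pairs $\xi,\eta$, which in turn rests on the $n$-distinct-eigenvalue property of $K$ via the structural remark. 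Everything else is a bookkeeping exercise with the Leibniz rule.
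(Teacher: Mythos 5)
Your overall route is the same as the paper's: pointwise commutation with $K$ (which has $n$ distinct eigenvalues) makes all the endomorphisms in sight commute, the higher-order bracket conditions follow from the Leibniz expansion of $\nabla^p_{g,\xi}(u^{\ast}_g v^{\ast}_g)$ together with the elementary fact that an endomorphism commuting with both factors commutes with their product, and the exponential case is reduced to the product case. In substance this is exactly the paper's argument.

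One step is misstated, however, and it is precisely the step you single out as the delicate one. The first-order Leibniz expansion of the covariant exterior derivative of a composition contains no term of the form $\nabla_g u^{\ast}_g \wedge \nabla_g v^{\ast}_g$; writing $M=u^{\ast}_g$, $N=v^{\ast}_g$ one has
\[
\nabla_{_{T_X,g}}(MN)(\xi,\eta) \;=\; (\nabla_{g,\xi}M)\,N\eta \;-\; (\nabla_{g,\eta}M)\,N\xi \;+\; M\,\nabla_{_{T_X,g}}N(\xi,\eta)\,.
\]
The last term vanishes because $v\in\mathbbm{F}_g$, but the first two are not an alternation of anything quadratic in first derivatives: what kills them is the commutator $\left[\nabla_{g,\xi}u^{\ast}_g,\, v^{\ast}_g\right]=0$, which lets you pull $N$ in front and recognize $N\,\nabla_{_{T_X,g}}M(\xi,\eta)=0$. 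So the commutator actually needed here is between $\nabla_{g,\xi}u^{\ast}_g$ and $v^{\ast}_g$, not between $\nabla_{g,\xi}u^{\ast}_g$ and $\nabla_{g,\eta}v^{\ast}_g$ (the latter plays no role in this step). Since both tensors have all their covariant derivatives commuting with $K$ pointwise, the commutator you need is already among the facts you list, so the repair is immediate; this is how the paper proceeds: $[\nabla_g u^{\ast}_g, v^{\ast}_g]=[\nabla_g v^{\ast}_g,u^{\ast}_g]=0$ gives $u\,v^{\ast}_g\in\mathbbm{F}_g$, and the identity $[T,AB]=0$ whenever $[T,A]=[T,B]=0$, applied to $A=\nabla^r_{g,\eta}u^{\ast}_g$, $B=\nabla^{p-r}_{g,\mu}v^{\ast}_g$, handles all higher brackets. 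One further small omission: you should also record that $u\,v^{\ast}_g$ is a symmetric $2$-tensor, which again follows from $[u^{\ast}_g,v^{\ast}_g]=0$.
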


\begin{proof}
  By assumption follows that $u^{\ast}_g$ commutes with $v^{\ast}_g$. This
  shows that $u\, v^{\ast}_g$ is a symmetric form. Again by assumption we infer
  $[\nabla_g \,u^{\ast}_g, v^{\ast}_g] = [\nabla_g \,v^{\ast}_g, u^{\ast}_g] = 0$,
  and thus $u^{\ast}_g \,v^{\ast}_g \in \mathbbm{F}_g$. We observe now that for
  any $A, B, C \in \tmop{End} (V)$ such that $\left[ C, A \right] = 0$ hold
  the identity
  \begin{equation}
    \label{triv-com} \left[ C, A\, B \right] \;\;=\;\; A \left[ C, B \right] \;.
  \end{equation}
  Thus if also $\left[ C, B \right] = 0$ then hold the identity
  \begin{equation}
    \label{nul-triv-com} \left[ C, A\, B \right] \;\;=\;\; 0\; .
  \end{equation}
  Applying (\ref{nul-triv-com}) with $C = T$ and with $A = \nabla^r_{g, \eta}\,
  u^{\ast}_g$, $\eta \in T^{\otimes r}_X$, $B = \nabla^{p - r}_{g, \mu}\,
  v^{\ast}_g$, $\mu \in T^{\otimes p - r}_X$ we infer the identity 
$$
[T,
  \nabla^p_{g, \xi} (u^{\ast}_g v^{\ast}_g)] \;\;=\;\; 0\;,
$$
thus the conclusion $u\,
  v^{\ast}_g \in \mathbbm{F}^K_g$. The fact $u \,e^{v^{\ast}_g} \in
  \mathbbm{F}^K_g$ follow directly from the previous one.
\end{proof}

For all $\xi \equiv (\xi_1, \ldots, \xi_p) \in C^{\infty} (X, T_X)^{\oplus p}$
we denote 
$$
\nabla^{(p)}_{g, \xi} \,v^{\ast}_g \;\;\assign\;\; \nabla_{g, \xi_1} \ldots
\nabla_{g, \xi_p} v^{\ast}_g\;,
$$ 
and we observe that a simple induction based on
the formula
\begin{eqnarray*}
  \nabla^p_{g, \xi} \,v^{\ast}_g & = & \nabla^{(p)}_{g, \xi} \,v^{\ast}_g \;\,-\;\,
  \sum_{r = 1}^{p - 1}\, \sum_{I \in J^{p - 1}_{p - r}} \varepsilon_I  \left(
  \nabla_{g, \xi_I}\, \xi_{\complement I} \right) \;\neg\; \nabla^r_g \,v^{\ast}_g\;,
\end{eqnarray*}
with $J^{p - 1}_{p - r} \assign \left\{ I \subset \left\{ 1, \ldots, p - 1
\right\} : | I| = p - r \right\}$, $\varepsilon_I = 0, 1$, $\complement I
\equiv (j_1, \ldots j_r)\assign \left\{ 1, \ldots, p \right\} \smallsetminus
I$ and with
\begin{eqnarray*}
  \nabla_{g, \xi_I}\, \xi_{\complement I} \;\;\assign\;\; \nabla_{g, \xi_{i_1}}
  \ldots \nabla_{g, \xi_{i_{p - r}}} \left( \xi_{j_1} \otimes \cdots \otimes
  \xi_{j_r} \right)\;,
\end{eqnarray*}
$I \equiv (i_1, \ldots, i_{p - r})$, shows the identity
\begin{eqnarray*}
  \mathbbm{F}^K_g \; = \; \left \{ v \in \mathbbm{F}_g \mid \hspace{0.25em} 
  \left[ T, \nabla^{(p)}_{g, \xi} \,v^{\ast}_g \right] = 0,\; T = K,
  \mathcal{R}_g, \forall \xi \in C^{\infty} (X, T_X)^{\oplus p}, \forall p \in
  \mathbbm{Z}_{\geqslant 0} \right\} .
\end{eqnarray*}
\begin{lemma}
  \label{invar-F-K}Let $(g_t)_{t \in \mathbbm{R}}$ be a geodesic such that
  $\dot{g}_0 \in \mathbbm{F}^K_{g_0} .$ Then $\dot{g}_t \in
  \mathbbm{F}^K_{g_t}$ for all $t \in \mathbbm{R}$.
\end{lemma}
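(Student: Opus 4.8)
The plan is to follow the Cauchy uniqueness strategy of Lemma~\ref{invar-Rm} and Corollary~\ref{invar-FK}, the only genuinely new input being a clean transport identity for the iterated covariant derivatives $\nabla^{(p)}_{g_t, \xi}\, \dot{g}^{\ast}_t$ along the geodesic. First I would collect what the earlier results already give. Since the conditions $[\mathcal{R}_{g_0}, v^{\ast}_{g_0}] = 0$ and $[\mathcal{R}_{g_0}, \nabla_{g_0} v^{\ast}_{g_0}] = 0$ are exactly the cases $p = 0, 1$ in the definition of $\mathbbm{F}^K_{g_0}$, we have $\mathbbm{F}^K_{g_0} \subset \mathbbm{F}^{\infty}_{g_0} \cap \mathbbm{E}_{g_0}$, so Lemma~\ref{invar-F} and Lemma~\ref{invar-Rm} give $\dot{g}_t \in \mathbbm{F}^{\infty}_{g_t} \cap \mathbbm{E}_{g_t} \subset \mathbbm{F}_{g_t}$ for all $t$, together with $\mathcal{R}_{g_t} \equiv \mathcal{R}_{g_0}$, while Corollary~\ref{invar-FK} gives $\dot{g}_t \in \mathbbm{F}_{g_t}(K)$. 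Moreover $\dot{g}^{\ast}_t \equiv \dot{g}^{\ast}_0 =: v^{\ast}$ because $(g_t)$ is a geodesic, and hence $[K, v^{\ast}] = 0$ and $[\mathcal{R}_{g_0}, v^{\ast}] = 0$ directly from the hypothesis $\dot{g}_0 \in \mathbbm{F}^K_{g_0}$. Using the characterisation of $\mathbbm{F}^K_g$ through the iterated derivatives $\nabla^{(p)}_{g, \xi}$ recorded just before the statement, it remains to prove that $[T, \nabla^{(p)}_{g_t, \xi} v^{\ast}] \equiv 0$ for every $p \in \mathbbm{Z}_{\geqslant 0}$, every tuple $\xi = (\xi_1, \ldots, \xi_p)$ of smooth vector fields, and $T = K$ or $T = \mathcal{R}_{g_0}(\eta_1, \eta_2)$ with $\eta_1, \eta_2$ smooth vector fields.

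The heart of the argument will be the transport identity
\begin{equation*}
  \frac{d}{d t}\, \nabla^{(p)}_{g_t, \xi}\, \dot{g}^{\ast}_t \;\; = \;\; \frac{1}{2}\, \big[ \nabla^{(p)}_{g_t, \xi}\, \dot{g}^{\ast}_t, \; \dot{g}^{\ast}_t \big] \; ,
\end{equation*}
proved by induction on $p$. For $p = 0$ both sides vanish since $\dot{g}^{\ast}_t \equiv v^{\ast}$. For the inductive step I would write $\nabla^{(p)}_{g_t, \xi} v^{\ast} = \nabla_{g_t, \xi_1} H_t$ with $H_t := \nabla^{(p-1)}_{g_t, (\xi_2, \ldots, \xi_p)} v^{\ast} \in C^{\infty}(X, \tmop{End}(T_X))$, differentiate in $t$, and use that the variation-of-connection part of $\frac{d}{d t} \nabla_{g_t}$ acting on endomorphisms is $\frac{1}{2} [\nabla_{g_t} \dot{g}^{\ast}_t, \cdot\,]$ by formula~(\ref{F-var-LC}) (available since $\dot{g}_t \in \mathbbm{F}_{g_t}$), together with the inductive expression for $\dot{H}_t$. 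Expanding $\nabla_{g_t, \xi_1} [H_t, v^{\ast}]$ by the Leibniz rule and using $\nabla_{g_t, \xi_1} H_t = \nabla^{(p)}_{g_t, \xi} v^{\ast}$, the two lower order cross commutators $[\nabla_{g_t, \xi_1} v^{\ast}, H_t]$ and $[H_t, \nabla_{g_t, \xi_1} v^{\ast}]$ cancel in pairs, which leaves precisely $\frac{1}{2} [\nabla^{(p)}_{g_t, \xi} v^{\ast}, v^{\ast}]$.

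With the transport identity available, fix $p$, $\xi$ and $T$ as above and set $Y_t := [T, \nabla^{(p)}_{g_t, \xi} v^{\ast}]$, a curve in $C^{\infty}(X, \tmop{End}(T_X))$. Since $\dot{T} = 0$ (the section $K$ is $t$-independent, and $\mathcal{R}_{g_t} \equiv \mathcal{R}_{g_0}$), the transport identity and the Jacobi identity give
\begin{equation*}
  \dot{Y}_t \;\; = \;\; \frac{1}{2}\, \big[ T,\, [\nabla^{(p)}_{g_t, \xi} v^{\ast},\, v^{\ast}] \big] \;\; = \;\; \frac{1}{2}\, [Y_t,\, v^{\ast}] \;\, + \;\, \frac{1}{2}\, \big[ \nabla^{(p)}_{g_t, \xi} v^{\ast},\, [T, v^{\ast}] \big] \; ,
\end{equation*}
and the last term vanishes because $[T, v^{\ast}] = 0$. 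Thus $\dot{Y}_t = \frac{1}{2} [Y_t, v^{\ast}]$ is a linear homogeneous first order ODE with vanishing initial datum $Y_0 = [T, \nabla^{(p)}_{g_0, \xi} v^{\ast}] = 0$ (the hypothesis $\dot{g}_0 \in \mathbbm{F}^K_{g_0}$), so $Y_t \equiv 0$ by Cauchy uniqueness. Since this holds for all $p$, $\xi$ and $T$, and since $\mathcal{R}_{g_t} \equiv \mathcal{R}_{g_0}$, we conclude $\dot{g}_t \in \mathbbm{F}^K_{g_t}$ for all $t \in \mathbbm{R}$.

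I expect the main obstacle to be the inductive verification of the transport identity, specifically keeping the Leibniz book-keeping honest and checking that the lower order cross terms really do cancel so that no genuinely higher order term survives; this cancellation is made possible precisely by the facts that $\dot{g}^{\ast}_t$ is constant along the geodesic and that formula~(\ref{F-var-LC}) has the pure commutator form $[\nabla_{g_t} \dot{g}^{\ast}_t, \cdot\,]$. As a fallback, one could bypass the closed transport identity and argue as in Lemma~\ref{invar-F}: show that all $t$-derivatives at $t = 0$ of $[T, \nabla^{(p)}_{g_t, \xi} v^{\ast}]$ vanish, by a simultaneous induction on $p$ and on the order of the derivative, and then invoke the real analyticity in $t$ of $t \mapsto \nabla^{(p)}_{g_t, \xi} v^{\ast}$ coming from the explicit geodesic $g_t = g_0\, e^{t g_0^{-1} \dot{g}_0}$.
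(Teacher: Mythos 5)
Your proposal is correct, but it follows a genuinely different route from the paper's. The paper also argues by induction on $p$ starting from the variation formula (\ref{F-var-LC}), but it uses the commutation hypotheses at each stage to make the commutator term vanish outright (via (\ref{flat-vr-LC}), since $[K,\nabla_{g_t}\dot g^{\ast}_t]\equiv 0$ and $[K,\nabla^{(p-1)}_{g_0,\xi}\dot g^{\ast}_0]=0$ force these tensors to commute), thereby proving the stronger invariance $\nabla^{(p)}_{g_t,\xi}\dot g^{\ast}_t\equiv\nabla^{(p)}_{g_0,\xi}\dot g^{\ast}_0$ for all $t$; the bracket conditions with $T=K,\mathcal{R}_{g_0}$ are then transported trivially. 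You instead keep the commutator term and obtain the unconditional transport identity $\tfrac{d}{dt}\nabla^{(p)}_{g_t,\xi}\dot g^{\ast}_t=\tfrac12\big[\nabla^{(p)}_{g_t,\xi}\dot g^{\ast}_t,\dot g^{\ast}_t\big]$ (valid as soon as $\dot g_t\in\mathbbm{F}_{g_t}$, which Lemma \ref{invar-F} supplies), and then kill $Y_t=[T,\nabla^{(p)}_{g_t,\xi}\dot g^{\ast}_t]$ by Cauchy uniqueness for the linear ODE $\dot Y_t=\tfrac12[Y_t,\dot g^{\ast}_t]$, exactly in the spirit of Lemma \ref{invar-Rm} and Corollary \ref{invar-FK}. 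I checked your inductive step: the two cross commutators do cancel, and the Jacobi-identity step uses only $[T,\dot g^{\ast}_0]=0$ and the $t$-independence of $T$ (which does require $\mathcal{R}_{g_t}\equiv\mathcal{R}_{g_0}$ from Lemma \ref{invar-Rm}, so that citation is essential, whereas your appeal to Corollary \ref{invar-FK} is actually not needed). What each approach buys: the paper's argument yields the stronger structural fact that all iterated covariant derivatives of $\dot g^{\ast}_t$ are frozen along the geodesic (a phenomenon the paper re-derives and exploits later, e.g. in (\ref{inv-der})), while your argument is more uniform — one ODE handles both $T=K$ and $T=\mathcal{R}$ and all orders $p$ at once, with no need to invoke the distinct-eigenvalue trick at the inductive stage — at the price of concluding only the bracket vanishing rather than constancy. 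Your conversion between the tensorial derivatives $\nabla^p_{g,\xi}$ in the definition of $\mathbbm{F}^K_g$ and the iterated ones $\nabla^{(p)}_{g,\xi}$ is legitimate since you obtain the vanishing for all orders simultaneously.
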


\begin{proof}
We observe that $\dot{g}_t \in
  \mathbbm{F}_{g_t}(K)\cap \mathbbm{E}_{g_t}$ for all $t \in \mathbbm{R}$, thanks to corollary \ref{invar-FK} and lemma
  \ref{invar-Rm}. This implies in particular that $\mathcal{R}_{g_t} =\mathcal{R}_{g_0}$ for
  all $t \in \mathbbm{R}$, by the variation formula (\ref{col-vr-Rm}). Then the conclusion will follow from the property
\begin{equation}
  \label{pcov-cst-Geo} 
\nabla^{(p)}_{g_0, \xi} \,\dot{g}^{\ast}_0 \;\;\equiv\;\; \nabla^{(p)}_{g_t, \xi} \,\dot{g}^{\ast}_t\,,
\quad \forall \xi \in C^{\infty} (X, T_X)^{\oplus p},\quad \forall t\;\in
\; \mathbbm{R}\;.
\end{equation}
This certainly hold true for $p=0$ by the geodesic equation $\dot{g}^{\ast}_0\equiv\dot{g}^{\ast}_t$.
We assume now the statement (\ref{pcov-cst-Geo}) true for $p - 1$ and we show it for $p >0$. We observe first that thanks to the variation formula (\ref{F-var-LC})
  hold the identity
  \begin{equation}
    \label{flat-vr-LC}  \dot{\nabla}_{g_t} H \;\;\equiv\;\; 0\;,
  \end{equation}
  along any smooth curve $(g_t)_t \subset \mathcal{M}$ such that $\dot{g}_t
  \in \mathbbm{F}_g$ and $\left[ \nabla_{g_t} \, \dot{g}^{\ast}_t, H \right] =
  0$. In our situation the identity $\left[ K,\nabla_{g_t} \, \dot{g}^{\ast}_t \right] \equiv
  0$ combined with the assumption on the initial data
  \begin{eqnarray*}
    \left[ K, \nabla^{(p-1)}_{g_0, \xi} \, \dot{g}^{\ast}_0 \right] \;\; = \;\; 0\;,
  \end{eqnarray*}
  for all $\xi \in C^{\infty} (X,T_X)^{\oplus (p-1)}$, implies thanks to (\ref{flat-vr-LC}) the equalities
  \begin{eqnarray*}
\nabla_{g_0}\nabla^{(p-1)}_{g_0, \xi} \,
    \dot{g}^{\ast}_0 \;\; = \;\; \nabla_{g_t}\nabla^{(p-1)}_{g_0, \xi} \,
    \dot{g}^{\ast}_0 \;\; \equiv\;\;\nabla_{g_t}\nabla^{(p-1)}_{g_t, \xi} \,
    \dot{g}^{\ast}_t \;,
  \end{eqnarray*}
by the inductive hypothesis. We infer the conclusion of the induction.
\end{proof}

\section{The set of scattering data}\label{sec-scat-dat}

We define the {\tmstrong{set of scattering data}} with center $K$ as the set
of metrics
\begin{eqnarray*}
  \mathcal{S}_{_{^{\Omega}}}^K & \assign & \left \{ g \in \mathcal{M} \mid
  \tmop{Ric}_g (\Omega) \in \mathbbm{F}^K_g \right\}\\
  &  & \\
  & = & \left \{ g \in \mathcal{S}_{_{^{\Omega}}} \mid 
  \left[ T, \nabla^p_{g, \xi} \tmop{Ric}^{\ast}_g (\Omega) \right] = 0\,,\; T = K, \mathcal{R}_g, \forall \xi \in T^{\otimes p}_X, \forall p \in
  \mathbbm{Z}_{\geqslant 0} \right\} .
\end{eqnarray*}
We observe that $\mathcal{S}_{_{^{\Omega}}}^K \neq \emptyset$ if the manifold
$X$ admit a $\Omega$-$\tmop{ShRS}$. We introduce now a few new product
notations. For any $A \in (V^{\ast})^{\otimes p} \otimes V$, $B \in
(V^{\ast})^{\otimes q} \otimes V$ and for any $k = 1, \ldots, q$ we define the
products $A \bullet_k B$ as
\begin{eqnarray*}
  (A \bullet_k B) (u, v) & \assign & B \Big(v_1, \ldots, v_{k - 1}, A (u, v_k),
  v_{k + 1}, \ldots, v_q\Big)\;,
\end{eqnarray*}
for all $u \equiv (u_1, \ldots, u_{p - 1})$ and $v \equiv (v_1, \ldots, v_q)$.
We note $\bullet \assign \bullet_1$ for simplicity. For any $\sigma \in S_{p +
k - 2}$ we define $A \bullet^{\sigma}_k B$ as
\begin{eqnarray*}
  (A \bullet^{\sigma}_k B) (u, v) & \assign & (A \bullet_k B) (\xi_{\sigma},
  v_k, \ldots, v_q)\;,
\end{eqnarray*}
where $\xi \equiv (\xi_1, \ldots, \xi_{p + k - 2}) \assign (u_1, \ldots, u_{p
- 1}, v_1, \ldots, v_{k - 1})$. We should notice that $A \bullet^{\sigma}_k B
\equiv A \bullet_k B$ if $p + k - 2 \leqslant 1$. We define
\begin{eqnarray*}
  A \;\hat{\neg}\; B & \assign & \sum^{q - 1}_{k = 1} A \bullet_k B \;.
\end{eqnarray*}
For $p > 1$ and \ $k = 1, \ldots p - 1$ we define the trace operation
\begin{eqnarray*}
  (\tmop{Tr}_{g, k} A) \left( u_1, \ldots, u_{p - 2} \right) & \assign &
  \tmop{Tr}_g \Big[ A \left( u_1, \ldots, u_{k - 1}, \cdot, \cdot, u_k,
  \ldots, u_{p - 2} \right) \Big] \;.
\end{eqnarray*}
For any $v \in T_X$ and $k = 1, \ldots, p$ we define the contraction operation
\begin{eqnarray*}
  ( v \;\neg_k\; A) \left( u_1, \ldots, u_{p - 1} \right)  & \assign &
  A \left( u_1, \ldots, u_{k - 1}, v, u_k, \ldots, u_{p - 1} \right) \;.
\end{eqnarray*}
For any $B \in (V^{\ast})^{\otimes q} \otimes V$ and $k = 1, \ldots, q
- 1$ we define the generalized type products
\begin{eqnarray*}
&&(A \ast_k B) \left( u_1, \ldots, u_p, v_1, \ldots, v_{q - 1} \right) 
\\
\\
&
  \assign & B \Big( v_1, \ldots, v_{k - 1}, A (u_1, \ldots, u_p), v_k,
  \ldots, v_{q - 1} \Big)\;,\\
   \\
  &&(A \ast^{\sigma}_k B) \left( u_1, \ldots, u_p, v_1, \ldots, v_{q - 1}
  \right) 
\\
\\
& \assign & (A \ast_k B) \left( \xi_{\sigma}, v_k, \ldots, v_{q - 1}
  \right)\;,\quad \forall \sigma \in S_{p + k - 1}\;,
\end{eqnarray*}
and $\xi \equiv (\xi_1, \ldots, \xi_{p + k - 1}) :=(u_1, \ldots, u_p,
  v_1, \ldots, v_{k - 1})$.
We observe now that if $A \in C^{\infty} (X, (T^{\ast}_X)^{\otimes p} \otimes
T_X)$ and $(g_t)_{t \in \mathbbm{R}} \subset \mathcal{M}$ is a smooth family
such that $[\nabla_{g_t, \xi} \; \dot{g}^{\ast}_t, A] \equiv 0$ for all $\xi \in
T_X$ then
\begin{eqnarray*}
  2\, \dot{\nabla}_{g_t} A & = & - \;\,\nabla_{g_t} \, \dot{g}^{\ast}_t  \;\hat{\neg}\; A\;,
\end{eqnarray*}
thanks to the variation formula (\ref{Pal-var-LC-conn}). We infer by this and
by the variation formula (\ref{flat-vr-LC}) that if $H \in C^{\infty} (X,
\tmop{End} (T_X))$ satisfies $[\nabla_{g_t, \xi} \, \dot{g}^{\ast}_t,
\nabla^p_{g_t} H] \equiv 0$, for all $\xi \in T_X$ and $p = 0, 1$ then
\begin{eqnarray*}
  2 \,\dot{\nabla}^2_{g_t} H & = & - \;\,\nabla_{g_t} \, \dot{g}^{\ast}_t  \;\hat{\neg}\;
  \nabla_{g_t} H \;.
\end{eqnarray*}
A simple induction shows that if $[\nabla_{g_t, \xi} \, \dot{g}^{\ast}_t,
\nabla^r_{g_t} H] \equiv 0$, for all $\xi \in T_X$ and $r = 0, \ldots p - 1$
then
\begin{equation}
  \label{var-Pder} 2\, \dot{\nabla}^p_{g_t} H \;\;=\;\; -\;\, \sum_{r = 1}^{p - 1}\,  \sum_{k
  = 1}^r \,\sum_{\sigma \in S_{p - r + k - 1}} C_{k, \sigma}^{p, r}
  \nabla_{g_t}^{p - r}  \dot{g}^{\ast}_t \bullet^{\sigma}_k \nabla_{g_t}^r H\;,
\end{equation}
with $C_{k, \sigma}^{p, r} = 0, 1$. We show now the following fundamental result.

\begin{corollary}
  \label{FK-vr-Om-EndRc}Let $(g_t)_{t \in \mathbbm{R}} \subset \mathcal{M}$ be
  a smooth family such that $\dot{g}_t \in \mathbbm{F}^K_{g_t}$ for all $t \in
  \mathbbm{R}$. If $g_0 \in \mathcal{S}^K_{_{^{\Omega}}}$ then $g_t \in
  \mathcal{S}^K_{_{^{\Omega}}}$ for all $t \in \mathbbm{R}$.
\end{corollary}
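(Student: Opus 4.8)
The plan is to mimic the structure of the proof of Corollary \ref{vr-Om-EndRc} (conservation of the pre-scattering condition), but now keeping track of \emph{all} covariant derivatives of $\tmop{Ric}^{\ast}_{g_t}(\Omega)$ and their commutation with $K$ and $\mathcal{R}_{g_t}$. First I would record that, by Lemma \ref{invar-F-K}, along the geodesic through $g_0$ with initial velocity $\dot g_0\in\mathbbm{F}^K_{g_0}$ we have $\dot g_t\in\mathbbm{F}^K_{g_t}$ for all $t$; in particular $\mathcal{R}_{g_t}=\mathcal{R}_{g_0}$ is constant, $[\,K,\nabla^{(r)}_{g_t,\xi}\dot g^{\ast}_t\,]\equiv 0$ and $[\,\mathcal{R}_{g_t},\nabla^{(r)}_{g_t,\xi}\dot g^{\ast}_t\,]\equiv 0$ for all $r$, and hence $[\nabla_{g_t,\xi}\dot g^{\ast}_t,\nabla^r_{g_t}H]\equiv 0$ whenever $H$ commutes (together with its first covariant derivatives) with $K$. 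The key point is that this makes the variation formula (\ref{var-Pder}) available for $H=\tmop{Ric}^{\ast}_{g_t}(\Omega)$, once we know inductively that its covariant derivatives lie in the relevant commutant.

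The heart of the argument is then a double induction. The statement $g_t\in\mathcal{S}^K_{_{^{\Omega}}}$ for all $t$ unpacks, via the reformulation of $\mathbbm{F}^K_g$ in terms of $\nabla^{(p)}$ given just before Lemma \ref{invar-F-K}, as: (i) $\nabla_{_{T_X,g_t}}\tmop{Ric}^{\ast}_{g_t}(\Omega)\equiv 0$ (the pre-scattering condition), and (ii) $[\,T,\nabla^{(p)}_{g_t,\xi}\tmop{Ric}^{\ast}_{g_t}(\Omega)\,]=0$ for $T=K,\mathcal{R}_{g_t}$ and all $p$. Part (i) is exactly Corollary \ref{vr-Om-EndRc}, since $\dot g_t\in\mathbbm{F}^K_{g_t}\subset\mathbbm{F}_{g_t}\cap\mathbbm{E}_{g_t}$; I would just cite it. For part (ii) I would proceed by induction on $p$. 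For $p=0$ the commutator $[\,K,\tmop{Ric}^{\ast}_{g_t}(\Omega)\,]$ has time derivative expressible through (\ref{col-vrEnOmRc}) and the Jacobi identity as $\pm[\dot g^{\ast}_t,[\,K,\tmop{Ric}^{\ast}_{g_t}(\Omega)\,]]$ plus terms involving $[\,K,\Delta^{\Omega}_{g_t}\dot g^{\ast}_t\,]$, which vanish because $\dot g_t\in\mathbbm{F}^K_{g_t}$; so Cauchy uniqueness gives $[\,K,\tmop{Ric}^{\ast}_{g_t}(\Omega)\,]\equiv 0$, and the same for $\mathcal{R}_{g_t}$ using (\ref{col-vr-Rm}) and $[\,\mathcal{R}_{g_t},\dot g^{\ast}_t\,]\equiv 0$. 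For the inductive step, assuming $[\,T,\nabla^{(r)}_{g_t,\xi}\tmop{Ric}^{\ast}_{g_t}(\Omega)\,]\equiv 0$ for all $r<p$, one differentiates $\nabla^{(p)}_{g_t,\xi}\tmop{Ric}^{\ast}_{g_t}(\Omega)$ in time: the variation of $\tmop{Ric}^{\ast}_{g_t}(\Omega)$ itself is governed by (\ref{col-vrEnOmRc}), and the variation of the $p$ covariant derivatives is governed by (a version of) (\ref{var-Pder}), whose right-hand side is built from $\nabla^{p-r}_{g_t}\dot g^{\ast}_t\bullet^{\sigma}_k\nabla^r_{g_t}\tmop{Ric}^{\ast}_{g_t}(\Omega)$. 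Every such term commutes with $T$: $\nabla^{r}_{g_t}\tmop{Ric}^{\ast}_{g_t}(\Omega)$ does by the inductive hypothesis (for $r<p$) or by what we are proving for $r=p$, and $\nabla^{p-r}_{g_t}\dot g^{\ast}_t$ does since $\dot g_t\in\mathbbm{F}^K_{g_t}$; using the elementary commutator identities (\ref{triv-com})–(\ref{nul-triv-com}) and the remark that two endomorphisms each commuting with $K$ commute with each other, one gets a closed linear ODE of the form $\frac{d}{dt}[\,T,\nabla^{(p)}_{g_t,\xi}\tmop{Ric}^{\ast}_{g_t}(\Omega)\,]=[\text{(something)},[\,T,\nabla^{(p)}_{g_t,\xi}\tmop{Ric}^{\ast}_{g_t}(\Omega)\,]]$, with zero initial data, so it vanishes for all $t$ by Cauchy uniqueness.

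The technical bookkeeping one must be careful about: the variation formula (\ref{var-Pder}) as stated is for a \emph{fixed} endomorphism-valued section $H$, whereas here $\tmop{Ric}^{\ast}_{g_t}(\Omega)$ is itself time-dependent, so the actual identity for $\frac{d}{dt}\nabla^{(p)}_{g_t,\xi}\tmop{Ric}^{\ast}_{g_t}(\Omega)$ has an extra block $\nabla^{(p)}_{g_t,\xi}\big(\frac{d}{dt}\tmop{Ric}^{\ast}_{g_t}(\Omega)\big)$ coming from (\ref{col-vrEnOmRc}); one checks that this block also commutes with $T$ (it involves $\nabla^{(p)}$ of $\Delta^{\Omega}_{g_t}\dot g^{\ast}_t$ and of $\dot g^{\ast}_t\tmop{Ric}^{\ast}_{g_t}(\Omega)$, both lying in the $K$-commutant by $\dot g_t\in\mathbbm{F}^K_{g_t}$, Lemma \ref{mut-exp}, and the inductive hypothesis). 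Also one should verify that the hypothesis ``$[\nabla_{g_t,\xi}\dot g^{\ast}_t,\nabla^r_{g_t}\tmop{Ric}^{\ast}_{g_t}(\Omega)]\equiv 0$ for $r\le p-1$'' needed to invoke (\ref{var-Pder}) is supplied precisely by the inductive hypothesis together with $[\,K,\nabla_{g_t,\xi}\dot g^{\ast}_t\,]\equiv 0$. I expect this—correctly assembling the time-derivative of $\nabla^{(p)}_{g_t,\xi}\tmop{Ric}^{\ast}_{g_t}(\Omega)$ from the two sources (the flow of $\tmop{Ric}^{\ast}(\Omega)$ and the flow of the connection) and confirming that every resulting term is a bracket of $T$ with the unknown—to be the main obstacle; once it is in place, Cauchy uniqueness for linear ODEs closes the induction exactly as in Corollaries \ref{invar-FK}, \ref{invar-F-K} and \ref{vr-Om-EndRc}.
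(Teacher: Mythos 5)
Your proposal is correct and follows essentially the same route as the paper: it delegates the pre-scattering condition to Corollary \ref{vr-Om-EndRc} and then handles the bracket conditions by induction on $p$, combining (\ref{col-vrEnOmRc}), the variation formula (\ref{var-Pder}), the expansion (\ref{p-derLap}) and the commutator identities (\ref{triv-com})--(\ref{nul-triv-com}) to produce a linear ODE for $\left[ T, \nabla^p_{g_t, \xi} \tmop{Ric}^{\ast}_{g_t} (\Omega) \right]$ with vanishing initial data, closed by Cauchy uniqueness exactly as in the paper. The only blemish is your opening appeal to Lemma \ref{invar-F-K} and the restriction to geodesics, which is superfluous: the hypothesis $\dot{g}_t \in \mathbbm{F}^K_{g_t}$ is assumed for the given smooth family and by itself yields the constancy of $\mathcal{R}_{g_t}$ (via (\ref{col-vr-Rm})) and all the commutation relations your argument uses.
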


\begin{proof}
  Thanks to corollary \ref{vr-Om-EndRc} we just need to show the condition on
  the brackets. We will proceed by induction on the order of covariant differentiation $p$. For notations simplicity
  we set $\rho_t \assign \tmop{Ric}_{g_t} \left( \Omega \right)$. We observe
  now that the assumption implies $\mathcal{R}_{g_t} =\mathcal{R}_{g_0}$ for
  all $t \in \mathbbm{R}$, thanks to the variation formula (\ref{col-vr-Rm}).
  This combined with the variation formula (\ref{col-vrEnOmRc}) gives
  \begin{eqnarray*}
    2\, \frac{d}{d t}\,  \left[ T, \rho^{\ast}_t \right] &=& -\;\, \left[ T,
    \Delta^{^{_{_{\Omega}}}}_{g_t}  \dot{g}^{\ast}_t \;\,+\;\, 2\, \dot{g}^{\ast}_t
    \rho^{\ast}_t \right] 
\\
\\
&=& -\;\, 2\, \left[ T, \dot{g}^{\ast}_t \rho^{\ast}_t
    \right] 
\\
\\
&=& -\;\, 2\, \dot{g}^{\ast}_t \left[ T, \rho^{\ast}_t \right]\;,
  \end{eqnarray*}
  thanks to the assumption $[T, \nabla^p_{g_t, \xi} \, \dot{g}^{\ast}_t] \equiv
  0$ and (\ref{triv-com}). By Cauchy uniqueness we infer $\left[ T,
  \rho^{\ast}_t \right] \equiv 0$. We assume now as inductive hypothesis
  $\left[ T, \nabla^r_{g_t, \eta}\, \rho^{\ast}_t \right] \equiv 0$, for all $r
  = 0, \ldots, p - 1$, $p > 1$ and $\eta \in T^{\otimes r}_X$. We
  deduce thanks to this for $T = K$ and thanks to the assumption, the identity
  \begin{eqnarray*}
    \left[ \nabla_{g_t} \, \dot{g}^{\ast}_t, \nabla^r_{g_t, \eta} \,\rho^{\ast}_t
    \right] & = & 0\;,
  \end{eqnarray*}
  This combined with the variation formula (\ref{var-Pder}) with $H =
  \rho_t^{\ast}$, combined with the inductive hypothesis and with
  (\ref{col-vrEnOmRc}) provides the identity
  \begin{eqnarray*}
    2\, \frac{d}{d t}  \left[ T, \nabla^p_{g_t, \xi} \,\rho^{\ast}_t \right] & = &
    -\;\, \left[ T, \nabla^p_{g_t, \xi}  \left( \Delta^{^{_{_{\Omega}}}}_{g_t} 
    \dot{g}^{\ast}_t \;\,+\;\, 2\, \dot{g}^{\ast}_t \rho^{\ast}_t \right) \right] \;.
  \end{eqnarray*}
  We can express the $p$-derivative of the $\Omega$-Laplacian as
  \begin{eqnarray}
    \nabla^p_{g_t} \Delta^{^{_{_{\Omega}}}}_{g_t}  \dot{g}^{\ast}_t & = & -\;\,
    \tmop{Tr}_{g_t, p + 1} \nabla_{g_t}^{p + 2}  \dot{g}^{\ast}_t \;\,+\;\,
    \nabla_{g_t} f_t \;\neg_{p + 1} \nabla^{p + 1}_{g_t}  \dot{g}^{\ast}_t 
    \nonumber\\
    &  &  \nonumber\\
    & + & \sum_{r = 1}^p \sum_{\sigma \in S_{p + 1}} C_{\sigma}^{p, r}\,
    \nabla^{p + 2 - r}_{g_t} f_t \ast^{\sigma}_r \,\nabla^r_{g_t} 
    \dot{g}^{\ast}_t\;, \label{p-derLap} 
  \end{eqnarray}
  with $C_{\sigma}^{p, r} = 0, 1$. Moreover the assumption combined with the
  expression of the $p$-derivative of the $\Omega$-Laplacian and with the
  identity
  \begin{eqnarray*}
    \left[ T, \xi \;\neg\; \left( \nabla^{p + 2 - r}_{g_t} f_t \ast^{\sigma}_r
    \nabla^r_{g_t} \, \dot{g}^{\ast}_t \right) \right] & \equiv & 0\;,
  \end{eqnarray*}
  implies the variation formula
  \begin{eqnarray*}
    2\, \frac{d}{d t}  \left[ T, \nabla^p_{g_t, \xi} \,\rho^{\ast}_t \right] & = &
    - \;\,2 \left[ T, \nabla^p_{g_t, \xi}  \left( \dot{g}^{\ast}_t \rho^{\ast}_t
    \right) \right] 
\\
\\
&=&- \;\,2\sum_{r = 0}^p\,
  \sum_{| I| = r} \left[ T,\nabla^{p - r}_{g_0, \xi_{\complement I}} \dot{g}^{\ast}_t\,
   \nabla^{r}_{g_0, \xi_I} \rho^{\ast}_t \right] \;.
  \end{eqnarray*}
  Using the assumption $[T, \nabla^{p - r}_{g_t, \mu} \, \dot{g}^{\ast}_t]
  \equiv 0$, $\mu \in T^{\otimes p - r}_X$ and the inductive hypothesis we can
  apply the identity (\ref{nul-triv-com}) to the products of type 
$\nabla^{p - r}_{g_0, \xi_{\complement I}} \dot{g}^{\ast}_t\,
   \nabla^{r}_{g_0, \xi_I} \rho^{\ast}_t$
in order
  to obtain the identity
  \begin{eqnarray*}
    2\, \frac{d}{d t}  \left[ T, \nabla^p_{g_t, \xi} \,\rho^{\ast}_t \right] & = &
    - \;\,2\, \dot{g}^{\ast}_t \left[ T, \nabla^p_{g_t, \xi} \,\rho^{\ast}_t \right] \;.
  \end{eqnarray*}
  Then the conclusion follows by Cauchy uniqueness.
\end{proof}

\section{Integrability of the distribution $\mathbbm{F}^K$}\label{Integ-F}

We start first with a basic calculus fact.

\begin{lemma}
  \label{cov-der-log}Let $B > 0$ be a $g$-symmetric endomorphism smooth section of $T_X$ such
  that $\left[ B, \nabla_{g, \xi} B \right] = 0$. Then hold the identity
  \begin{eqnarray*}
    \nabla_{g, \xi} \log B & = & B^{- 1} \nabla_{g, \xi} B \;.
  \end{eqnarray*}
\end{lemma}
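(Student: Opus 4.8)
The plan is to establish the identity $\nabla_{g,\xi}\log B = B^{-1}\nabla_{g,\xi}B$ by exploiting the commutation hypothesis $[B,\nabla_{g,\xi}B]=0$, which forces everything to take place inside a commutative subalgebra of endomorphisms where the logarithm behaves like a scalar function. First I would fix an arbitrary point $x_0\in X$ and a tangent vector $\xi$, and note that since $B(x_0)>0$ is $g$-symmetric we may write $\log B$ via the holomorphic functional calculus, e.g. $\log B = \frac{1}{2\pi i}\oint_{\gamma}\log(\lambda)(\lambda I - B)^{-1}\,d\lambda$, where $\gamma$ is a contour in the right half-plane enclosing the spectrum of $B(x_0)$ (and, by continuity, of $B$ near $x_0$). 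The point of this representation is that covariant differentiation commutes with the contour integral, so the computation reduces to differentiating the resolvent.

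Next I would differentiate: $\nabla_{g,\xi}(\lambda I - B)^{-1} = (\lambda I - B)^{-1}(\nabla_{g,\xi}B)(\lambda I - B)^{-1}$, which follows from $\nabla_{g,\xi}\bigl[(\lambda I - B)(\lambda I - B)^{-1}\bigr]=0$ together with the Leibniz rule for $\nabla_g$ acting on endomorphism products (and the fact that $\nabla_g$ kills the constant $\lambda I$). Now the hypothesis $[B,\nabla_{g,\xi}B]=0$ enters decisively: it implies $[(\lambda I - B)^{-1},\nabla_{g,\xi}B]=0$, hence the two resolvent factors and $\nabla_{g,\xi}B$ all commute, and $\nabla_{g,\xi}(\lambda I - B)^{-1} = (\nabla_{g,\xi}B)(\lambda I - B)^{-2}$. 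Plugging back in,
\begin{eqnarray*}
\nabla_{g,\xi}\log B & = & \frac{1}{2\pi i}\oint_{\gamma}\log(\lambda)\,(\nabla_{g,\xi}B)(\lambda I - B)^{-2}\,d\lambda\\
& = & (\nabla_{g,\xi}B)\cdot\frac{1}{2\pi i}\oint_{\gamma}\frac{\log\lambda}{(\lambda I - B)^{2}}\,d\lambda\,,
\end{eqnarray*}
where in the last step I pulled $\nabla_{g,\xi}B$ out of the integral, again legitimate because it commutes with the resolvent. Recognizing the remaining contour integral as $\frac{d}{dB}\log B = B^{-1}$ (the functional calculus applied to $\lambda\mapsto 1/\lambda$, since $\oint \log\lambda\,(\lambda I-B)^{-2}d\lambda$ computes the derivative of the function $\log$ at $B$) yields $\nabla_{g,\xi}\log B = (\nabla_{g,\xi}B)B^{-1} = B^{-1}\nabla_{g,\xi}B$, the last equality once more by the commutation hypothesis.

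An alternative, more elementary route that avoids contour integrals: diagonalize. Since $[B,\nabla_{g,\xi}B]=0$ and $B(x_0)$ is symmetric, one could instead argue via a local $g$-orthonormal frame that simultaneously diagonalizes $B$ and $\nabla_{g,\xi}B$ at $x_0$ — but this is delicate because the frame is only pointwise diagonalizing and one still has to control the derivative of the eigenprojections, so I would prefer the functional-calculus argument or, better still, the purely formal one: start from $B = \exp(\log B)$, apply $\nabla_{g,\xi}$ using the variation formula for the exponential of a commuting family, $\nabla_{g,\xi}\exp(A) = \exp(A)\,\nabla_{g,\xi}A$ valid precisely when $[A,\nabla_{g,\xi}A]=0$, and solve for $\nabla_{g,\xi}\log B$. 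The main obstacle is bookkeeping the commutativity carefully at each step: every simplification (pulling factors out of integrals, collapsing resolvent products, identifying the final integral with $B^{-1}$) rests on $[B,\nabla_{g,\xi}B]=0$, and one must verify this propagates to all the derived quantities, which it does since they are all built from $B$ and $\nabla_{g,\xi}B$ by algebraic operations within the commutant of $B$.
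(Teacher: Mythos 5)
Your main argument is correct, but it runs along a genuinely different track from the paper. The paper sets $A:=\log B$, differentiates the identity $[e^A,A]=0$, and uses the fact that commuting $g$-symmetric endomorphisms are simultaneously diagonalizable to pass from the hypothesis $[e^A,\nabla_{g,\xi}e^A]=0$ to $[A,\nabla_{g,\xi}e^A]=0$ and then to $[A,\nabla_{g,\xi}A]=0$; from this $\nabla_{g,\xi}e^A=e^A\,\nabla_{g,\xi}A$ and the formula follows by solving for $\nabla_{g,\xi}A$. Your resolvent computation bypasses that diagonalization step entirely: differentiating $\log B=\frac{1}{2\pi i}\oint_\gamma\log\lambda\,(\lambda I-B)^{-1}\,d\lambda$ under the integral, collapsing $(\lambda I-B)^{-1}(\nabla_{g,\xi}B)(\lambda I-B)^{-1}$ to $(\nabla_{g,\xi}B)(\lambda I-B)^{-2}$ via the commutant hypothesis, and identifying $\frac{1}{2\pi i}\oint_\gamma\log\lambda\,(\lambda I-B)^{-2}\,d\lambda=B^{-1}$ (cleanest by integration by parts on the closed contour, which turns it into the functional calculus of $\lambda\mapsto 1/\lambda$, legitimate since $\gamma$ does not wind around $0$) gives the claim. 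What your route buys is robustness: it never uses symmetry of $\nabla_{g,\xi}B$ or pointwise diagonalization, only that $\nabla_{g,\xi}B$ lies in the commutant of $B$ and that the spectrum stays away from the branch cut, at the price of invoking functional-calculus machinery for what the paper does in five lines of linear algebra. One caveat: the ``purely formal'' variant you prefer at the end is essentially the paper's proof, but as you state it it assumes rather than proves the key point, namely that $[\log B,\nabla_{g,\xi}\log B]=0$ follows from $[B,\nabla_{g,\xi}B]=0$; establishing that implication (the paper does it through simultaneous diagonalization of $e^A$ and $\nabla_{g,\xi}e^A$) is exactly the content of the lemma's proof, and invoking the derived formula $\nabla_{g,\xi}\log B=B^{-1}\nabla_{g,\xi}B$ to check it would be circular. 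So keep the resolvent argument as your actual proof and treat the exponential identity only as motivation, or supply the missing commutation step.
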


\begin{proof}
  We set $A \assign \log B$ and we observe that by definition $\left[ B, A
  \right] = 0$, i.e $\left[ e^A, A \right] = 0$. The assumption $\left[ e^A,
  \nabla_{g, \xi} \,e^A \right] = 0$ is equivalent to the condition $\left[ A,
  \nabla_{g, \xi} \,e^A \right] = 0$ since the endomorphisms $e^A, \nabla_{g,
  \xi} \,e^A$ can be diagonalized simultaneously. Thus deriving the identity
  $\left[ e^A, A \right] = 0$ we infer $\left[ \nabla_{g, \xi} \,A, e^A \right]
  = 0$, which is equivalent to $\left[ \nabla_{g, \xi} \,A, A \right] = 0$. But
  this last implies
  \begin{eqnarray*}
    \nabla_{g, \xi}\, e^A \;\; = \;\; \nabla_{g, \xi} \,A \,e^A \;\;=\;\; e^A \nabla_{g, \xi}\, A\; .
  \end{eqnarray*}
  We infer $\nabla_{g, \xi} \,A = e^{- A} \nabla_{g, \xi} \,e^A$, i.e the required
  conclusion.
\end{proof}

We show now the following key lemma.

\begin{lemma}
  \label{flat-Scat-Space}For any $g_0 \in \mathcal{M}$ hold the identities
  \begin{eqnarray}
    \label{Iflat-Scat-Space} \Sigma_K (g_0) &\assign& \mathbbm{F}^K_{g_0} \cap
    \mathcal{M}\;\;=\;\; \exp_{G, g_0} \left( \mathbbm{F}^K_{g_0} \right)\;,
\\\nonumber
\\
    \label{2flat-Scat-Space} T_{\Sigma_{^{_K}} (g_0), g} &=&\mathbbm{F}^K_g\,,\quad
    \forall g \in \Sigma_K (g_0) \;.
  \end{eqnarray}
  Moreover $\Sigma_K (g_0)$ is a totally geodesic and flat sub-variety inside the
  non-positively curved Riemannian manifold $(\mathcal{M}, G) .$
\end{lemma}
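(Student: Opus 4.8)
The plan is to prove the three assertions in turn, the key being the explicit description \eqref{Iflat-Scat-Space}, from which flatness, total geodesy, and the tangent-space identity \eqref{2flat-Scat-Space} follow by combining the earlier conservation lemmas with the structure of the metric $G$. First I would establish the inclusion $\exp_{G,g_0}(\mathbbm{F}^K_{g_0}) \subseteq \mathbbm{F}^K_{g_0}\cap\mathcal{M}$. Given $v \in \mathbbm{F}^K_{g_0}$, the $G$-geodesic emanating from $g_0$ in the direction $v$ is $g_t = g_0\,e^{t\,g_0^{-1}v}$ by \eqref{expr-geod}, and Lemma \ref{invar-F-K} tells us precisely that $\dot g_t = v\,e^{t g_0^{-1}v} \in \mathbbm{F}^K_{g_t}$ for all $t$; in particular $\dot g_t$ is a symmetric form commuting (together with all its covariant derivatives) with $K$ and $\mathcal{R}_{g_t} = \mathcal{R}_{g_0}$. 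Since $\mathbbm{F}^K_{g_0}$ is a \emph{vector space} and, by Lemma \ref{mut-exp}, closed under multiplication by $e^{v^\ast_{g_0}}$, one gets $g_1 - g_0 = g_0(e^{g_0^{-1}v}-1) \in \mathbbm{F}^K_{g_0}$, hence $g_1 = \exp_{G,g_0}(v) \in \mathbbm{F}^K_{g_0}\cap\mathcal{M}$. (Here I use that the defining conditions of $\mathbbm{F}^K$ — symmetry, vanishing of $\nabla_{T_X,g_0}$ applied to powers, and the bracket conditions with $K$ and $\mathcal{R}_{g_0}$ — are all conditions referring to the \emph{fixed} connection and curvature of $g_0$, so membership of $g_1-g_0$ can be tested there.)

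Next I would prove the reverse inclusion $\mathbbm{F}^K_{g_0}\cap\mathcal{M} \subseteq \exp_{G,g_0}(\mathbbm{F}^K_{g_0})$. Given $g \in \mathbbm{F}^K_{g_0}\cap\mathcal{M}$, set $v := g - g_0 \in \mathbbm{F}^K_{g_0}$. Since $g, g_0$ are both positive and, by the $\mathbbm{F}^K_{g_0}$-condition, $g_0^{-1}g$ commutes with all $\nabla^p_{g_0}$ of itself and with $K$, Lemma \ref{cov-der-log} applies to $B := g_0^{-1}g > 0$, giving $\nabla_{g_0,\xi}\log(g_0^{-1}g) = (g_0^{-1}g)^{-1}\nabla_{g_0,\xi}(g_0^{-1}g)$; iterating, $w := g_0\log(g_0^{-1}g)$ lies in $\mathbbm{F}^K_{g_0}$ and satisfies $g_0 e^{g_0^{-1}w} = g$, i.e. $g = \exp_{G,g_0}(w)$. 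This proves \eqref{Iflat-Scat-Space}. I expect the verification that $w\in\mathbbm{F}^K_{g_0}$ — i.e. that taking a matrix logarithm preserves all the covariant-bracket conditions simultaneously — to be the main obstacle; the point is that $g_0^{-1}g$ and all its iterated $g_0$-covariant derivatives commute with $K$ and hence (by the ``$[K,M]=[K,N]=0 \Rightarrow [M,N]=0$'' remark preceding Corollary \ref{invar-FK}) pairwise, so they are simultaneously diagonalizable and the power-series manipulations of Lemma \ref{cov-der-log} go through at each differentiation order; this is the same bookkeeping as in Lemmas \ref{mut-exp} and \ref{invar-F-K} and should be dispatched by the induction already set up there.

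For \eqref{2flat-Scat-Space}: the inclusion $T_{\Sigma_K(g_0),g} \subseteq \mathbbm{F}^K_g$ follows because any smooth curve in $\Sigma_K(g_0)$ through $g$ is, by \eqref{Iflat-Scat-Space} applied with base point $g$ (note $\Sigma_K(g) = \Sigma_K(g_0)$ once $g \in \Sigma_K(g_0)$, by symmetry of the relation just proved), a curve of $G$-geodesics, so its velocity at $g$ lies in $\mathbbm{F}^K_g$ by Lemma \ref{invar-F-K}; conversely, for $v\in\mathbbm{F}^K_g$ the geodesic $t\mapsto g\,e^{t g^{-1}v}$ stays in $\Sigma_K(g_0) = \mathbbm{F}^K_g \cap \mathcal{M} = \exp_{G,g}(\mathbbm{F}^K_g)$ by the same lemma, exhibiting $v$ as a tangent vector. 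Finally, total geodesy is immediate from \eqref{Iflat-Scat-Space}: through any $g\in\Sigma_K(g_0)$ and in any direction $v\in T_{\Sigma_K(g_0),g}=\mathbbm{F}^K_g$ the ambient $G$-geodesic $g\,e^{tg^{-1}v}$ stays in $\Sigma_K(g_0)$. For flatness, I would note that along such geodesics $\dot g^\ast_t \equiv \dot g^\ast_0$ and $\nabla^{(p)}_{g_t}\dot g^\ast_t$ is $t$-independent (equation \eqref{pcov-cst-Geo}), and moreover any two elements $u,v\in\mathbbm{F}^K_g$ have $u^\ast_g, v^\ast_g$ commuting along with all their covariant derivatives (again by the $K$-diagonalization remark); since $\Sigma_K(g_0)$ is totally geodesic, its intrinsic curvature equals the restriction of the curvature of $(\mathcal{M},G)$, and the known formula for the latter (a combination of brackets of the $v^\ast_g$'s and their derivatives, cf. \cite{Pal1}) vanishes identically on $\mathbbm{F}^K_g\times\mathbbm{F}^K_g$ precisely because of this simultaneous commutativity. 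That $(\mathcal{M},G)$ is non-positively curved is recalled from \cite{Pal1} and needs no argument here.
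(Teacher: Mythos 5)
Your handling of \eqref{Iflat-Scat-Space} is sound and essentially follows the paper's Step I: the inclusion $\exp_{G,g_0}(\mathbbm{F}^K_{g_0})\subseteq\Sigma_K(g_0)$ via powers of $\dot g^{\ast}_0$ and repeated use of (\ref{nul-triv-com}) (equivalently Lemma \ref{mut-exp}), and the reverse inclusion via $w=g_0\log(g_0^{-1}g)$ and Lemma \ref{cov-der-log}, which is the paper's explicit Step I(B2). The genuine gap is in your treatment of \eqref{2flat-Scat-Space}. You write that ``$\Sigma_K(g)=\Sigma_K(g_0)$ once $g\in\Sigma_K(g_0)$, by symmetry of the relation just proved'', and both inclusions for the tangent space, as well as your total geodesy argument, rest on this re-basing. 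But $\Sigma_K(g)=\Sigma_K(g_0)$ is equivalent to $\mathbbm{F}^K_g=\mathbbm{F}^K_{g_0}$, which is precisely the content of \eqref{2flat-Scat-Space} and is \emph{not} a formal consequence of \eqref{Iflat-Scat-Space}: the two spaces are defined through different connections $\nabla_g$ and $\nabla_{g_0}$, different curvature operators $\mathcal{R}_g$ and $\mathcal{R}_{g_0}$, and different identifications $v\mapsto g^{-1}v$ versus $v\mapsto g_0^{-1}v$. No symmetry has been established at this point; even the single statement $g_0\in\mathbbm{F}^K_g$ already requires comparing iterated covariant derivatives taken with respect to the two different connections. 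What \eqref{Iflat-Scat-Space} plus the differential computation gives you for free is only $T_{\Sigma_K(g_0),g}=\mathbbm{F}^K_{g_0}$; the identification of this space with $\mathbbm{F}^K_g$ is the actual work.

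That work is the paper's Step II: along the geodesic joining $g_0$ to $g$ one has $\mathcal{R}_{g_t}\equiv\mathcal{R}_{g_0}$ and $[K,\nabla_{g_t}\dot g^{\ast}_t]\equiv 0$ by Lemma \ref{invar-F-K}, hence by (\ref{flat-vr-LC}) the key identity (\ref{inv-cov}), namely $\nabla_g H=\nabla_{g_0}H$ for every $H$ commuting with $K$; this is then fed into an induction on the order of differentiation, establishing simultaneously the equivalence (\ref{ind-com-cov}) of the bracket conditions relative to $g_0$ and to $g$, and the equality (\ref{pcov-cst-vrH}) of the iterated covariant derivatives themselves. Without some version of this argument your proof of \eqref{2flat-Scat-Space} is circular, and the same unproved re-basing contaminates your converse inclusion and your total geodesy claim (which needs $\Sigma_K(g_0)=\exp_{G,g}(\mathbbm{F}^K_g)$ at every $g$). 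Your flatness argument itself — vanishing second fundamental form, Gauss equation, and the vanishing of the ambient bracket curvature formula on pairwise commuting elements of $\mathbbm{F}^K_g$ — is correct and coincides with the paper's Step III, but it only becomes available once the tangent-space identity is genuinely proved.
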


\begin{proof}
  {\tmstrong{Step I (A)}}
  
  We observe first the inclusion $\Sigma_K (g_0) \supseteq \exp_{G, g_0}
  \left( \mathbbm{F}^K_{g_0} \right) .$ In fact let $(g_t)_{t \in
  \mathbbm{R}}$ be a geodesic such that $\dot{g}_0 \in \mathbbm{F}_{g_0}^K$.
  Then using the expression (\ref{expr-geod}) of the geodesics we obtain
  \begin{eqnarray*}
    \nabla_{_{T_X, g_0}} (g^{- 1}_0 g_t) \;\;=\;\; \nabla_{_{T_X, g_0}} e^{t
    \dot{g}^{\ast}_0} \;\;=\;\; 0\,,\quad \forall t \in \mathbbm{R}\;,
  \end{eqnarray*}
  since the last equality is equivalent to the condition $\dot{g}_0 \in
  \mathbbm{F}_{g_0}^{\infty}$. Moreover the condition
  \begin{eqnarray*}
    \left[ T, \nabla^p_{g_0, \xi} \,e^{t \dot{g}^{\ast}_0} \right] \;\;=\;\; 0\,,\quad
    \forall t \in \mathbbm{R}\,,\; \forall p \in \mathbbm{Z}_{\geqslant 0}\;,
  \end{eqnarray*}
  is equivalent to the identity
$$
\left[ T, \nabla^p_{g_0, \xi} ( \dot{g}^{\ast}_0)^q
  \right] \;\;=\;\; 0\;,
$$
 for all $p, q \in \mathbbm{Z}_{\geqslant 0}$ and this last
  follows from a repetitive use of the identity (\ref{nul-triv-com}). We
  conclude the inclusion $\Sigma_K (g_0) \supseteq \exp_{G, g_0} \left(
  \mathbbm{F}^K_{g_0} \right) .$
  
  {\tmstrong{Step I (B1)}}
  
  We observe now that for any smooth curve $(u_t)_{t \in (- \varepsilon,
  \varepsilon)} \in \mathbbm{F}^K_{g_0}$ the identity $[K, g^{- 1}_0 u_t] = 0$
  implies $[K, g^{- 1}_0 \dot{u}_t] = 0$ and thus $[g^{- 1}_0 u_t, g^{- 1}_0
  \dot{u}_t] = 0$. We infer that the differential 
$$
D_u \exp_{G, g_0} :
  \mathbbm{F}^K_{g_0} \longrightarrow \mathbbm{F}^K_{g_0}\;,
$$ of the exponential map
  $\exp_{G, g_0} : \mathbbm{F}^K_{g_0} \longrightarrow \Sigma_K (g_0)$ at a point
  $u \in \mathbbm{F}^K_{g_0}$ is given by the formula
  \begin{eqnarray*}
    D_u \exp_{G, g_0} (v) \;\; = \;\; v\, e^{g^{- 1}_0 u}\;,
  \end{eqnarray*}
  for all $v \in \mathbbm{F}^K_{g_0}$. We deduce by lemma \ref{mut-exp} that
  this differential map is an isomorphism. This combined with the fact that
  the exponential map is injective implies that $\exp_{G, g_0}
  (\mathbbm{F}^K_{g_0})$ is an open subset of $\Sigma_K (g_0)$. But $\exp_{G,
  g_0} (\mathbbm{F}^K_{g_0})$ is also a closed set of $\mathcal{M}$ and thus a
  closed set of $\Sigma_K (g_0)$. The fact that this last is connected implies
  the required equality (\ref{Iflat-Scat-Space}).
  
  {\tmstrong{Step I (B2)}}
  
  We give now an explicit proof of the inclusion $\Sigma_K (g_0) \subseteq
  \exp_{G, g_0} \left( \mathbbm{F}^K_{g_0} \right) .$ (This is useful also
  for other considerations.) Indeed we show that if $g \in \Sigma_K (g_0)$
  then $g_0 \log (g_0^{- 1} g) \in \mathbbm{F}^K_{g_0}$. We observe first that
  the assumptions $[K, g^{- 1}_0 g] = 0$, and $[K, \nabla_{g_0} (g^{- 1}_0 g)]
  = 0$, imply
  \begin{eqnarray*}
    \left[ g^{- 1}_0 g, \nabla_{g_0, \xi} (g^{- 1}_0 g) \right] \;\; = \;\; 0\;,
  \end{eqnarray*}
  which allows to apply lemma \ref{cov-der-log} in order to obtain the formula
  \begin{equation}
    \label{der-logG} \nabla_{g_0, \xi} \log (g^{- 1}_0 g) \;\;=\;\; g^{- 1} g_0
    \nabla_{g_0, \xi} (g^{- 1}_0 g) \;.
  \end{equation}
  Thus the assumption $\nabla_{_{T_{X, g_0}}} (g_0^{- 1} g) = 0$ implies the
  identity
$$
\nabla_{_{T_{X, g_0}}} \log (g_0^{- 1} g) \;\;=\;\; 0\; . 
$$
  Let $\varepsilon > 0$ be sufficiently small such that $\varepsilon g < 2
  g_0$. Then the expansion
  \begin{eqnarray*}
    \log (\varepsilon\, g_0^{- 1} g) \;\; =\;\; \sum_{p = 1}^{+ \infty}  \frac{(-
    1)^{p + 1}}{p}  \left( \varepsilon\, g_0^{- 1} g \;\,-\;\,\mathbbm{I} \right)^p\,,
  \end{eqnarray*}
  implies $[T, \log (\varepsilon g_0^{- 1} g)] = 0$, and thus $[T, \log
  (g_0^{- 1} g)] = 0$. Moreover the identity (\ref{triv-com}) implies
  \begin{eqnarray*}
    g^{- 1}_0 g \left[ T, g^{- 1} g_0 \right] \;\;=\;\; \left[ T, \mathbbm{I}
    \right] \;\;=\;\; 0\,,
  \end{eqnarray*}
  and thus $[T, g^{- 1} g_0] = 0$ which combined with the formula
  \begin{eqnarray*}
    \nabla_{g_0, \xi} (g^{- 1} g_0)\;\;=\;\; - g^{- 1} g_0 \,\nabla_{g_0, \xi}
    (g^{- 1}_0 g) \,g^{- 1} g_0\;,
  \end{eqnarray*}
  and with the identity (\ref{nul-triv-com}) implies the equality $[T, \nabla_{g_0} (g^{-
  1}_0 g)] = 0$. We infer $[T, \nabla^p_{g_0, \xi} (g^{- 1}_0 g)] = 0$ by a
  simple induction based on a repetitive use of the identity
  (\ref{nul-triv-com}). We conclude
  \begin{eqnarray*}
    \left[ T, \nabla^p_{g_0, \xi} \log (g^{- 1}_0 g) \right] \;\; = \;\; 0\;,
  \end{eqnarray*}
  by deriving the identity (\ref{der-logG}) and using (\ref{nul-triv-com}).
  
  {\tmstrong{Step II}}
  
We show now the identity (\ref{2flat-Scat-Space}), i.e the identity $\mathbbm{F}^K_{g_0}=\mathbbm{F}^K_g$. We can consider,
  thanks to the equality (\ref{Iflat-Scat-Space}), a geodesic $(g_t)_{t \in
  \mathbbm{R}} \subset \Sigma_K (g_0)$ joining $g = g_1$ with $g_0$. We
  observe also that lemma \ref{invar-F-K} combined with the variation formula
  (\ref{col-vr-Rm}) implies the identity $\mathcal{R}_{g_t} \equiv \mathcal{R}_{g_0}
  =\mathcal{R}_g$. 
Moreover 
$\left[ K,\nabla_{g_t} \, \dot{g}^{\ast}_t \right] \equiv 0$, thanks to lemma \ref{invar-F-K}.
Then the variation formula (\ref{flat-vr-LC}) implies
\begin{eqnarray}
  \nabla_g H \;\; = \;\; \nabla_{g_0} H\,,\quad \forall H \in C^{\infty} (X, \tmop{End}
  (T_X)) \;:\; [K, H] \;\;=\;\; 0\;.  \label{inv-cov}
\end{eqnarray}
On the other hand using (\ref{triv-com}) we
obtain the equalities
\begin{eqnarray*}
   \left[ T, g^{- 1}_0 v \right] \;\;=\;\; \left[ T, (g^{- 1}_0 g) (g^{- 1} v)
  \right]\;\; =\;\; (g^{- 1}_0 g) \left[ T, g^{- 1} v \right]\; .
\end{eqnarray*}
Thus $[T, g^{- 1}_0 v] = 0$ iff $[T, g^{- 1} v] = 0$. This last for $T = K$
implies
\begin{equation}
  \label{cov-cstH} \nabla_g (g^{- 1} v) \;\;=\;\; \nabla_{g_0} (g^{- 1} v)\;,
\end{equation}
thanks to (\ref{inv-cov}). We consider the identities
  \begin{eqnarray*}
\nabla_{_{T_{X, g_0}}} (g_0^{- 1}  v)
 & = & \nabla_{_{T_{X, g_0}}}  \big[ (g^{- 1}_0 g) (g^{- 1}  v)\big]\\
    &  & \\
    & = & g^{- 1}  v \,\nabla_{_{T_{X, g_0}}} (g^{- 1}_0 g) \;\,+\;\, g^{- 1}_0
    g\, \nabla_{_{T_{X, g_0}}} (g^{- 1}  v)\\
    &  & \\
    & = & g^{- 1}_0 g \,\nabla_{_{T_{X, g}}} (g^{- 1}  v)\;,
  \end{eqnarray*}
since $g \in \Sigma_K (g_0)$ and thanks to (\ref{cov-cstH}). We deduce $v \in \mathbbm{F}_{g_0}$ iff $v \in \mathbbm{F}_g$ provided that $[K, g^{- 1} v] = 0$.
We show now by induction on $p \geqslant 0$ the
properties
\begin{equation}
  \label{ind-com-cov}  \left[ T, \nabla^{(r)}_{g_0, \xi} (g_0^{- 1} v) \right]
  \;=\; 0\; \Longleftrightarrow \;\left[ T, \nabla^{(r)}_{g, \xi} (g^{- 1} v) \right]
  \;=\; 0\,, \;\forall \xi \in C^{\infty} (X, T_X)^{\oplus r},
\end{equation}
and
\begin{equation}
  \label{pcov-cst-vrH} \nabla^{(r + 1)}_{g, \xi} (g^{- 1} v) \;\;=\;\; \nabla^{(r
  + 1)}_{g_0, \xi} (g^{- 1} v)\,,\quad \forall \xi \in C^{\infty} (X, T_X)^{\oplus (r
  + 1)}\,,
\end{equation}
for all $r = 0, \ldots, p$. This properties hold true for $p = 0$ as we
observed previously. The assumption $g \in \Sigma_K (g_0)$ combined with
(\ref{triv-com}) implies the identity.
\begin{eqnarray*}
  \left[ T, \nabla^{(p)}_{g_0, \xi} (g_0^{- 1} v) \right] \;\; = \;\; \sum_{r = 0}^p\,
  \sum_{| I| = r} \nabla^{(p - r)}_{g_0, \xi_{\complement I}} (g^{- 1}_0 g)
  \left[ T, \nabla^{(r)}_{g_0, \xi_I} (g^{- 1} v) \right] \;.
\end{eqnarray*}
We assume that the step $p - 1$ of the induction hold true. We infer the
equality
\begin{eqnarray*}
  &  & \left[ T, \nabla^{(p)}_{g_0, \xi} (g_0^{- 1} v) \right]\;\;=\;\; g_0^{- 1} g
  \left[ T, \nabla^{(p)}_{g, \xi} (g^{- 1} v) \right]\;,
\end{eqnarray*}
which implies (\ref{ind-com-cov}) for $r = p$. Moreover the identity
$$
\left[ K, \nabla^{(p)}_{g, \xi} (g^{- 1} v)\right] \;\;=\;\; 0\;, 
$$
implies the equalities
\begin{eqnarray*}
  \nabla_g \nabla^{(p)}_{g, \xi} (g^{- 1} v) \;\;= \;\; \nabla_{g_0}
  \nabla^{(p)}_{g, \xi} (g^{- 1} v) \;\;=\;\; \nabla_{g_0} \nabla^{(p)}_{g_0,
  \xi} (g^{- 1} v)\;,
\end{eqnarray*}
thanks to (\ref{inv-cov}) and to the inductive assumption. We obtain
(\ref{pcov-cst-vrH}) for $r = p$ and thus the conclusion of the induction.

{\tmstrong{Step III}}
  
  We show now the last statement of the lemma. We observe indeed that the
  identities $\Sigma_{^{_K}} (g_0) =\mathcal{M} \cap \mathbbm{F}^K_g =
  \exp_{G, g} \left( \mathbbm{F}^K_g \right)$, hold thanks to the equalities
  (\ref{2flat-Scat-Space}) and (\ref{Iflat-Scat-Space}). But this implies that
  the second fundamental form of $\Sigma_K (g_0)$ inside $(\mathcal{M}, G)$
  vanishes identically. Thus using Gauss equation, the identity
  (\ref{2flat-Scat-Space}) and the expression of the curvature tensor
\begin{equation*}
  \label{curvat-formula} \mathcal{R}_{\mathcal{M}} (g) (u, v) w\;\; =\;\; -\;\, \frac{1}{4}\; g \Big[
  \left[ u_g^{\ast} \hspace{0.25em}, v^{\ast}_g \right], w^{\ast}_g \Big]\; ,
\end{equation*}
 we infer the equalities of the curvature forms $R_{\Sigma_{^{_K}}
  (g_0)} (g) = R_{\mathcal{M}} (g)_{\mid \mathbbm{F}^K_g } \equiv 0$ for all
  $g \in \Sigma_{^{_K}} (g_0)$. This concludes the proof of lemma
  \ref{flat-Scat-Space}.
\end{proof}

\section{Reinterpretation of the space $\mathbbm{F}^K_g$}

In this section we conciliate the definition of the vector space $\mathbbm{F}^K_g$ given
in the section \ref{main-res} with the definition so far used. 
\\
We consider indeed
$K \in C^{\infty} (X, \tmop{End} (T_X))$ with $n$-distinct real eigenvalues
almost everywhere over $X$. If $A, B \in C^{\infty} (X, \tmop{End} (T_X))$
commute with $K$ then $[A, B] = 0$ over $X$. We observe that this is all we
need in order to make work the previous arguments. Thus all the previous
results hold true if we use such $K$. In this case hold an equivalent
definition of the vector space $\mathbbm{F}^K_g$. We show in fact the
following lemma.

\begin{lemma}
  Let $K \in C^{\infty} (X, \tmop{End} (T_X))$ with $n$-distinct real
  eigenvalues almost everywhere over $X$. Then hold the identity
\begin{equation}
   \label{eq-def-FK} \mathbbm{F}^K_g \;\;=\;\; \Big\{ v \in \mathbbm{F}_g \mid
  \hspace{0.25em}  \left[ \nabla^p_g \,T, v^{\ast}_g \right] \;=\; 0\,,\; T
  \;=\;\mathcal{R}_g\,,\, K\,,\; \forall p \in \mathbbm{Z}_{\geqslant 0} \Big\}\; .
\end{equation}
\end{lemma}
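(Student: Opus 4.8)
The plan is to derive \eqref{eq-def-FK} from the general fact that, for a fixed smooth section $T$ (to be applied with $T=K$ and with $T=\mathcal R_g$) and for $v\in\mathbb F_g$, imposing $[\nabla^p_g T,v^*_g]=0$ for all $p$ is equivalent to imposing $[T,\nabla^p_g v^*_g]=0$ for all $p$; one then applies this with $T=K$ and $T=\mathcal R_g$ and intersects the two resulting descriptions. In particular the $n$-distinctness of the eigenvalues of $K$ will not really enter: it is used elsewhere only to force endomorphisms commuting with $K$ to commute with one another, which plays no role below.

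First I would pass from the derivatives $\nabla^p_g$ to the plain iterated directional derivatives $\nabla^{(p)}_g$ on both sides. On the $v^*_g$-side this is precisely the rewriting recorded just before Lemma~\ref{invar-F-K}, obtained from the expansion of $\nabla^p_{g,\xi}v^*_g$ as $\nabla^{(p)}_{g,\xi}v^*_g$ plus terms $(\nabla_{g,\xi_I}\xi_{\complement I})\neg\nabla^r_g v^*_g$ with $r<p$. The identical expansion holds with $v^*_g$ replaced by $K$ or by $\mathcal R_g$, and a straightforward induction on $p$ — using that the correction terms involve only $\nabla^r_g T$ with $r<p$, which commute with $v^*_g$ once the lower order cases are granted — yields $\{\,[\nabla^p_gT,v^*_g]=0\ \forall p\,\}=\{\,[\nabla^{(p)}_gT,v^*_g]=0\ \forall p\,\}$. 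Thus it suffices to show, for $v\in\mathbb F_g$ and $T\in C^{\infty}(X,\Lambda^{\bullet}T^{\ast}_X\otimes\tmop{End}(T_X))$,
\[
\big\{\,[\nabla^{(p)}_gT,v^*_g]=0\ \ \forall p\,\big\}\quad\Longleftrightarrow\quad\big\{\,[T,\nabla^{(q)}_gv^*_g]=0\ \ \forall q\,\big\}\;.
\]

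For the forward implication I would prove, by induction on $r\geqslant 0$, the statement $P(r)$: $[\nabla^{(p)}_gT,\nabla^{(r)}_gv^*_g]=0$ for \emph{all} $p\geqslant 0$. The base $P(0)$ is the hypothesis. For the inductive step one differentiates the identity $[\nabla^{(p)}_gT,\nabla^{(r-1)}_gv^*_g]=0$ (a case of $P(r-1)$, valid for every $p$) in one further direction $\mu$, using the exact Leibniz rule $\nabla_{g,\mu}[A,B]=[\nabla_{g,\mu}A,B]+[A,\nabla_{g,\mu}B]$ together with $\nabla_{g,\mu}\nabla^{(k)}_{g,\xi}=\nabla^{(k+1)}_{g,(\mu,\xi)}$: the term $[\nabla^{(p+1)}_gT,\nabla^{(r-1)}_gv^*_g]$ produced this way vanishes again by $P(r-1)$, leaving $[\nabla^{(p)}_gT,\nabla^{(r)}_gv^*_g]=0$, and since $p$ and the differentiation directions are arbitrary this is exactly $P(r)$. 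Taking $p=0$ in $P(r)$ for every $r$ gives the right hand condition. The reverse implication is the same argument with the roles of $T$ and $v^*_g$ interchanged (legitimate because $[A,B]=0\Leftrightarrow[B,A]=0$): one defines $Q(p)$: $[\nabla^{(p)}_gT,\nabla^{(q)}_gv^*_g]=0$ for all $q$, uses the right hand condition as $Q(0)$, propagates by induction on $p$ by the same Leibniz step, and reads off $[\nabla^{(p)}_gT,v^*_g]=0$ for all $p$ by taking $q=0$.

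Finally, applying this equivalence with $T=K$ and with $T=\mathcal R_g$ and intersecting identifies the vector space used throughout Sections~\ref{sec-scat-dat} and~\ref{Integ-F} with the space $\{\,v\in\mathbb F_g\mid[\nabla^p_gT,v^*_g]=0,\ T=\mathcal R_g,K,\ \forall p\,\}$ of Section~\ref{main-res}, which is \eqref{eq-def-FK}. I expect the only genuinely delicate point — and the likeliest source of index or sign slips — to be the reduction from $\nabla^p_g$ to $\nabla^{(p)}_g$ on the curvature and on the $K$ side, i.e.\ checking that the frame-correction terms are indeed harmless; everything after that reduction is a clean Leibniz induction carrying the quantifier ``for all $p$'' (respectively ``for all $q$'') through the inductive statement.
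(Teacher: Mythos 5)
Your proof is correct and is essentially the paper's argument: the heart of both is the same Leibniz step — differentiate a vanishing commutator $[\nabla^a_g T,\nabla^b_g v^\ast_g]=0$ and use an already-established case to kill one of the two resulting terms, thereby moving one derivative from $T$ onto $v^\ast_g$ (or back) — organized as a double induction, exactly as in the paper's statements (\ref{iduc-eqFK-I}), (\ref{ind-FK-I}) and (\ref{ind-FK-D}). Your repackaging (first reducing both the $T$-side and the $v$-side to iterated directional derivatives $\nabla^{(p)}_g$, so that the Leibniz rule is exact and the paper's pointwise normalization $\nabla_g\,\xi(x)=0$ is not needed, and carrying the quantifier ``for all $p$'' through the inductive statement so that the two implications decouple) changes only the bookkeeping, not the method.
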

\begin{proof}
  It is sufficient to show by induction on $p \geqslant 1$ that
  \begin{equation}
    \label{iduc-eqFK-I} \forall r \;=\; 0, \ldots, p \,;\; \left[ \nabla^r_g T,
    v^{\ast}_g \right] \;=\; 0\; \Longleftrightarrow \;\left[ T, \nabla^r_{g, \xi}\,
    v^{\ast}_g \right] \;=\; 0\,,\; \forall \xi \in T^{\otimes r}_X \;.
  \end{equation}
  We assume true this statement for $p - 1$ and we show it for $p$. The
  inductive hypothesis implies
  \begin{equation}
    \label{ind-FK-I} \forall r \;=\; 0, \ldots, p - 1 \,;\; \left[ \nabla^{r - s}_g \,T,
    \nabla^s_{g, \xi} \,v^{\ast}_g \right] \;=\; 0\,,\; \forall \xi \in T^{\otimes s}_X,\;
    \forall s \;=\; 0, \ldots, r \;.
  \end{equation}
  We show the statement (\ref{ind-FK-I}) by a finite increasing induction on
  $s$. The statement (\ref{ind-FK-I}) hold true obviously for $s = 0, 1$. We
  assume (\ref{ind-FK-I}) true for $s$ and we show it for $s + 1$. Indeed by
  the inductive assumption on $s$ hold the identity
  \begin{eqnarray*}
    \left[ \nabla^{r - 1 - s}_g \,T, \nabla^s_{g, \xi} \,v^{\ast}_g \right] \;\;=\;\;
    0\;,
  \end{eqnarray*}
  for all $\xi \in C^{\infty} (X, T^{\otimes s}_X)$. We take in particular
  $\xi$ such that $\nabla_g \,\xi (x) = 0$, at some arbitrary point $x \in X$.
  Thus hold the equalities
  \begin{eqnarray*}
    0 & = & \nabla_{g, \eta} \left[ \nabla^{r - 1 - s}_g \,T, \nabla^s_{g, \xi}\,
    v^{\ast}_g \right] \\
    &  & \\
    & = & \left[ \eta \;\neg\; \nabla^{r - s}_g \,T, \nabla^s_{g, \xi} \,v^{\ast}_g
    \right] \;\,+\;\, \left[ \nabla^{r - 1 - s}_g \,T, \nabla^{s + 1}_{g, \eta \otimes
    \xi} \,v^{\ast}_g \right]\\
    &  & \\
    & = & \left[ \nabla^{r - 1 - s}_g \,T, \nabla^{s + 1}_{g, \eta \otimes \xi}\,
    v^{\ast}_g \right]\;,
  \end{eqnarray*}
  thanks to the inductive assumption on $s$. This completes the proof of
  (\ref{ind-FK-I}). The conclusion of the induction on $p$ for
  (\ref{iduc-eqFK-I}) will follow from the statement; for all $s = 0, \ldots, p
  - 1$ hold the equivalence
  \begin{equation}
    \label{ind-FK-D} \left[ \nabla^{p - s}_g \,T, \nabla^s_{g, \xi} \,v^{\ast}_g
    \right] \,=\, 0\,,\, \forall \xi \in T^{\otimes s}_X \,\Longleftrightarrow \,\left[
    \nabla^{p - s - 1}_g \,T, \nabla^{s + 1}_{g, \eta} \,v^{\ast}_g \right] \,=\, 0\,,\,
    \forall \eta \in T^{\otimes s + 1}_X .
  \end{equation}
  By (\ref{ind-FK-I}) for $r = p - 1$ and $s = 0, \ldots, p - 1$ hold the identity
  \begin{eqnarray*}
    \left[ \nabla^{p - 1 - s}_g \,T, \nabla^s_{g, \xi} \,v^{\ast}_g \right] \;\; = \;\;
    0\;,
  \end{eqnarray*}
  for all $\xi \in C^{\infty} (X, T^{\otimes s}_X)$. We take as before $\xi$
  such that $\nabla_g \,\xi (x) = 0$, at some arbitrary point $x \in X$. Thus
  \begin{eqnarray*}
    0 & = & \nabla_{g, \eta} \left[ \nabla^{p - 1 - s}_g \,T, \nabla^s_{g, \xi}\,
    v^{\ast}_g \right] \\
    &  & \\
    & = & \left[ \eta \;\neg\; \nabla^{p - s}_g \,T, \nabla^s_{g, \xi} \,v^{\ast}_g
    \right] \;\,+\;\, \left[ \nabla^{p - 1 - s}_g \,T, \nabla^{s + 1}_{g, \eta \otimes
    \xi} \,v^{\ast}_g \right]\;,
  \end{eqnarray*}
  which shows (\ref{ind-FK-D}) and thus the conclusion of the induction on $p$
  for (\ref{iduc-eqFK-I}). 
\end{proof}

\section{Representation of the $\Omega$-SRF as the gradient flow of the
functional $\mathcal{W}_{\Omega}$ over $\Sigma_K (g_0)$}

The following proposition enlightens the properties of the sub-variety $\Sigma_K
(g_0)$.

\begin{proposition}
  \label{fund-scattering}For any $g_0 \in \mathcal{S}^K_{_{^{\Omega}}}$ and
  any $g \in \Sigma_K (g_0)$ hold the identities
\begin{eqnarray*}
    \nabla_G \mathcal{W}_{\Omega} \,(g) & = & g \;\,-\;\, \tmop{Ric}_g (\Omega) \;\in\;
    T_{\Sigma_K (g_0), g}\;,\\
    &  & \\
    \nabla^{_{^{\Sigma_K (g_0)}}}_G D\, \mathcal{W}_{\Omega} \,(g)\, (v, v) & = &
    \int_X \left[ \left\langle v \tmop{Ric}^{\ast}_g (\Omega), v
    \right\rangle_g \;\,+\;\, \frac{1}{2}\, | \nabla_g \,v|^2_g  \right] \Omega\;,
  \end{eqnarray*}
  for all $v \in T_{\Sigma_K (g_0), g}$. Moreover for all $g_0 \in \mathcal{S}^K_{_{^{\Omega}}}$ the
  $\Omega$-$\tmop{SRF}$ $(g_t)_{t \in \left[ 0, T) \right.} \subset
  \Sigma^{}_K (g_0)$ with initial data $g_0$ represents the formal gradient flow of
  the functional $\mathcal{W}_{\Omega}$ over the totally geodesic and flat
  sub-variety $\Sigma_K (g_0)$ inside the non-positively curved Riemannian
  manifold $(\mathcal{M}, G)$.
\end{proposition}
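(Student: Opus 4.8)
The plan is to deduce the three statements from Perelman's variational formula for $\mathcal{W}_{\Omega}$, from Lemma \ref{flat-Scat-Space} (which gives $T_{\Sigma_K (g_0), g} = \mathbbm{F}^K_g$ and the total geodesy of $\Sigma_K (g_0)$ inside $(\mathcal{M}, G)$) and from Corollary \ref{FK-vr-Om-EndRc} (conservation of the scattering condition).

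First I would record that, by Perelman's computation of the first variation of $\mathcal{W}_{\Omega}$ (see the Introduction and \cite{Per}, \cite{Pal1}), one has $D\mathcal{W}_{\Omega}(g)(v) = \int_X \langle g - \tmop{Ric}_g (\Omega), v \rangle_g\, \Omega$ for all $v$, so that $\nabla_G \mathcal{W}_{\Omega}(g) = g - \tmop{Ric}_g (\Omega)$ as an element of $\mathcal{H}$. It then remains to check that this element lies in $T_{\Sigma_K (g_0), g} = \mathbbm{F}^K_g$ when $g \in \Sigma_K (g_0)$. For this I would use that $\Sigma_K (g_0)$ is connected and that, by (\ref{2flat-Scat-Space}), along any smooth path $(h_s) \subset \Sigma_K (g_0)$ joining $g_0$ to $g$ one has $\dot h_s \in \mathbbm{F}^K_{h_s}$; Corollary \ref{FK-vr-Om-EndRc} then propagates the hypothesis $g_0 \in \mathcal{S}^K_{_{^{\Omega}}}$ to $g \in \mathcal{S}^K_{_{^{\Omega}}}$, i.e. $\tmop{Ric}_g (\Omega) \in \mathbbm{F}^K_g$. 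Since $g^{\ast}_g = \mathbbm{I}$ is parallel and commutes with $K$ and $\mathcal{R}_g$, also $g \in \mathbbm{F}^K_g$, and as $\mathbbm{F}^K_g$ is a vector space the difference $g - \tmop{Ric}_g (\Omega)$ lies in it. In particular this shows $\Sigma_K (g_0) \subseteq \mathcal{S}^K_{_{^{\Omega}}} \subseteq \mathcal{S}_{_{^{\Omega}}}$.

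For the Hessian I would use that $\Sigma_K (g_0)$ is totally geodesic in $(\mathcal{M}, G)$: its intrinsic Hessian at $g$ in a direction $v \in \mathbbm{F}^K_g$ equals $\frac{d^2}{dt^2}\big|_{t=0} \mathcal{W}_{\Omega}(g_t)$ along the ambient $G$-geodesic $g_t = g\, e^{t v^{\ast}_g}$, which by (\ref{expr-geod}) and Lemma \ref{invar-F-K} stays in $\Sigma_K (g_0)$ with $\dot g_t \in \mathbbm{F}^K_{g_t} \subset \mathbbm{F}_{g_t}$ and $\dot g^{\ast}_t \equiv v^{\ast}_g$. From the first variation, $\frac{d}{dt} \mathcal{W}_{\Omega}(g_t) = G_{g_t}(g_t - \tmop{Ric}_{g_t}(\Omega), \dot g_t) = \int_X \tmop{Tr}\big[ (\mathbbm{I} - \rho^{\ast}_t)\, v^{\ast}_g \big]\, \Omega$ with $\rho_t := \tmop{Ric}_{g_t}(\Omega)$; since $\Omega$ and the endomorphism $v^{\ast}_g$ are independent of $t$, differentiating again gives $\frac{d^2}{dt^2} \mathcal{W}_{\Omega}(g_t) = -\int_X \tmop{Tr}\big[ \dot\rho^{\ast}_t\, v^{\ast}_g \big]\, \Omega$. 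Inserting (\ref{col-vrEnOmRc}), which applies because $\dot g_t \in \mathbbm{F}_{g_t}$, in the form $2\dot\rho^{\ast}_t = -\Delta^{^{_{_{\Omega}}}}_{g_t} v^{\ast}_g - 2 v^{\ast}_g \rho^{\ast}_t$, one obtains $\frac{d^2}{dt^2} \mathcal{W}_{\Omega}(g_t) = \int_X \big[ \tmop{Tr}(v^{\ast}_g \rho^{\ast}_t v^{\ast}_g) + \tfrac12 \tmop{Tr}((\Delta^{^{_{_{\Omega}}}}_{g_t} v^{\ast}_g) v^{\ast}_g) \big] \Omega$. Finally I would rewrite $\tmop{Tr}(v^{\ast}_g \rho^{\ast}_t v^{\ast}_g) = \langle v\, \tmop{Ric}^{\ast}_{g_t}(\Omega), v \rangle_{g_t}$ and, using that $\Delta^{^{_{_{\Omega}}}}_{g} = \nabla^{\ast_{_{\Omega}}}_g \nabla_g$ is self-adjoint for $G$ together with $\tmop{Tr}((\Delta^{^{_{_{\Omega}}}}_{g} v^{\ast}_g) v^{\ast}_g) = \langle \Delta^{^{_{_{\Omega}}}}_{g} v, v \rangle_g$, rewrite $\int_X \tfrac12 \tmop{Tr}((\Delta^{^{_{_{\Omega}}}}_{g_t} v^{\ast}_g) v^{\ast}_g)\, \Omega = \tfrac12 \int_X | \nabla_{g_t} v |^2_{g_t}\, \Omega$; evaluating at $t = 0$ yields the stated formula for $\nabla^{_{^{\Sigma_K (g_0)}}}_G D\, \mathcal{W}_{\Omega}(g)(v, v)$.

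For the last assertion, the formal gradient flow of $\mathcal{W}_{\Omega}$ restricted to $\Sigma_K (g_0)$ is, by definition, $\dot g_t = -\nabla^{_{^{\Sigma_K (g_0)}}}_G \mathcal{W}_{\Omega}(g_t)$, i.e. minus the $T_{\Sigma_K (g_0), g_t}$-orthogonal projection of $\nabla_G \mathcal{W}_{\Omega}(g_t)$; but by the first part this gradient already lies in $T_{\Sigma_K (g_0), g_t}$, so the projection is the identity and the equation reads $\dot g_t = \tmop{Ric}_{g_t}(\Omega) - g_t$, which is exactly the $\Omega$-$\tmop{SRF}$ equation. Conversely, since the vector field $g \mapsto \tmop{Ric}_g (\Omega) - g$ is tangent to $\Sigma_K (g_0)$ at every point of $\Sigma_K (g_0)$ (first part, using $\Sigma_K (g_0) \subseteq \mathcal{S}^K_{_{^{\Omega}}}$) and $g_0 \in \Sigma_K (g_0)$, Cauchy uniqueness forces the $\Omega$-$\tmop{SRF}$ with initial datum $g_0$ to remain in $\Sigma_K (g_0) \subseteq \mathcal{S}_{_{^{\Omega}}}$, hence to coincide with the formal gradient flow. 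The only step that genuinely uses the scattering hypothesis $g_0 \in \mathcal{S}^K_{_{^{\Omega}}}$ is the tangency $\nabla_G \mathcal{W}_{\Omega}(g) \in T_{\Sigma_K (g_0), g}$, i.e. the conservation statement of Corollary \ref{FK-vr-Om-EndRc}; once this is in hand, the remaining content is the elementary second-variation computation above together with the standard fact that a totally geodesic submanifold inherits second derivatives of functions along its geodesics.
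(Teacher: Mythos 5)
Your proposal is correct and, for the heart of the matter (the tangency $\nabla_G\mathcal{W}_{\Omega}(g)=g-\tmop{Ric}_g(\Omega)\in T_{\Sigma_K(g_0),g}$), it runs exactly as the paper does: take a path inside $\Sigma_K(g_0)$ from $g_0$ to $g$, note that its velocity lies in $\mathbbm{F}^K$ by (\ref{2flat-Scat-Space}), propagate the scattering condition by Corollary \ref{FK-vr-Om-EndRc}, and add $g\in\mathbbm{F}^K_g$; the paper simply uses the explicit geodesic furnished by (\ref{Iflat-Scat-Space}) where you invoke connectedness, which is an immaterial difference. Where you genuinely diverge is the Hessian: the paper obtains the second variation formula by citing Corollary 1 of \cite{Pal1} together with total geodesy (and remarks it also follows from the tangency via the second fundamental form identity), whereas you rederive it from scratch along the ambient geodesic $g_t=g\,e^{tv^{\ast}_g}$, differentiating Perelman's first variation and inserting (\ref{col-vrEnOmRc}), then integrating by parts with $\nabla^{\ast_{\Omega}}_g$. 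This buys a self-contained proof using only formulas already in the paper, at the cost of implicitly using that the first variation identity holds for the full curve (Perelman's formula for arbitrary variations), which is legitimate since the paper takes that formula as known. Your closing remark that Cauchy uniqueness keeps the $\Omega$-SRF inside $\Sigma_K(g_0)$ should be read only at the formal level (tangency of the vector field $g\mapsto\tmop{Ric}_g(\Omega)-g$ to $\Sigma_K(g_0)$), since no well-posedness theory is available here; this matches the paper's phrasing ``formal gradient flow'' and is not a gap in substance.
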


\begin{proof}
  By the identity (\ref{Iflat-Scat-Space}) in lemma \ref{flat-Scat-Space}
  there exist a geodesic $(g_t)_{t \in \mathbbm{R}} \subset \Sigma_K (g_0)$,
  $\dot{g}_0 \in \mathbbm{F}^K_{g_0}$, joining $g = g_1$ with $g_0$. Then
  lemma \ref{invar-F-K} combined with corollary \ref{FK-vr-Om-EndRc} and with
  the identity (\ref{2flat-Scat-Space}) in lemma \ref{flat-Scat-Space} implies
  $\tmop{Ric}_g (\Omega) \in T_{\Sigma_K (g_0), g}$.
  
  Moreover $g \in T_{\Sigma_K (g_0), g}$ thanks to the identity
  (\ref{2flat-Scat-Space}). We conclude the fundamental property $\nabla_G \mathcal{W}_{\Omega} (g) \in
  T_{\Sigma_K (g_0), g}$ for all $g\in \Sigma_K (g_0)$.
\\
The second variation formula in the statement
  follows directly from corollary 1 in \cite{Pal1} and from the fact that
  $\Sigma_K (g_0)$ is a totally geodesic sub-variety inside the Riemannian
  manifold $(\mathcal{M}, G)$. We observe however that it follows also from
  the tangency property $\nabla_G \mathcal{W}_{\Omega} (g) \in T_{\Sigma_K (g_0),
  g}$.
  Indeed we remind that for any sub-variety $\Sigma \subset \mathcal{M}$ of a
  general Riemannian manifold $(\mathcal{M}, G)$ and for any $f \in C^2
  (\mathcal{M}, \mathbbm{R})$ hold the identity
  \begin{eqnarray*}
    \nabla^{_{^{\mathcal{M}}}}_G d \,f \,(\xi, \eta) \;\; = \;\; \nabla^{_{^{\Sigma}}}_G\,
    d \,f \,(\xi, \eta) \;\,-\;\, G \left( \nabla^{_{^{\mathcal{M}}}}_G f,
    \tmop{II}_G^{_{^{\Sigma}}} (\xi, \eta) \right), \forall \xi, \eta \in
    T_{\Sigma}\;,
  \end{eqnarray*}
  where $\tmop{II}_G^{_{^{\Sigma}}} \in C^{\infty} \left( \Sigma, S^2
  T^{\ast}_{\Sigma} \otimes N_{\Sigma /\mathcal{M}, G} \right)$ denotes the
  second fundamental form of $\Sigma$ inside $(\mathcal{M}, G)$. 
\end{proof}

The result so far obtained does not allow to see yet the $\Omega$-SRF as the
gradient flow of a convex functional inside a flat metric space. In order to
see the required convexity picture we need to make a key change of variables
that we explain in the next sections.

\section{Explicit representations of the $\Omega$-SRF
equation}\label{rep-SRF-PM}

In this section we show the following fundamental expression of the
$\Omega$-BER-tensor over the variety $\Sigma_K (g_0)$.

\begin{lemma}
  \label{Cool-expr-OmRic}Let $g_0 \in \mathcal{M}$. Then for any metric $g \in
  \Sigma_K (g_0)$ hold the expression
  \begin{eqnarray*}
    \tmop{Ric}_g (\Omega) & = & \tmop{Ric}_{g_0} (\Omega) \;\,-\;\, \frac{1}{2}\,
    \Delta^{^{_{_{\Omega}}}}_{g_0}  \left[ g_0 \log (g^{- 1}_0 g) \right] \\
    &  & \\
    & - & \frac{1}{4} \,g_0 \,\tmop{Tr}_{g_0}  \Big[ \nabla_{g_0, \bullet} \log
    (g^{- 1}_0 g) \nabla_{g_0, \bullet} \log (g^{- 1}_0 g) \Big]\;.
  \end{eqnarray*}
\end{lemma}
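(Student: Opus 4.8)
The plan is to compute $\tmop{Ric}_g(\Omega)$ by integrating the first-variation formula for the $\Omega$-Bakry-Emery-Ricci tensor along the $G$-geodesic from $g_0$ to $g$ inside $\Sigma_K(g_0)$. By lemma \ref{flat-Scat-Space}, equality (\ref{Iflat-Scat-Space}), every $g \in \Sigma_K(g_0)$ is of the form $g_s|_{s=1}$ for a geodesic $(g_s)_{s\in\mathbbm{R}}$ with $\dot g_0 = v \in \mathbbm{F}^K_{g_0}$; explicitly $g_s = g_0\, e^{s v^{\ast}_{g_0}}$ by (\ref{expr-geod}), so that $\dot g_s^{\ast} \equiv v^{\ast}_{g_0}$ is constant in $s$ and $g_0 v^{\ast}_{g_0} = g_0 \log(g_0^{-1}g_s)|_{s=1}$. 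Since $\dot g_s \in \mathbbm{F}_{g_s}$ all along the geodesic, the collapsed variation formula (\ref{col-vr-OmRc}) applies and gives
\begin{eqnarray*}
  2\,\frac{d}{ds}\tmop{Ric}_{g_s}(\Omega) \;\;=\;\; -\,\Delta^{^{_{_{\Omega}}}}_{g_s}\,\dot g_s \;\;=\;\; -\,\Delta^{^{_{_{\Omega}}}}_{g_s}\big(g_s\, v^{\ast}_{g_0}\big)\;.
\end{eqnarray*}

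The main technical point is then to show that the $\Omega$-Laplacian on the right-hand side is in fact \emph{independent of $s$}, so that integrating from $0$ to $1$ produces the term $-\tfrac12\,\Delta^{^{_{_{\Omega}}}}_{g_0}[g_0\log(g_0^{-1}g)]$ plus a correction of the stated quadratic form. This is where the hypotheses packaged into $\mathbbm{F}^K_{g_0}$ are used: by lemma \ref{invar-F-K}, $\dot g_s \in \mathbbm{F}^K_{g_s}$ for all $s$, and by the proof of lemma \ref{flat-Scat-Space} (Step II, identities (\ref{inv-cov}) and (\ref{flat-vr-LC})) the Levi-Civita connection $\nabla_{g_s}$ agrees with $\nabla_{g_0}$ on all endomorphisms commuting with $K$ — in particular on $v^{\ast}_{g_0}$ and its powers, on $e^{sv^{\ast}_{g_0}}$, and hence on $g_s^{-1}g_0$, $g_0^{-1}g_s$ and $\log(g_0^{-1}g_s)$. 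Likewise $\nabla_{g_s} f_s$ can be controlled: from $f_s = \log\frac{dV_{g_s}}{\Omega} = f_0 + \tfrac12\tmop{Tr}_{\mathbbm{R}}(s v^{\ast}_{g_0})$ and the fact that $\tmop{Tr}_{\mathbbm{R}} v^{\ast}_{g_0}$ is constant (being a contraction of a $\nabla_{g_0}$-parallel tensor, since $v\in\mathbbm{F}_{g_0}$), one gets $\nabla_{g_s} f_s = \nabla_{g_0} f_0$. Thus $\Delta^{^{_{_{\Omega}}}}_{g_s} = \Delta^{^{_{_{\Omega}}}}_{g_0}$ when applied to sections built from $v^{\ast}_{g_0}$ and $K$-commuting endomorphisms, up to the single place where the second covariant derivative of the \emph{varying} factor $g_s$ itself appears.

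Concretely I would write $g_s\,v^{\ast}_{g_0}$ and expand $\Delta^{^{_{_{\Omega}}}}_{g_s}(g_s v^{\ast}_{g_0}) = \nabla_{g_s}^{\ast_{_{\Omega}}}\nabla_{g_s}(g_s v^{\ast}_{g_0})$ via the Leibniz rule in the variables $g_s$ and $v^{\ast}_{g_0}$. Using $\nabla_{g_s}(g_s) = 0$ (metric compatibility) the naive guess would be that only $g_s\,\Delta^{^{_{_{\Omega}}}}_{g_s}v^{\ast}_{g_0}$ survives, but because $v^{\ast}_{g_0} = g_0^{-1}v$ is \emph{not} $g_s$-parallel for $s\neq 0$ — only $g_0$-parallel — there is a genuine first-order cross term. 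The cleanest bookkeeping is to pass to the variable $\phi_s := \log(g_0^{-1}g_s) = s v^{\ast}_{g_0}$, note $\dot\phi_s = v^{\ast}_{g_0}$ and $\dot g_s = g_s\dot\phi_s$ where $g_s = g_0 e^{\phi_s}$, and use lemma \ref{cov-der-log} together with (\ref{der-logG}) to relate $\nabla_{g_0}\phi_s$ to $\nabla_{g_0}(g_0^{-1}g_s)$. Carrying out the $\nabla_{g_s}^{\ast_{_{\Omega}}}\nabla_{g_s}$ computation with all connections replaced by $\nabla_{g_0}$ (justified above) collapses the $s$-dependence to $g_s = g_0 e^{sv^{\ast}_{g_0}}$ appearing algebraically, and the $ds$-integral of $-\tfrac12\Delta^{^{_{_{\Omega}}}}_{g_0}$ applied to $\frac{d}{ds}(g_s\phi_s$-type terms$)$ telescopes. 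The quadratic residual $-\tfrac14 g_0\tmop{Tr}_{g_0}[\nabla_{g_0,\bullet}\phi_1\,\nabla_{g_0,\bullet}\phi_1]$ is exactly the accumulated cross term $\int_0^1 (\text{bilinear in }\nabla_{g_0}\phi_s\text{ and }\dot\phi_s = \nabla_{g_0}\phi_s\cdot\tfrac1s)$, which integrates to a clean $\tfrac14$-coefficient.

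I expect the main obstacle to be the careful justification that every connection appearing can be replaced by $\nabla_{g_0}$ — in particular verifying that $\nabla_{g_s, e_k}\nabla_{g_s, e_k}(g_s v^{\ast}_{g_0})$, after the Leibniz expansion, only ever differentiates $K$-commuting objects so that (\ref{inv-cov}) applies at each stage — and the clean identification of the $O(\dot\phi^2)$ cross term with the stated $\tfrac14$-trace expression. An alternative, possibly shorter route is to verify the asserted formula by differentiating both sides in $s$ along the geodesic: the right-hand side is explicit in $g_s = g_0 e^{sv^{\ast}_{g_0}}$, its $s$-derivative can be computed directly using $\tfrac{d}{ds}\log(g_0^{-1}g_s) = v^{\ast}_{g_0}$ and $\tfrac{d}{ds}g_s = g_s v^{\ast}_{g_0}$, and one checks it equals $-\tfrac12\Delta^{^{_{_{\Omega}}}}_{g_s}(g_s v^{\ast}_{g_0})$ using (\ref{col-vr-OmRc}); the initial condition at $s=0$ is the trivial identity $\tmop{Ric}_{g_0}(\Omega) = \tmop{Ric}_{g_0}(\Omega)$. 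This reduces the whole statement to a single (still delicate) identity between $\nabla_{g_0}$-covariant expressions, and I would pursue whichever of the two organizations keeps the connection-replacement step most transparent.
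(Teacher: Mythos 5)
Your overall plan (integrate the variation formula (\ref{col-vr-OmRc}) along the geodesic $g_s=g_0e^{sv^{\ast}_{g_0}}$, or equivalently differentiate the claimed formula in $s$ and compare) is legitimate in principle, but it is a different organization from the paper, and as written it rests on a false intermediate claim. You assert that $\tmop{Tr}_{\mathbbm{R}}v^{\ast}_{g_0}$ is constant "being a contraction of a $\nabla_{g_0}$-parallel tensor, since $v\in\mathbbm{F}_{g_0}$", and deduce $\nabla_{g_s}f_s=\nabla_{g_0}f_0$. But $v\in\mathbbm{F}_{g_0}$ only means that $\nabla_{g_0}v$ is totally symmetric ($\nabla_{_{T_X,g_0}}v^{\ast}_{g_0}=0$), not that $v^{\ast}_{g_0}$ is parallel; one has $d\,\tmop{Tr}_{\mathbbm{R}}v^{\ast}_{g_0}=-\,g_0\nabla^{\ast}_{g_0}v^{\ast}_{g_0}$, which is nonzero in general (this divergence is exactly the quantity the paper manipulates, cf. the identity $d\,\tmop{Tr}_{\mathbbm{R}}\log(g_0^{-1}g)=-\,g_0\nabla^{\ast}_{g_0}\log(g_0^{-1}g)$ in its proof, and the evolution (\ref{col-vr-Grad}) of $\nabla_{g_t}f_t$). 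Consequently the drift of $\Delta^{^{_{_{\Omega}}}}_{g_s}$ does change along the geodesic; moreover, even though (\ref{inv-cov}) lets you replace $\nabla_{g_s}$ by $\nabla_{g_0}$ on $K$-commuting \emph{endomorphisms}, the weighted Laplacian also involves the $g_s$-trace and the action of $\nabla_{g_s}$ on the one-form slot of $\nabla_{g_0}v^{\ast}_{g_0}$ (a mixed tensor, where $\dot{\nabla}_{g_s}$ does not drop out). So the central claim that $\Delta^{^{_{_{\Omega}}}}_{g_s}$ coincides with $\Delta^{^{_{_{\Omega}}}}_{g_0}$ on the relevant sections "up to one cross term" is unjustified, and the final step — that the accumulated cross terms integrate to exactly $-\tfrac14\,g_0\tmop{Tr}_{g_0}[\nabla_{g_0,\bullet}U\,\nabla_{g_0,\bullet}U]$ — is asserted rather than proved (the heuristic $\dot{\phi}_s=\nabla_{g_0}\phi_s\cdot\tfrac1s$ is not even type-correct, since $\dot{\phi}_s=v^{\ast}_{g_0}$).

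Note also that your route reduces, after differentiation in $s$, to the identity $\Delta^{^{_{_{\Omega}}}}_{g_s}(g_sv^{\ast}_{g_0})=\Delta^{^{_{_{\Omega}}}}_{g_0}(g_0v^{\ast}_{g_0})+s\,g_0\tmop{Tr}_{g_0}[\nabla_{g_0,\bullet}v^{\ast}_{g_0}\nabla_{g_0,\bullet}v^{\ast}_{g_0}]$, which carries the entire content of the lemma and is of the same difficulty as what must be computed anyway. The paper avoids the flow altogether: it uses $\mathcal{R}_g=\mathcal{R}_{g_0}$ (hence $\tmop{Ric}_g=\tmop{Ric}_{g_0}$) on $\Sigma_K(g_0)$, splits $\log\frac{dV_g}{\Omega}=f_0+\tfrac12\tmop{Tr}_{\mathbbm{R}}\log(g_0^{-1}g)$, writes $\nabla_g=\nabla_{g_0}+\Gamma$ with $2\,\Gamma^{T_X}_{g,\xi}=\nabla_{g_0,\xi}\log(g_0^{-1}g)$ (lemma \ref{cov-der-log}), and then evaluates the Hessian term by term, the Weitzenb\"ock formula of lemma \ref{TX-Lap-RmLap} producing the $-\tfrac12\Delta^{^{_{_{\Omega}}}}_{g_0}$ term and the $\Gamma$-contributions the $-\tfrac14$ quadratic trace. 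To salvage your approach you would have to carry out an equally explicit computation of $\Delta^{^{_{_{\Omega}}}}_{g_s}(g_sv^{\ast}_{g_0})$, tracking the varying trace, drift and connection terms rather than declaring them unchanged.
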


\begin{proof}
  The fact that $\mathcal{R}_g =\mathcal{R}_{g_0}$ for all $g \in \Sigma_K
  (g_0)$ implies also $\tmop{Ric}_g = \tmop{Ric}_{g_0}$. Moreover we observe the elementary identities
  \begin{eqnarray*}
    \log \frac{dV_g}{\Omega} & = & \log \frac{dV_g}{d V_{g_0}} \;\,+\;\, \log
    \frac{dV_{g_0}}{\Omega}\;,\\
    &  & \\
    \frac{dV_g}{d V_{g_0}} & = & \left[ \frac{\det g}{\det g_0} \right]^{1 /
    2} \;\;=\;\; \left[ \det (g^{- 1}_0 g) \right]^{1 / 2}\;,\\
    &  & \\
    \log \frac{dV_g}{\Omega} & = & \frac{1}{2} \,\log \det (g^{- 1}_0 g) \;\,+\;\, \log
    \frac{dV_{g_0}}{\Omega} \\
    &  & \\
    & = & \frac{1}{2} \tmop{Tr}_{_{\mathbbm{R}}} \log (g^{- 1}_0 g) \;\,+\;\, \log
    \frac{dV_{g_0}}{\Omega}\;.
  \end{eqnarray*}
We set $f_0 \assign \log \frac{dV_{g_0}}{\Omega}$ and $\nabla_g = \nabla_{g_0}
  + \Gamma^{T_X}_g$. We infer the equality
  \begin{eqnarray*}
    \tmop{Ric}_g (\Omega) & = & \tmop{Ric}_{g_0} (\Omega) \;\,+\;\,
    \Gamma^{T^{^{_{\ast}}}_X}_g d\, f_0
\\
\\
& +& \frac{1}{2}\, \nabla_{g_0} d
    \tmop{Tr}_{_{\mathbbm{R}}} \log (g^{- 1}_0 g) \;\,+\;\, \frac{1}{2}\,
    \Gamma^{T^{^{_{\ast}}}_X}_g d \tmop{Tr}_{_{\mathbbm{R}}} \log (g^{- 1}_0
    g) \;.
  \end{eqnarray*}
  We observe now that $\Gamma^{T^{^{_{\ast}}}_X}_{g, \xi}  =  -\, \big( \Gamma^{T_X}_{g, \xi}
    \big)^{\ast}$, with
\begin{eqnarray*}
    2 \,\Gamma^{T_X}_{g, \xi} \,\eta & = & g^{- 1} \Big[ \nabla_{g_0} \,g \,(\xi, \eta,
    \cdot) \;\,+\;\, \nabla_{g_0} \,g \,(\eta, \xi, \cdot) \;\,-\;\, \nabla_{g_0} \,g \,(\cdot, \xi,
    \eta)\Big]\\
    &  & \\
    & = & \big(g^{- 1} \nabla_{g_0, \xi} \,g\big) \,\eta\;,
  \end{eqnarray*}
  since $\nabla_{_{T_{X, g_0}}} (g_0^{- 1} g) = 0$. Using lemma
  \ref{cov-der-log} we deduce the identity
  \begin{eqnarray*}
2\, \Gamma^{T_X}_{g, \xi} & = & (g^{- 1}_0 g)^{- 1} \nabla_{g_0, \xi} (g^{-1}_0 g)
\\
    &  & \\
    & = & \nabla_{g_0, \xi} \log (g^{- 1}_0 g)\;,\label{Gamma}
  \end{eqnarray*}
since $\left[ g^{- 1}_0 g, \nabla_{g_0, \xi} (g^{- 1}_0 g) \right]=0$.
Indeed we observe that this last equality follows from the fact that $[K, g^{- 1}_0 g] = 0$ and 
$[K,\nabla_{g_0} (g^{- 1}_0 g)] = 0$.
Thus for any function $u$ hold the
  identity
  \begin{eqnarray*}
    2 \left( \Gamma^{T^{^{_{\ast}}}_X}_{g, \xi} \,d\, u \right) \eta \;\; = \;\; -\;\, g_0
    \left( \nabla_{g_0} u, \nabla_{g_0, \xi} \log (g^{- 1}_0 g) \,\eta \right) \;.
  \end{eqnarray*}
  Using the fact that the endomorphism $\nabla_{g_0, \xi} \log (g^{- 1}_0 g)$
  is $g_0$-symmetric (since $\log (g^{- 1}_0 g)$ is also $g_0$-symmetric) we
  deduce
  \begin{equation}
    \label{form-Gamm} 2 \left( \Gamma^{T^{^{_{\ast}}}_X}_{g, \xi} \,d\, u \right)
    \eta \;\;=\;\; - \;\,g_0 \left( \nabla_{g_0, \xi} \log (g^{- 1}_0 g) \nabla_{g_0} u,
    \eta \right) \;.
  \end{equation}
  We observe that $g \in \Sigma_K (g_0)$ if and only if $g_0 \log (g_0^{- 1}
  g) \in \mathbbm{F}^K_{g_0}$ by the identity (\ref{Iflat-Scat-Space}). In
  particular
  \begin{equation}
    \label{clos-log} \nabla_{_{T_{X, g_0}}} \log (g_0^{- 1} g) \;\;=\;\; 0 \;.
  \end{equation}
  We deduce the equalities
  \begin{eqnarray*}
    2 \left( \Gamma^{T^{^{_{\ast}}}_X}_{g, \xi} \,d\, f_0 \right) \eta & = & - \;\,g_0
    \left( \nabla_{g_0, \nabla_{g_0} f_0} \log (g^{- 1}_0 g) \,\xi, \eta
    \right)\\
    &  & \\
    & = & - \;\,\nabla_{g_0, \nabla_{g_0} f_0}  \left[ g_0 \log (g^{- 1}_0 g)
    \right] (\xi, \eta) \;.
  \end{eqnarray*}
  Let $(e_k)_k$ be a local frame of $T_X$ in a neighborhood of an arbitrary
  point $x \in X$ which is $g$-orthonormal at $x$ and such that $\nabla_{g_0}
  e_k (x) = 0$. Deriving the identity
  \begin{eqnarray*}
    \tmop{Tr}_{_{\mathbbm{R}}} \log (g^{- 1}_0 g) \;\; = \;\; g_0 \big(\log (g^{- 1}_0
    g) \,e_k, g^{- 1}_0 e_k^{\ast}\big)\;,
  \end{eqnarray*}
  we infer at the point $x$
  \begin{eqnarray*}
    \xi . \tmop{Tr}_{_{\mathbbm{R}}} \log (g^{- 1}_0 g) & = & g_0 \left(
    \nabla_{g_0, \xi} \log (g^{- 1}_0 g) \,e_k, e_k \right)\\
    &  & \\
    & = & g_0 \left( \nabla_{g_0, e_k} \log (g^{- 1}_0 g) \,\xi, e_k \right)\\
    &  & \\
    & = & g_0 \left( \xi, \nabla_{g_0, e_k} \log (g^{- 1}_0 g) \,e_k \right)\;,
  \end{eqnarray*}
  thanks to the identity (\ref{clos-log}) and thanks to the fact that the
  endomorphism $\nabla_{g_0, \xi} \log (g^{- 1}_0 g)$ is $g_0$-symmetric. We
  infer the formula
  \begin{eqnarray*}
    \label{der-tr-log} d \tmop{Tr}_{_{\mathbbm{R}}} \log (g^{- 1}_0 g) \;\;=\;\; -\;\, g_0
    \nabla^{\ast}_{g_0} \log (g^{- 1}_0 g) \;.
  \end{eqnarray*}
  Thus using (\ref{clos-log}) we infer the equalities
  \begin{eqnarray*}
    \nabla_{g_0} d \tmop{Tr}_{_{\mathbbm{R}}} \log (g^{- 1}_0 g) & = & - \;\,g_0
    \nabla_{g_0} \nabla^{\ast}_{g_0} \log (g^{- 1}_0 g)\\
    &  & \\
    & = & - \;\,g_0 \,\Delta_{_{T_{X, g_0}}} \log (g_0^{- 1} g)\\
    &  & \\
    & = & - \;\,g_0 \,\Delta_{g_0} \log (g_0^{- 1} g)\\
    &  & \\
    & = & - \;\,\Delta_{g_0}  \left[ g_0 \log (g^{- 1}_0 g) \right]\;,
  \end{eqnarray*}
  by the Weitzenb\"ock formula in lemma \ref{TX-Lap-RmLap} in the appendix and by the identity
  $[\mathcal{R}_g, \log (g_0^{- 1} g)] = 0$. (We remind that $g_0 \log (g_0^{-
  1} g) \in \mathbbm{F}^K_{g_0}$.) We apply now the identity (\ref{form-Gamm})
  to the function $u \assign \tmop{Tr}_{_{\mathbbm{R}}} \log (g^{- 1}_0 g)$.
  We infer by the formula (\ref{der-tr-log}) the equality
  \begin{eqnarray*}
    \nabla_{g_0} u & = & \nabla_{g_0, e_k} \log (g^{- 1}_0 g) e_k .
  \end{eqnarray*}
  Thus we obtain the equality
\begin{eqnarray*}
2 \left( \Gamma^{T^{^{_{\ast}}}_X}_{g, \xi} \,d \tmop{Tr}_{_{\mathbbm{R}}}
    \log (g^{- 1}_0 g) \right) \eta 
\;\;= \;\; -\;\, g_0 \left( \nabla_{g_0, \xi} \log
    (g^{- 1}_0 g) \nabla_{g_0, e_k} \log (g^{- 1}_0 g) \,e_k, \eta \right) \;.
\end{eqnarray*}
  Using the identity $[K, \nabla_{g_0} \log (g^{- 1}_0 g)] = 0$, we deduce the
  expression at the point $x$
  \begin{eqnarray*}
    2 \left( \Gamma^{T^{^{_{\ast}}}_X}_{g, \xi} \,d \tmop{Tr}_{_{\mathbbm{R}}}
    \log (g^{- 1}_0 g) \right) \eta & = & - \;\,g_0 \left( \nabla_{g_0, e_k} \log
    (g^{- 1}_0 g) \nabla_{g_0, \xi} \log (g^{- 1}_0 g) \,e_k, \eta \right) \\
    &  & \\
    & = & - \;\,g_0 \left( \nabla_{g_0, e_k} \log (g^{- 1}_0 g) \nabla_{g_0, e_k}
    \log (g^{- 1}_0 g) \,\xi, \eta \right)\;,
  \end{eqnarray*}
  thanks to (\ref{clos-log}). Combining the expressions obtained so far we
  infer the required formula.
\end{proof}

We remind that by proposition \ref{fund-scattering} follows that if $g_0 \in
\mathcal{S}^K_{_{^{\Omega}}}$ then
\begin{equation}
  \label{fund-scat} \tmop{Ric}_g (\Omega) \;\in\; \mathbbm{F}_{g_0}^K\,,\quad \forall g
  \;\in\; \Sigma_K (g_0) \;.
\end{equation}
We give an other proof (not necessarily shorter but more explicit) of this fundamental fact
based on the expression of $\tmop{Ric}_g (\Omega)$ in lemma
\ref{Cool-expr-OmRic}.

\begin{proof}
  We remind that $g \in \Sigma_K (g_0)$ if and only if $g_0 \log (g_0^{- 1} g)
  \in \mathbbm{F}^K_{g_0}$ by the identity (\ref{Iflat-Scat-Space}). We set
  for notation simplicity 
$$
U \;\;\assign\;\; \log (g^{- 1}_0 g) \;\in\; g^{- 1}_0
  \mathbbm{F}^K_{g_0}\;,
$$ 
and we show first the equality
  \begin{equation}
    \label{closed-endRic} \nabla_{_{T_X, g_0}} g^{- 1}_0 \tmop{Ric}_g (\Omega)
    \;\;=\;\; 0\; .
  \end{equation}
  The assumption $g_0 \in \mathcal{S}^K_{_{^{\Omega}}}$ combined with the
  expression of $\tmop{Ric}_g (\Omega)$ in lemma \ref{Cool-expr-OmRic} implies the identity
  \begin{eqnarray*}
    \nabla_{_{T_X, g_0}} g^{- 1}_0 \tmop{Ric}_g (\Omega) \;\; = \;\; -\;\, \frac{1}{2}\,
    \nabla_{_{T_X, g_0}} \Delta^{^{_{_{\Omega}}}}_{g_0} U \;.
  \end{eqnarray*}
  Consider now $(x_1, ..., x_n)$ be $g_0$-geodesic coordinates centered at an
  arbitrary point $p \in X$ and set $e_k : = \frac{\partial}{\partial x_k}$.
  The local tangent frame $(e_k)_k$ is $g_t (p)$-orthonormal at the point $p$
  and satisfies $\nabla_{g_0} e_j (p) = 0$ for all $j$. 
\\
We take now two vector
  fields $\xi$ and $\eta$ with constant coefficients with respect to the
  $g_0$-geodesic coordinates $(x_1, ..., x_n)$. Therefore $\nabla_{g_0} \xi
  (p) = \nabla_{g_0} \eta (p) = 0$. Commuting derivatives by means of
  (\ref{com-cov}) we infer the identities at the point $p$
  \begin{eqnarray*}
    \nabla_{g_0, \xi} \tmop{Tr}_{g_0} \left( \nabla_{g_0, \bullet} \,U\,
    \nabla_{g_0, \bullet} \,U \right) \eta & = & \nabla_{g_0, \xi}  \left(
    \nabla_{g_0, e_k} U \,\nabla_{g_0, g^{- 1}_0 e^{\ast}_k} U \right) \eta\\
    &  & \\
    & = & \nabla_{g_0, e_k} \nabla_{g_0, \xi}\, U\, \nabla_{g_0, e_k} U \eta\\
    &  & \\
    & + & \nabla_{g_0, e_k} U \,\nabla_{g_0, e_k} \nabla_{g_0, \xi} \,U \eta\\
    &  & \\
    & = & 2\, \nabla_{g_0, e_k} U \,\nabla_{g_0, e_k} \left[ \nabla_{g_0, \xi} \,U
    \eta \right]\;,
  \end{eqnarray*}
  since $[\mathcal{R}_{g_0}, U] = 0$, $[\xi, e_k] \equiv 0$, $[\xi, g^{- 1}_0
  e^{\ast}_k] = 0$ at the point $p$ and because $[\nabla_{g_0, e_k} \nabla_{g_0, \xi}\, U,
  \nabla_{g_0, e_k} U] = 0$. We infer the required identity
  \begin{eqnarray*}
    \nabla_{_{T_X, g_0}} \tmop{Tr}_{g_0} \left( \nabla_{g_0, \bullet}\, U
    \,\nabla_{g_0, \bullet} \,U \right) \;\; = \;\; 0\; .
  \end{eqnarray*}
  We expand now at the point $p$ the therm
  \begin{eqnarray*}
    \nabla_{_{T_X, g_0}} \Delta_{g_0} U (\xi, \eta) & = &  -\;\,
    \nabla_{g_0, \xi}  \big[ \nabla_{g_0, e_k} \nabla_{g_0} U (e_k, \eta) \big]
    \\
\\
&+& \nabla_{g_0, \eta}  \big[ \nabla_{g_0, e_k} \nabla_{g_0} U (e_k, \xi)
    \big] 
\end{eqnarray*}
\begin{eqnarray*}
    & = & - \;\,\nabla_{g_0, \xi} \nabla_{g_0, e_k} \nabla_{g_0, e_k} U \eta \;\,+\;\,
    \nabla_{g_0} U (\nabla_{g_0, \xi} \nabla_{g_0, e_k} e_k, \eta)\\
    &  & \\
    & + & \nabla_{g_0, \eta} \nabla_{g_0, e_k} \nabla_{g_0, e_k} U \xi \;\,-\;\,
    \nabla_{g_0} U (\nabla_{g_0, \eta} \nabla_{g_0, e_k} e_k, \xi)\\
    &  & \\
    & = & - \;\,\nabla_{g_0, e_k} \nabla_{g_0, \xi} \nabla_{g_0, e_k} U \eta \;\,+\;\,
    \nabla_{g_0} U (\nabla_{g_0, \xi} \nabla_{g_0, e_k} e_k, \eta)\\
    &  & \\
    & + & \nabla_{g_0, e_k} \nabla_{g_0, \eta} \nabla_{g_0, e_k} U \xi \;\,-\;\,
    \nabla_{g_0} U (\nabla_{g_0, \eta} \nabla_{g_0, e_k} e_k, \xi)\;,
  \end{eqnarray*}
  since
  \begin{eqnarray*}
    \nabla_{g_0, e_k} \nabla_{g_0} U (e_k, \eta) & = & \nabla_{g_0, e_k} 
    \left[ \nabla_{g_0, e_k} U \eta \right] 
\\
\\
&-& \nabla_{g_0} U (\nabla_{g_0,
    e_k} e_k, \eta) \;\,-\;\, \nabla_{g_0} U (e_k, \nabla_{g_0, e_k} \eta)\\
    &  & \\
    & = & \nabla_{g_0, e_k} \nabla_{g_0, e_k} U \eta \;\,-\;\, \nabla_{g_0} U
    (\nabla_{g_0, e_k} e_k, \eta)\;,
  \end{eqnarray*}
  and $\left[ \xi, e_k \right] = \left[ \eta, e_k \right] \equiv 0$ in a
  neighborhood of $p$ and since $\left[ \mathcal{R}_{g_0}, \nabla_{g_0, e_k} U
  \right] \equiv 0$. (We use here the \ identity (\ref{com-cov}).) Moreover
  using the fact that $\left[ \mathcal{R}_{g_0}, U \right] = 0$ we infer the
  identity at the point $p$
  \begin{eqnarray*}
    \nabla_{_{T_X, g_0}} \Delta_{g_0} U (\xi, \eta) & = & -\;\, \nabla_{g_0, e_k}
    \nabla_{g_0, e_k} \nabla_{g_0, \xi}\, U \eta \;\,+\;\, \nabla_{g_0} U (\nabla_{g_0,
    \xi} \nabla_{g_0, e_k} e_k, \eta)\\
    &  & \\
    & + & \nabla_{g_0, e_k} \nabla_{g_0, e_k} \nabla_{g_0, \eta} \,U \xi \;\,-\;\,
    \nabla_{g_0} U (\nabla_{g_0, \eta} \nabla_{g_0, e_k} e_k, \xi) \;.
  \end{eqnarray*}
  We expand now at the point $p$ the therm
  \begin{eqnarray*}
    0 \;\;=\;\; \Delta_{g_0} \nabla_{_{T_X, g_0}} U (\xi, \eta) \;\; = \;\; - \;\,\nabla_{g_0,
    e_k}  [ \nabla_{g_0, e_k} \nabla_{_{T_X, g_0}} U (\xi, \eta)] \;.
  \end{eqnarray*}
  Thus we expand first the therm
  \begin{eqnarray*}
    \nabla_{g_0, e_k} \nabla_{_{T_X, g_0}} U (\xi, \eta) & = & \nabla_{g_0,
    e_k}  \big[ \nabla_{_{T_X, g_0}} U (\xi, \eta)\big] \\
    &  & \\
    & - & \nabla_{_{T_X, g_0}} U (\nabla_{g_0, e_k} \xi, \eta) \;\,-\;\,
    \nabla_{_{T_X, g_0}} U (\xi, \nabla_{g_0, e_k} \eta)\\
    &  & \\
    & = & \nabla_{g_0, e_k}  \big[ \nabla_{g_0, \xi} \,U \eta \;\,-\;\, \nabla_{g_0,
    \eta} \,U \xi \big]\\
    &  & \\
    & - & \nabla_{g_0} U (\nabla_{g_0, e_k} \xi, \eta) \;\,+\;\, \nabla_{g_0} U
    (\eta, \nabla_{g_0, e_k} \xi)\\
    &  & \\
    & - & \nabla_{g_0} U (\xi, \nabla_{g_0, e_k} \eta) \;\,+\;\, \nabla_{g_0} U
    (\nabla_{g_0, e_k} \eta, \xi)
\\
\\
    & = & \nabla_{g_0, e_k} \nabla_{g_0, \xi} \,U \eta \;\,-\;\, \nabla_{g_0, e_k}
    \nabla_{g_0, \eta} \,U \xi\\
    &  & \\
    & - & \nabla_{g_0} U (\nabla_{g_0, e_k} \xi, \eta) \;\,+\;\, \nabla_{g_0} U
    (\nabla_{g_0, e_k} \eta, \xi)\;,
  \end{eqnarray*}
  in a neighborhood of $p$. We deduce the identity at the point $p$
  \begin{eqnarray*}
    0 \;\;=\;\; \Delta_{g_0} \nabla_{_{T_X, g_0}} U (\xi, \eta) & = & -\;\, \nabla_{g_0,
    e_k} \nabla_{g_0, e_k} \nabla_{g_0, \xi}\, U \eta 
\\
\\
&+& \nabla_{g_0, e_k}
    \nabla_{g_0, e_k} \nabla_{g_0, \eta} \,U \xi\\
    &  & \\
    & + & \nabla_{g_0} U (\nabla_{g_0, e_k} \nabla_{g_0, \xi} \,e_k, \eta) 
\\
\\
&-&\nabla_{g_0} U (\nabla_{g_0, e_k} \nabla_{g_0, \eta} \,e_k, \xi)\;,
  \end{eqnarray*}
  since $\left[ \xi, e_k \right] = \left[ \eta, e_k \right] \equiv 0$. Thus we
  obtain the identity 
  \begin{eqnarray*}
    \nabla_{_{T_X, g_0}} \Delta_{g_0} U (\xi, \eta) & = & \nabla_{g_0} U
    (\mathcal{R}_{g_0} (\xi, e_k) e_k, \eta) \;\,-\;\, \nabla_{g_0} U
    (\mathcal{R}_{g_0} (\eta, e_k) e_k, \xi)\\
    &  & \\
    & = & \nabla_{g_0, \eta} \,U \tmop{Ric}^{\ast}_{g_0} \xi \;\,-\;\, \nabla_{g_0,
    \xi} \,U \tmop{Ric}^{\ast}_{g_0} \eta\;,
  \end{eqnarray*}
  since $g_0 U \in \mathbbm{F}_{g_0}$. We expand now at the point $p$ the
  therm 
  \begin{eqnarray*}
 \nabla_{_{T_X, g_0}}  \big[ \nabla_{g_0} f_0 \;\neg\; \nabla_{g_0} U
    \big] (\xi, \eta) & = & \nabla_{_{T_X, g_0}}  \big[ \nabla_{g_0} U\,
    \nabla_{g_0} f_0 \big] (\xi, \eta)\\
    &  & \\
    & = &  \nabla_{g_0, \xi}  \big[ \nabla_{g_0, \eta} \,U
    \,\nabla_{g_0} f_0 \big] 
\\
\\
&-& \nabla_{g_0, \eta}  \big[ \nabla_{g_0, \xi} \,U
    \,\nabla_{g_0} f_0 \big]\\
    &  & \\
    & = & \nabla_{g_0, \xi} \nabla_{g_0, \eta}\, U \,\nabla_{g_0} f_0 \;\,+\;\,
    \nabla_{g_0, \eta}\, U \,\nabla^2_{g_0} f_0 \,\xi\\
    &  & \\
    & - & \nabla_{g_0, \eta} \nabla_{g_0, \xi} \,U \,\nabla_{g_0} f_0 \;\,-\;\,
    \nabla_{g_0, \xi} \,U\, \nabla^2_{g_0} f_0 \,\eta\\
    &  & \\
    & = & \nabla_{g_0, \eta}\, U\, \nabla^2_{g_0} f_0 \,\xi \;\,-\;\, \nabla_{g_0, \xi} \,U\,
    \nabla^2_{g_0} f_0 \,\eta\;,
  \end{eqnarray*}
  since $\left[ \mathcal{R}_{g_0}, U \right] = 0$ and $\left[ \xi, \eta
  \right] \equiv 0$. We deduce the identity
  \begin{eqnarray*}
    \nabla_{_{T_X, g_0}} \Delta^{^{_{_{\Omega}}}}_{g_0} U \;\; =\;\, -\;\, \tmop{Alt} \big[
    \nabla_{g_0} U \tmop{Ric}^{\ast}_{g_0} (\Omega)\big] \;.
  \end{eqnarray*}
  But $[ \nabla_{g_0} U, \tmop{Ric}^{\ast}_{g_0} (\Omega)] = 0$ since $g_0 \in
  \mathcal{S}^K_{_{^{\Omega}}}$. We infer
  \begin{eqnarray*}
    \nabla_{_{T_X, g_0}} \Delta^{^{_{_{\Omega}}}}_{g_0} U \;\; = \;\; -\;
    \tmop{Ric}^{\ast}_{g_0} (\Omega) \,\nabla_{_{T_X, g_0}} U \;\;=\;\; 0\; .
  \end{eqnarray*}
Moreover 
$$
\big[T,\nabla^p_{g_0, \xi} \,\Delta^{^{_{_{\Omega}}}}_{g_0} U\big]\;\;=\;\;0\;,
$$ 
thanks to the identity (\ref{p-derLap}) applied to $U$.
We observe now the decomposition
  \begin{eqnarray*}
    \nabla^p_{g_0, \xi} \tmop{Tr}_{g_0} \left( \nabla_{g_0, \bullet} \,U\,
    \nabla_{g_0, \bullet} \,U \right) & = & \sum_{r = 0}^p  \,\sum_{| I| = r}
    \nabla^{r + 1}_{g_0, e_k, \xi_I} U \,\nabla^{p - r + 1}_{g_0, g^{- 1}_0
    e^{\ast}_k, \xi_{\complement I}} U \;.
  \end{eqnarray*}
Then the conclusion follows from the identity 
  (\ref{nul-triv-com}) with $C=T$, $A=\nabla^{r + 1}_{g_0, e_k, \xi_I} U$ and $B=\nabla^{p - r + 1}_{g_0, g^{- 1}_0 e^{\ast}_k, \xi_{\complement I}} U$.
\end{proof}

In the following lemma we introduce a fundamental change of variables.

\begin{lemma}
  \label{Cool-expr-EndOmRic}Let $g_0 \in \mathcal{M}$ and set $H \equiv H_g
  \assign (g^{- 1} g_0)^{1 / 2}$ for any metric $g \in \Sigma_K (g_0)$. Then
  hold the expression
  \begin{eqnarray*}
    \tmop{Ric}^{\ast}_g (\Omega) \;\;=\;\; H \Delta^{^{_{_{\Omega}}}}_{g_0} H \;\,+\;\,
    H^2 \tmop{Ric}^{\ast}_{g_0} (\Omega) \;.
  \end{eqnarray*}
\end{lemma}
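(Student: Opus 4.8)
The plan is to derive the formula directly from Lemma \ref{Cool-expr-OmRic} by left-multiplication by $g^{- 1}$. Set $U \assign \log (g^{- 1}_0 g)$. Since $g \in \Sigma_K (g_0)$, the identity (\ref{Iflat-Scat-Space}) gives $g_0 U \in \mathbbm{F}^K_{g_0}$; in particular $[\mathcal{R}_{g_0}, U] = 0$, $[K, U] = 0$ and $[K, \nabla_{g_0, \xi} U] = 0$ for all $\xi$, hence (distinct eigenvalues of $K$) $[U, \nabla_{g_0, \xi} U] = 0$ and all endomorphisms built from $U$ and its iterated $g_0$-covariant derivatives commute pairwise. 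Because $g^{- 1} g_0 = e^{- U}$ is $g$-positive its square root is $H = e^{- U / 2}$, and $g^{- 1} = H^2 g^{- 1}_0$.

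First I would multiply the expression of Lemma \ref{Cool-expr-OmRic} on the left by $g^{- 1} = H^2 g^{- 1}_0$. The cubic term contributes $- \tfrac{1}{4} H^2 \tmop{Tr}_{g_0} [\nabla_{g_0, \bullet} U \, \nabla_{g_0, \bullet} U]$ directly (the $g^{- 1}_0$ cancels the factor $g_0$). For the Laplacian term one uses that, since $[\mathcal{R}_{g_0}, U] = 0$, the Weitzenb\"ock identity of lemma \ref{TX-Lap-RmLap} in the appendix — exactly as already invoked in the proof of Lemma \ref{Cool-expr-OmRic} — gives $g^{- 1}_0 \Delta^{^{_{_{\Omega}}}}_{g_0} (g_0 U) = \Delta^{^{_{_{\Omega}}}}_{g_0} U$, the right-hand $\Omega$-Laplacian now acting on $\tmop{End} (T_X)$-sections. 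Together with $g^{- 1} \tmop{Ric}_{g_0} (\Omega) = H^2 \tmop{Ric}^{\ast}_{g_0} (\Omega)$ this yields
\[
\tmop{Ric}^{\ast}_g (\Omega) \;=\; H^2 \tmop{Ric}^{\ast}_{g_0} (\Omega) \;-\; \tfrac{1}{2} H^2 \Delta^{^{_{_{\Omega}}}}_{g_0} U \;-\; \tfrac{1}{4} H^2 \tmop{Tr}_{g_0} \big[ \nabla_{g_0, \bullet} U \, \nabla_{g_0, \bullet} U \big] \;.
\]

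The heart of the argument is then a direct computation of $\Delta^{^{_{_{\Omega}}}}_{g_0} H$. Since $[U, \nabla_{g_0, \xi} U] = 0$, Lemma \ref{cov-der-log} (equivalently, term-by-term differentiation of the exponential series) gives the covariant chain rule $\nabla_{g_0, \xi} H = - \tfrac{1}{2} H \nabla_{g_0, \xi} U$. Differentiating once more at a point where a chosen $g_0$-orthonormal frame $(e_k)$ satisfies $\nabla_{g_0} e_k = 0$ and tracing, the Leibniz cross term produces $\tfrac{1}{4} H \sum_k (\nabla_{g_0, e_k} U)^2$ and the $\nabla^2 U$ term produces $- \tfrac{1}{2} H \Delta_{g_0} U$ (using that $H$ commutes with $\nabla_{g_0, e_k} U$), so $\Delta_{g_0} H = - \tfrac{1}{2} H \Delta_{g_0} U - \tfrac{1}{4} H \tmop{Tr}_{g_0} [\nabla_{g_0, \bullet} U \, \nabla_{g_0, \bullet} U]$. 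Adding the drift $\nabla_{g_0} f_0 \neg \nabla_{g_0} H = - \tfrac{1}{2} H \nabla_{g_0, \nabla_{g_0} f_0} U$ converts $\Delta_{g_0} U$ into $\Delta^{^{_{_{\Omega}}}}_{g_0} U$:
\[
\Delta^{^{_{_{\Omega}}}}_{g_0} H \;=\; - \tfrac{1}{2} H \Delta^{^{_{_{\Omega}}}}_{g_0} U \;-\; \tfrac{1}{4} H \tmop{Tr}_{g_0} \big[ \nabla_{g_0, \bullet} U \, \nabla_{g_0, \bullet} U \big] \;.
\]
Multiplying this on the left by $H$ (legitimate: $H$ commutes with $\Delta^{^{_{_{\Omega}}}}_{g_0} U$ and with the trace term, all in the commutative family generated by $K$) identifies $H \Delta^{^{_{_{\Omega}}}}_{g_0} H$ with the sum of the last two terms of the displayed expression for $\tmop{Ric}^{\ast}_g (\Omega)$, whence $\tmop{Ric}^{\ast}_g (\Omega) = H \Delta^{^{_{_{\Omega}}}}_{g_0} H + H^2 \tmop{Ric}^{\ast}_{g_0} (\Omega)$.

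I expect the only genuinely delicate points to be the two operator-commutation steps: transferring the factor $g_0$ through $\Delta^{^{_{_{\Omega}}}}_{g_0}$ acting on $U$ (the Weitzenb\"ock cancellation enabled by $[\mathcal{R}_{g_0}, U] = 0$), and justifying the covariant chain rule for $H = e^{- U / 2}$ and the ensuing Leibniz manipulations, both of which rest on the simultaneous diagonalizability of $U$, $\nabla_{g_0, \xi} U$ and $\mathcal{R}_{g_0}$ — i.e. on $g \in \Sigma_K (g_0)$ via $g_0 U \in \mathbbm{F}^K_{g_0}$. Everything else is routine Leibniz-rule bookkeeping.
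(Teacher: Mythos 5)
Your proposal is correct and follows essentially the same route as the paper: the paper sets $A=\log H=-\tfrac{1}{2}\log(g_0^{-1}g)$, rewrites Lemma \ref{Cool-expr-OmRic} as (\ref{quad-RIC}) and then computes $\Delta^{^{_{_{\Omega}}}}_{g_0}e^{A}=e^{A}\Delta^{^{_{_{\Omega}}}}_{g_0}A-e^{A}\tmop{Tr}_{g_0}\left(\nabla_{g_0,\bullet}A\,\nabla_{g_0,\bullet}A\right)$, which is exactly your computation of $\Delta^{^{_{_{\Omega}}}}_{g_0}H$ written in the variable $U=-2A$. The only small inaccuracy is your appeal to the Weitzenb\"ock lemma for $g_0^{-1}\Delta^{^{_{_{\Omega}}}}_{g_0}(g_0U)=\Delta^{^{_{_{\Omega}}}}_{g_0}U$: since $\Delta^{^{_{_{\Omega}}}}_{g_0}$ is the rough ($\Omega$-twisted Bochner) Laplacian and $g_0$ is $\nabla_{g_0}$-parallel, this identity is immediate and needs no curvature commutation.
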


\begin{proof}
  By lemma \ref{Cool-expr-OmRic} we obtain the formula
  \begin{eqnarray*}
    \tmop{Ric}^{\ast}_g (\Omega) & = & g^{- 1} g_0 \tmop{Ric}^{\ast}_{g_0}
    (\Omega) \;\,-\;\, \frac{1}{2}\, g^{- 1} g_0 \,\Delta^{^{_{_{\Omega}}}}_{g_0} \log
    (g^{- 1}_0 g)\\
    &  & \\
    & - & \frac{1}{4}\, g^{- 1} g_0 \tmop{Tr}_{g_0}  \left[ \nabla_{g_0,
    \bullet} \log (g^{- 1}_0 g) \nabla_{g_0, \bullet} \log (g^{- 1}_0 g)
    \right]\;.
  \end{eqnarray*}
  We set 
$$
A \;\;\assign\;\; \log H \;\;=\;\; -\;\, \frac{1}{2}\, \log (g^{- 1}_0 g) \;\,\in\;\, g^{- 1}_0
  \mathbbm{F}^K_{g_0}\;.
$$ 
and we observe that $H = e^A \in g^{- 1}_0 \Sigma_K
  (g_0)$ thanks to lemma \ref{mut-exp}. With this notations the previous
  formula rewrites as
  \begin{equation}
    \label{quad-RIC} \tmop{Ric}^{\ast}_g (\Omega) \;\;=\;\; e^{2 A}  \left[
    \Delta^{^{_{_{\Omega}}}}_{g_0} A \;\,-\;\, \tmop{Tr}_{g_0}  \left( \nabla_{g_0,
    \bullet} A \nabla_{g_0, \bullet} A \right) \;\,+\;\, \tmop{Ric}^{\ast}_{g_0}
    (\Omega) \right]\; .
  \end{equation}
  We expand now, at an arbitrary center of geodesic coordinates, the therm
  \begin{eqnarray*}
    \Delta^{^{_{_{\Omega}}}}_{g_0} e^A & = & - \;\,\nabla_{g_0, e_k} \nabla_{g_0,
    e_k} e^A \;\,+\;\, \nabla_{g_0} f_0 \;\neg\; \nabla_{g_0} e^A\\
    &  & \\
    & = & - \;\,\nabla_{g_0, e_k}  \left( e^A \nabla_{g_0, e_k} A \right) \;\,+\;\, e^A 
    \left( \nabla_{g_0} f_0 \;\neg\; \nabla_{g_0} A \right)\\
    &  & \\
    & = & e^A \Delta^{^{_{_{\Omega}}}}_{g_0} A \;\,-\;\, e^A \tmop{Tr}_{g_0}  \left(
    \nabla_{g_0, \bullet} A \,\nabla_{g_0, \bullet} A \right) \;.
  \end{eqnarray*}
  We infer the expression
  \begin{equation}
    \label{exp-exprRic} \tmop{Ric}^{\ast}_g (\Omega) \;\;=\;\; e^A
    \Delta^{^{_{_{\Omega}}}}_{g_0} e^A \;\,+\;\, e^{2 A} \tmop{Ric}^{\ast}_{g_0}
    (\Omega)\;,
  \end{equation}
  i.e the required conclusion.
\end{proof}
We deduce the following corollary.
\begin{corollary}
  \label{Porous-Medium}The $\Omega$-SRF $(g_t)_t \subset \Sigma_K (g_0)$ is
  equivalent to the porous medium type equation
  \begin{eqnarray*}
    2 \,\dot{H}_t & = & -\;\, H_t^2 \Delta^{^{_{_{\Omega}}}}_{g_0} H_t \;\,-\;\, H_t^3
    \tmop{Ric}^{\ast}_{g_0} (\Omega) \;\,+\;\, H_t\;,
  \end{eqnarray*}
  with initial data $H_0 =\mathbbm{I}$, via the identification $H_t = (g_t^{-
  1} g_0)^{1 / 2} \in g^{- 1}_0 \Sigma_K (g_0)$.
\end{corollary}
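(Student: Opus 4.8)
The plan is to push the $\Omega$-SRF equation $\dot g_t = \tmop{Ric}_{g_t}(\Omega) - g_t$ through the substitution $H_t := (g_t^{-1}g_0)^{1/2}$ and then insert the expression for $\tmop{Ric}^{\ast}_{g_t}(\Omega)$ supplied by Lemma \ref{Cool-expr-EndOmRic}. First I would fix the algebraic setup. Since $(g_t)_t \subset \Sigma_K(g_0)$, for each $t$ the endomorphism $g_t^{-1}g_0$ is $g_0$-symmetric and positive, so its principal square root $H_t$ is a well-defined smooth positive $g_0$-symmetric section; moreover $H_t = e^{-\frac12 \log(g_0^{-1}g_t)} \in g_0^{-1}\Sigma_K(g_0)$, exactly as in the proof of Lemma \ref{Cool-expr-EndOmRic} together with Lemma \ref{mut-exp}. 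In particular $H_t$ commutes with $K$, hence (by the commutation principle of Section \ref{Integ-F}) with $\dot H_t$, with $\Delta^{\Omega}_{g_0}H_t$, and with $\tmop{Ric}^{\ast}_{g_0}(\Omega)$. From $H_t^2 = g_t^{-1}g_0$ we read off $g_t = g_0 H_t^{-2}$ and $g_t^{-1} = H_t^2 g_0^{-1}$.

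Next I would differentiate. Using $[H_t,\dot H_t]=0$ one has $\frac{d}{dt}H_t^{-2} = -2H_t^{-3}\dot H_t$, hence $\dot g_t = -2\, g_0 H_t^{-3}\dot H_t$, and therefore, applying $g_t^{-1} = H_t^2 g_0^{-1}$,
$$\dot g_t^{\ast} \;=\; g_t^{-1}\dot g_t \;=\; -\,2\, H_t^{-1}\dot H_t\;.$$
On the other hand, applying $g_t^{-1}$ directly to the $\Omega$-SRF equation gives $\dot g_t^{\ast} = \tmop{Ric}^{\ast}_{g_t}(\Omega) - \mathbbm{I}$. Substituting the formula of Lemma \ref{Cool-expr-EndOmRic}, namely $\tmop{Ric}^{\ast}_{g_t}(\Omega) = H_t\Delta^{\Omega}_{g_0}H_t + H_t^2\tmop{Ric}^{\ast}_{g_0}(\Omega)$, and equating the two expressions for $\dot g_t^{\ast}$, I would multiply on the left by $-H_t$ to obtain $2\dot H_t = -H_t^2\Delta^{\Omega}_{g_0}H_t - H_t^3\tmop{Ric}^{\ast}_{g_0}(\Omega) + H_t$, with $H_0 = (g_0^{-1}g_0)^{1/2} = \mathbbm{I}$. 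This is the stated porous medium type equation.

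For the converse I would run the same computation backwards: given a solution $(H_t)_t$ of the porous medium equation with $H_0=\mathbbm{I}$ lying in $g_0^{-1}\Sigma_K(g_0)$ (a condition that is propagated by the equation, since the right-hand side preserves the $\mathbbm{F}^K_{g_0}$-structure — cf. Corollary \ref{FK-vr-Om-EndRc} and Lemma \ref{mut-exp}), one sets $g_t := g_0 H_t^{-2}$, which is a metric in $\Sigma_K(g_0)$, and the identities above read backwards show that $(g_t)_t$ solves the $\Omega$-SRF. The one point requiring genuine care is the bookkeeping of commutations: the manipulations $\frac{d}{dt}H_t^{-2} = -2H_t^{-3}\dot H_t$, the collapse $H_t^2 g_0^{-1}g_0 H_t^{-3} = H_t^{-1}$, and the left-multiplication by $-H_t$ are all legitimate only because $H_t$, $\dot H_t$, $\Delta^{\Omega}_{g_0}H_t$ and $\tmop{Ric}^{\ast}_{g_0}(\Omega)$ commute pairwise. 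This is precisely what the flat, totally geodesic structure of $\Sigma_K(g_0)$ from Lemma \ref{flat-Scat-Space} and the conservation of the scattering condition along the flow (Proposition \ref{fund-scattering}, Corollary \ref{FK-vr-Om-EndRc}) guarantee, so once these are invoked the corollary follows by the elementary computation sketched above.
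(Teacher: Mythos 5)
Your proposal is correct and follows essentially the same route as the paper: both proofs reduce the corollary to Lemma \ref{Cool-expr-EndOmRic} together with the elementary relation $\dot{g}_t^{\ast} = -\,2\,H_t^{-1}\dot{H}_t$ (which the paper obtains via $U_t=\log(g_0^{-1}g_t)$ and $2\dot{H}_t=-H_t\dot{U}_t$, while you get it by differentiating $g_t=g_0H_t^{-2}$ directly, using the same commutativity facts). The extra remarks on the converse and on propagation of the $\mathbbm{F}^K_{g_0}$-structure are harmless but not needed, since the corollary's equivalence is stated via the identification $H_t=(g_t^{-1}g_0)^{1/2}\in g_0^{-1}\Sigma_K(g_0)$.
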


\begin{proof}
  Let $U_t \assign \log (g^{- 1}_0 g_t) \in g^{- 1}_0 \mathbbm{F}^K_{g_0}$ and
  observe that the $\Omega$-SRF equation $(g_t)_t \subset \Sigma_K (g_0)$ is
  equivalent to the evolution equation
  \begin{eqnarray*}
    \dot{U}_t \;\; = \;\; \dot{g}_t^{\ast} \;\;=\;\; \tmop{Ric}^{\ast}_{g_t} (\Omega)
    \;\,-\;\,\mathbbm{I}\;.
  \end{eqnarray*}
  Then the conclusion follows combining the identity $2 \dot{H}_t = - H_t \,
  \dot{U}_t$ with the previous lemma.
\end{proof}

We observe that the change of variables
$$ 
g \;\in\; \Sigma_K (g_0) \longmapsto H = (g^{- 1} g_0)^{1 / 2} \in g^{-
   1}_0 \Sigma_K (g_0)\;, 
$$
is the one which linearizes as much as possible the expression of the SRF
equation. Indeed we can rewrite it as the porous medium equation in corollary
\ref{Porous-Medium}. However we will see in the next section that the change
of variables $g \mapsto A = \log H$ would fit us in a gradient flow picture of
a convex functional over convex sets in a Hilbert space. So from now on we
will consider the change of variables
\begin{equation}
  \label{chg-var} g \in \Sigma_K (g_0) \longmapsto A \;=\; \log H \;=\; -\;
  \frac{1}{2}\, \log (g^{- 1}_0 g) \in \mathbbm{T}_{g_0}^K \assign g^{- 1}_0
  \mathbbm{F}^K_{g_0}\; .
\end{equation}
We define the $\Omega$-divergence operator of a tensor $\alpha$ as
\begin{eqnarray*}
  \tmop{div}^{^{_{_{\Omega}}}}_g \alpha \;\; \assign \;\; e^f \tmop{div}_g \left(
  e^{- f} \alpha \right) \;\;=\;\; \tmop{div}_g \alpha \;\,-\;\, \nabla_g f \;\neg\; \alpha\;,
\end{eqnarray*}
with $f \assign \log \frac{d V_g}{\Omega}$. We observe that with this notation
formula (\ref{exp-exprRic}) rewrites as
\begin{equation}
  \label{exp-expr-OmRic} \tmop{Ric}^{\ast}_{g_A} (\Omega) \;\;=\;\; -\;\, e^A
  \tmop{div}^{^{_{_{\Omega}}}}_{g_0} \left( e^A \nabla_{g_0} A \right) \;\,+\;\, e^{2
  A} \tmop{Ric}^{\ast}_{g_0} (\Omega)\;,
\end{equation}
with $g_A \assign g_0 e^{- 2 A}$, for all $A \in \mathbbm{T}_{g_0}^K$. With
this notations hold the analogue of corollary \ref{Porous-Medium}.

\begin{corollary}
  The $\Omega$-SRF $(g_t)_t \subset \Sigma_K (g_0)$ is equivalent to the
  solution $(A_t)_{t \geqslant 0} \subset \mathbbm{T}_{g_0}^K$ of the forward
  evolution equation
  \begin{equation}
    \label{non-lin-GrdFw} 2\, \dot{A}_t \;\;=\;\; e^{A_t}
    \tmop{div}^{^{_{_{\Omega}}}}_{g_0} \left( e^{A_t} \nabla_{g_0} A_t \right)
    \;\,-\;\, e^{2 A_t} \tmop{Ric}^{\ast}_{g_0} (\Omega) \;\,+\;\,\mathbbm{I}\;,
  \end{equation}
  with initial data $A_0 = 0$, via the identification $( \ref{chg-var})$.
\end{corollary}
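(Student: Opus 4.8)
The plan is to read off the asserted equation as a direct transcription, under the substitution (\ref{chg-var}), of the expression (\ref{exp-expr-OmRic}) for the $\Omega$-Bakry-Emery-Ricci tensor over $\Sigma_K(g_0)$ --- equivalently, of Corollary \ref{Porous-Medium}. Recall that $A_t \assign \log H_t = -\tfrac{1}{2}\log(g_0^{-1}g_t)$ takes values in $\mathbbm{T}_{g_0}^K = g_0^{-1}\mathbbm{F}^K_{g_0}$, that $g_t = g_0\, e^{-2A_t} = g_{A_t}$, and that by (\ref{expr-geod}) together with the identity (\ref{Iflat-Scat-Space}) of Lemma \ref{flat-Scat-Space} and the injectivity of $\exp_{G,g_0}$ the map $A \mapsto g_0\, e^{-2A}$ is a bijection from $\mathbbm{T}_{g_0}^K$ onto $\Sigma_K(g_0)$; this bijection is precisely the identification (\ref{chg-var}). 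A standing remark, used repeatedly, is that the almost-everywhere $n$-distinctness of the eigenvalues of $K$ forces any two sections of $\mathbbm{T}_{g_0}^K$ to commute, together with their $g_0$-covariant derivatives with $A_t$; in particular $e^{A}\nabla_{g_0}A = \nabla_{g_0}e^{A}$ and $\frac{d}{dt}e^{-2A_t} = -2\,\dot{A}_t\,e^{-2A_t}$.

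First I would relate $\dot{g}_t^{\ast}$ to $\dot{A}_t$. Differentiating $g_t = g_0\, e^{-2A_t}$ and invoking the commutation remark gives $\dot{g}_t = -2\,g_0\,\dot{A}_t\,e^{-2A_t}$, whence
\[
  \dot{g}_t^{\ast} \;=\; g_t^{-1}\dot{g}_t \;=\; e^{2A_t}\,g_0^{-1}\bigl(-2\,g_0\,\dot{A}_t\,e^{-2A_t}\bigr) \;=\; -2\,\dot{A}_t\;.
\]
Hence, for a curve $(g_t)_t \subset \Sigma_K(g_0)$, the $\Omega$-$\tmop{SRF}$ equation $\dot{g}_t^{\ast} = \tmop{Ric}^{\ast}_{g_t}(\Omega) - \mathbbm{I}$ is equivalent to $2\,\dot{A}_t = \mathbbm{I} - \tmop{Ric}^{\ast}_{g_{A_t}}(\Omega)$, and inserting the expression (\ref{exp-expr-OmRic}) of $\tmop{Ric}^{\ast}_{g_{A_t}}(\Omega)$ turns the right-hand side into $\mathbbm{I} + e^{A_t}\tmop{div}^{^{_{_{\Omega}}}}_{g_0}(e^{A_t}\nabla_{g_0}A_t) - e^{2A_t}\tmop{Ric}^{\ast}_{g_0}(\Omega)$, which is exactly (\ref{non-lin-GrdFw}); the initial condition transcribes as $A_0 = -\tfrac{1}{2}\log(g_0^{-1}g_0) = 0$. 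The converse direction follows by running this chain of equivalences backwards: if $(A_t)_{t \geqslant 0}$ solves (\ref{non-lin-GrdFw}) with $A_0 = 0$, then its right-hand side equals $\mathbbm{I} - \tmop{Ric}^{\ast}_{g_{A_t}}(\Omega) \in \mathbbm{T}_{g_0}^K$ (by Proposition \ref{fund-scattering}, i.e.\ (\ref{fund-scat}), together with Lemma \ref{mut-exp}), so by ODE uniqueness the solution stays in $\mathbbm{T}_{g_0}^K$; then $g_t \assign g_0\, e^{-2A_t} \in \Sigma_K(g_0)$ by (\ref{Iflat-Scat-Space}), and $\dot{g}_t^{\ast} = -2\dot{A}_t$ shows $(g_t)_t$ is an $\Omega$-$\tmop{SRF}$ with initial data $g_0$.

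I expect no genuine obstacle here: the analytic substance --- the expression of $\tmop{Ric}^{\ast}_g(\Omega)$ over $\Sigma_K(g_0)$ in the linearising variable $A$ --- is already contained in Lemma \ref{Cool-expr-EndOmRic}, and what remains is bookkeeping in the commuting algebra $\mathbbm{T}_{g_0}^K$: the identity $\frac{d}{dt}e^{-2A_t} = -2\dot{A}_t\,e^{-2A_t}$, the legitimacy of cancelling the left factor $e^{A_t}$ when passing between $2\dot{A}_t = \mathbbm{I} - \tmop{Ric}^{\ast}_{g_{A_t}}(\Omega)$ and the porous-medium form of Corollary \ref{Porous-Medium} (through $\dot{H}_t = H_t\dot{A}_t$ and the identity $\Delta^{^{_{_{\Omega}}}}_{g_0}e^{A} = -\tmop{div}^{^{_{_{\Omega}}}}_{g_0}(e^{A}\nabla_{g_0}A)$, itself obtained by comparing (\ref{exp-exprRic}) with (\ref{exp-expr-OmRic})), and the well-definedness of the change of variables. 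All of these are exactly what the hypothesis on $K$ and the conservation results of the preceding sections are designed to render routine.
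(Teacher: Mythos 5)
Your proposal is correct and follows essentially the same route as the paper: the paper's proof consists precisely of the observation $\dot{g}_t = -2\,g_t\,\dot{A}_t$ (i.e.\ $\dot{g}_t^{\ast} = -2\dot{A}_t$) combined with the $\Omega$-SRF equation and formula (\ref{exp-expr-OmRic}). You merely spell out the commutation bookkeeping in $\mathbbm{T}_{g_0}^K$ and the converse direction, which the paper leaves implicit.
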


\begin{proof}
  We observe first the identity $\dot{g}_t \;=\; -\, 2\, g_t\,  \dot{A}_t$. Then the
  conclusion follows combining the identity $- \,2\, \dot{A}_t \;=\; \dot{g}_t^{\ast}
  \;=\; \tmop{Ric}^{\ast}_{g_t} (\Omega) \;-\;\mathbbm{I}$, with formula
  (\ref{exp-expr-OmRic}).
\end{proof}

We observe that the assumption $g_0 \in \mathcal{S}^K_{_{^{\Omega}}}$ implies
\begin{eqnarray*}
  \tmop{Ric}^{\ast}_{g_A} (\Omega) \;\; = \;\; e^{2 A}\, g^{- 1}_0 \tmop{Ric}_{g_A}
  (\Omega) \;\in\; \mathbbm{T}_{g_0}^K\,,\quad \forall A \;\in\; \mathbbm{T}_{g_0}^K\,,
\end{eqnarray*}
thanks to the fundamental identity (\ref{fund-scat}) combined with lemma
\ref{mut-exp}. We deduce
\begin{equation}
  \label{exp-SCAT} e^A \tmop{div}^{^{_{_{\Omega}}}}_{g_0} \left( e^A
  \nabla_{g_0} A \right) \;\,-\;\, e^{2 A} \tmop{Ric}^{\ast}_{g_0} (\Omega) \;\in\;
  \mathbbm{T}_{g_0}^K\,,\quad \forall A \in \mathbbm{T}_{g_0}^K\,,
\end{equation}
thanks to the expression (\ref{exp-expr-OmRic}).

\section{Convexity of $\mathcal{W}_{\Omega}$ over convex subsets inside
$(\Sigma_K (g_0), G)$}\label{sec-PM}

We define the functional ${\bf W}_{\Omega}$ over $\mathbbm{T}_{g_0}^K$ by
the formula ${\bf W}_{\Omega} (A) \assign \mathcal{W}_{\Omega}
(g_A)$, via the identification (\ref{chg-var}). We remind now the identity
\begin{eqnarray*}
  \mathcal{W}_{\Omega} (g) & = & \int_X \left[ \tmop{Tr}_g  \left(
  \tmop{Ric}_g (\Omega) \;\,-\;\, g \right) \;\,+\;\, 2 \log
  \frac{dV_g}{\Omega} \right] \Omega\\
  &  & \\
  & = & \int_X \tmop{Tr}_{_{\mathbbm{R}}} \Big[ \tmop{Ric}^{\ast}_g (\Omega)
  \;\,+\;\, \log (g^{- 1}_0 g) \Big] \Omega \;\,+\;\, \int_X \left[ 2 \log \frac{d
  V_{g_0}}{\Omega} \;\,-\;\, n \right] \Omega \;.
\end{eqnarray*}
Plunging (\ref{exp-exprRic}) and integrating by parts we infer the expression
\begin{eqnarray*}
  {\bf W}_{\Omega} (A) & = & \int_X \Big[ \big| \nabla_{g_0} e^A \big|^2_{g_0} \;\,+\;\,
  \tmop{Tr}_{_{\mathbbm{R}}} \left( e^{2 A} \tmop{Ric}^{\ast}_{g_0} (\Omega) \;\,-\;\,
  2 \,A \right)\Big] \Omega\\
  &  & \\
  & + & \int_X \left[ 2 \log \frac{d V_{g_0}}{\Omega} \;\,-\;\, n \right] \Omega \;.
\end{eqnarray*}
We define now the vector space $\overline{\mathbbm{T}}^K_{g_0}$ as the
$L^2$-closure of $\mathbbm{T}_{g_0}^K$ and we equip it with the constant
$L^2$-product $4 \int_X \left\langle \cdot, \cdot \right\rangle_{g_0} \Omega$.
From now on all $L^2$-products are defined by this formula.

\begin{lemma}
  \label{Grad-Por-Med}Let $g_0 \in \mathcal{S}^K_{_{^{\Omega}}}$. Then the
  forward equation equation $( \ref{non-lin-GrdFw})$ with initial data $A_0 =
  0$ is equivalent to a smooth solution of the gradient flow equation
  $\dot{A}_t = - \,\nabla_{L^2} {\bf W}_{\Omega} (A_t)$.
\end{lemma}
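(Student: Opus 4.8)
The plan is to compute the $L^2$-gradient of ${\bf W}_{\Omega}$ directly from the expression
\begin{eqnarray*}
  {\bf W}_{\Omega} (A) & = & \int_X \Big[ \big| \nabla_{g_0} e^A \big|^2_{g_0} \;+\;
  \tmop{Tr}_{_{\mathbbm{R}}} \left( e^{2 A} \tmop{Ric}^{\ast}_{g_0} (\Omega) \;-\;
  2 \,A \right)\Big] \Omega \;+\; \text{const}\;,
\end{eqnarray*}
and to match it, term by term, with the right-hand side of the evolution equation $(\ref{non-lin-GrdFw})$, keeping track of the factor $4$ in the chosen $L^2$-product. First I would fix $A \in \mathbbm{T}^K_{g_0}$ and a variation $B \in \overline{\mathbbm{T}}^K_{g_0}$, and differentiate. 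The key algebraic simplification is that $A$, $B$, $\nabla_{g_0}A$ and $\nabla_{g_0}B$ all commute pairwise (they lie in $g_0^{-1}\mathbbm{F}^K_{g_0}$ and share the common eigenbasis of $K$), so $\frac{d}{ds}e^{A+sB}|_{s=0} = e^A B = B e^A$ and one never has to deal with the Duhamel integral for the derivative of the exponential. Hence $\frac{d}{ds}\nabla_{g_0}e^{A+sB}|_{s=0} = \nabla_{g_0}(e^A B)$ and $\frac{d}{ds}\tmop{Tr}_{_{\mathbbm{R}}}(e^{2(A+sB)}\tmop{Ric}^{\ast}_{g_0}(\Omega))|_{s=0} = 2\,\tmop{Tr}_{_{\mathbbm{R}}}(e^{2A}B\,\tmop{Ric}^{\ast}_{g_0}(\Omega))$, using the cyclicity of the trace together with the commutation $[e^{2A}, \tmop{Ric}^{\ast}_{g_0}(\Omega)]=0$, which holds since $g_0 \in \mathcal{S}^K_{_{^{\Omega}}}$ forces $\tmop{Ric}^{\ast}_{g_0}(\Omega)\in\mathbbm{T}^K_{g_0}$ and lemma \ref{mut-exp} applies.

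Next I would integrate by parts in the first term. Writing $\int_X \langle \nabla_{g_0}e^A, \nabla_{g_0}(e^A B)\rangle_{g_0}\Omega$ and using the $\Omega$-adjointness of $\nabla_{g_0}$ recorded in Section \ref{main-res} (i.e. $\nabla^{\ast_{_{\Omega}}}_{g_0} = e^{f_0}\nabla^{\ast}_{g_0}(e^{-f_0}\bullet)$), this becomes $\int_X \langle \nabla^{\ast_{_{\Omega}}}_{g_0}(e^A \nabla_{g_0}e^A), e^A B\rangle_{g_0}\Omega$, after moving one $e^A$ across using $g_0$-symmetry of $e^A$ and the commutation relations. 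Since $\nabla^{\ast_{_{\Omega}}}_{g_0}(e^A\nabla_{g_0}e^A) = -\tmop{div}^{^{_{_{\Omega}}}}_{g_0}(e^A\nabla_{g_0}e^A)$ and $e^A\nabla_{g_0}e^A = e^{2A}\nabla_{g_0}A$ (again by the commutation of $A$ with $\nabla_{g_0}A$), the first term contributes $-\int_X \langle e^A\,\tmop{div}^{^{_{_{\Omega}}}}_{g_0}(e^A\nabla_{g_0}A), B\rangle_{g_0}\Omega$ after pulling one more $e^A$ back through $B$. Collecting all three pieces and comparing with the $4\int_X\langle\cdot,\cdot\rangle_{g_0}\Omega$ normalization gives
\begin{eqnarray*}
  \nabla_{L^2}{\bf W}_{\Omega}(A) & = & \tfrac14\Big(-\,e^A\tmop{div}^{^{_{_{\Omega}}}}_{g_0}(e^A\nabla_{g_0}A) \;+\; e^{2A}\tmop{Ric}^{\ast}_{g_0}(\Omega) \;-\;\mathbbm{I}\Big)\;,
\end{eqnarray*}
which is exactly $-\tfrac12$ times the right-hand side of $(\ref{non-lin-GrdFw})$, i.e. the gradient flow equation $\dot{A}_t = -\nabla_{L^2}{\bf W}_{\Omega}(A_t)$ coincides with $(\ref{non-lin-GrdFw})$. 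One also needs to record that $\nabla_{L^2}{\bf W}_{\Omega}(A)\in\mathbbm{T}^K_{g_0}$, so that the gradient flow stays in the right space; this is precisely the content of $(\ref{exp-SCAT})$, so the two formulations are genuinely equivalent as flows in $\mathbbm{T}^K_{g_0}$.

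The step I expect to be the main obstacle is the integration-by-parts bookkeeping: carefully justifying that each $e^A$ can be slid past $\nabla_{g_0}$-factors and past $B$ using only $g_0$-symmetry and the pairwise commutation coming from membership in $g_0^{-1}\mathbbm{F}^K_{g_0}$, and that no boundary term appears (here compactness of $X$ is used). A secondary technical point is the passage from the dense subspace $\mathbbm{T}^K_{g_0}$ to its $L^2$-closure $\overline{\mathbbm{T}}^K_{g_0}$ when identifying the gradient — but since for a smooth solution $(A_t)$ all the tensors involved are smooth, it suffices to test against $B\in\mathbbm{T}^K_{g_0}$ and invoke density, so this is routine. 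Everything else is the termwise differentiation already sketched, plus quoting $(\ref{exp-expr-OmRic})$, lemma \ref{mut-exp}, and the definition of $\tmop{div}^{^{_{_{\Omega}}}}_{g_0}$ from the preceding section.
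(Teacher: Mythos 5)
Your proposal follows the paper's own proof essentially step for step: differentiate ${\bf W}_{\Omega}$ along the line $A+sB$, use the pairwise commutation inside $\mathbbm{T}^K_{g_0}$ to write $\nabla_{g_0}e^{A}=e^{A}\nabla_{g_0}A$ and avoid any Duhamel term, integrate by parts against $\tmop{div}^{\Omega}_{g_0}$, and invoke the hypothesis $g_0\in\mathcal{S}^{K}_{\Omega}$ through (\ref{exp-SCAT}) to conclude that the candidate gradient lies in $\mathbbm{T}^K_{g_0}$, so that the gradient flow and (\ref{non-lin-GrdFw}) are the same flow in that space. This is exactly the paper's argument via the first variation formula (\ref{first-varW}), and your identification of the only delicate points (the commutation bookkeeping in the integration by parts, testing against the dense subspace) is accurate.

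One constant needs fixing. The first variation is
$D{\bf W}_{\Omega}(A)(B)=-\,2\int_X\big\langle e^{A}\tmop{div}^{\Omega}_{g_0}\!\left(e^{A}\nabla_{g_0}A\right)-e^{2A}\tmop{Ric}^{\ast}_{g_0}(\Omega)+\mathbbm{I},\,B\big\rangle_{g_0}\Omega$,
so with the product $4\int_X\langle\cdot,\cdot\rangle_{g_0}\Omega$ the gradient carries the factor $\tfrac12$, not $\tfrac14$:
$\nabla_{L^2}{\bf W}_{\Omega}(A)=\tfrac12\big(-\,e^{A}\tmop{div}^{\Omega}_{g_0}\!\left(e^{A}\nabla_{g_0}A\right)+e^{2A}\tmop{Ric}^{\ast}_{g_0}(\Omega)-\mathbbm{I}\big)$.
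Your displayed $\tfrac14$ is also inconsistent with your own next clause, since $\tfrac14(\cdot)$ equals $-\tfrac14$ (not $-\tfrac12$) times the right-hand side of (\ref{non-lin-GrdFw}) and would produce $4\dot A_t$ rather than $2\dot A_t$ on the left of (\ref{non-lin-GrdFw}), i.e.\ only a time-reparametrized flow rather than the stated equivalence. With the factor $\tfrac12$, which is what the computation actually yields, the identity $\dot A_t=-\nabla_{L^2}{\bf W}_{\Omega}(A_t)$ is literally equation (\ref{non-lin-GrdFw}), and the rest of your argument is sound.
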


\begin{proof}
  We compute first the $L^2$-gradient of the functional
  ${\bf W}_{\Omega}$. For this purpose we consider a line $t \mapsto
  A_t \assign A + t \,V$ with $A, V \in \mathbbm{T}^K_{g_0}$ arbitrary. Then
  hold the identity
  \begin{eqnarray}
    \label{first-varW}  \frac{d}{d t} \,{\bf W}_{\Omega} (A_t) &=& 2 \int_X
     \left\langle \nabla_{g_0} e^{A_t}, \nabla_{g_0} (e^{A_t} V)
    \right\rangle_{g_0}\Omega\nonumber
\\\nonumber
\\
& +& 2\int_X\tmop{Tr}_{_{^{\mathbbm{R}}}} \left[ \left( e^{2
    A_t} \tmop{Ric}^{\ast}_{g_0} (\Omega) \;\,-\;\,\mathbbm{I} \right) V \right] 
    \Omega \;.
  \end{eqnarray}
Integrating by parts we obtain the first variation formula
  \begin{eqnarray*}
    \frac{d}{d t} _{\mid_{t = 0}} {\bf W}_{\Omega} (A_t) \; = \; -\; 2 \int_X
    \Big\langle e^A \tmop{div}^{^{_{_{\Omega}}}}_{g_0} \left( e^A
    \nabla_{g_0} A \right) \;-\; e^{2 A} \tmop{Ric}^{\ast}_{g_0} (\Omega)
    \;+\;\mathbbm{I}\,, V \Big\rangle_{g_0} \Omega \;.
  \end{eqnarray*}
  The assumption $g_0 \in \mathcal{S}^K_{_{^{\Omega}}}$ implies the expression of the gradient
  \begin{eqnarray*}
    2 \,\nabla_{L^2} {\bf W}_{\Omega} (A) \;\; = \;\; -\;\, e^A
    \tmop{div}^{^{_{_{\Omega}}}}_{g_0} \left( e^A \nabla_{g_0} A \right) \;\,+\;\,
    e^{2 A} \tmop{Ric}^{\ast}_{g_0} (\Omega) \;\,-\;\,\mathbbm{I} \;\;\in\;\;
    \mathbbm{T}_{g_0}^K\,,
  \end{eqnarray*}
  thanks to the identity (\ref{exp-SCAT}). We infer the required conclusion.
\end{proof}

We show now the following convexity results.

\begin{lemma}
  \label{convex-lm} For any $g_0 \in \mathcal{M}$ and for all $A, V \in
  \mathbbm{T}^K_{g_0}$ hold the second variation formula
  \begin{eqnarray*}
    &&\nabla_{L^2} D\,{\bf W}_{\Omega} (A) (V, V) 
\\
\\
& = & 2 \int_X \Big\langle
    \left[ - \;\,e^A \tmop{div}^{^{_{_{\Omega}}}}_{g_0} \left( e^A \nabla_{g_0} A
    \right) \;\,+\;\, 2\, e^{2 A} \tmop{Ric}^{\ast}_{g_0} (\Omega) \right] V, V
    \Big\rangle_{g_0} \Omega\\
    &  & \\
    & + & 2 \int_X \big| \nabla_{g_0} (e^A V) \big|^2_{g_0} \Omega
  \end{eqnarray*}
  Moreover if $\tmop{Ric}_{g_0} (\Omega) > 0$ then the functional
  ${\bf W}_{\Omega}$ is convex over the convex set
  \begin{eqnarray*}
    \mathbbm{T}^{K, +}_{g_0} \;\assign \; \left\{ A \in \mathbbm{T}_{g_0}^K
    \mid \int_X |U \nabla_{g_0} A|^2_{g_0} \Omega \,\leqslant\, \int_X
    \tmop{Tr}_{_{^{\mathbbm{R}}}} \left[ U^2 \tmop{Ric}^{\ast}_{g_0} (\Omega)
    \right] \Omega\,, \forall U \in \mathbbm{T}^K_{g_0} \right\} .
  \end{eqnarray*}
\end{lemma}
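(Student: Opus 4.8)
The first task is to establish the second variation formula for ${\bf W}_\Omega$. I would start from the first variation identity \eqref{first-varW}, applied along the line $t\mapsto A_t := A + tV$, and differentiate once more at $t=0$. Differentiating the term $2\int_X \langle \nabla_{g_0}e^{A_t},\nabla_{g_0}(e^{A_t}V)\rangle_{g_0}\Omega$ produces $2\int_X |\nabla_{g_0}(e^A V)|^2_{g_0}\Omega + 2\int_X\langle \nabla_{g_0}e^A,\nabla_{g_0}(e^A V^2)\rangle_{g_0}\Omega$ (using $\frac{d}{dt}e^{A_t}=e^{A_t}V$, valid because $[A,V]=0$ on $\mathbbm{T}^K_{g_0}$, so that $\frac{d}{dt}(e^{A_t}V)=e^{A_t}V^2$); integrating the second of these by parts turns it into $-2\int_X\langle e^A\tmop{div}^{^{_{_{\Omega}}}}_{g_0}(e^A\nabla_{g_0}A)\,V,V\rangle_{g_0}\Omega$. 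Differentiating the term $2\int_X\tmop{Tr}_{_\mathbbm{R}}[(e^{2A_t}\tmop{Ric}^\ast_{g_0}(\Omega)-\mathbbm I)V]\Omega$ gives $4\int_X\tmop{Tr}_{_\mathbbm{R}}[e^{2A}\tmop{Ric}^\ast_{g_0}(\Omega)V^2]\Omega = 4\int_X\langle e^{2A}\tmop{Ric}^\ast_{g_0}(\Omega)V,V\rangle_{g_0}\Omega$, where I use that $e^{2A}$ and $\tmop{Ric}^\ast_{g_0}(\Omega)$ are $g_0$-symmetric and mutually commuting (since $g_0\in\mathcal S^K_{_{^\Omega}}$), so the trace is a genuine $g_0$-inner product of symmetric endomorphisms. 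Collecting the three contributions yields exactly the claimed formula for $\nabla_{L^2}D\,{\bf W}_\Omega(A)(V,V)$. I should be careful that all commutators needed — $[A,V]=0$, $[\nabla_{g_0}A,V]=0$, $[\tmop{Ric}^\ast_{g_0}(\Omega),V]=0$, etc. — hold because $A,V\in\mathbbm{T}^K_{g_0}=g_0^{-1}\mathbbm{F}^K_{g_0}$ and $g_0\in\mathcal S^K_{_{^\Omega}}$, invoking lemma \ref{mut-exp} and the definition of $\mathbbm{F}^K$.

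The second task is the convexity statement. Fix $A\in\mathbbm{T}^{K,+}_{g_0}$ and $V\in\mathbbm{T}^K_{g_0}$; I must show $\nabla_{L^2}D\,{\bf W}_\Omega(A)(V,V)\ge 0$. Integrating by parts the first term of the second variation formula, $-\int_X\langle e^A\tmop{div}^{^{_{_{\Omega}}}}_{g_0}(e^A\nabla_{g_0}A)\,V,V\rangle_{g_0}\Omega = \int_X\langle e^A\nabla_{g_0}A,\nabla_{g_0}(e^A V^2)\rangle_{g_0}\Omega$; combined with the $|\nabla_{g_0}(e^A V)|^2$ term and the $2e^{2A}\tmop{Ric}^\ast_{g_0}(\Omega)$ term, the whole expression should reorganize, after using the Leibniz rule on $\nabla_{g_0}(e^A V)$ and $\nabla_{g_0}(e^A V^2)$ and the commutativity of $A$, $V$ and their covariant derivatives, into something of the shape $2\int_X|\nabla_{g_0}(e^A V)|^2_{g_0}\Omega + (\text{a term controlled by }\mathbbm{T}^{K,+}_{g_0})$, where the defining inequality of $\mathbbm{T}^{K,+}_{g_0}$, applied with the test endomorphism $U = e^A V$ (which lies in $\mathbbm{T}^K_{g_0}$ by lemma \ref{mut-exp}), furnishes exactly the bound $\int_X|e^A V\,\nabla_{g_0}A|^2_{g_0}\Omega\le\int_X\tmop{Tr}_{_\mathbbm{R}}[(e^A V)^2\tmop{Ric}^\ast_{g_0}(\Omega)]\Omega$ needed to absorb the indefinite cross-terms. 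Convexity of the set $\mathbbm{T}^{K,+}_{g_0}$ itself is immediate: it is defined by a family (indexed by $U$) of inequalities between quadratic forms in $\nabla_{g_0}A$, and for each fixed $U$ the map $A\mapsto\int_X|U\nabla_{g_0}A|^2_{g_0}\Omega$ is a convex quadratic functional of $A$ while the right-hand side is constant, so each constraint set is convex and the intersection is convex.

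The main obstacle I anticipate is the bookkeeping in reorganizing the second-variation integrand into a manifestly sign-controlled combination: one has to expand $\nabla_{g_0}(e^A V)=e^A(\nabla_{g_0}A\,V+\nabla_{g_0}V)$ (using $[\nabla_{g_0}A,V]=0$ to merge the two endomorphism factors), square it, and match the resulting $|e^A\nabla_{g_0}A\,V|^2$ term against the contribution coming from integrating $\langle e^A\nabla_{g_0}A,\nabla_{g_0}(e^A V^2)\rangle$ by parts — the cancellation is not term-by-term obvious and relies essentially on all the relevant endomorphisms being simultaneously diagonalizable (the center-of-polarization mechanism). I would carry this out pointwise in a frame diagonalizing $K$, where every endomorphism in sight is diagonal, reducing the identity to a scalar computation, and only then integrate. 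Once the integrand is written as $2\int_X|\nabla_{g_0}(e^A V)|^2_{g_0}\Omega + 2\int_X\big(\tmop{Tr}_{_\mathbbm{R}}[(e^A V)^2\tmop{Ric}^\ast_{g_0}(\Omega)] - |e^A V\,\nabla_{g_0}A|^2_{g_0}\big)\Omega$ or a similar form, nonnegativity on $\mathbbm{T}^{K,+}_{g_0}$ is transparent.
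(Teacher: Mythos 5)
Your proposal is correct and follows essentially the same route as the paper: differentiate the first variation formula (\ref{first-varW}) along the line $A+tV$ (using $[A,V]=0$), integrate by parts to obtain the stated second variation, observe that convexity of $\mathbbm{T}^{K,+}_{g_0}$ is immediate from convexity of the squared $L^2$-norm, and conclude convexity of ${\bf W}_{\Omega}$ by testing the defining inequality with $U=e^{A}V\in\mathbbm{T}^{K}_{g_0}$ (lemma \ref{mut-exp}). The only cosmetic differences are that the paper closes the absorption step not by an exact rearrangement but by the estimate $\left\langle \nabla_{g_0}e^{A},\nabla_{g_0}(e^{A}V^{2})\right\rangle_{g_0}=2\left\langle V\nabla_{g_0}e^{A},\nabla_{g_0}(e^{A}V)\right\rangle_{g_0}-|V\nabla_{g_0}e^{A}|^{2}_{g_0}\geqslant -2|V\nabla_{g_0}e^{A}|^{2}_{g_0}-|\nabla_{g_0}(e^{A}V)|^{2}_{g_0}$ (Cauchy--Schwarz plus Young), yielding $\nabla_{L^2}D\,{\bf W}_{\Omega}(A)(V,V)\geqslant 4\int_X\big(\tmop{Tr}_{_{\mathbbm{R}}}[(Ve^{A})^{2}\tmop{Ric}^{\ast}_{g_0}(\Omega)]-|Ve^{A}\nabla_{g_0}A|^{2}_{g_0}\big)\Omega$, and that your appeal to $g_0\in\mathcal{S}^{K}_{_{^{\Omega}}}$ for the trace manipulation is unnecessary (only $[A,V]=0$ and $g_0$-symmetry are used, so the formula indeed holds for all $g_0\in\mathcal{M}$ as stated).
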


\begin{proof}
  We observe first that the convexity of the set $\mathbbm{T}^{K, +}_{g_0}$
  follows directly by the convexity of the $L^2$-norm squared. We compute now
  the second variation of the functional ${\bf W}_{\Omega}$ along any
  line $t \mapsto A_t \assign A + t\, V$ with $A, V \in \mathbbm{T}^K_{g_0}$.
  Differentiating the formula (\ref{first-varW}) we infer the expansion
  \begin{eqnarray*}
    \nabla_{L^2} D\,{\bf W}_{\Omega} (A) (V, V) & = & \frac{d^2}{d t^2}
    _{\mid_{t = 0}} {\bf W}_{\Omega} (A_t)\\
    &  & \\
    & = & 2 \int_X \Big[ \big| \nabla_{g_0} (e^A V) \big|^2_{g_0} \;\,+\;\, \left\langle
    \nabla_{g_0} e^A, \nabla_{g_0} (e^A V^2) \right\rangle_{g_0}\Big] \Omega\\
    &  & \\
    & + & 4 \int_X \tmop{Tr}_{_{\mathbbm{R}}} \left[ e^{2 A} V^2
    \tmop{Ric}^{\ast}_{g_0} (\Omega) \right] \Omega \;.
  \end{eqnarray*}
  The required second variation formula follows by an integration by parts. We
  expand now the therm
  \begin{eqnarray*}
    \left\langle \nabla_{g_0} e^A, \nabla_{g_0} (e^A V^2) \right\rangle_{g_0}
    & = & \left\langle \nabla_{g_0} e^A, \nabla_{g_0} (e^A V) V \;\,+\;\, e^A V\,
    \nabla_{g_0} V \right\rangle_{g_0}\\
    &  & \\
    & = & \left\langle V \,\nabla_{g_0} e^A, \nabla_{g_0} (e^A V) \;\,+\;\, e^A
    \nabla_{g_0} V \right\rangle_{g_0}\\
    &  & \\
    & = & 2 \left\langle V \,\nabla_{g_0} e^A, \nabla_{g_0} (e^A V)
    \right\rangle_{g_0} \;\,-\;\, \big|V\, \nabla_{g_0} e^A \big|^2_{g_0} \;.
  \end{eqnarray*}
  Using the Cauchy-Schwarz and Jensen's inequalities we obtain
  \begin{eqnarray*}
    2\, \big| \left\langle V \,\nabla_{g_0} e^A, \nabla_{g_0} (e^A V)
    \right\rangle_{g_0}  \big| & \leqslant & 2\, \big|V \,\nabla_{g_0} e^A \big|_{g_0}\, \big|
    \nabla_{g_0} (e^A V) \big|_{g_0} \\
    &  & \\
    & \leqslant & \big|V \,\nabla_{g_0} e^A \big|^2_{g_0} \;\,+\;\, \big| \nabla_{g_0} (e^A V)
    \big|^2_{g_0}\;,
  \end{eqnarray*}
  and thus the inequality
  \begin{eqnarray*}
    \left\langle \nabla_{g_0} e^A, \nabla_{g_0} (e^A V^2) \right\rangle_{g_0} 
    \;\; \geqslant \;\; -\;\, 2\, \big|V \,\nabla_{g_0} e^A \big|^2_{g_0} \;\,-\;\, \big| \nabla_{g_0} (e^A V)
    \big|^2_{g_0}\; .
  \end{eqnarray*}
  We infer the estimate
  \begin{eqnarray*}
    \nabla_{L^2} D\,{\bf W}_{\Omega} (A) (V, V) \;\; \geqslant \;\; 4 \int_X \Big\{
    \tmop{Tr}_{_{^{\mathbbm{R}}}} \left[ (V e^A)^2 \tmop{Ric}^{\ast}_{g_0}
    (\Omega) \right] \;\,-\;\,\big|V e^A \nabla_{g_0} A\big|^2_{g_0} \Big\}\, \Omega\;,
  \end{eqnarray*}
  which implies the required convexity statement over the convex set
  $\mathbbm{T}_{g_0}^{K, +}$. Indeed $V e^A \in \mathbbm{T}_{g_0}^K$ thanks to
  lemma \ref{mut-exp}.
\end{proof}

\begin{corollary}
  \label{coro-Pos-Ric}Let $g_0 \in \mathcal{M}$. The functional
  $\mathcal{W}_{\Omega}$ is $G$-convex over the $G$-convex set
  \begin{eqnarray*}
    \Sigma^-_K (g_0) \;\;\assign\;\;\Big\{ g \in \Sigma_K (g_0) \mid
    \tmop{Ric}_g (\Omega) \;\geqslant\; -\, \tmop{Ric}_{g_0} (\Omega) \Big\}\;,
  \end{eqnarray*}
  inside the totally geodesic and flat sub-variety $\Sigma_K (g_0)$ of the
  non-positively curved Riemannian manifold $(\mathcal{M}, G)$. Moreover if
  $\tmop{Ric}_{g_0} (\Omega) \geqslant \varepsilon g_0$, for some $\varepsilon
  \in \mathbbm{R}_{> 0}$ then the functional $\mathcal{W}_{\Omega}$ is
  $G$-convex over the $G$-convex and non-empty sets
  \begin{eqnarray*}
    \Sigma^{\delta}_K (g_0) & \assign & \Big\{ g \in \Sigma_K (g_0) \mid
    \tmop{Ric}_g (\Omega) \;\geqslant\; \delta\, g \Big\}\,,\quad \forall \delta \in [0,
    \varepsilon)\;,
\\
\\
    \Sigma^+_K (g_0) & \assign & \Big\{ g \in \Sigma_K (g_0) \mid 2\,
    \tmop{Ric}_g (\Omega) \;+\; g_0 \,\Delta^{^{_{_{\Omega}}}}_{g_0} \log (g^{- 1}_0
    g) \;\geqslant\; 0 \Big\} \;.
  \end{eqnarray*}
\end{corollary}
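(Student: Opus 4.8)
The plan is to transport everything to the Hilbert space $\mathbbm{T}^K_{g_0}=g_0^{-1}\mathbbm{F}^K_{g_0}$ through the change of variables (\ref{chg-var}), $g\mapsto A=-\frac{1}{2}\log(g_0^{-1}g)$, under which $\mathcal{W}_{\Omega}$ becomes $\mathbf{W}_{\Omega}$. First I would record that this map carries the $G$-geodesics of the totally geodesic flat sub-variety $\Sigma_K(g_0)$ (lemma \ref{flat-Scat-Space}) to affine segments of $\mathbbm{T}^K_{g_0}$: a $G$-geodesic through $g_1\in\Sigma_K(g_0)$ has the form $g_t=g_1e^{tC}$ by (\ref{expr-geod}), and since every endomorphism occurring inside $\Sigma_K(g_0)$ commutes with the regular element $K$, hence with every other such endomorphism, $g_0^{-1}g_t=e^{C_1}e^{tC_2}=e^{C_1+tC_2}$, so $t\mapsto A_t$ is affine. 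Consequently $G$-convex subsets of $\Sigma_K(g_0)$ correspond exactly to affinely convex subsets of $\mathbbm{T}^K_{g_0}$, and $G$-convexity of $\mathcal{W}_{\Omega}$ over such a subset is equivalent to ordinary convexity of $\mathbf{W}_{\Omega}$, whose second variation along lines is computed in Lemma \ref{convex-lm} (and intrinsically in Proposition \ref{fund-scattering}).

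Next I would rewrite the three sets in the variable $A$. Combining lemma \ref{Cool-expr-OmRic} with the identity $\Delta^{\Omega}_{g_0}e^A=e^A[\Delta^{\Omega}_{g_0}A-\tmop{Tr}_{g_0}(\nabla_{g_0,\bullet}A\,\nabla_{g_0,\bullet}A)]$ established inside the proof of Lemma \ref{Cool-expr-EndOmRic}, one obtains the clean formula
$$\tmop{Ric}_{g_A}(\Omega)\;=\;g_0\,\Delta^{\Omega}_{g_0}A\;-\;g_0\,\tmop{Tr}_{g_0}(\nabla_{g_0,\bullet}A\,\nabla_{g_0,\bullet}A)\;+\;\tmop{Ric}_{g_0}(\Omega),\qquad g_A\assign g_0e^{-2A}.$$
Since $\log(g_0^{-1}g)=-2A$, this turns $\Sigma^+_K(g_0)$ into $\{A:\tmop{Tr}_{g_0}(\nabla_{g_0,\bullet}A\,\nabla_{g_0,\bullet}A)\le\tmop{Ric}^{\ast}_{g_0}(\Omega)\}$, turns $\Sigma^-_K(g_0)$ into $\{A:\Delta^{\Omega}_{g_0}A-\tmop{Tr}_{g_0}(\nabla_{g_0,\bullet}A\,\nabla_{g_0,\bullet}A)+2\,\tmop{Ric}^{\ast}_{g_0}(\Omega)\ge0\}$, and turns $\Sigma^{\delta}_K(g_0)$ into $\{A:\Delta^{\Omega}_{g_0}A-\tmop{Tr}_{g_0}(\nabla_{g_0,\bullet}A\,\nabla_{g_0,\bullet}A)+\tmop{Ric}^{\ast}_{g_0}(\Omega)-\delta\,e^{-2A}\ge0\}$. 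Along any line $A_t$ of $\mathbbm{T}^K_{g_0}$ all the endomorphisms that appear commute pairwise, so at each point they are simultaneously diagonalizable and each of the three defining expressions becomes a diagonal matrix whose entries are concave functions of $t$ (an affine term, minus a sum of squares of affine terms, plus a constant, minus a positive scalar exponential). Hence each set meets every line in an interval, i.e.\ is $G$-convex; non-emptiness of $\Sigma^{\delta}_K(g_0)$ and $\Sigma^+_K(g_0)$ under $\tmop{Ric}_{g_0}(\Omega)\ge\varepsilon g_0$ is witnessed by $A=0$.

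The convexity of the functional I would split over the three sets. Over $\Sigma^{\delta}_K(g_0)$ it suffices to quote the intrinsic second variation formula of Proposition \ref{fund-scattering}: there $\tmop{Ric}^{\ast}_g(\Omega)\ge\delta\,\mathbbm{I}\ge0$, so $\langle v\,\tmop{Ric}^{\ast}_g(\Omega),v\rangle_g=\tmop{Tr}_{\mathbbm{R}}(\tmop{Ric}^{\ast}_g(\Omega)(v^{\ast}_g)^2)\ge0$ and the whole Hessian is non-negative. Over $\Sigma^+_K(g_0)$ I would use the lower bound produced in the proof of Lemma \ref{convex-lm}, namely $\nabla_{L^2}D\mathbf{W}_{\Omega}(A)(V,V)\ge4\int_X\{\tmop{Tr}_{\mathbbm{R}}[(Ve^A)^2\tmop{Ric}^{\ast}_{g_0}(\Omega)]-|Ve^A\nabla_{g_0}A|^2_{g_0}\}\,\Omega$, combined with $|Ve^A\nabla_{g_0}A|^2_{g_0}=\tmop{Tr}_{\mathbbm{R}}((Ve^A)^2\sum_k(\nabla_{g_0,e_k}A)^2)$ (all factors commuting with $K$); the integrand is then $\tmop{Tr}_{\mathbbm{R}}((Ve^A)^2[\tmop{Ric}^{\ast}_{g_0}(\Omega)-\sum_k(\nabla_{g_0,e_k}A)^2])\ge0$, which is precisely the condition defining $\Sigma^+_K(g_0)$ in the new variables. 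Over $\Sigma^-_K(g_0)$, where $\tmop{Ric}_{g_0}(\Omega)$ is not assumed positive, I would keep the exact second variation formula of Lemma \ref{convex-lm} and rewrite its curvature bracket using (\ref{exp-exprRic}): $-e^A\tmop{div}^{\Omega}_{g_0}(e^A\nabla_{g_0}A)+2e^{2A}\tmop{Ric}^{\ast}_{g_0}(\Omega)=\tmop{Ric}^{\ast}_{g_A}(\Omega)+e^{2A}\tmop{Ric}^{\ast}_{g_0}(\Omega)=e^{2A}\,g_0^{-1}(\tmop{Ric}_{g_A}(\Omega)+\tmop{Ric}_{g_0}(\Omega))$, which is non-negative on $\Sigma^-_K(g_0)$; adding the manifestly non-negative term $2\int_X|\nabla_{g_0}(e^AV)|^2_{g_0}\,\Omega$ gives $\nabla_{L^2}D\mathbf{W}_{\Omega}(A)(V,V)\ge0$.

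The main obstacle I anticipate is bookkeeping rather than a single hard estimate: throughout one must consistently exploit that $K$ has almost everywhere distinct eigenvalues, so that every endomorphism in play — in particular $\tmop{Ric}^{\ast}_{g_A}(\Omega)$, which lies in $\mathbbm{F}^K_{g_0}$ by the fundamental property (\ref{fund-scat}) — lies pointwise in the abelian commutant of $K$; this is what reduces the matrix inequalities and the trace pairings $\langle MV,V\rangle_{g_0}=\tmop{Tr}_{\mathbbm{R}}(MV^2)$ to scalar statements with transparent positivity, and it is also what makes $t\mapsto e^{-2A_t}$ convex along lines even though $X\mapsto e^X$ is not operator convex in general. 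One also has to verify carefully the clean formula for $\tmop{Ric}_{g_A}(\Omega)$ displayed above — the step where the Weitzenb\"ock term and the quadratic gradient term of lemma \ref{Cool-expr-OmRic} combine — since this is exactly what converts the pointwise curvature conditions defining $\Sigma^-_K$, $\Sigma^{\delta}_K$, $\Sigma^+_K$ into the convexity statements driving the argument.
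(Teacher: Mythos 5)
Your proposal follows essentially the same route as the paper's own proof: the change of variables (\ref{chg-var}) turning $G$-geodesics of $\Sigma_K (g_0)$ into lines of $\mathbbm{T}^K_{g_0}$, the formula $\tmop{Ric}_{g_A} (\Omega) = g_0 \Delta^{^{_{_{\Omega}}}}_{g_0} A - g_0 \tmop{Tr}_{g_0} (\nabla_{g_0, \bullet} A\, \nabla_{g_0, \bullet} A) + \tmop{Ric}_{g_0} (\Omega)$ (the paper's (\ref{brk-RIC})), concavity of the defining expressions along segments for the $G$-convexity of the three sets, and Lemma \ref{convex-lm} together with (\ref{exp-exprRic}) and the pointwise bound $\sum_k (\nabla_{g_0, e_k} A)^2 \leqslant \tmop{Ric}^{\ast}_{g_0} (\Omega)$ for the convexity of the functional over $\Sigma^-_K (g_0)$ and $\Sigma^+_K (g_0)$; this is exactly STEP I and STEP II of the paper.

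One caution: in two places you lean on statements whose hypothesis is $g_0 \in \mathcal{S}^K_{_{^{\Omega}}}$, while the corollary assumes only $g_0 \in \mathcal{M}$ (resp.\ $\tmop{Ric}_{g_0} (\Omega) \geqslant \varepsilon g_0$). First, for $\Sigma^{\delta}_K (g_0)$ you quote the intrinsic second variation formula of Proposition \ref{fund-scattering}, which is stated for scattering data; the paper instead settles this case via Lemma \ref{convex-lm} (valid for all $g_0 \in \mathcal{M}$), and you could equally observe $\Sigma^{\delta}_K (g_0) \subset \Sigma^-_K (g_0)$ once $\tmop{Ric}_{g_0} (\Omega) \geqslant \varepsilon g_0 > 0$. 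Second, you invoke (\ref{fund-scat}) to place $\tmop{Ric}^{\ast}_{g_A} (\Omega)$ in the commutant of $K$; without scattering data this is not available, but it is also not needed: the curvature terms enter the set-convexity argument only as constant-in-$t$ summands (so concavity in the quadratic-form order is unaffected, with no need to diagonalize them), and they enter the Hessian estimate only through pairings $\tmop{Tr}_{_{\mathbbm{R}}} (P Q)$ with $P, Q$ symmetric and non-negative, which are non-negative without any commutation. The commutation you genuinely use — among $A_0$, $A_1$, $V$, $e^A$, $\nabla_{g_0, e_k} A$ — does follow from membership in $\mathbbm{T}^K_{g_0}$, as you say. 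With these two repairs your argument coincides with the paper's.
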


\begin{proof}
  {\tmstrong{STEP I (G-convexity of the sets $\Sigma^{\ast}_K (g_0)$)}}. We
  observe first that the change of variables (\ref{chg-var}) send
  geodesics in to lines. Indeed the image of any geodesic $t \mapsto
  g_t = g\, e^{t v^{\ast}_g}$ via this map is the line $t \mapsto A_t : = A - t\,
  v^{\ast}_g / 2 \in \mathbbm{T}_{g_0}^K$. We infer that the $G$-convexity of
  the set $\Sigma^-_K (g_0)$ is equivalent to the (linear) convexity of the
  set
  \begin{eqnarray*}
    \mathbbm{T}_{g_0}^{K, -} \;\; \assign \;\; \Big\{ A \in \mathbbm{T}_{g_0}^K
    \mid \tmop{Ric}_{g_A} (\Omega) \;\geqslant\; -\; \tmop{Ric}_{g_0} (\Omega)
    \Big\}\;,
  \end{eqnarray*}
  with $g_A \assign g_0 e^{- 2 A}$. In the same way the $G$-convexity of the
  sets $\Sigma^{\delta}_K (g_0)$ and $\Sigma^+_K (g_0)$ is equivalent
  respectively to the convexity of the sets
  \begin{eqnarray*}
    \mathbbm{T}_{g_0}^{K, \delta} & \assign & \Big\{ A \in
    \mathbbm{T}_{g_0}^K \mid \tmop{Ric}_{g_A} (\Omega) \;\geqslant\; \delta\, g_A
    \Big\}\;,
\\
\\
\mathbbm{T}_{g_0}^{K, + +} & \assign & \Big\{ A \in \mathbbm{T}_{g_0}^K
    \mid \tmop{Ric}_{g_A} (\Omega) \;\geqslant \;g_0\,
    \Delta^{^{_{_{\Omega}}}}_{g_0} A \Big\}\; .
  \end{eqnarray*}
  Given any metric $g \in \mathcal{M}$ and any sections $A, B \in C^{\infty}
  (X, \tmop{End}_g (T_X))$ we define the bilinear product operation
\begin{eqnarray*}
\left\{ A, B \right\}_g \;\;\assign \; g\, \tmop{Tr}_g  \left( \nabla_{g,
\bullet} \,A \,\nabla_{g, \bullet} \,B \right)\;,
\end{eqnarray*}
and we observe the inequality $\left\{ A, A \right\}_g \geqslant 0$. This
implies the convex inequality
\begin{eqnarray*}
    \left\{ A_t, A_t \right\}_g & \leqslant & (1 \;-\; t) \left\{ A_0, A_0
    \right\}_g \;\,+\;\, t \left\{ A_1, A_1 \right\}_g\;,\\
    &  & \\
    A_t & \assign & (1 \;-\; t)\, A_0 \;\,+\;\, t \,A_1\,,\quad t \;\in\; [0, 1]\; .
\end{eqnarray*}
  for any $A_0, A_1 \in C^{\infty} (X, \tmop{End}_g (T_X))$. Indeed we observe
  the expansion
  \begin{eqnarray*}
    \left\{ A_t, A_t \right\}_g & = & \{A_t, A_0 \}_g \;\,+\;\, \left\{ A_t, t (A_1 \;-\;
    A_0) \right\}_g\\
    &  & \\
    & = & (1 \;-\; t) \left\{ A_0, A_0 \right\}_g \;\,+\;\, t \,\left\{ A_1, A_0
    \right\}_g\\
    &  & \\
    & + & \left\{ A_1 \;\,-\;\, (1 \;-\; t) (A_1 \;-\; A_0), t \,(A_1 \;-\; A_0) \right\}_g\\
    &  & \\
    & = & (1 \;-\; t) \left\{ A_0, A_0 \right\}_g \;\,+\;\, t \left\{ A_1, A_1 \right\}_g
    \\
    &  & \\
    & - & t \,(1 \;-\; t) \left\{ A_1 \;-\; A_0, A_1 \;-\; A_0 \right\}_g \;.
  \end{eqnarray*}
  Using this notation in formula (\ref{quad-RIC}) we infer the expression
  \begin{equation}
    \label{brk-RIC} \tmop{Ric}_{g_A} (\Omega) \;\;=\;\; g_0\,
    \Delta^{^{_{_{\Omega}}}}_{g_0} A \;\,-\;\, \left\{ A, A \right\}_{g_0} \;\,+\;\,
    \tmop{Ric}_{g_0} (\Omega) \;.
  \end{equation}
  We deduce the identities
  \begin{eqnarray*}
    \mathbbm{T}_{g_0}^{K, -} & = & \left\{ A \in \mathbbm{T}_{g_0}^K \mid g_0\,
    \Delta^{^{_{_{\Omega}}}}_{g_0} A \;\geqslant\; \left\{ A, A \right\}_{g_0} \;-\; 2
    \tmop{Ric}_{g_0} (\Omega) \right\}\;,\\
    &  & \\
    \mathbbm{T}_{g_0}^{K, \delta} & = & \left\{ A \in \mathbbm{T}_{g_0}^K \mid
    g_0 \,\Delta^{^{_{_{\Omega}}}}_{g_0} A \;\geqslant\; \left\{ A, A \right\}_{g_0}
    \;-\; \tmop{Ric}_{g_0} (\Omega) \;+\; \delta\, g_0\, e^{- 2 A} \right\}\;,\\
    &  & \\
    \mathbbm{T}_{g_0}^{K, + +} & = & \left\{ A \in \mathbbm{T}_{g_0}^K \mid
    \left\{ A, A \right\}_{g_0} \;\leqslant\; \tmop{Ric}_{g_0} (\Omega) \right\}\; .
  \end{eqnarray*}
  Let now $A_0$, $A_1 \in \mathbbm{T}_{g_0}^{K, \delta}$. The fact that $[A_0,
  A_1] = 0$ implies the existence of a $g_0$-orthonormal basis which
  diagonalizes simultaneously $A_0$ and $A_1$. Then the convexity of the
  exponential function implies the convex inequality
  \begin{eqnarray*}
    g_0 \,e^{- 2 A_t} \;\;\leqslant \;\; (1\; -\; t)\, g_0\; e^{- \,2\, A_0} \;\,+\;\, t\, g_0\, e^{- \,2\, A_1}\;,
  \end{eqnarray*}
  for all $t \in [0, 1]$. Using the previous convex inequalities we obtain
  \begin{eqnarray*}
    g_0 \,\Delta^{^{_{_{\Omega}}}}_{g_0} A_t & = & (1 \;-\; t)\, g_0\,
    \Delta^{^{_{_{\Omega}}}}_{g_0} A_0 \;\,+\;\, t\, g_0\, \Delta^{^{_{_{\Omega}}}}_{g_0}
    A_1 \\
    &  & \\
    & \geqslant & (1 \;-\; t) \left\{ A_0, A_0 \right\}_{g_0} \;\,+\;\, t \left\{ A_1,
    A_1 \right\}_{g_0} \;\,-\;\, \tmop{Ric}_{g_0} (\Omega)\\
    &  & \\
    & + & (1 \;-\; t)\, \delta\, g_0\, e^{- \,2\, A_0} \;\,+\;\, t\, \delta\, g_0\, e^{-\, 2\, A_1}\\
    &  & \\
    & \geqslant & \left\{ A_t, A_t \right\}_{g_0} \;\,-\;\, \tmop{Ric}_{g_0} (\Omega)
    \;\,+\;\, \delta\, g_0\, e^{-\, 2\, A_t}\;,
  \end{eqnarray*}
  for all $t \in [0, 1]$. We infer the convexity of the set
  $\mathbbm{T}_{g_0}^{K, \delta}$. The proof of the convexity of the sets
  $\mathbbm{T}_{g_0}^{K, -}$ and $\mathbbm{T}_{g_0}^{K, + +}$ is quite
  similar.
  
  {\tmstrong{STEP II (G-convexity of the functional
  $\mathcal{W}_{\Omega}$)}}. Using again the fact that the change of variables
  (\ref{chg-var}) send geodesics in to lines we infer that the
  $G$-convexity of the functional $\mathcal{W}_{\Omega}$ over the $G$-convex
  sets $\Sigma^-_K (g_0)$, $\Sigma^{\delta}_K (g_0)$, $\Sigma^+_K (g_0)$ is
  equivalent to the convexity of the functional ${\bf W}_{\Omega}$ over
  the convex sets $\mathbbm{T}_{g_0}^{K, -}$, $\mathbbm{T}_{g_0}^{K, \delta}$,
  $\mathbbm{T}_{g_0}^{K, + +}$. Let now $g \in \Sigma^-_K (g_0)$ and observe
  that
\begin{eqnarray*}
0 & \leqslant & \tmop{Ric}_g (\Omega) \;\,+\;\, \tmop{Ric}_{g_0} (\Omega)\\
    &  & \\
& = & - \;\,g_0 \,e^{- A} \tmop{div}^{^{_{_{\Omega}}}}_{g_0} \left( e^A
    \nabla_{g_0} A \right) \;\,+\;\, 2\, \tmop{Ric}_{g_0} (\Omega)\;,
\end{eqnarray*}
  thanks to the identity (\ref{exp-expr-OmRic}). Then the second variation
  formula in lemma \ref{convex-lm} implies the convexity of the functional
  ${\bf W}_{\Omega}$ over the convex set $\mathbbm{T}_{g_0}^{K, -}$. The
  convexity of the functional ${\bf W}_{\Omega}$ over
  $\mathbbm{T}_{g_0}^{K, \delta}$ is obvious at this point. We observe now the
  inclusion $\mathbbm{T}_{g_0}^{K, + +} \subset \mathbbm{T}_{g_0}^{K, +}$.
  Indeed for all $A \in \mathbbm{T}_{g_0}^{K, + +}$ and for all $U \in
  \mathbbm{T}_{g_0}^K$ hold the trivial identities
  \begin{eqnarray*}
    |U \,\nabla_{g_0} A|^2_{g_0} & = & \sum_k |U \,\nabla_{g_0, e_k} A|^2_{g_0} \\
    &  & \\
    & = & \sum_k \Big\langle \nabla_{g_0, e_k} A \,\nabla_{g_0, e_k} A \,U, U
    \Big\rangle_{g_0}\\
    &  & \\
    & = & \Big\langle \tmop{Tr}_{g_0} \big( \nabla_{g_0, \bullet} \,A\,
    \nabla_{g_0, \bullet} \,A \big) U, U \Big\rangle_{g_0} \\
    &  & \\
    & = & \tmop{Tr}_{_{^{\mathbbm{R}}}} \Big[ \tmop{Tr}_{g_0} \big( \nabla_{g_0,
    \bullet} A \nabla_{g_0, \bullet} A\big) U^2\Big] \\
    &  & \\
    & \leqslant & \tmop{Tr}_{_{^{\mathbbm{R}}}} \left[ U^2
    \tmop{Ric}^{\ast}_{g_0} (\Omega) \right]\;,
  \end{eqnarray*}
  where $(e_k)_k \subset T_{X, x}$ is a $g_0$-orthonormal basis at an
  arbitrary point $x \in X$. Then lemma \ref{convex-lm} implies the convexity
  of the functional ${\bf W}_{\Omega}$ over the convex set
  $\mathbbm{T}_{g_0}^{K, + +}$.
\end{proof}

\section{The extension of the functional ${\bf W}_{\Omega}$ to
$\overline{\mathbbm{T}}^K_{g_0}$}

We denote by $\tmop{End}_{g_0} (T_X)$ the space of $g_0$-symmetric
endomorphisms. We define the natural integral extension 
$$
{\bf W}_{\Omega}
: \overline{\mathbbm{T}}^K_{g_0} \longrightarrow (- \infty, + \infty]\;,
$$ 
of the
functional ${\bf W}_{\Omega}$ by the integral expression in the beginning
of section \ref{sec-PM} if $e^A \in \overline{\mathbbm{T}}^K_{g_0} \cap H^1
(X, \tmop{End}_{g_0} (T_X))$ and ${\bf W}_{\Omega} (A) = + \infty$
otherwise. We show now the following elementary fact.

\begin{lemma}
  \label{lsc-W}Let $g_0 \in \mathcal{M}$ such that $\tmop{Ric}_{g_0} (\Omega)
  \geqslant \varepsilon g_0$, for some $\varepsilon \in \mathbbm{R}_{> 0}$.
  Then the natural integral extension ${\bf W}_{\Omega} :
  \overline{\mathbbm{T}}^K_{g_0} \longrightarrow (- \infty, + \infty]$ of the
  functional ${\bf W}_{\Omega}$ is lower semi-continuous and bounded from
  below. Indeed for any $A \in \overline{\mathbbm{T}}^K_{g_0}$ hold the
  uniform estimate
  \begin{eqnarray*}
    {\bf W}_{\Omega} (A) \;\; \geqslant \;\; 2 \int_X \log \frac{d
    V_{\varepsilon g_0}}{\Omega} \,\Omega\; .
  \end{eqnarray*}
\end{lemma}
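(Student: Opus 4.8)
The plan is to work directly with the integral expression
$$
{\bf W}_{\Omega} (A) \;=\; \int_X \Big[ \big| \nabla_{g_0} e^A \big|^2_{g_0} \;+\; \tmop{Tr}_{_{\mathbbm{R}}} \big( e^{2 A} \tmop{Ric}^{\ast}_{g_0} (\Omega) \;-\; 2\, A \big) \Big] \Omega \;+\; \int_X \Big[ 2 \log \frac{d V_{g_0}}{\Omega} \;-\; n \Big] \Omega\;,
$$
valid when $e^A \in \overline{\mathbbm{T}}^K_{g_0} \cap H^1(X, \tmop{End}_{g_0}(T_X))$, and to prove the pointwise lower bound on the integrand first. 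First I would discard the gradient term $|\nabla_{g_0} e^A|^2_{g_0} \geqslant 0$. Then, using $\tmop{Ric}^{\ast}_{g_0}(\Omega) \geqslant \varepsilon\, \mathbbm{I}$ as a $g_0$-symmetric endomorphism and the fact that $e^{2A}$ is $g_0$-symmetric positive with $[e^{2A}, \tmop{Ric}^{\ast}_{g_0}(\Omega)]$ not necessarily zero — but $\tmop{Tr}_{_{\mathbbm{R}}}(e^{2A} R) \geqslant \varepsilon\, \tmop{Tr}_{_{\mathbbm{R}}}(e^{2A})$ holds since $e^{2A}$ is positive and $R - \varepsilon\mathbbm{I} \geqslant 0$ — I would bound $\tmop{Tr}_{_{\mathbbm{R}}}(e^{2A}\tmop{Ric}^{\ast}_{g_0}(\Omega)) \geqslant \varepsilon\, \tmop{Tr}_{_{\mathbbm{R}}}(e^{2A})$. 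So the integrand is bounded below by $\varepsilon\, \tmop{Tr}_{_{\mathbbm{R}}}(e^{2A}) - 2\tmop{Tr}_{_{\mathbbm{R}}}(A) + 2\log \frac{dV_{g_0}}{\Omega} - n$, and it suffices to diagonalize $A$ pointwise with eigenvalues $a_1, \dots, a_n$ and apply the scalar inequality $\varepsilon e^{2a} - 2a \geqslant 1 + \log \varepsilon$ for all $a \in \mathbbm{R}$ (the minimum of $a \mapsto \varepsilon e^{2a} - 2a$ is attained at $e^{2a} = 1/\varepsilon$). Summing over eigenvalues gives $\varepsilon\, \tmop{Tr}_{_{\mathbbm{R}}}(e^{2A}) - 2\tmop{Tr}_{_{\mathbbm{R}}}(A) \geqslant n(1 + \log \varepsilon)$, hence the integrand is $\geqslant 2\log \frac{dV_{g_0}}{\Omega} + n\log\varepsilon = 2\log \frac{dV_{g_0}}{\Omega} + \log\varepsilon^n = 2\log\frac{dV_{\varepsilon g_0}}{\Omega}$, since $dV_{\varepsilon g_0} = \varepsilon^{n/2} dV_{g_0}$. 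Integrating over $X$ yields the claimed bound; in the case ${\bf W}_{\Omega}(A) = +\infty$ the estimate is trivial. This also shows the extension never takes the value $-\infty$, so it is genuinely $(-\infty, +\infty]$-valued.

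For lower semicontinuity, I would take a sequence $A_j \to A$ in $\overline{\mathbbm{T}}^K_{g_0}$ (i.e.\ in $L^2$) with $\liminf_j {\bf W}_{\Omega}(A_j) < +\infty$, pass to a subsequence realizing the liminf and with ${\bf W}_{\Omega}(A_j)$ bounded, hence with $e^{A_j}$ bounded in $H^1$. The $L^2$ convergence of $A_j$ (after a further subsequence, pointwise a.e.) forces $e^{A_j} \to e^A$ a.e.\ and, combined with the $H^1$ bound, $e^A \in H^1$ with $\nabla_{g_0} e^{A_j} \rightharpoonup \nabla_{g_0} e^A$ weakly in $L^2$; weak lower semicontinuity of the $L^2$-norm handles the gradient term $\int_X |\nabla_{g_0} e^A|^2_{g_0}\,\Omega \leqslant \liminf_j \int_X |\nabla_{g_0} e^{A_j}|^2_{g_0}\,\Omega$. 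For the zeroth-order terms, the integrand $\tmop{Tr}_{_{\mathbbm{R}}}(e^{2A}\tmop{Ric}^{\ast}_{g_0}(\Omega) - 2A)$ is bounded below by the fixed integrable function $2\log\frac{dV_{\varepsilon g_0}}{\Omega} - 2\log\frac{dV_{g_0}}{\Omega} + n$ (a constant, as just shown), so after subtracting this lower bound Fatou's lemma applies to the a.e.-convergent nonnegative integrands, giving $\liminf_j \int_X (\cdots)_j\, \Omega \geqslant \int_X (\cdots)\, \Omega$. Adding the two liminf inequalities gives ${\bf W}_{\Omega}(A) \leqslant \liminf_j {\bf W}_{\Omega}(A_j)$.

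The main obstacle is the interplay between $L^2$-convergence of $A_j$ and control of $e^{A_j}$: one needs the energy bound coming from ${\bf W}_{\Omega}(A_j)$ bounded to upgrade $L^2$-convergence of $A_j$ to enough compactness for $e^{A_j}$ (namely a uniform $H^1$ bound on $e^{A_j}$ plus a.e.\ convergence) and to identify the weak limit of $\nabla_{g_0} e^{A_j}$ with $\nabla_{g_0} e^A$. The term $\int_X \tmop{Tr}_{_{\mathbbm{R}}}(e^{2A_j}\tmop{Ric}^{\ast}_{g_0}(\Omega))\,\Omega$ is where the uniform coercivity $\tmop{Ric}^{\ast}_{g_0}(\Omega)\geqslant \varepsilon\,\mathbbm{I}$ is essential: it makes this term control $\int_X \tmop{Tr}_{_{\mathbbm{R}}}(e^{2A_j})\,\Omega$ from below, which together with the scalar inequality gives the uniform lower bound and prevents mass from escaping. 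Once these compactness points are in place, the rest is the routine Fatou/weak-lsc argument sketched above.
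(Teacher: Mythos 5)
Your proposal is correct and follows essentially the paper's own argument: the same coercivity estimate $\int_X\big[|\nabla_{g_0}e^{A_k}|^2_{g_0}+\varepsilon|e^{A_k}|^2_{g_0}\big]\Omega\leqslant C$ extracted from the energy bound, weak $L^2$ lower semi-continuity for the gradient term, and the identical pointwise minimization of $x\mapsto\varepsilon e^{2x}-2x$ (after the trace inequality $\tmop{Tr}_{_{\mathbbm{R}}}(e^{2A}\tmop{Ric}^{\ast}_{g_0}(\Omega))\geqslant\varepsilon\tmop{Tr}_{_{\mathbbm{R}}}(e^{2A})$) giving the bound $2\int_X\log\frac{dV_{\varepsilon g_0}}{\Omega}\,\Omega$. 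The only deviation is cosmetic: you treat the zeroth-order terms via a.e.\ convergence and Fatou, where the paper invokes Rellich--Kondrachov to get strong $L^2$ convergence of $e^{A_k}$; both close the argument in the same way.
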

\begin{proof}
  We observe that a function $f$ over a metric space is l.s.c. iff $f (x)
  \leqslant \liminf_{k \rightarrow + \infty} f (x_k)$ for any convergent
  sequence $x_k \rightarrow x$ such that $\sup_k f (x_k) < + \infty$. So let
  $(A_k)_k \subset \overline{\mathbbm{T}}_{g_0}^K$ and $A \in
  \overline{\mathbbm{T}}_{g_0}^K$ such that $A_k \rightarrow A$ in $L^2 (X)$
  with $\sup_k {\bf W}_{\Omega} (A_k) < + \infty$. This combined with the assumption $\tmop{Ric}_{g_0} (\Omega) \geqslant
  \varepsilon g_0$, implies the estimates
  \begin{eqnarray}
&&    \label{unifH1}  \int_X \left[ \big| \nabla_{g_0} e^{A_k} \big|^2_{g_0} \;\,+\;\,
    \varepsilon \big|e^{A_k} \big|^2_{g_0} \right] \Omega \nonumber
\\\nonumber
\\
&\leqslant& \int_X \left[ \big|\nabla_{g_0} e^{A_k} \big|^2_{g_0} \;\,+\;\, \left\langle \tmop{Ric}^{\ast}_{g_0}
    (\Omega) e^{A_k}, e^{A_k} \right\rangle_{g_0} \right] \Omega \nonumber
\\\nonumber
\\
&\leqslant& C\;,
  \end{eqnarray}
  for some uniform constant $C$. The first uniform estimate combined with the
  Relich-Kondrachov compactness result, $H^1 (X) \subset \subset L^2 (X)$,
  implies that for every subsequence of $(e^{A_k})_k$ there exists a
  sub-subsequence convergent to $e^A$ in $L^2 (X)$. We observe indeed that the
  assumption on the $L^2$-convergence $A_k \rightarrow A$ implies that for
  every subsequence of $(e^{A_k})_k$ there exists a sub-subsequence convergent
  to $e^A$ a.e. over $X$. We infer that $e^{A_k} \longrightarrow e^A$ in $L^2
  (X)$. Then the uniform estimates (\ref{unifH1}) imply that $\nabla_{g_0}
  e^{A_k} \longrightarrow \nabla_{g_0} e^A$ weakly in $L^2 (X)$. We deduce
  \begin{eqnarray*}
    {\bf W}_{\Omega} (A) \;\; \leqslant \;\; \liminf_{k \rightarrow + \infty}\,
    {\bf W}_{\Omega} (A_k)\;,
  \end{eqnarray*}
  thanks to the weak lower semi-continuity of the $L^2$-norm. We show now the
  lower bound in the statement. For this purpose we observe the estimates
  \begin{eqnarray*}
    {\bf W}_{\Omega} (A) & \geqslant & \int_X \left[
    \tmop{Tr}_{_{\mathbbm{R}}} \left( \varepsilon\, e^{2 A} \;\,-\;\, 2\, A \right) \;\,+\;\, 2
    \log \frac{d V_{g_0}}{\Omega} \;\,-\;\, n \right] \Omega\\
    &  & \\
    & \geqslant & \int_X \left[ n (1 \;\,+\;\, \log \varepsilon) \;\,+\;\, 2 \log \frac{d
    V_{g_0}}{\Omega} \;\,-\;\, n \right] \Omega \;.
  \end{eqnarray*}
  The last estimate follows from the fact that the convex function $x \mapsto
  \varepsilon e^{2 x} - 2 x$ admits a global minimum over $\mathbbm{R}$ at the
  point $- (\log \varepsilon) / 2$ in which takes the value $1 + \log
  \varepsilon$. We infer the required lower bound.
\end{proof}

From now on we will assume that the polarization endomorphism $K$ is smooth.
We define the vector spaces
\begin{eqnarray*}
  W^{1, \infty} (\mathbbm{T}_{g_0}) & \assign & \Big\{ A \in W^{1, \infty} (X,
  \tmop{End}_{g_0} (T_X)) \mid \nabla_{_{T_{X, g_0}}} A = 0 \Big\}\;,\\
  &  & \\
  W^{1, \infty} (\mathbbm{T}_{g_0}^K) & \assign & \Big\{ A \in W^{1, \infty}
  (\mathbbm{T}_{g_0}) \mid \left[ \nabla^p_{g_0} T, A \right] = 0\,, \;T
  =\mathcal{R}_{g_0}, K, \;\forall p \in \mathbbm{Z}_{\geqslant 0} \Big\}\;,
\end{eqnarray*}
We denote by $\overline{\mathbbm{T}}^{K, + +}_{g_0}$ the $L^2$-closure of the
set convex set $\mathbbm{T}^{K, + +}_{g_0}$. The following quite elementary
lemma will be useful for convexity purposes.

\begin{lemma}
  \label{bound-M}Let $g_0 \in \mathcal{M}$ such that $\tmop{Ric}_{g_0}
  (\Omega) > 0$. Then the $L^2$-closed and convex set
  $\overline{\mathbbm{T}}^{K, + +}_{g_0}$ satisfies the inclusion
$$
\overline{\mathbbm{T}}^{K, + +}_{g_0} \;\;\subset\;\; W^{1, \infty}
  (\mathbbm{T}_{g_0}^{K, +})\;,
$$
where $W^{1, \infty} (\mathbbm{T}_{g_0}^{K, +})$ is the set of points $A \in W^{1, \infty}
     (\mathbbm{T}^K_{g_0})$ such that 
$$ \int_X |U \,\nabla_{g_0} A|^2_{g_0} \Omega
     \;\;\leqslant\;\; \int_X \tmop{Tr}_{_{^{\mathbbm{R}}}} \left[ U^2
     \tmop{Ric}^{\ast}_{g_0} (\Omega) \right] \Omega\,,\quad \forall U \;\in\;
     \overline{\mathbbm{T}}^K_{g_0}\; .
$$
\end{lemma}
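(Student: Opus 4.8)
The plan is to realize every element of $\overline{\mathbbm{T}}^{K, + +}_{g_0}$ as an $L^2$-limit of smooth elements of $\mathbbm{T}^{K, + +}_{g_0}$ enjoying a uniform $W^{1, \infty}$-bound, and then to pass the defining properties of $W^{1, \infty} (\mathbbm{T}_{g_0}^{K, +})$ to the limit. So fix $A \in \overline{\mathbbm{T}}^{K, + +}_{g_0}$ and choose $A_k \in \mathbbm{T}^{K, + +}_{g_0}$ (smooth, since $\mathbbm{T}^K_{g_0} = g^{- 1}_0 \mathbbm{F}^K_{g_0} \subset C^{\infty}$) with $A_k \rightarrow A$ in $L^2 (X)$. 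Recall from the proof of corollary \ref{coro-Pos-Ric} that
$$
\mathbbm{T}^{K, + +}_{g_0} \;\;=\;\; \left\{ A \in \mathbbm{T}^K_{g_0} \mid \left\{ A, A \right\}_{g_0} \;\leqslant\; \tmop{Ric}_{g_0} (\Omega) \right\}\;,
$$
with $\left\{ A, A \right\}_{g_0} = g_0 \tmop{Tr}_{g_0} ( \nabla_{g_0, \bullet} A \, \nabla_{g_0, \bullet} A )$. Since each $\nabla_{g_0, e_k} A$ is $g_0$-symmetric, the endomorphism $g^{- 1}_0 \left\{ A, A \right\}_{g_0} = \sum_k ( \nabla_{g_0, e_k} A )^2$ is $g_0$-nonnegative and has $\tmop{Tr}_{_{\mathbbm{R}}} ( g^{- 1}_0 \left\{ A, A \right\}_{g_0} ) = | \nabla_{g_0} A |^2_{g_0}$; taking traces in the constraint $g^{- 1}_0 \left\{ A, A \right\}_{g_0} \leqslant \tmop{Ric}^{\ast}_{g_0} (\Omega)$ therefore yields the uniform pointwise estimate $| \nabla_{g_0} A_k |^2_{g_0} \leqslant \tmop{Tr}_{_{\mathbbm{R}}} \tmop{Ric}^{\ast}_{g_0} (\Omega) \leqslant C$, the last bound holding by compactness of $X$ and smoothness of $g_0$.

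I would then promote this first-order bound to a full $W^{1, \infty}$-bound. By the Kato inequality $| \nabla_{g_0} | A_k |_{g_0} | \leqslant | \nabla_{g_0} A_k |_{g_0} \leqslant \sqrt{C}$ almost everywhere, so each function $| A_k |_{g_0}$ is $\sqrt{C}$-Lipschitz on $(X, g_0)$ and hence has oscillation at most $\sqrt{C}$ times the diameter of $(X, g_0)$; combined with $\sup_k \| A_k \|_{L^2} < + \infty$, which follows from the $L^2$-convergence, this controls the mean of $| A_k |_{g_0}$ and gives $\sup_k \| A_k \|_{L^{\infty}} < + \infty$. Thus $(A_k)_k$ is uniformly bounded and uniformly Lipschitz, so by the Arzel\`a--Ascoli theorem a subsequence converges uniformly on $X$; the uniform limit necessarily coincides with $A$, which is therefore Lipschitz, i.e. $A \in W^{1, \infty} (X, \tmop{End}_{g_0} (T_X))$. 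Moreover $\nabla_{g_0} A_k$ is bounded in $L^2$ and its weak $L^2$-limit along a further subsequence is the distributional derivative $\nabla_{g_0} A$, so the linear equalities $\nabla_{_{T_X, g_0}} A_k = 0$ pass to the limit, while the brackets $[ \nabla^p_{g_0} T, A_k ] = 0$ with $T =\mathcal{R}_{g_0}, K$ pass to the uniform limit directly, being continuous in $A_k$ alone. Hence $A \in W^{1, \infty} (\mathbbm{T}^K_{g_0})$.

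Finally I would check the variational inequality. For each $k$ and each $U \in \mathbbm{T}^K_{g_0}$, the pointwise computation at the end of step II of the proof of corollary \ref{coro-Pos-Ric} gives $| U \nabla_{g_0} A_k |^2_{g_0} = \tmop{Tr}_{_{\mathbbm{R}}} [ U^2 g^{- 1}_0 \left\{ A_k, A_k \right\}_{g_0} ] \leqslant \tmop{Tr}_{_{\mathbbm{R}}} [ U^2 \tmop{Ric}^{\ast}_{g_0} (\Omega) ]$ pointwise on $X$, using $U^2 \geqslant 0$ together with the constraint. Integrating against $\Omega$, observing that $U \nabla_{g_0} A_k$ converges weakly in $L^2$ to $U \nabla_{g_0} A$ for fixed bounded $U$ (because $\nabla_{g_0} A_k$ does and $U$ is a fixed bounded $g_0$-symmetric multiplier), and invoking the weak lower semicontinuity of the $L^2$-norm, I obtain $\int_X | U \nabla_{g_0} A |^2_{g_0} \Omega \leqslant \int_X \tmop{Tr}_{_{\mathbbm{R}}} [ U^2 \tmop{Ric}^{\ast}_{g_0} (\Omega) ] \Omega$ for all $U \in \mathbbm{T}^K_{g_0}$. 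Both sides are continuous under $L^2$-convergence of $U$, thanks to $\nabla_{g_0} A \in L^{\infty}$ and $\tmop{Ric}^{\ast}_{g_0} (\Omega) \in L^{\infty}$ and the Cauchy--Schwarz bound $\int_X | \tmop{Tr}_{_{\mathbbm{R}}} ( ( U^2_j - U^2 ) M ) | \Omega \leqslant \| M \|_{L^{\infty}} \| U_j - U \|_{L^2} \| U_j + U \|_{L^2}$, so the inequality extends from $\mathbbm{T}^K_{g_0}$ to its $L^2$-closure $\overline{\mathbbm{T}}^K_{g_0}$. This shows $A \in W^{1, \infty} (\mathbbm{T}_{g_0}^{K, +})$ and completes the proof.

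The main obstacle is the middle step: converting the purely first-order a priori bound $| \nabla_{g_0} A_k |_{g_0} \leqslant \sqrt{C}$ into a genuine $W^{1, \infty}$-bound and, from it, into a limit sitting in the constrained space $\mathbbm{T}^K_{g_0}$. This is precisely where compactness of $X$ (finiteness of the diameter, the Poincar\'e-type oscillation estimate, Arzel\`a--Ascoli) and the Kato inequality for $\tmop{End} (T_X)$-valued sections are needed, and where one must verify that the closed conditions $\nabla_{_{T_X, g_0}} ( \, \cdot \, ) = 0$ and $[ \nabla^p_{g_0} T, \, \cdot \, ] = 0$ survive the relevant weak and uniform limits. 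The variational inequality of the last step, by contrast, is a routine lower-semicontinuity-plus-density argument once these uniform bounds are established.
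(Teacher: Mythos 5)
Your proposal is correct, and its overall skeleton coincides with the paper's: extract the pointwise bound $|\nabla_{g_0} A_k|^2_{g_0} \leqslant \tmop{Tr}_{g_0} \tmop{Ric}_{g_0} (\Omega)$ on smooth approximants from the defining constraint of $\mathbbm{T}^{K,++}_{g_0}$, upgrade the $L^2$-limit $A$ to $W^{1,\infty}(\mathbbm{T}^K_{g_0})$, prove the inequality for fixed smooth $U \in \mathbbm{T}^K_{g_0}$ by weak lower semicontinuity of the $L^2$-norm, and finally extend to $U \in \overline{\mathbbm{T}}^K_{g_0}$ by $L^2$-density (your last two steps are essentially verbatim the paper's). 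The genuine difference is the mechanism of the middle step. The paper never invokes Arzel\`a--Ascoli: it converts the pointwise gradient bound into the family of uniform $L^{2p}$-estimates on $\nabla_{g_0} A_k$, passes each to the limit by weak $L^{2p}$-compactness and weak lower semicontinuity, and then lets $p \rightarrow + \infty$ to get $\| \nabla_{g_0} A \|_{L^{\infty}} \leqslant \sup_X \big[ \tmop{Tr}_{g_0} \tmop{Ric}_{g_0}(\Omega) \big]^{1/2}$ directly for the weak derivative; the boundedness of $A$ itself is then obtained by the same Kato-type inequality you use, but packaged as a radial integration in a coordinate ball rather than as an oscillation-plus-$L^2$-mean estimate. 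Your route instead uses the compactness of $X$ more heavily (diameter bound, equicontinuity, Arzel\`a--Ascoli) to get a uniformly convergent subsequence; this buys slightly stronger qualitative information (a Lipschitz representative of $A$, uniform convergence, and an easy passage of the closed conditions $\nabla_{_{T_X,g_0}}(\cdot)=0$ and $[\nabla^p_{g_0} T, \cdot\,]=0$ to the limit), at the cost of the standard bookkeeping about trivializations needed to apply Arzel\`a--Ascoli to bundle-valued sections, which you should make explicit since the covariant-derivative bound alone controls the coordinate derivatives of the components only after the uniform $L^{\infty}$-bound is in hand — as you correctly ordered the argument. Both approaches yield the same Lipschitz constant and finish identically.
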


\begin{proof}
  We remind that for all $A \in \mathbbm{T}_{g_0}^{K, + +}$ hold the
  inequality
  \begin{eqnarray*}
    | \nabla_{g_0} A|^2_{g_0} \;\; \leqslant \;\; \tmop{Tr}_{g_0} \tmop{Ric}_{g_0}
    (\Omega)\;,
  \end{eqnarray*}
  which implies the uniform estimate
  \begin{equation}
    \label{p-iter-M}  \left[ \int_X | \nabla_{g_0} A|^{2 p}_{g_0} \,\Omega
    \right]^{\frac{1}{2 p}} \;\;\leqslant\;\; \left[ \int_X \tmop{Tr}_{g_0}
    \tmop{Ric}_{g_0} (\Omega) \Omega \right]^{\frac{1}{2 p}}\,,
  \end{equation}
  for all $p \in \mathbbm{N}_{> 1}$. Let now $A \in \overline{\mathbbm{T}}^{K,
  + +}_{g_0}$ arbitrary and let $(A_k)_k \subset \mathbbm{T}^{K, + +}_{g_0}$
  be a sequence $L^2$-convergent to $A$. Applying the uniform estimate
  (\ref{p-iter-M}) to $A_k$ we infer that (\ref{p-iter-M}) hold also for $A$,
  by the weak $L^{2 p}$-compactness and the weak lower semi-continuity of the
  $L^{2 p}$-norm. Furthermore taking the limit as $p \rightarrow + \infty$ in
  (\ref{p-iter-M}) we infer the uniform estimate
  \begin{eqnarray*}
    \| \nabla_{g_0} A\|_{L^{\infty} (X, g_0)} \;\; \leqslant \;\; \sup_X \big[
    \tmop{Tr}_{g_0} \tmop{Ric}_{g_0} (\Omega) \big]^{\frac{1}{2}}\,,
  \end{eqnarray*}
  for all $A \in \overline{\mathbbm{T}}^{K, + +}_{g_0}$. It is clear at this
  point that $A \in L^{\infty} (X, \tmop{End}_{g_0} (T_X))$. Indeed consider
  an arbitrary coordinate ball $\mathcal{B} \subset X$ with center a point $x
  \equiv 0$ such that $|A|_{g_0} (0) < + \infty$. Then for all \ $v \in
  \mathcal{B}$ hold the inequalities
  \begin{eqnarray*}
    |A|_{g_0} (v) & \leqslant & |A|_{g_0} (0) \;\,+\;\, \int^1_0 \big| \left\langle
    \nabla_{g_0} |A|_{g_0} (t v), v \right\rangle_{g_0} \big|\, d t\\
    &  & \\
    & \leqslant & |A|_{g_0} (0) \;\,+\;\, \int^1_0 | \nabla_{g_0, v} A|_{g_0} (t v) \,d
    t\;,
  \end{eqnarray*}
  which show that $A$ is bounded. In the last inequality we used the estimate
  \begin{eqnarray*}
    | \left\langle \nabla_{g_0} |A|_{g_0}, \xi \right\rangle_{g_0} | \;\;
    \leqslant \;\; | \nabla_{g_0, \xi} A|_{g_0}\;,
  \end{eqnarray*}
  for all $\xi \in T_{X, x}$. This last follows combining the elementary
  identities
  \begin{eqnarray*}
    \xi . |A|^2_{g_0} & = & 2 \left\langle \nabla_{g_0, \xi} A, A
    \right\rangle_{g_0}\;,\\
    &  & \\
    \xi . |A|^2_{g_0} & = & 2\, |A|_{g_0} \,\xi . |A|_{g_0} \;\;=\;\; 2\, |A|_{g_0} 
    \left\langle \nabla_{g_0} |A|_{g_0}, \xi \right\rangle_{g_0} \;.
  \end{eqnarray*}
with the Cauchy-Schwartz inequality.
  It is also clear by the definition of the convex set
  $\overline{\mathbbm{T}}^{K, + +}_{g_0}$ that $A \in W^{1, \infty}
  (\mathbbm{T}_{g_0}^K)$. Moreover the inclusion $\mathbbm{T}_{g_0}^{K, + +}
  \subset \mathbbm{T}_{g_0}^{K, +}$ implies that for all $A \in
  \overline{\mathbbm{T}}^{K, + +}_{g_0}$ hold the inequality
  \begin{eqnarray*}
    \int_X |U \,\nabla_{g_0} A|^2_{g_0} \Omega \;\;\leqslant \;\; \int_X
    \tmop{Tr}_{_{^{\mathbbm{R}}}} \left[ U^2 \tmop{Ric}^{\ast}_{g_0} (\Omega)
    \right] \Omega\,,\quad \forall U \;\in\; \mathbbm{T}_{g_0}^K\; .
  \end{eqnarray*}
  Indeed this follows by the weak $L^2$-compactness and the weak lower
  semi-continuity of the $L^2$-norm. By $L^2$-density we infer the inclusion in
  the statement of lemma \ref{bound-M}. 
\end{proof}

We can show that the same result hold also for the closure of the set
$\mathbbm{T}_{g_0}^{K, +}$. However the proof of this case is slightly more
complicated and we omit it since we will not use it.

\begin{lemma}
  \label{fin-sc-Ext}Consider any $g_0 \in \mathcal{M}$ such that
  $\tmop{Ric}_{g_0} (\Omega) > 0$. Then the natural integral extension
  ${\bf W}_{\Omega} : \overline{\mathbbm{T}}^{K, + +}_{g_0}
  \longrightarrow \mathbbm{R}$ of the functional ${\bf W}_{\Omega}$ is
  lower semi-continuous, uniformly bounded from below and convex over the
  $L^2$-closed and convex set $\overline{\mathbbm{T}}^{K, + +}_{g_0}$.
\end{lemma}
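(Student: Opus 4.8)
The statement to prove (Lemma \ref{fin-sc-Ext}) asserts three properties of the natural integral extension $\mathbf{W}_{\Omega}\colon \overline{\mathbbm{T}}^{K,++}_{g_0}\longrightarrow \mathbbm{R}$: lower semi-continuity, a uniform lower bound, and convexity over the $L^2$-closed convex set $\overline{\mathbbm{T}}^{K,++}_{g_0}$. The plan is to deduce all three from the work already done, the main input being Lemma \ref{bound-M} (which shows $\overline{\mathbbm{T}}^{K,++}_{g_0}\subset W^{1,\infty}(\mathbbm{T}_{g_0}^{K,+})$, so in particular every point of the closure already has the $L^{\infty}$ bound on $\nabla_{g_0}A$, the $L^{\infty}$ bound on $A$ itself, and the commutation relations defining $W^{1,\infty}(\mathbbm{T}_{g_0}^K)$), together with Lemma \ref{lsc-W} and Corollary \ref{coro-Pos-Ric}.

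First I would establish the lower semi-continuity and the uniform lower bound. Since $\tmop{Ric}_{g_0}(\Omega)>0$ on the compact manifold $X$, there is $\varepsilon\in\mathbbm{R}_{>0}$ with $\tmop{Ric}_{g_0}(\Omega)\geqslant\varepsilon g_0$; hence Lemma \ref{lsc-W} applies verbatim and gives that the integral extension of $\mathbf{W}_{\Omega}$ to all of $\overline{\mathbbm{T}}^K_{g_0}$ is $L^2$-l.s.c.\ and bounded below by $2\int_X\log\frac{dV_{\varepsilon g_0}}{\Omega}\,\Omega$. Restricting an l.s.c.\ function to a closed subset leaves it l.s.c., and the lower bound is inherited; moreover by Lemma \ref{bound-M} every $A\in\overline{\mathbbm{T}}^{K,++}_{g_0}$ has $e^A\in H^1(X,\tmop{End}_{g_0}(T_X))$ (indeed $A\in W^{1,\infty}$), so the extension is finite, i.e.\ takes values in $\mathbbm{R}$ and not $+\infty$, on this set. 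This disposes of the first two claims.

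For convexity, the key point is that on $\mathbbm{T}^{K,++}_{g_0}$ (the smooth points) we already know $\mathbf{W}_{\Omega}$ is convex: this is exactly the last assertion of Corollary \ref{coro-Pos-Ric} together with the inclusion $\mathbbm{T}_{g_0}^{K,++}\subset\mathbbm{T}_{g_0}^{K,+}$ proved there and the second variation formula in Lemma \ref{convex-lm}. So it remains to pass convexity to the $L^2$-closure. I would take $A_0,A_1\in\overline{\mathbbm{T}}^{K,++}_{g_0}$, $t\in[0,1]$, and $A_t:=(1-t)A_0+tA_1$ (note $\overline{\mathbbm{T}}^{K,++}_{g_0}$ is convex, as an $L^2$-closure of a convex set, so $A_t$ lies in it), pick sequences $A_0^{(k)},A_1^{(k)}\in\mathbbm{T}^{K,++}_{g_0}$ converging in $L^2$ to $A_0,A_1$, and set $A_t^{(k)}:=(1-t)A_0^{(k)}+tA_1^{(k)}$. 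The uniform bounds of Lemma \ref{bound-M} give uniform $L^{\infty}$ control of $A_i^{(k)}$ and of $\nabla_{g_0}A_i^{(k)}$, hence of $A_t^{(k)}$ and $\nabla_{g_0}A_t^{(k)}$; arguing as in the proof of Lemma \ref{lsc-W}, $e^{A_i^{(k)}}\to e^{A_i}$ strongly in $L^2$ and $\nabla_{g_0}e^{A_i^{(k)}}\to\nabla_{g_0}e^{A_i}$ weakly in $L^2$, and the same for $A_t^{(k)}$. By the smooth convexity $\mathbf{W}_{\Omega}(A_t^{(k)})\leqslant (1-t)\mathbf{W}_{\Omega}(A_0^{(k)})+t\,\mathbf{W}_{\Omega}(A_1^{(k)})$; taking $k\to\infty$, the right-hand side converges (using that on the uniformly bounded sequences all the integrand terms $|\nabla_{g_0}e^{A}|^2_{g_0}$, $\tmop{Tr}_{\mathbbm{R}}(e^{2A}\tmop{Ric}^{\ast}_{g_0}(\Omega))$, $\tmop{Tr}_{\mathbbm{R}}(A)$ pass to the limit by dominated convergence and weak $L^2$ lower semi-continuity for the gradient term — in fact for the right-hand side one wants convergence, which follows from strong $L^2$ convergence of $e^{A_i^{(k)}}$ together with the uniform $H^1$ bound upgrading to strong $H^1$ convergence along a subsequence, or more simply by applying the l.s.c.\ of Lemma \ref{lsc-W} in the reverse direction is not available, so one argues termwise), while the left-hand side is controlled below by $\liminf_k\mathbf{W}_{\Omega}(A_t^{(k)})\geqslant\mathbf{W}_{\Omega}(A_t)$ via Lemma \ref{lsc-W}. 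Combining yields $\mathbf{W}_{\Omega}(A_t)\leqslant(1-t)\mathbf{W}_{\Omega}(A_0)+t\,\mathbf{W}_{\Omega}(A_1)$, the desired convexity.

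The main obstacle I anticipate is the passage to the limit of the \emph{right-hand side} $(1-t)\mathbf{W}_{\Omega}(A_0^{(k)})+t\,\mathbf{W}_{\Omega}(A_1^{(k)})$: lower semi-continuity alone gives an inequality in the wrong direction, so one genuinely needs convergence, not merely semi-continuity, of $\mathbf{W}_{\Omega}$ along the approximating sequences $A_i^{(k)}\to A_i$. This is where the $W^{1,\infty}$ bounds from Lemma \ref{bound-M} are essential: they promote the weak $H^1$ convergence of $e^{A_i^{(k)}}$ to strong $L^2$ convergence of the gradients along a subsequence (e.g.\ via the uniform bound $|\nabla_{g_0}e^{A_i^{(k)}}|\leqslant e^{\|A_i^{(k)}\|_\infty}|\nabla_{g_0}A_i^{(k)}|$ and an Aubin–Lions / Rellich-type argument, or by testing the weak convergence against $\nabla_{g_0}e^{A_i}$ and using $\|\nabla_{g_0}e^{A_i^{(k)}}\|_{L^2}\to\|\nabla_{g_0}e^{A_i}\|_{L^2}$, which itself needs justification), so that $\|\nabla_{g_0}e^{A_i^{(k)}}\|^2_{L^2}\to\|\nabla_{g_0}e^{A_i}\|^2_{L^2}$ and every term in the integral expression for $\mathbf{W}_{\Omega}$ converges. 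Once that technical point is secured, the rest is a routine closure argument and the lemma follows.
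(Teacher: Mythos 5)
Your treatment of the lower semi-continuity, the uniform lower bound and the finiteness on $\overline{\mathbbm{T}}^{K,++}_{g_0}$ is fine and coincides with the paper's (Lemma \ref{lsc-W} plus the $W^{1,\infty}$ information from Lemma \ref{bound-M}). The convexity argument, however, has a genuine gap, and it is exactly the one you flag yourself: after approximating the endpoints $A_0,A_1\in\overline{\mathbbm{T}}^{K,++}_{g_0}$ by $A_i^{(k)}\in\mathbbm{T}^{K,++}_{g_0}$ in $L^2$, you need $\mathbf{W}_{\Omega}(A_i^{(k)})\rightarrow\mathbf{W}_{\Omega}(A_i)$, and neither lower semi-continuity nor the uniform $W^{1,\infty}$ bounds deliver this. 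Strong $L^2$-convergence of $A_i^{(k)}$ together with a uniform $L^{\infty}$ bound on $\nabla_{g_0}A_i^{(k)}$ only yields weak convergence of the gradients; the Dirichlet-type term $\int_X|\nabla_{g_0}e^{A_i^{(k)}}|^2_{g_0}\Omega$ can drop strictly in the limit (think of bounded oscillations, e.g.\ $k^{-1}\sin(kx)$-type perturbations, which converge uniformly while their gradients only converge weakly), so the right-hand side of the convexity inequality need not converge to $(1-t)\mathbf{W}_{\Omega}(A_0)+t\,\mathbf{W}_{\Omega}(A_1)$. Rellich--Kondrachov and Aubin--Lions do not help here (they compactify the functions, not the gradients), and there is no reason the $L^2$-closure comes with ``recovery sequences'' along which $\mathbf{W}_{\Omega}$ converges, since the extension is defined by the pointwise integral formula and not as a relaxation. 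So as written the closure argument does not go through.

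The paper avoids the limit passage altogether: since by Lemma \ref{bound-M} every point of $\overline{\mathbbm{T}}^{K,++}_{g_0}$ lies in $W^{1,\infty}(\mathbbm{T}_{g_0}^{K,+})$, one works directly on a segment $A_t=A+tV$ inside the closure. The $W^{1,\infty}$ regularity makes $t\mapsto\mathbf{W}_{\Omega}(A_t)$ a $C^{\infty}$ function of $t$, the first- and second-variation computations of Lemma \ref{convex-lm} apply verbatim (together with $e^{A_t}\in W^{1,\infty}(\mathbbm{T}_{g_0}^K)$ via the argument of Lemma \ref{mut-exp}), and one then shows $Ve^{A_t}\in\overline{\mathbbm{T}}^K_{g_0}$ by an approximation plus dominated-convergence argument, so that the defining inequality of $W^{1,\infty}(\mathbbm{T}_{g_0}^{K,+})$, tested with $U=Ve^{A_t}$, gives $\frac{d^2}{dt^2}\mathbf{W}_{\Omega}(A_t)\geqslant 0$ directly. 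If you want to salvage your route you would have to prove convergence (not just semi-continuity) of $\mathbf{W}_{\Omega}$ along suitably chosen approximating sequences, which is essentially as hard as, and less direct than, the computation the paper performs on the closure itself.
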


\begin{proof}
  Thanks to lemma \ref{lsc-W} we just need to show the convexity of the
  natural integral extension ${\bf W}_{\Omega} :
  \overline{\mathbbm{T}}^{K, + +}_{g_0} \longrightarrow \mathbbm{R}$. We
  consider for this purpose an arbitrary segment $t \in [0, 1] \longmapsto
  A_t \assign A + t \,V \in \overline{\mathbbm{T}}^{K, + +}_{g_0} \subset W^{1,
  \infty} (\mathbbm{T}_{g_0}^{K, +})$. In particular the fact that $A_t \in
  W^{1, \infty} (\mathbbm{T}_{g_0}^K)$ combined with the expression
  \begin{eqnarray*}
    {\bf W}_{\Omega} (A_t) & = & \int_X \Big[ \,\big|e^{t V} e^A \nabla_{g_0} A_t
    \big|^2_{g_0} \;\,+\;\, \tmop{Tr}_{_{\mathbbm{R}}} \left( e^{2 t V} e^{2 A}
    \tmop{Ric}^{\ast}_{g_0} (\Omega) \;\,-\;\, 2\, A_t \right)\Big] \Omega\\
    &  & \\
    & + & \int_X \left[ 2 \log \frac{d V_{g_0}}{\Omega} \;\,-\;\, n \right] \Omega\;,
  \end{eqnarray*}
  implies that the function $t \in [0, 1] \longmapsto
  {\bf W}_{\Omega} (A_t) \in \mathbbm{R}$ is of class $C^{\infty}$
  over the time interval $[0, 1]$. Moreover $e^{A_t} \in W^{1, \infty}
  (\mathbbm{T}_{g_0}^K)$ for all $t \in [0, 1]$ thanks to the argument in the
  proof of lemma \ref{mut-exp}. This is all we need in order to apply to $A_t
  \in W^{1, \infty} (\mathbbm{T}_{g_0}^K)$ the first order computations in the
  proof of lemma \ref{convex-lm} which provide the second variation formula
  \begin{eqnarray*}
&&    \frac{d^2}{d t^2} {\bf W}_{\Omega} (A_t) 
\\
\\
& = & 4 \int_X \Big\{
    \tmop{Tr}_{_{^{\mathbbm{R}}}} \left[ (V e^{A_t})^2 \tmop{Ric}^{\ast}_{g_0}
    (\Omega) \right] \;\,-\;\, \big|V e^{A_t} \nabla_{g_0} A_t \big|^2_{g_0} \Big\} \,\Omega\\
    &  & \\
    & + & 2 \int_X \Big[ \big| \nabla_{g_0} (e^{A_t} V) \big|^2_{g_0} \;\,+\;\, \big|V\, \nabla_{g_0}
    e^{A_t} \big|^2_{g_0} \;\,+\;\, 2 \left\langle V\, \nabla_{g_0} e^{A_t}, \nabla_{g_0}
    (e^{A_t} V) \right\rangle_{g_0}\Big] \Omega\\
    &  & \\
    & \geqslant & 4 \int_X \Big\{ \tmop{Tr}_{_{^{\mathbbm{R}}}} \left[ (V
    e^{A_t})^2 \tmop{Ric}^{\ast}_{g_0} (\Omega) \right] \;\,-\;\, \big|V e^{A_t}
    \nabla_{g_0} A_t \big|^2_{g_0} \Big\}\, \Omega\;,
  \end{eqnarray*}
  thanks to the Cauchy-Schwarz and Jensen's inequalities. We show now that
  \begin{equation}
    \label{L2-alg-T} U \,e^A \;\;\in\;\; \overline{\mathbbm{T}}_{g_0}^K\; .
  \end{equation}
  for all $U \in \overline{\mathbbm{T}}_{g_0}^K$ and all $A \in
  \overline{\mathbbm{T}}_{g_0}^{K, + +}$. Indeed let $(A_j)_j \subset
  \mathbbm{T}_{g_0}^{K, + +}$ and $(U_j)_j \subset \mathbbm{T}_{g_0}^K$ be two
  sequences convergent respectively to $A$ and $U$ in the $L^2$-topology. From a
  computation in the proof of lemma \ref{bound-M} we know that the uniform
  estimate
$$
| \nabla_{g_0} A_j |^2_{g_0} \;\;\leqslant \;\;\tmop{Tr}_{_{^{\mathbbm{R}}}}
     \tmop{Ric}^{\ast}_{g_0} (\Omega)\;, 
$$
  combined with the convergence a.e implies that the sequence $(A_j)_j$ is
  bounded in norm $L^{\infty}$. We infer the $L^2$-convergence
  $\mathbbm{T}_{g_0}^K \ni U_k e^{A_k} \longrightarrow U e^A \in \text{}
  \overline{\mathbbm{T}}_{g_0}^K$ thanks to the dominated
  convergence theorem. We observe now that the property (\ref{L2-alg-T})
  applied to $V e^{A_t}$ combined with the fact that $A_t \in W^{1, \infty}
  (\mathbbm{T}_{g_0}^{K, +})$ for all $t \in [0, 1]$ provides the inequality
  \begin{eqnarray*}
    \frac{d^2}{d t^2} {\bf W}_{\Omega} (A_t) \;\; \geqslant \;\; 0\;,
  \end{eqnarray*}
  over the time interval $[0, 1]$, which shows the required convexity
  statement.
\end{proof}

The convexity statement over the $d_G$-convex set $\overline{\Sigma}^+_K
(g_0)$ in the main theorem \ref{Main-Teo} follows directly from lemma
\ref{fin-sc-Ext} due to the fact that the change of variables
$$
g \;\in\; \overline{\Sigma}^+_K (g_0)\; \longmapsto \;A\; =\; -\, \frac{1}{2}\,
   \log (g^{- 1}_0 g) \;\in\; \overline{\mathbbm{T}}^{K, + +}_{g_0}\;, 
$$
represents a $(d_G, L^2)$-isometry map (where $L^2$ denotes the constant
$L^2$-product $4 \int_X \left\langle \cdot, \cdot \right\rangle_{g_0} \Omega$)
which in particular send all $d_G$-geodesics segments 
$$
t \;\in\; [0, 1]
\;\longmapsto\; g_t \;=\; g\, e^{t v^{\ast}_g} \;\in\; \overline{\Sigma}^+_K (g_0)\;,
$$
in to linear segments $t \in [0, 1] \longmapsto A_t : = A - t \,v^{\ast}_g / 2
\in \overline{\mathbbm{T}}^{K, + +}_{g_0}$.

\section{On the exponentially fast convergence of
the Soliton-Ricci-flow }

\begin{lemma}
  Let $g_0 \in \mathcal{S}^K_{_{^{\Omega,+}}}$ and let $(g_t)_{t \geqslant 0} \subset \Sigma_K (g_0)$ be
  a solution of the $\Omega$-SRF with initial data $g_0$. If there exist $\delta \in \mathbbm{R}_{> 0}$ such that
 $\tmop{Ric}_{g_t} (\Omega) \geqslant \delta
  g_t$ for all times $t \geqslant 0$, then the $\Omega$-SRF converges
  exponentially fast with all its space derivatives to a
  $\Omega$-$\tmop{ShRS}$ $g_{\tmop{RS}} \in \Sigma_K (g_0)$ as $t \rightarrow
  + \infty$.
\end{lemma}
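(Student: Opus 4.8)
The plan is to obtain the result in two stages: first the exponentially fast decay of the $C^0$-norm of the velocity $\dot g_t=\tmop{Ric}_{g_t}(\Omega)-g_t$ via a scalar maximum principle exploiting the hypothesis $\tmop{Ric}_{g_t}(\Omega)\geqslant\delta g_t$, and then the upgrade to all higher space derivatives by a parabolic bootstrap combined with a Hamilton-type interpolation.

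\textbf{Step 1 ($C^0$-decay).} Along the flow $\dot g_t\in\mathbbm{F}^K_{g_t}\subset\mathbbm{F}_{g_t}$, so the variation formula (\ref{col-vrEnOmRc}) applies to $(g_t)_t$; writing $S_t\assign\dot g^{\ast}_t=\tmop{Ric}^{\ast}_{g_t}(\Omega)-\mathbbm{I}$ it reads
\begin{eqnarray*}
2\,\dot S_t&=&-\;\Delta^{^{_{_{\Omega}}}}_{g_t}S_t\;-\;2\,S_t\,\tmop{Ric}^{\ast}_{g_t}(\Omega)\;.
\end{eqnarray*}
The endomorphism $S_t$ is $g_t$-symmetric and commutes with $\tmop{Ric}^{\ast}_{g_t}(\Omega)$, so for $\psi_t\assign|\dot g_t|^2_{g_t}=\tmop{Tr}_{\mathbbm{R}}(S_t^2)$ the Bochner identity for the rough Laplacian together with the above evolution equation and the positivity hypothesis (which gives $\tmop{Tr}_{\mathbbm{R}}\big(S_t^2\,\tmop{Ric}^{\ast}_{g_t}(\Omega)\big)\geqslant\delta\,\psi_t$, using $S_t^2\geqslant 0$, $[S_t,\tmop{Ric}^{\ast}_{g_t}(\Omega)]=0$) yield
\begin{eqnarray*}
\dot\psi_t&=&-\;\frac12\,\Delta^{^{_{_{\Omega}}}}_{g_t}\psi_t\;-\;|\nabla_{g_t}S_t|^2_{g_t}\;-\;2\,\tmop{Tr}_{\mathbbm{R}}\big(S_t^2\,\tmop{Ric}^{\ast}_{g_t}(\Omega)\big)\\
&\leqslant&-\;\frac12\,\Delta^{^{_{_{\Omega}}}}_{g_t}\psi_t\;-\;2\,\delta\,\psi_t\;.
\end{eqnarray*}
Since $X$ is compact the parabolic maximum principle gives $\max_X\psi_t\leqslant e^{-2\delta t}\max_X\psi_0$, hence $\|\dot g_t\|_{C^0}\leqslant C e^{-\delta t}$. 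In the variable $A_t=-\frac12\log(g_0^{-1}g_t)$ of (\ref{chg-var}) this reads $\|\dot A_t\|_{C^0}\leqslant C e^{-\delta t}$ since $\dot A_t=-\frac12 S_t$; moreover $\|A_t\|_{C^0}\leqslant\int_0^t\|\dot A_s\|_{C^0}\,ds\leqslant C$, so the metrics $g_t=g_0 e^{-2A_t}$ stay uniformly equivalent to $g_0$.

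\textbf{Step 2 (limit metric and uniform bounds).} From $\int_0^{+\infty}\|\dot A_t\|_{C^0}\,dt<+\infty$ we get $A_t\to A_{\infty}$ in $C^0$ with $\|A_t-A_{\infty}\|_{C^0}\leqslant C e^{-\delta t}$; set $g_{\infty}\assign g_0 e^{-2A_{\infty}}$. Then $g_{\infty}$ is a positive definite metric, uniformly equivalent to $g_0$, and $g_{\infty}\in\Sigma_K(g_0)$ since $\Sigma_K(g_0)=\mathbbm{F}^K_{g_0}\cap\mathcal{M}$ is $C^{\infty}$-closed (lemma \ref{flat-Scat-Space}). The equation (\ref{non-lin-GrdFw}) for $A_t$ lives over the fixed background $(X,g_0)$, is uniformly parabolic because $e^{A_t}$ is uniformly bounded above and below, and has right hand side $2\dot A_t$ which is $C^0$-bounded uniformly in $t$; standard interior parabolic (Schauder) estimates then give uniform bounds $\|A_t\|_{C^k(X,g_0)}\leqslant C_k$ for all $t\geqslant 1$ and all $k$. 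Passing to the limit in $\tmop{Ric}_{g_t}(\Omega)-g_t=\dot g_t\to 0$ gives $\tmop{Ric}_{g_{\infty}}(\Omega)=g_{\infty}$, i.e. $g_{\infty}$ is a $\Omega$-$\tmop{ShRS}$.

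\textbf{Step 3 (higher derivatives) and main obstacle.} The difficulty, already flagged by the authors, is that the evolution equations for $\nabla^p_{g_t}S_t$ acquire reaction terms of the schematic form $\mathcal{R}_{g_0}\ast\nabla^p_{g_t}S_t$ (the curvature being conserved, $\mathcal{R}_{g_t}=\mathcal{R}_{g_0}$) whose sign is not controlled by $\delta$, so a direct maximum principle on $|\nabla^p_{g_t}S_t|^2$ is unavailable. Following Hamilton \cite{Ham}, set $B_t\assign A_t-A_{\infty}$ and combine the $C^0$-decay of Step 1 ($\|B_t\|_{C^0},\|\dot A_t\|_{C^0}\leqslant C e^{-\delta t}$) with the uniform bounds of Step 2 through the Gagliardo--Nirenberg interpolation inequalities on $(X,g_0)$:
\begin{eqnarray*}
\|B_t\|_{C^k(X,g_0)}&\leqslant&C\,\|B_t\|_{C^0(X,g_0)}^{\frac1{k+1}}\,\|B_t\|_{C^{k+1}(X,g_0)}^{\frac{k}{k+1}}\;\leqslant\;C_k'\,e^{-\frac{\delta}{k+1}t}\;,
\end{eqnarray*}
which already gives, for every $k$, an exponential decay $\|g_t-g_{\infty}\|_{C^k}\leqslant C_k e^{-c_k t}$. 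To sharpen the rate one feeds these decaying bounds back into the (parabolic, resp.\ elliptic) equations for $\nabla^p_{g_t}S_t$, using the conserved bracket relations $[\mathcal{R}_{g_0},\nabla^p_{g_0,\xi}S_t]=0$ (which hold because $\tmop{Ric}_{g_t}(\Omega)\in\mathbbm{F}^K_{g_0}$ by (\ref{fund-scat}), together with lemma \ref{mut-exp}) to neutralize the offending curvature reaction terms and restore a genuine maximum principle at each order. The main obstacle is precisely this neutralization: as the authors remark, the extra curvature terms are ``alien to Hamilton's argument'', and handling them requires the intrinsic conservation laws of the $\Omega$-$\tmop{SRF}$ (preservation of the symmetry class $\mathbbm{F}^K$ and constancy of $\mathcal{R}_{g_t}$) rather than a plain maximum principle. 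Setting $g_{\tmop{RS}}\assign g_{\infty}\in\Sigma_K(g_0)$ concludes.
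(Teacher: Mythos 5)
Your Step 1 is sound and is essentially the paper's own argument: the evolution equation you write for $S_t=\dot g^{\ast}_t$ is the paper's (\ref{evol-vr-G}) (equivalently (\ref{col-vrEnOmRc})), and the maximum-principle decay $\|\dot g_t\|_{C^0}\leqslant Ce^{-\delta t}$, the uniform equivalence of the metrics and the existence of the $C^0$-limit $g_\infty$ all match the first part of the paper's proof. The genuine gap is in Step 2, and it is exactly the part the paper spends its last two sections on. You claim uniform bounds $\|A_t\|_{C^k(X,g_0)}\leqslant C_k$ for all $k$ from ``standard interior parabolic (Schauder) estimates'', using only uniform parabolicity and the $C^0$ bounds. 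This does not follow: (\ref{non-lin-GrdFw}) is a quasilinear \emph{endomorphism-valued system}, and interior Schauder theory requires H\"older continuity of the coefficients $e^{A_t}$ in space-time, which at that stage you do not have (you only have $L^\infty$ control); the scalar routes from $L^\infty$ to $C^\alpha$ (De Giorgi--Nash--Moser in divergence form, Krylov--Safonov otherwise) are not available for systems. Moreover ``right hand side $2\dot A_t$ which is $C^0$-bounded'' is circular in the parabolic reading, since $2\dot A_t$ is the evolution term of the equation, not a forcing datum. The paper does not get these bounds softly: it first proves a genuine $C^1$ estimate by a maximum principle for $|\nabla_{g_t}\dot g^{\ast}_t|^2_{g_t}$, which works only because of the conserved structural identities ($\mathcal{R}_{g_t}=\mathcal{R}_{g_0}$, $[\mathcal{R}_{g_t},\nabla_{g_t}\dot g^{\ast}_t]=0$, $\nabla_{g_t}\dot g^{\ast}_t\equiv\nabla_{g_0}\dot g^{\ast}_t$ via (\ref{flat-vr-LC})), and then controls all $H^p$-norms by an induction using Hamilton's integral interpolation inequalities (\ref{interpI})--(\ref{interpII}), Gronwall and Sobolev embedding -- precisely because, as the paper stresses, the curvature reaction terms defeat the pointwise maximum principle at higher order. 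Your closing suggestion that the bracket relations ``restore a genuine maximum principle at each order'' is therefore not a sketch of a proof but a restatement of the unsolved difficulty.

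The rest of your plan would indeed close the argument if the uniform bounds were in hand: interpolating the $C^0$-exponential decay against uniform $C^{k+1}$ bounds does give $C^k$-exponential convergence, and the limit then lies in $\Sigma_K(g_0)$ and satisfies $\tmop{Ric}_{g_\infty}(\Omega)=g_\infty$. A possible repair of Step 2, closer in spirit to the paper's change of variables, would use structure you did not invoke: since $[A_t,\nabla_{g_0}A_t]=0$ one has $e^{A_t}\nabla_{g_0}A_t=\nabla_{g_0}e^{A_t}$, and (\ref{exp-exprRic}) together with $\tmop{Ric}^{\ast}_{g_t}(\Omega)=\mathbbm{I}+\dot g^{\ast}_t$ gives $\Delta^{\Omega}_{g_0}e^{A_t}=e^{-A_t}\tmop{Ric}^{\ast}_{g_t}(\Omega)-e^{A_t}\tmop{Ric}^{\ast}_{g_0}(\Omega)$, uniformly bounded in $L^\infty$; linear elliptic $L^p$ theory for the fixed operator $\Delta^{\Omega}_{g_0}$ then yields uniform $W^{2,p}$, hence $C^{1,\alpha}$, bounds on $e^{A_t}$, after which a Petrowsky-parabolic Schauder bootstrap on the porous-medium equation of corollary \ref{Porous-Medium} can be launched. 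As written, however, the higher-derivative half of the lemma is not established.
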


\begin{proof}
Time deriving the $\Omega$-SRF equation by
  means of (\ref{col-vr-OmRc}) we infer the evolution formula
  \begin{eqnarray*}
    2\, \ddot{g}_t \;\; = \;\; -\;\, \Delta^{^{_{_{\Omega}}}}_{g_t}  \dot{g}_t \;\,-\;\, 2\,
    \dot{g}_t,
  \end{eqnarray*}
  and thus the evolution equation
  \begin{equation}
    \label{evol-vr-G} 2 \,\frac{d}{d t}\,  \dot{g}_t^{\ast} \;\;=\;\; -\;\,
    \Delta^{^{_{_{\Omega}}}}_{g_t}  \dot{g}^{\ast}_t \;\,-\;\, 2\, \dot{g}^{\ast}_t \;\,-\;\, 2\,
    ( \dot{g}_t^{\ast})^2 \;.
  \end{equation}
  Using this we can compute the evolution of $| \dot{g}_t |_{g_t}^2 = |
  \dot{g}^{\ast}_t |_{g_t}^2 = \tmop{Tr}_{_{\mathbbm{R}}} (
  \dot{g}_t^{\ast})^2$. Indeed we define the heat operator
  \begin{eqnarray*}
    \Box_{g_t}^{^{_{_{\Omega}}}} \;\; \assign \;\; \Delta^{^{_{_{\Omega}}}}_{g_t} \;\,+\;\,
    2 \frac{d}{d t}\;,
  \end{eqnarray*}
  and we observe the elementary identity
  \begin{eqnarray*}
    \Delta^{^{_{_{\Omega}}}}_{g_t} | \dot{g}_t |_{g_t}^2 \;\; = \;\; 2 \left\langle
    \Delta^{^{_{_{\Omega}}}}_{g_t}  \dot{g}^{\ast}_t, \dot{g}^{\ast}_t
    \right\rangle_g \;\,-\;\, 2 \,| \nabla_{g_t}  \dot{g}^{\ast}_t |_{g_t}^2 \;.
  \end{eqnarray*}
  We infer the evolution formula
  \begin{eqnarray*}
    \Box_{g_t}^{^{_{_{\Omega}}}} | \dot{g}_t |_{g_t}^2 & = & -\;\, 2\, |
    \nabla_{g_t}  \dot{g}^{\ast}_t |_{g_t}^2 \;\,-\;\, 4\, | \dot{g}_t |_{g_t}^2 \;\,-\;\, 4
    \tmop{Tr}_{_{\mathbbm{R}}} ( \dot{g}_t^{\ast})^3\\
    &  & \\
    & \leqslant & - \;\,\delta | \dot{g}_t |_{g_t}^2\;,
  \end{eqnarray*}
  thanks to the $\Omega$-SRF equation and thanks to the assumption
  $\tmop{Ric}_{g_t} (\Omega) \geqslant \delta g_t$. Applying the scalar
  maximum principle we infer the exponential estimate
  \begin{equation}
    \label{uni-exp-est} | \dot{g}_t |_{g_t} \;\;\leqslant\;\; \sup_X | \dot{g}_0
    |_{g_0} \,e^{- \delta t / 2}\,,
  \end{equation}
  for all $t \geqslant 0$. In its turn this implies the convergence of the
  integral
  \begin{eqnarray*}
    \int^{+ \infty}_0 | \dot{g}_t |_{g_t} d t \;\; \leqslant \;\; C\;,
  \end{eqnarray*}
  and thus the uniform estimate
  \begin{equation}
    \label{uni-metric}  e^{- C} g_0 \;\;\leqslant\;\; g_t \leqslant e^C g_0\;,
  \end{equation}
  for all $t \geqslant 0$, (see \cite{Ch-Kn}). Thus the convergence of the
  integral
  \begin{eqnarray*}
    \int^{+ \infty}_0 | \dot{g}_t |_{g_0} d t \;\; < \;\; +\;\, \infty\;,
  \end{eqnarray*}
  implies the existence of the metric
  \begin{eqnarray*}
    g_{\infty} \;\; \assign \;\; g_0 \;\,+\;\, \int^{+ \infty}_0 \dot{g}_t \,d t\,,
  \end{eqnarray*}
  thanks to Bochner's theorem. (The positivity of the metric $g_{\infty}$ follows from the estimate $e^{-
  C} g_0 \leqslant g_t$.) Moreover the estimate
  \begin{eqnarray*}
    |g_{\infty} \;-\; g_t |_{g_0} \;\; \leqslant \;\; \int^{+ \infty}_t | \dot{g}_s
    |_{g_0} d s \;\;\leqslant\;\; C' e^{- t}\;,
  \end{eqnarray*}
  implies the exponential convergence of the $\Omega$-SRF to $g_{\infty}$ in
  the uniform topology. We show now the $C^1(X)$-convergence. Indeed the fact
  that $(g_t)_{t \geqslant 0} \subset \Sigma_K (g_0)$ implies $\dot{g}_t \in
  \mathbbm{F}^K_{g_t}$ for all times $t \geqslant 0$ thanks to the identity
  (\ref{2flat-Scat-Space}). 
\\
Thus hold the identities $\left[ K,
  \dot{g}^{\ast}_t \right] \equiv 0$ and $\left[ K, \nabla_{g_{\tau}} 
  \dot{g}^{\ast}_{\tau} \right] \equiv 0$, which imply in their turn $\left[
  \nabla_{g_{\tau}}  \dot{g}^{\ast}_{\tau}, \dot{g}^{\ast}_t \right] \equiv 0$. 
By the variation formula (\ref{flat-vr-LC}) we deduce the identity
  \begin{equation}
    \label{inv-der} \nabla_{g_{\tau}}  \dot{g}^{\ast}_t \;\;\equiv\;\; \nabla_{g_0} 
    \dot{g}^{\ast}_t\;,
  \end{equation}
  for all $\tau, t \geqslant 0$. This combined with (\ref{evol-vr-G}) provides
  the equalities
  \begin{eqnarray*}
    2\, \frac{d}{d t}  \left( \nabla_{g_t}  \dot{g}_t^{\ast} \right) & = & 2\,
    \nabla_{g_t}  \frac{d}{d t} \, \dot{g}_t^{\ast}\\
    &  & \\
    & = & - \;\,\nabla_{g_t} \Delta^{^{_{_{\Omega}}}}_{g_t}  \dot{g}^{\ast}_t \;\,-\;\, 2\,
    \nabla_{g_t}  \dot{g}^{\ast}_t \;\,-\;\, 2\, \nabla_{g_t} ( \dot{g}_t^{\ast})^2 \\
    &  & \\
    & = & - \Delta^{^{_{_{\Omega}}}}_{g_t} \nabla_{g_t}  \dot{g}^{\ast}_t \;\,-\;\,
    \tmop{Ric}^{\ast}_g (\Omega) \bullet \nabla_{g_t}  \dot{g}^{\ast}_t 
\\
\\
&-& 2\,
    \nabla_{g_t}  \dot{g}^{\ast}_t \;\,-\;\, 4\, \dot{g}_t^{\ast} \nabla_{g_t} 
    \dot{g}^{\ast}_t \;.
  \end{eqnarray*}
  We justify the last equality. We pick geodesic coordinates centered at an
  arbitrary space time point $(x_0, t_0)$, let $(e_k)_k$ be the coordinate local tangent frame
and let $\xi, \eta$ be local vector fields with constant coefficients defined in a neighborhood of $x_0$. We expand at the space time point
  $(x_0, t_0)$ the therm
  \begin{eqnarray*}
    \nabla_{g_t, \xi} \Delta^{^{_{_{\Omega}}}}_{g_t}  \dot{g}^{\ast}_t & = & -\;\,
    \nabla_{g_t, \xi} \nabla_{g_t, e_k} \nabla_{g_t, e_k}  \dot{g}^{\ast}_t \;\,+\;\,
    \nabla_{g_t, \xi} \nabla_{g_t, e_k} e_k \;\neg\; \nabla_{g_t} 
    \dot{g}^{\ast}_t 
\\
\\
&+&
\nabla_{g_t, \xi} \nabla_{g_t, \nabla_{g_t} f_t}\, 
    \dot{g}^{\ast}_t \\
    &  & \\
    & = & - \;\,\nabla_{g_t, e_k} \nabla_{g_t, e_k} \nabla_{g_t, \xi} \,
    \dot{g}^{\ast}_t \;\,+\;\, \nabla_{g_t, \xi} \nabla_{g_t, e_k} e_k \;\neg\;
    \nabla_{g_t}  \dot{g}^{\ast}_t\\
    &  & \\
    & + & \nabla_{g_t, \nabla_{g_t} f_t} \nabla_{g_t, \xi} \, \dot{g}^{\ast}_t
    \;\,+\;\, \nabla_{g_t, \xi} \nabla_{g_t} f_t \;\neg\; \nabla_{g_t}  \dot{g}^{\ast}_t\;,
  \end{eqnarray*}
  thanks to the identity (\ref{com-cov}) and $\left[ \mathcal{R}_{g_t},
  \nabla_{g_t, e_k}  \dot{g}^{\ast}_t \right] \equiv 0, \left[
  \mathcal{R}_{g_t}, \dot{g}^{\ast}_t \right] \equiv 0, [e_k, \xi] \equiv 0$.
  Moreover at the space time point $(x_0, t_0)$ hold the identity
  \begin{eqnarray*}
    \nabla_{g_t, e_k} \nabla^2_{g_t}  \dot{g}^{\ast}_t (e_k, \xi, \eta) & = &
    \nabla_{g_t, e_k}  \big[ \nabla_{g_t, e_k} \nabla_{g_t}  \dot{g}^{\ast}_t
    (\xi, \eta) \big]\\
    &  & \\
    & = &
\nabla_{g_t, e_k} \nabla_{g_t, e_k} (\nabla_{g_t, \xi}\, 
    \dot{g}^{\ast}_t \, \eta)
\\
\\
&-&
 \nabla_{g_t, e_k}  \big[ \nabla_{g_t}  \dot{g}^{\ast}_t
    (\nabla_{g_t, e_k} \xi, \eta) \;\,+\;\, \nabla_{g_t}  \dot{g}^{\ast}_t (\xi,
    \nabla_{g_t, e_k} \eta) \big]\\
    &  & \\
    & = & \nabla_{g_t, e_k}  \big[ \nabla_{g_t, e_k} \nabla_{g_t, \xi} \,
    \dot{g}^{\ast}_t \, \eta \;\,-\;\, \nabla_{g_t}  \dot{g}^{\ast}_t (\nabla_{g_t,
    \xi} \,e_k, \eta) \big]\\
    &  & \\
    & = & \nabla_{g_t, e_k} \nabla_{g_t, e_k} \nabla_{g_t, \xi} \,
    \dot{g}^{\ast}_t \, \eta \;\,-\;\, \nabla_{g_t}  \dot{g}^{\ast}_t (\nabla_{g_t,
    e_k} \nabla_{g_t, \xi} \,e_k, \eta)\;,
  \end{eqnarray*}
  which combined with the previous expression implies the formula
  \begin{eqnarray*}
    \nabla_{g_t} \Delta^{^{_{_{\Omega}}}}_{g_t}  \dot{g}^{\ast}_t \;\; = \;\;
    \Delta^{^{_{_{\Omega}}}}_{g_t} \nabla_{g_t}  \dot{g}^{\ast}_t \;\,+\;\,
    \tmop{Ric}^{\ast}_g (\Omega) \bullet \nabla_{g_t}  \dot{g}^{\ast}_t \;.
  \end{eqnarray*}
  Thus
  \begin{eqnarray*}
    2\, \frac{d}{d t}  \left( \nabla_{g_t}  \dot{g}_t^{\ast} \right) & = & -\;\,
    \Delta^{^{_{_{\Omega}}}}_{g_t} \nabla_{g_t}  \dot{g}^{\ast}_t \;\,-\;\,
    \dot{g}^{\ast}_t \bullet \nabla_{g_t}  \dot{g}^{\ast}_t \;\,-\;\, 3\, \nabla_{g_t} 
    \dot{g}^{\ast}_t \;\,-\;\, 4\, \dot{g}_t^{\ast} \nabla_{g_t}  \dot{g}^{\ast}_t\;,
  \end{eqnarray*}
  by the $\Omega$-SRF equation. We use this to compute the evolution of the
  norm squared
  \begin{eqnarray*}
    | \nabla_{g_t}  \dot{g}^{\ast}_t |^2_{g_t} \;\; = \;\;
    \tmop{Tr}_{_{\mathbbm{R}}} \left( \nabla_{g_t, e_k}  \dot{g}^{\ast}_t\,
    \nabla_{g_t, g^{- 1}_t e^{\ast}_k}  \dot{g}^{\ast}_t \right) \;.
  \end{eqnarray*}
  Indeed at time $t_0$ hold the identities
  \begin{eqnarray*}
    \frac{d}{d t}\, | \nabla_{g_t}  \dot{g}^{\ast}_t |^2_{g_t} & = &
    \tmop{Tr}_{_{\mathbbm{R}}} \left[ 2\, e_k \;\neg\; \frac{d}{d t}  \left(
    \nabla_{g_t}  \dot{g}_t^{\ast} \right) \nabla_{g_t, e_k}  \dot{g}^{\ast}_t
    \;\,-\;\, \nabla_{g_t, e_k}  \dot{g}^{\ast}_t \,\nabla_{g_t, \dot{g}^{\ast}_t e_k} 
    \dot{g}^{\ast}_t \right]\\
    &  & \\
    & = & - \;\,\tmop{Tr}_{_{\mathbbm{R}}} \Big[ 2\, e_k\; \neg\;
    \Delta^{^{_{_{\Omega}}}}_{g_t} \nabla_{g_t}  \dot{g}^{\ast}_t\, \nabla_{g_t,
    e_k}  \dot{g}^{\ast}_t \;\,+\;\, 2\, \nabla_{g_t, \dot{g}^{\ast}_t e_k} 
    \dot{g}^{\ast}_t \,\nabla_{g_t, e_k}  \dot{g}^{\ast}_t \Big]\\
    &  & \\
    & - & \tmop{Tr}_{_{\mathbbm{R}}} \Big[ 3\, \nabla_{g_t, e_k} 
    \dot{g}_t^{\ast} \,\nabla_{g_t, e_k}  \dot{g}^{\ast}_t \;\,+\;\, 4\, \dot{g}^{\ast}_t
    \nabla_{g_t, e_k}  \dot{g}^{\ast}_t \,\nabla_{g_t, e_k}  \dot{g}^{\ast}_t
    \Big]\\
    &  & \\
    & = & - \;\,\Big\langle \Delta^{^{_{_{\Omega}}}}_{g_t} \nabla_{g_t} 
    \dot{g}^{\ast}_t, \nabla_{g_t}  \dot{g}^{\ast}_t \Big\rangle_{g_t} \;\,-\;\, 3\, |
    \nabla_{g_t}  \dot{g}^{\ast}_t |^2_{g_t} \\
    &  & \\
    & - & 2 \,\Big\langle \dot{g}^{\ast}_t \bullet \nabla_{g_t} 
    \dot{g}^{\ast}_t \;\,+\;\, 2\, \dot{g}_t^{\ast} \,\nabla_{g_t}  \dot{g}^{\ast}_t,
    \nabla_{g_t}  \dot{g}^{\ast}_t \Big\rangle_{g_t}\; .
  \end{eqnarray*}
  This combined with the elementary identity
  \begin{eqnarray*}
    \Delta^{^{_{_{\Omega}}}}_{g_t} | \nabla_{g_t}  \dot{g}^{\ast}_t |^2_{g_t}
    & = & 2 \,\Big\langle \Delta^{^{_{_{\Omega}}}}_{g_t} \nabla_{g_t} 
    \dot{g}^{\ast}_t, \nabla_{g_t}  \dot{g}^{\ast}_t \Big\rangle_{g_t} \;\,-\;\, 2\, |
    \nabla^2_{g_t}  \dot{g}^{\ast}_t |^2_{g_t}\;,
  \end{eqnarray*}
  implies the evolution formula
  \begin{eqnarray*}
    \Box_{g_t}^{^{_{_{\Omega}}}} | \nabla_{g_t}  \dot{g}^{\ast}_t |^2_{g_t} &
    = & -\;\, 2\, | \nabla^2_{g_t}  \dot{g}^{\ast}_t |^2_{g_t} \;\,-\;\, 6\, | \nabla_{g_t} 
    \dot{g}^{\ast}_t |^2_{g_t}
\\
\\
&-& 4\, \Big\langle \dot{g}^{\ast}_t \bullet
    \nabla_{g_t}  \dot{g}^{\ast}_t \;\,+\;\, 2\, \dot{g}_t^{\ast} \nabla_{g_t} 
    \dot{g}^{\ast}_t, \nabla_{g_t}  \dot{g}^{\ast}_t \Big\rangle_{g_t}\\
    &  & \\
    & \leqslant & \big[ (4 \;+\; 8\, \sqrt{n}\,) | \dot{g}_t |_{g_t} \;-\; 6 \big]\, |
    \nabla_{g_t}  \dot{g}^{\ast}_t |^2_{g_t}\\
    &  & \\
    & \leqslant & \left( C e^{- t} \;-\; 6 \right) | \nabla_{g_t} 
    \dot{g}^{\ast}_t |^2_{g_t}\;,
  \end{eqnarray*}
  thanks to the uniform exponential estimate (\ref{uni-exp-est}). An
  application of the scalar maximum principle implies the estimate
  \begin{equation}
    \label{exp-dc-der} | \nabla_{g_t}  \dot{g}^{\ast}_t |_{g_t} \;\;\leqslant\;\; C_1
    e^{- t}\;,
  \end{equation}
  for all $t \geqslant 0$, where $C_1 > 0$ is a constant uniform in time. By
  abuse of notation we will allays denote by $C$ or $C_1$ such type of
  constants. Moreover the identity (\ref{inv-der}) for $\tau = t$ combined
  with (\ref{uni-metric}) provides the exponential estimate
  \begin{eqnarray*}
    | \nabla_{g_0}  \dot{g}^{\ast}_t |_{g_0} \;\; \leqslant \;\; C_1 e^{- t}\; .
  \end{eqnarray*}
  We observe now the trivial decomposition
  \begin{eqnarray*}
    \nabla_{g_0} (g^{- 1}_0  \dot{g}_t) \;\; = \;\; \nabla_{g_0} (g^{- 1}_0 g_t)\,
    \dot{g}^{\ast}_t \;\,+\;\, g^{- 1}_0 g_t \nabla_{g_0} \, \dot{g}^{\ast}_t \;.
  \end{eqnarray*}
  Thus if we set $N_t \assign | \nabla_{g_0} (g^{- 1}_0 g_t) |_{g_0}$ we infer
  the first order differential inequality
  \begin{eqnarray*}
    \dot{N}_t & \leqslant & \sqrt{n} \,N_t\, | \dot{g}^{\ast}_t |_{g_0} \;\,+\;\, \sqrt{n}\,
    |g^{- 1}_0 g_t |_{g_0} | \,\nabla_{g_0}  \dot{g}^{\ast}_t |_{g_0}\\
    &  & \\
    & \leqslant & C\, N_t\, e^{- t} \;\,+\;\, C\, e^{- 2 t}\;,
  \end{eqnarray*}
  and thus
  \begin{eqnarray*}
    N_t \;\; \leqslant\;\; e^{C \int^t_0 e^{- s} d s}  \left[ N_0 \;+\; C \int^t_0 e^{-
    2 s} d s \right] \;\;\leqslant\;\; e^C\, (N_0 \;+\; C)\;,
  \end{eqnarray*}
  by Gronwall's inequality. We deduce in conclusion the exponential estimate
  $\dot{N}_t \leqslant C_1 e^{- t}$, i.e,
  \begin{eqnarray*}
    | \nabla_{g_0}  \dot{g}_t |_{g_0} \;\;\leqslant \;\; C_1 e^{- t}\;,
  \end{eqnarray*}
  for all $t \geqslant 0$. We infer the convergence of the integral
  \begin{eqnarray*}
    \int^{+ \infty}_0 | \nabla_{g_0}  \dot{g}_t |_{g_0}\, d t \;\; < \;\; +\;\, \infty\;,
  \end{eqnarray*}
  and thus the existence of the tensor
  \begin{eqnarray*}
    A_1 \;\; \assign \;\; \int^{+ \infty}_0 \nabla_{g_0}  \dot{g}_t\, d t\;,
  \end{eqnarray*}
  thanks to Bochner's theorem. Moreover hold the exponential estimate
  \begin{eqnarray*}
    |A_1 \;-\; \nabla_{g_0} g_t |_{g_0} \;\;\leqslant \;\; \int^{+ \infty}_t |
    \nabla_{g_0}  \dot{g}_s |_{g_0} \,d s \;\;\leqslant\;\; C'\, e^{- t}\; .
  \end{eqnarray*}
  A basic calculus fact implies that $A_1 = \nabla_{g_0} g_{\infty}$. In order
  to obtain the convergence of the higher order derivatives we need to combine
  the uniform $C^1$-estimate obtained so far with an interpolation method
  based in Hamilton's work (see \cite{Ham}). The details will be explained in the
  next more technical sections. In conclusion taking the limit as $t
  \rightarrow + \infty$ in the $\Omega$-SRF equation we deduce that
  $g_{\infty} = g_{\tmop{RS}}$ is a $\Omega$-ShRS.
\end{proof}

\section{The commutator $\left[ \nabla^p, \Delta^{\Omega} \right]$ along the
$\Omega$-Soliton-Ricci flow}

We introduce first a few product notations. Let $g$ be a metric over a vector
space $V$. For any $A \in (V^{\ast})^{\otimes p} \otimes V$, $B \in
(V^{\ast})^{\otimes q} \otimes V$ and for all integers $k, l$ such that $1
\leqslant l \leqslant k \leqslant q - 2$ we define the product $A \odot^g_{k,
l} B$ as
\begin{eqnarray*}
&&  (A \odot^g_{k, l} B) (u, v)
\\
\\
 & \assign & \tmop{Tr}_g  \Big[ B (v_1, \ldots,
  v_{l - 1}, \cdot, v_l, \ldots, v_{k - 1}, A (u, \cdot, v_k), v_{k + 1},
  \ldots, v_{q - 1}) \Big]\;,
\end{eqnarray*}
for all $u \equiv (u_1, \ldots, u_{p - 2})$ and $v \equiv (v_1, \ldots, v_{q -
1})$. Moreover for any $\sigma \in S_{p + l - 3}$ we define the product $A
\odot^{g, \sigma}_{k, l} B$ as
\begin{eqnarray*}
  (A \odot^{g, \sigma}_{k, l} B) (u, v) & \assign & (A \odot^g_{k, l} B)
  (\xi_{\sigma}, v_l, \ldots, v_{q - 1})\;,
\end{eqnarray*}
where $\xi \equiv (\xi_1, \ldots, \xi_{p + l - 3}) \assign (u_1, \ldots, u_{p
- 2}, v_1, \ldots, v_{l - 1})$. We notice also that $A \odot^{g, \sigma}_{k,
l} B \equiv A \odot^g_{k, l} B$ if $p + l - 3 \leqslant 1$. Finally we define
\begin{eqnarray*}
  A \hat{\neg}_g B & \assign & \sum^{q - 2}_{k = 1} A \odot^g_{k, 1} B\; .
\end{eqnarray*}
Let now $(X, g)$ be a Riemannian manifold and let $A \in C^{\infty} (X,
(T^{\ast}_X)^{\otimes p} \otimes T_X)$. We remind the classic formula
\begin{equation}
  \label{gen-com-der}  \big[ \nabla_{g, \xi}, \nabla_{g, \eta} \big] A \;\;=\;\;
  \big[ \mathcal{R}_g (\xi, \eta), A \big] \;\,-\;\,\mathcal{R}_g (\xi, \eta)
  \;\hat{\neg}\; A \;\,+\;\, \nabla_{g, [\xi, \eta]} \,A\;,
\end{equation}
for all $\xi, \eta \in C^{\infty} (X, T_X)$. We show now the following lemma.

\begin{lemma}
  Let $(X, g)$ be an oriented Riemannian manifold, let $\Omega > 0$ be a
  smooth volume form over $X$ and let $A \in C^{\infty} (X,
  (T^{\ast}_X)^{\otimes p} \otimes T_X)$ such that $\left[ \mathcal{R}_g, \xi
  \neg \nabla^r_g A \right] = 0$ for all $r = 0, 1$ and $\xi \in T^{\otimes p
  + r - 1}_X$. Then hold the formula
  \begin{eqnarray*}
    \left[ \nabla_g, \Delta^{^{_{_{\Omega}}}}_g  \right] A \;\; = \;\;
    \tmop{Ric}^{\ast}_g (\Omega) \bullet \nabla_g \,A \;\,+\;\, 2\,\mathcal{R}_g 
    \;\hat{\neg}_g \nabla_g\, A \;\,+\;\, \nabla^{\ast_{\Omega}}_g \mathcal{R}_g \;
    \hat{\neg}\; A\; .
  \end{eqnarray*}
\end{lemma}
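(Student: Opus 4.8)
The plan is to prove the identity \emph{pointwise} and then invoke tensoriality. Fix an arbitrary point $x_0\in X$, choose $g$-geodesic coordinates centered at $x_0$ with coordinate frame $(e_k)_k$, so that $\nabla_g e_k(x_0)=0$ and $(e_k)$ is $g(x_0)$-orthonormal there, and take the vectors $\xi,\eta$ appearing in the tensorial identity to have constant coefficients, hence $\nabla_g\xi(x_0)=\nabla_g\eta(x_0)=0$ and $[\xi,e_k]\equiv[\eta,e_k]\equiv 0$ near $x_0$. Writing $\Delta^{\Omega}_g=\Delta_g+\nabla_g f\,\neg\,\nabla_g$ with $f\assign\log\frac{dV_g}{\Omega}$ and $\Delta_g=\nabla^{\ast}_g\nabla_g=-\tmop{Tr}_g\nabla^2_g$ the rough Laplacian, one has at $x_0$ the expression $\Delta^{\Omega}_g \mathsf{A}=-\nabla_{g,e_k}\nabla_{g,e_k}\mathsf{A}+\nabla_{g,\nabla_g f}\mathsf{A}$ (for $\mathsf{A}=A$), and the whole computation consists of expanding $\nabla_{g,\xi}(\Delta^{\Omega}_g A)$ and $\Delta^{\Omega}_g(\nabla_{g,\xi}A)$ at $x_0$ and subtracting. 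This is exactly the type of manipulation already performed for $p=1$, $A=\dot g^{\ast}_t$ in the proof of the exponential-convergence lemma, where the $\hat{\neg}$-terms degenerate to empty sums and only $\tmop{Ric}^{\ast}_g(\Omega)\bullet\nabla_g\dot g^{\ast}_t$ survives; the present lemma is the generalization to arbitrary $(p,1)$-tensors, and consistency with that special case is a useful check.

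For the second-order part I would commute $\nabla_{g,\xi}$ successively past the two factors $\nabla_{g,e_k}$ using the commutation identity \eqref{gen-com-der}. Each commutation contributes, besides the term that simply shifts $\nabla_{g,\xi}$ inward: a curvature-bracket term involving $\nabla^r_g A$ with $r\in\{0,1\}$, a contraction term $\mathcal{R}_g(\xi,e_k)\,\hat{\neg}\,\nabla^r_g A$, and (after re-tracing over $k$) a term in which a derivative has landed on $\mathcal{R}_g$ itself. The hypothesis $[\mathcal{R}_g,\xi\neg\nabla^r_g A]=0$ for $r=0,1$ annihilates all the curvature-bracket contributions, and no brackets with $\nabla^r_g A$ for $r\ge 2$ ever occur since $\nabla_{g,\xi}$ is moved past at most two covariant derivatives and can only reach $\mathcal{R}_g$, $A$ and $\nabla_g A$. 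Re-contracting over $k$, the contraction terms assemble — using the alternation and algebraic-Bianchi identities \eqref{curv-alg-id}, \eqref{ext-alg-prod} — into $2\,\mathcal{R}_g\,\hat{\neg}_g\,\nabla_g A$, the factor $2$ coming from the two traced derivative slots; the $\mathcal{R}_g(\xi,e_k)e_k$-type trace produces a $\tmop{Ric}^{\ast}_g\bullet\nabla_g A$ contribution; and the term with the derivative on $\mathcal{R}_g$ produces, via the once-contracted differential Bianchi identity $\underline{\tmop{div}}_g\mathcal{R}_g=-\nabla_{T_X,g}\tmop{Ric}^{\ast}_g$, a $\underline{\tmop{div}}_g\mathcal{R}_g\,\hat{\neg}\,A$ contribution. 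Separately, the $f$-part $\nabla_{g,\xi}(\nabla_g f\,\neg\,\nabla_g A)-\nabla_g f\,\neg\,\nabla_g(\nabla_{g,\xi}A)$ yields a Hessian term $\nabla^2_g f(\xi,\cdot)\bullet\nabla_g A$ plus one more curvature-bracket term, again killed by the $r=1$ hypothesis, plus the $f$-piece $(\nabla_g f\neg\mathcal{R}_g)\,\hat{\neg}\,A$ coming from commuting $\nabla_{g,\xi}$ into $\nabla_g A$ inside the $\hat{\neg}$ term.

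It then remains to recombine: the Hessian term $\nabla^2_g f\bullet\nabla_g A$ adds to $\tmop{Ric}^{\ast}_g\bullet\nabla_g A$ to give exactly $\tmop{Ric}^{\ast}_g(\Omega)\bullet\nabla_g A$, while $\underline{\tmop{div}}_g\mathcal{R}_g\,\hat{\neg}\,A$ together with $(\nabla_g f\neg\mathcal{R}_g)\,\hat{\neg}\,A$ becomes $\underline{\tmop{div}}^{\Omega}_g\mathcal{R}_g\,\hat{\neg}\,A=-\nabla_{T_X,g}\tmop{Ric}^{\ast}_g(\Omega)\,\hat{\neg}\,A=\nabla^{\ast_{\Omega}}_g\mathcal{R}_g\,\hat{\neg}\,A$ by \eqref{Om-cntr-Bianc}; collecting all surviving terms gives the asserted formula, and since it is a tensorial identity, its validity at the arbitrary point $x_0$ suffices. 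I expect the genuine difficulty to be purely organizational: keeping straight the several generalized contractions ($\hat{\neg}$, $\hat{\neg}_g$, $\bullet_k$, $\odot^g_{k,l}$) and their signs throughout the commutations, verifying that every curvature-bracket term really is of the form covered by the two hypotheses $r=0,1$, and checking that the Ricci-plus-Hessian and the divergence-plus-$f$ recombinations reproduce precisely the coefficients $1$, $2$, $1$ appearing in the statement.
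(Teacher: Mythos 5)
Your proposal is correct in substance and follows essentially the same route as the paper's proof: a pointwise computation at the center of geodesic coordinates with constant-coefficient vector fields, commuting $\nabla_{g,\xi}$ past the two traced derivatives and the $\nabla_g f$-term via (\ref{gen-com-der}), killing all bracket terms with the $r=0,1$ hypotheses, and recombining the Christoffel/Hessian contributions into $\tmop{Ric}^{\ast}_g(\Omega)\bullet\nabla_g A$ and the curvature contributions into $2\,\mathcal{R}_g\,\hat{\neg}_g\,\nabla_g A$ and $\nabla^{\ast_{\Omega}}_g\mathcal{R}_g\,\hat{\neg}\,A$. One correction of detail: no Bianchi identity is needed anywhere — the terms $\nabla_{g,e_k}\mathcal{R}_g(\xi,e_k)\,\hat{\neg}\,A+\mathcal{R}_g(\nabla_g f,\xi)\,\hat{\neg}\,A$ are exactly $\nabla^{\ast_{\Omega}}_g\mathcal{R}_g(\xi)\,\hat{\neg}\,A$ by the very definition of the $\Omega$-adjoint, whereas your chain $\underline{\tmop{div}}^{^{_{_{\Omega}}}}_g\mathcal{R}_g\,\hat{\neg}\,A=-\nabla_{_{T_X,g}}\tmop{Ric}^{\ast}_g(\Omega)\,\hat{\neg}\,A=\nabla^{\ast_{\Omega}}_g\mathcal{R}_g\,\hat{\neg}\,A$ via (\ref{Om-cntr-Bianc}) conflates two different contractions of $\nabla_g\mathcal{R}_g$ (a vector-valued two-form versus an endomorphism-valued one-form), and (\ref{curv-alg-id}), (\ref{ext-alg-prod}) likewise play no role, the factor $2$ coming simply from the two successive commutations; also the bracket arising in the $f$-part is with $A$ itself, so it is the $r=0$ hypothesis (not $r=1$) that kills it.
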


\begin{proof}
  We pick geodesic coordinates centered at an arbitrary point $x_0$. Let
  $(e_k)_k$ be the coordinate local tangent frame and let $\xi, \eta \equiv (\eta_1, \ldots, \eta_p)$ be 
local vector fields with constant coefficients defined in a neighborhood of the point $x_0$.
We expand at the point $x_0$ the therm
  \begin{eqnarray*}
    \nabla_{g, \xi}\, \Delta^{^{_{_{\Omega}}}}_g A & = & - \;\,\nabla_{g, \xi}
    \nabla_{g, e_k} \nabla_{g, e_k} A 
\\
\\
&+& \nabla_{g, \xi} \nabla_{g, e_k} e_k
    \;\neg\; \nabla_g\, A\;\, +\;\, \nabla_{g, \xi} \nabla_{g, \nabla_g f} \,A\\
    &  & \\
    & = & - \;\,\nabla_{g, e_k} \nabla_{g, \xi} \nabla_{g, e_k} A \;\,+\;\,\mathcal{R}_g
    (\xi, e_k) \;\hat{\neg}\; \nabla_{g, e_k} A\\
    &  & \\
    & + & \nabla_{g, \xi} \nabla_{g, e_k} e_k \;\neg\; \nabla_g \,A \;\,+\;\, \nabla_{g,
    \nabla_g f} \nabla_{g, \xi} \,A\\
    &  & \\
    & - & \mathcal{R}_g (\xi, \nabla_g f) \;\hat{\neg}\; A \;\,+\;\, \nabla_{g, \xi}
    \nabla_g f \;\neg\; \nabla_g\, A\\
    &  & \\
    & = & - \;\,\nabla_{g, e_k} \nabla_{g, e_k} \nabla_{g, \xi} \,A
\\
\\
&+&
    2\,\mathcal{R}_g (\xi, e_k) \;\hat{\neg}\; \nabla_{g, e_k}\, A\;\, +\;\, \nabla_{g, e_k}
    \mathcal{R}_g (\xi, e_k) \;\hat{\neg}\; A\\
    &  & \\
    & + & \nabla_{g, \xi} \nabla_{g, e_k} e_k \;\neg\; \nabla_g\, A \;\,+\;\, \left(
    \nabla_g f \;\neg\; \nabla^2_g\, A \right) \left( \xi, \cdot) \right.\\
    &  & \\
    & + & \mathcal{R}_g (\nabla_g f, \xi) \;\hat{\neg}\; A\;\, +\;\, \nabla_{g, \xi}
    \nabla_g f \;\neg\; \nabla_g\, A\;,
  \end{eqnarray*}
  thanks to the identity (\ref{gen-com-der}), to the assumptions on $A$ and to
  the identity $[e_k, \xi] \equiv 0 .$ Moreover at the point $x_0$ hold the
  identity
  \begin{eqnarray*}
    \nabla_{g, e_k} \nabla^2_g \,A \,(e_k, \xi, \eta) & = & \nabla_{g, e_k} 
    \big[ \nabla_{g, e_k} \nabla_g \,A\, (\xi, \eta) \big]\\
    &  & \\
    & = & \nabla_{g, e_k}  \Big\{ \nabla_{g, e_k}  \big[ \nabla_{g, \xi} \,A\,
    (\eta) \big] \;\,-\;\, \nabla_g \,A \,(\nabla_{g_t, e_k} \xi, \eta) \Big\} \\
    &  & \\
    & - & \sum^p_{j = 1} \nabla_{g, e_k}  \big[ \nabla_g \,A\, (\xi, \eta_1,
    \ldots, \nabla_{g, e_k} \eta_j, \ldots, \eta_p) \big]\\
    &  & \\
    & = & \nabla_{g, e_k}  \big[ \nabla_{g, e_k} \nabla_{g, \xi} \,A\, (\eta)\;\, -\;\,
    \nabla_g \,A\, (\nabla_{g, \xi} e_k, \eta) \big]\\
    &  & \\
    & = & \nabla_{g, e_k} \nabla_{g, e_k} \nabla_{g, \xi} \,A\, (\eta)\;\, -\;\, \nabla_g\,
    A \,(\nabla_{g, e_k} \nabla_{g, \xi} e_k, \eta)\;,
  \end{eqnarray*}
  which combined with the previous expression implies the required formula
\end{proof}

Applying this lemma first to $A = \dot{g}^{\ast}_t$ and then to $A =
\nabla_{g_t}  \dot{g}^{\ast}_t$ along the $\Omega$-SRF we infer the formula
\begin{eqnarray*}
  \left[ \nabla^2_{g_t}, \Delta^{^{_{_{\Omega}}}}_{g_t} \right] 
  \dot{g}^{\ast}_t & = & 2\, \nabla_{g_t}^2  \dot{g}^{\ast}_t \;\,+\;\, \dot{g}^{\ast}_t
  \;\hat{\neg}\; \nabla_{g_t}^2  \dot{g}^{\ast}_t \;\,+\;\, \nabla_{g_t} 
  \dot{g}^{\ast}_t \bullet \nabla_{g_t}  \dot{g}^{\ast}_t \\
  &  & \\
  & + & 2\,\mathcal{R}_{g_t}  \;\hat{\neg}_{g_t} \nabla_{g_t}^2  \dot{g}^{\ast}_t
  \;\,+\;\, \nabla^{\ast_{\Omega}}_{g_t} \mathcal{R}_{g_t}  \;\hat{\neg}\; \nabla_{g_t} 
  \dot{g}^{\ast}_t \;.
\end{eqnarray*}
(We observe that the presence of the curvature factor turns off the power of
the maximum principle in the exponential convergence of higher order space
derivatives along the $\Omega$-SRF.) A simple induction shows the general
formula
\begin{eqnarray*}
  \left[ \nabla^p_{g_t}, \Delta^{^{_{_{\Omega}}}}_{g_t} \right] 
  \dot{g}^{\ast}_t & = & p \,\nabla_{g_t}^p  \dot{g}^{\ast}_t \;\,+\;\, \sum_{r = 1}^p \,
  \sum_{k = 1}^r\, \sum_{\sigma \in S_{p - r + k - 1}} C_{k, \sigma}^{p, r}\,
  \nabla_{g_t}^{p - r}  \dot{g}^{\ast}_t \bullet^{\sigma}_k \nabla_{g_t}^r 
  \dot{g}^{\ast}_t\\
  &  & \\
  & + & \sum_{r = 1}^{p - 1}\,  \sum_{k = 1}^r \,\sum_{\sigma \in S_{p - r + k -
  1}} K_{k, \sigma}^{p, r} \,\nabla_{g_t}^{p - r - 1}
  \nabla^{\ast_{\Omega}}_{g_t} \mathcal{R}_{g_t} \bullet^{\sigma}_k
  \nabla_{g_t}^r  \dot{g}^{\ast}_t\\
  &  & \\
  & + & 2 \sum_{r = 2}^p \, \sum_{k = 2}^r \, \sum_{l = 1}^{k - 1} \sum_{\sigma
  \in S_{p - r + l}} Q_{k, l, \sigma}^{p, r} \nabla_{g_t}^{p - r}
  \mathcal{R}_{g_t} \odot^{g_t, \sigma}_{k, l} \nabla_{g_t}^r 
  \dot{g}^{\ast}_t\;,
\end{eqnarray*}
where $C_{k, \sigma}^{p, r}, K_{k, \sigma}^{p, r}, Q_{k, l, \sigma}^{p, r} \in
\left\{ 0, 1 \right\}$.

\section{Exponentially fast convergence of higher order space derivatives along
the $\Omega$-Soliton-Ricci flow }

We use here an interpolation method introduced by Hamilton in his proof of the
exponential convergence of the Ricci flow in \cite{Ham}. The difference with the technique in \cite{Ham} is
a more involved interpolation process due to the presence of some extra curvature therms
which seem to be alien to Hamilton's argument. We are able to perform our interpolation 
process by using some intrinsic properties of the $\Omega$-SRF.

\subsection{Estimate of the heat of the derivatives norm}

The fact that $(g_t)_{t \geqslant 0} \subset \Sigma_K (g_0)$ implies
$\dot{g}_t \in \mathbbm{F}^K_{g_t}$ for all times $t \geqslant 0$ thanks to
the identity (\ref{2flat-Scat-Space}). Thus hold the identities $\left[ K,
\nabla_{g_t}  \dot{g}^{\ast}_t \right] \equiv 0$ and $\left[ K, \nabla^p_{g_t}
\dot{g}^{\ast}_t \right] \equiv 0$, which in their turn imply 
$$
\Big[
\nabla_{g_t, \xi} \, \dot{g}^{\ast}_t, \nabla^p_{g_t}  \dot{g}^{\ast}_t \Big]
\;\;\equiv\;\; 0\;,
$$ for all $p \in \mathbbm{Z}_{\geqslant 0}$, $\xi \in
T_X$. We deduce by using the identity
(\ref{var-Pder})
\begin{equation}
  \label{inv-Pder} 2\, \dot{\nabla}^p_{g_t}  \dot{g}^{\ast}_t \;\;=\;\; -\;\, \sum_{r =
  1}^{p - 1} \, \sum_{k = 1}^r \sum_{\sigma \in S_{p - r + k - 1}} C_{k,
  \sigma}^{p, r} \,\nabla_{g_t}^{p - r}  \dot{g}^{\ast}_t \bullet^{\sigma}_k
  \nabla_{g_t}^r  \dot{g}^{\ast}_t\;,
\end{equation}
for all $t \geqslant 0$ and $p \in \mathbbm{N}_{> 0}$. This combined with
(\ref{evol-vr-G}) provides the identities
\begin{eqnarray*}
  2\, \frac{d}{d t}  \left( \nabla^p_{g_t}  \dot{g}_t^{\ast} \right) & = & -\;\,
  \sum_{r = 1}^{p - 1} \, \sum_{k = 1}^r \sum_{\sigma \in S_{p - r + k - 1}}
  C_{k, \sigma}^{p, r} \,\nabla_{g_t}^{p - r}  \dot{g}^{\ast}_t
  \bullet^{\sigma}_k \nabla_{g_t}^r  \dot{g}^{\ast}_t \;\,+\;\, 2\, \nabla^p_{g_t} 
  \frac{d}{d t}  \,\dot{g}_t^{\ast}\\
  &  & \\
  & = & - \;\,\sum_{r = 1}^{p - 1} \, \sum_{k = 1}^r \sum_{\sigma \in S_{p - r + k
  - 1}} C_{k, \sigma}^{p, r} \,\nabla_{g_t}^{p - r}  \dot{g}^{\ast}_t
  \bullet^{\sigma}_k \nabla_{g_t}^r  \dot{g}^{\ast}_t\\
  &  & \\
  & - & \nabla^p_{g_t} \Delta^{^{_{_{\Omega}}}}_{g_t}  \dot{g}^{\ast}_t \;\,-\;\, 2\,
  \nabla^p_{g_t}  \dot{g}^{\ast}_t \;\,-\;\, 2\, \nabla^p_{g_t} ( \dot{g}_t^{\ast})^2 \;.
\end{eqnarray*}
We consider now a $g_{t_0}$-orthonormal basis $(e_k)_k \subset T_{X, x_0}$ and
we define the multi-vectors $e_K \assign (e_{k_1}, \ldots, e_{k_p})$, 
$g^{- 1}_t
e^{\ast}_K \assign (g^{- 1}_t e^{\ast}_{k_1}, \ldots, g^{- 1}_t
e^{\ast}_{k_p})$. 
Then at the space time point $(x_0, t_0)$ hold the
identities
\begin{eqnarray*}
&&  \frac{d}{d t}\, | \nabla^p_{g_t}  \dot{g}^{\ast}_t |^2_{g_t} 
\\
\\
& = & \frac{d}{d
  t} \tmop{Tr}_{_{\mathbbm{R}}} \left[ \nabla^p_{g_t, e_K}  \dot{g}^{\ast}_t\,
  \nabla^p_{g_t, g^{- 1}_t e_K}  \dot{g}^{\ast}_t \right]\\
  &  & \\
  & = & \tmop{Tr}_{_{\mathbbm{R}}} \left[ e_K \;\neg\; \left( 2\, \frac{d}{d t} 
  \left( \nabla^p_{g_t}  \dot{g}_t^{\ast} \right) \nabla^p_{g_t, e_K} 
  \dot{g}^{\ast}_t \;\,-\;\, \sum_{j = 1}^p \dot{g}^{\ast}_t \bullet_j \nabla^p_{g_t} 
  \dot{g}^{\ast}_t \right) \nabla^p_{g_t, e_K}  \dot{g}^{\ast}_t \right]\\
  &  & \\
  & = & \left\langle 2\, \frac{d}{d t}  \left( \nabla^p_{g_t}  \dot{g}_t^{\ast}
  \right) \;\,-\;\, \sum_{j = 1}^p \dot{g}^{\ast}_t \bullet_j \nabla^p_{g_t} 
  \dot{g}^{\ast}_t, \nabla^p_{g_t}  \dot{g}_t^{\ast} \right\rangle_{g_t}\\
  &  & \\
  & \leqslant & - \;\,\sum_{r = 1}^{p - 1}  \,\sum_{k = 1}^r \sum_{\sigma \in S_{p
  - r + k - 1}} C_{k, \sigma}^{p, r}  \Big\langle \nabla_{g_t}^{p - r} 
  \dot{g}^{\ast}_t \bullet^{\sigma}_k \nabla_{g_t}^r  \dot{g}^{\ast}_t,
  \nabla^p_{g_t}  \dot{g}_t^{\ast} \Big\rangle_{g_t}\\
  &  & \\
  & - & \Big\langle \nabla^p_{g_t} \Delta^{^{_{_{\Omega}}}}_{g_t} 
  \dot{g}^{\ast}_t \;\,+\;\, 2 \nabla^p_{g_t}  \dot{g}^{\ast}_t \;\,+\;\, 2\, \nabla^p_{g_t} (
  \dot{g}_t^{\ast})^2, \nabla^p_{g_t}  \dot{g}^{\ast}_t \Big\rangle_{g_t} \;\,+\;\,
  p\, | \dot{g}_t |_{g_t} | \nabla^p_{g_t}  \dot{g}^{\ast}_t |^2_{g_t}\\
  &  & \\
  & \leqslant & - \;\,\Big\langle \Delta^{^{_{_{\Omega}}}}_{g_t} \nabla^p_{g_t} 
  \dot{g}^{\ast}_t, \nabla^p_{g_t}  \dot{g}^{\ast}_t \Big\rangle_{g_t} \;\,+\;\, C\, |
  \nabla^p_{g_t}  \dot{g}^{\ast}_t |^2_{g_t}\\
  &  & \\
  & + & C\, \sum_{r = 1}^{p - 1} \,| \nabla_{g_t}^{p - r}  \dot{g}^{\ast}_t
  |_{g_t} | \nabla_{g_t}^r  \dot{g}^{\ast}_t |_{g_t} | \nabla^p_{g_t} 
  \dot{g}^{\ast}_t |_{g_t}\\
  &  & \\
  & + & C\, \sum_{r = 1}^{p - 1} \,| \nabla_{g_t}^{p - r - 1}
  \nabla^{\ast_{\Omega}}_{g_t} \mathcal{R}_{g_t} |_{g_t} | \nabla_{g_t}^r 
  \dot{g}^{\ast}_t |_{g_t} | \nabla^p_{g_t}  \dot{g}^{\ast}_t |_{g_t} \\
  &  & \\
  & + & C \,\sum_{r = 2}^p \,| \nabla_{g_t}^{p - r} \mathcal{R}_{g_t} |_{g_t} |
  \nabla_{g_t}^r  \dot{g}^{\ast}_t |_{g_t} | \nabla^p_{g_t}  \dot{g}^{\ast}_t
  |_{g_t}\;,
\end{eqnarray*}
where $C > 0$ will always denote a time independent constant. We observe now
that $\mathcal{R}_{g_t} \equiv \mathcal{R}_{g_0}$ since $(g_t)_{t \geqslant 0}
\subset \Sigma_K (g_0)$. Using the standard identity
\begin{eqnarray*}
  \Delta^{^{_{_{\Omega}}}}_{g_t} | \nabla^p_{g_t}  \dot{g}^{\ast}_t |^2_{g_t}
  \;\; = \;\; 2\, \Big\langle \Delta^{^{_{_{\Omega}}}}_{g_t} \nabla^p_{g_t} 
  \dot{g}^{\ast}_t, \nabla^p_{g_t}  \dot{g}^{\ast}_t \Big\rangle_{g_t} \;\,-\;\, 2\, |
  \nabla^{p + 1}_{g_t}  \dot{g}^{\ast}_t |^2_{g_t}\;,
\end{eqnarray*}
we infer the estimate of the heat of the norm squared
\begin{eqnarray*}
&&  \Box_{g_t}^{^{_{_{\Omega}}}} | \nabla^p_{g_t}  \dot{g}^{\ast}_t |^2_{g_t} 
\\
\\
&
  \leqslant & - \;\,2\, | \nabla^{p + 1}_{g_t}  \dot{g}^{\ast}_t |^2_{g_t} \;\,+\;\, C\, |
  \nabla^p_{g_t}  \dot{g}^{\ast}_t |^2_{g_t} \\
  &  & \\
  & + & C \,\sum_{r = 1}^{p - 1} \,| \nabla_{g_t}^{p - r}  \dot{g}^{\ast}_t
  |_{g_t} | \nabla_{g_t}^r  \dot{g}^{\ast}_t |_{g_t} | \nabla^p_{g_t} 
  \dot{g}^{\ast}_t |_{g_t} \\
  &  & \\
  & + & C \,\sum_{r = 2}^{p - 1} \Big[ | \nabla_{g_t}^{p - r - 1}\,
  \nabla^{\ast_{\Omega}}_{g_t} \mathcal{R}_{g_t} |_{g_t} \;+\; | \nabla_{g_t}^{p -
  r} \mathcal{R}_{g_t} |_{g_t} \Big] | \nabla_{g_t}^r  \dot{g}^{\ast}_t
  |_{g_t} | \nabla^p_{g_t}  \dot{g}^{\ast}_t |_{g_t} \\
  &  & \\
  & + & C\,e^{- t} | \nabla_{g_t}^{p - 2} \,\nabla^{\ast_{\Omega}}_{g_t}
  \mathcal{R}_{g_t} |_{g_t} | \nabla^p_{g_t}  \dot{g}^{\ast}_t |_{g_t}\;,
\end{eqnarray*}
thanks to the exponential estimate (\ref{exp-dc-der}). Integrating with
respect to the volume form $\Omega$ we obtain the inequality
\begin{eqnarray*}
&&  2\, \frac{d}{d t} \int_X | \nabla^p_{g_t}  \dot{g}^{\ast}_t |^2_{g_t} \Omega 
\\
\\
&
  \leqslant & - 2 \int_X | \nabla^{p + 1}_{g_t}  \dot{g}^{\ast}_t |^2_{g_t}\,
  \Omega \;\,+\;\, C\, \int_X | \nabla^p_{g_t}  \dot{g}^{\ast}_t |^2_{g_t} \,\Omega\\
  &  & \\
  & + & C \,\sum_{r = 1}^{p - 1} \int_X | \nabla_{g_t}^{p - r} 
  \dot{g}^{\ast}_t |_{g_t} | \nabla_{g_t}^r  \dot{g}^{\ast}_t |_{g_t} |
  \nabla^p_{g_t}  \dot{g}^{\ast}_t |_{g_t} \Omega\\
  &  & \\
  & + & C\, \sum_{r = 2}^{p - 1} \,\int_X \Big[ | \nabla_{g_t}^{p - r - 1}\,
  \nabla^{\ast_{\Omega}}_{g_t} \mathcal{R}_{g_t} |_{g_t} \;+\; | \nabla_{g_t}^{p -
  r} \mathcal{R}_{g_t} |_{g_t} \Big] | \nabla_{g_t}^r  \dot{g}^{\ast}_t
  |_{g_t} | \nabla^p_{g_t}  \dot{g}^{\ast}_t |_{g_t} \,\Omega\\
  &  & \\
  & + & C \,e^{- t}  \left[ \int_X | \nabla_{g_t}^{p - 2}\,
  \nabla^{\ast_{\Omega}}_{g_t} \mathcal{R}_{g_t} |^2_{g_t} \,\Omega \right]^{1 /
  2}  \left[ \int_X | \nabla^p_{g_t}  \dot{g}^{\ast}_t |^2_{g_t} \,\Omega
  \right]^{1 / 2},
\end{eqnarray*}
thanks to H\"older's inequality. In order to estimate the sums we need a
Hamilton's result in \cite{Ham} which restates in our setting as follows.

\subsection{Hamilton's interpolation inequalities}

\begin{lemma}
  Let $p, q, r \in \mathbbm{R}$ with $r \geqslant 1$ such that $1 / p + 1 / q
  = 1 / r$. There exists a time independent constant $C > 0$ such that along
  the $\Omega$-SRF hold the inequality
  \begin{eqnarray*}
    \left[ \int_X | \nabla_{g_t} A|^{2 r}_{g_t} \,\Omega \right]^{1 / r} \;\;
    \leqslant \;\;
 C \left[ \int_X | \nabla^2_{g_t} A|^p_{g_t} \,\Omega \right]^{1
    / p}  \left[ \int_X | A|^q_{g_t} \,\Omega \right]^{1 / q},
  \end{eqnarray*}
  for all tensors $A$.
\end{lemma}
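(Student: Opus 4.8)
The plan is to run Hamilton's interpolation argument from \cite{Ham}: an integration by parts turns $\int_X|\nabla_{g_t}A|^{2r}$ into an integral of $|A|\,|\nabla^{2}_{g_t}A|\,|\nabla_{g_t}A|^{2r-2}$, and H\"older's inequality with three exponents then delivers the claim. The one feature peculiar to our setting is that the measure is the fixed $\Omega$ rather than $dV_{g_t}$, so I would begin by reducing to the latter. Along the $\Omega$-$\tmop{SRF}$ the two-sided bound $e^{-C}g_0\le g_t\le e^{C}g_0$ of estimate (\ref{uni-metric}) provides a constant $c\ge 1$ independent of $t$ with $c^{-1}dV_{g_t}\le\Omega\le c\,dV_{g_t}$, so for every $s\ge 1$ the norms $\|\cdot\|_{L^{s}(X,\Omega)}$ and $\|\cdot\|_{L^{s}(X,dV_{g_t})}$ of a tensor field are comparable with constant $c^{1/s}$ independent of $t$. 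It thus suffices to establish, for an arbitrary fixed closed oriented Riemannian manifold $(X,g)$ of dimension $n$ (such as $(X,g_t)$ at each time), the inequality
\[
\Bigl(\int_X|\nabla_{g}A|^{2r}\,dV_{g}\Bigr)^{1/r}\ \le\ C(n,r)\,\Bigl(\int_X|\nabla^{2}_{g}A|^{p}\,dV_{g}\Bigr)^{1/p}\Bigl(\int_X|A|^{q}\,dV_{g}\Bigr)^{1/q}
\]
with a constant depending only on $n$ and $r$; the stated inequality, with $\Omega$ and $g=g_t$, then follows with the time-independent constant $c^{2/r}C(n,r)$.

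To prove the fixed-metric inequality, fix $g$ and write $\nabla$, $dV$, $|\cdot|$ for its data. For $\varepsilon>0$ apply the divergence theorem on the closed manifold $X$ to the vector field $\beta^{j}:=\langle A,\nabla^{j}A\rangle\,(|\nabla A|^{2}+\varepsilon)^{r-1}$; since $\nabla$ is the Levi-Civita connection, $\int_X\operatorname{div}_g\beta\,dV=0$, and expanding and rearranging gives, with an orthonormal frame $(e_j)$ understood,
\[
\int_X|\nabla A|^{2}(|\nabla A|^{2}+\varepsilon)^{r-1}dV=-\int_X\langle A,\operatorname{tr}_g\nabla^{2}A\rangle(|\nabla A|^{2}+\varepsilon)^{r-1}dV-(r-1)\int_X\langle A,\nabla_{j}A\rangle(|\nabla A|^{2}+\varepsilon)^{r-2}\nabla_{j}|\nabla A|^{2}\,dV .
\]
Using $|\operatorname{tr}_g\nabla^{2}A|\le\sqrt{n}\,|\nabla^{2}A|$, $\bigl|\nabla_{j}|\nabla A|^{2}\bigr|\le 2|\nabla^{2}A|\,|\nabla A|$ and $|\nabla A|^{2}(|\nabla A|^{2}+\varepsilon)^{r-2}\le(|\nabla A|^{2}+\varepsilon)^{r-1}$, every term on the right is bounded in absolute value by $C(n,r)\,|A|\,|\nabla^{2}A|\,(|\nabla A|^{2}+\varepsilon)^{r-1}$. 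Since $r\ge 1$, for $\varepsilon\in(0,1]$ these integrands are dominated by the fixed integrable function $C(n,r)\,|A|\,|\nabla^{2}A|\,(|\nabla A|^{2}+1)^{r-1}$, so letting $\varepsilon\downarrow 0$ by dominated convergence yields $\int_X|\nabla A|^{2r}dV\le C(n,r)\int_X|A|\,|\nabla^{2}A|\,|\nabla A|^{2r-2}dV$.

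Next I would apply H\"older's inequality to this last integral with exponents $q$ on $|A|$, $p$ on $|\nabla^{2}A|$, and $s:=r/(r-1)$ on $|\nabla A|^{2r-2}$ (with $s=\infty$ and $|\nabla A|^{0}\equiv 1$ when $r=1$); these are admissible because $1/q+1/p+1/s=1/r+(r-1)/r=1$, and $\bigl\||\nabla A|^{2r-2}\bigr\|_{L^{s}}=\bigl(\int_X|\nabla A|^{2r}dV\bigr)^{(r-1)/r}$. Hence
\[
\int_X|\nabla A|^{2r}\,dV\ \le\ C(n,r)\,\Bigl(\int_X|A|^{q}dV\Bigr)^{1/q}\Bigl(\int_X|\nabla^{2}A|^{p}dV\Bigr)^{1/p}\Bigl(\int_X|\nabla A|^{2r}dV\Bigr)^{(r-1)/r}.
\]
If $\int_X|\nabla A|^{2r}dV=0$ the desired inequality is trivial; otherwise, dividing by $\bigl(\int_X|\nabla A|^{2r}dV\bigr)^{(r-1)/r}$ gives exactly the fixed-metric inequality, and the reduction of the first paragraph finishes the proof.

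The only genuine obstacle is that the weight $|\nabla A|^{2r-2}$, and the weight $|\nabla A|^{2r-4}$ occurring implicitly in the second integral above, is singular at points where $\nabla A$ vanishes when $1<r<2$; this is precisely why I introduce the $\varepsilon$-regularization $|\nabla A|^{2}\mapsto|\nabla A|^{2}+\varepsilon$, which is legitimate exactly because $r\ge 1$ makes all regularized integrands dominated uniformly in $\varepsilon$. Everything else — the integration by parts, the pointwise Cauchy-Schwarz estimates, and the application of H\"older's inequality — is routine, as in \cite{Ham}.
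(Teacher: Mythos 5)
Your argument is correct and is essentially the paper's own (Hamilton's) proof: integrate by parts to bound $\int_X|\nabla_{g_t}A|^{2r}$ by $\int_X|A|\,|\nabla^2_{g_t}A|\,|\nabla_{g_t}A|^{2r-2}$, apply the three-exponent H\"older inequality with $1/q+1/p+(r-1)/r=1$, divide, and use the uniform equivalence (\ref{uni-metric}) of $g_t$ with $g_0$ to make the passage between $dV_{g_t}$ and $\Omega$ cost only a time-independent constant. Your $\varepsilon$-regularization of the weight $|\nabla_{g_t}A|^{2(r-1)}$ is a welcome extra precaution (the paper differentiates this weight formally, which is delicate at zeros of $\nabla_{g_t}A$ when $r<2$), but it does not change the route.
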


\begin{proof}
  The argument here is the same as in Hamilton \cite{Ham}. We include it for readers
  convenience. We set $u \assign | \nabla_{g_t} A|^{2 (r - 1)}_{g_t}$ and we
  observe the identities
  \begin{eqnarray*}
    \int_X | \nabla_{g_t} A|^{2 r}_{g_t} d V_{g_t} & = & \int_X \left\langle
    A, \nabla^{\ast}_{g_t} (u \nabla_{g_t} A) \right\rangle_{g_t} d V_{g_t}\\
    &  & \\
    & = & \int_X \left\langle A, \Delta_{g_t} A \right\rangle_{g_t} |
    \nabla_{g_t} A|^{2 (r - 1)}_{g_t} d V_{g_t}\\
    &  & \\
    & - & \int_X \left\langle A, \nabla_{g_t} u \;\neg\; \nabla_{g_t} A
    \right\rangle_{g_t} d V_{g_t},
  \end{eqnarray*}
  and the inequalities
  \begin{eqnarray*}
    | \left\langle A, \Delta_{g_t} A \right\rangle_{g_t} | & \leqslant &
    \sqrt{n}\, | A|_{g_t} | \nabla^2_{g_t} A|_{g_t},\\
    &  & \\
    | \left\langle A, \nabla_{g_t} u \;\neg\; \nabla_{g_t} A \right\rangle_{g_t} |
    & \leqslant & \sqrt{n}\, | A|_{g_t} | \nabla_{g_t} A|_{g_t} | \nabla_{g_t}
    u|_{g_t} \;.
  \end{eqnarray*}
  Expanding the vector $\nabla_{g_t} u$ we infer the identities
  \begin{eqnarray*}
    \nabla_{g_t} u & = & (r \;-\; 1)\, | \nabla_{g_t} A|^{2 (r - 2)}_{g_t}\,
    \nabla_{g_t} | \nabla_{g_t} A|^2_{g_t}\\
    &  & \\
    & = & 2 \,(r\; -\; 1)\, | \nabla_{g_t} A|^{2 (r - 2)}_{g_t} \, \Big\langle
    \nabla_{g_t, e_k} \nabla_{g_t} A, \nabla_{g_t} A \Big\rangle_{g_t} e_k\;,
  \end{eqnarray*}
  where $(e_k)_k$ is a $g_t$-orthonormal basis. Thus hold the inequality
  \begin{eqnarray*}
    | \nabla_{g_t} u|_{g_t} \;\; \leqslant \;\; 2\, (r\; -\; 1) \sqrt{n}\, | \nabla_{g_t}
    A|^{2 r - 3}_{g_t}\, | \nabla^2_{g_t} A|_{g_t} \;.
  \end{eqnarray*}
  Combining the previous inequalities we infer
  \begin{eqnarray*}
    \int_X | \nabla_{g_t} A|^{2 r}_{g_t} \,d V_{g_t} \;\;\leqslant \;\; C_{r,n} \int_X | A|_{g_t} | \nabla^2_{g_t} A|_{g_t} |
    \nabla_{g_t} A|^{2 r - 2}_{g_t} \,d V_{g_t}\;,
  \end{eqnarray*}
with $C_{r,n}:=2\, (r\; -\;
    1) \,n \;+\; \sqrt{n}$. This combined with (\ref{uni-metric}) implies the inequality
  \begin{eqnarray*}
    \int_X | \nabla_{g_t} A|^{2 r}_{g_t} \,\Omega \;\; \leqslant \;\; C \int_X |
    A|_{g_t} | \nabla^2_{g_t} A|_{g_t} | \nabla_{g_t} A|^{2 r - 2}_{g_t}\,
    \Omega \;.
  \end{eqnarray*}
  We can estimate the last integral using H\"older's inequality with
  \begin{eqnarray*}
    \frac{1}{p} \;\,+\;\, \frac{1}{q} \;\,+\;\, \frac{r - 1}{r} \;\; = \;\; 1\;,
  \end{eqnarray*}
in order to obtain
  \begin{eqnarray*}
    \int_X | \nabla_{g_t} A|^{2 r}_{g_t} \,\Omega  \;\leqslant \; C \left[ \int_X
    | \nabla^2_{g_t} A|^p_{g_t} \,\Omega \right]^{1 / p}  \left[ \int_X |
    A|^q_{g_t} \,\Omega \right]^{1 / q}  \left[ \int_X | \nabla_{g_t} A|^{2
    r}_{g_t} \,\Omega \right]^{1 - 1 / r},
  \end{eqnarray*}
  and hence the required conclusion.
\end{proof}
As a corollary of this inequality Hamilton obtains in \cite{Ham} the following two
estimates. For all $r = 1, \ldots, p - 1$, hold
\begin{eqnarray}
  \label{interpI}  \int_X | \nabla^r_{g_t} A|^{2 p / r}_{g_t} \,\Omega 
&\leqslant&
  C \left[ \max_X | A|_{g_t} \right]^{2 (p / r - 1)} \int_X | \nabla^p_{g_t}
  A|^2_{g_t} \,\Omega\;,
\\\nonumber
\\
  \label{interpII}  \int_X | \nabla^r_{g_t} A|^2_{g_t} \,\Omega 
&\leqslant &
C
  \left[ \int_X | \nabla^p_{g_t} A|^2_{g_t} \,\Omega \right]^{r / p}  \left[
  \int_X | A|^2_{g_t} \,\Omega \right]^{1 - r / p} .
\end{eqnarray}

\subsection{Interpolation of the $H^p$-norms }

We estimate now the integral
\begin{eqnarray*}
  I_1 \;\; \assign \;\; \sum_{r = 1}^{p - 1} \int_X | \nabla_{g_t}^{p - r} 
  \dot{g}^{\ast}_t |_{g_t} | \nabla_{g_t}^r  \dot{g}^{\ast}_t |_{g_t} |
  \nabla^p_{g_t}  \dot{g}^{\ast}_t |_{g_t} \,\Omega\; .
\end{eqnarray*}
Indeed using H\"older's inequality we obtain
\begin{eqnarray*}
  &  & \int_X | \nabla_{g_t}^{p - r}  \dot{g}^{\ast}_t |_{g_t} |
  \nabla_{g_t}^r  \dot{g}^{\ast}_t |_{g_t} | \nabla^p_{g_t}  \dot{g}^{\ast}_t
  |_{g_t} \,\Omega\\
  &  & \\
  & \leqslant &  \left[ \int_X | \nabla^{p - r}_{g_t}  \dot{g}^{\ast}_t |^{2
  p / (p - r)}_{g_t} \,\Omega \right]^{(p - r) / 2 p}  \times
\\
\\
&\times&\left[ \int_X |
  \nabla^r_{g_t}  \dot{g}^{\ast}_t |^{2 p / r}_{g_t} \,\Omega \right]^{r / 2 p} 
  \left[ \int_X | \nabla^p_{g_t}  \dot{g}^{\ast}_t |^2_{g_t} \,\Omega \right]^{1
  / 2} .
\end{eqnarray*}
This combined with Hamilton's inequality (\ref{interpI}) implies the estimate
\begin{eqnarray*}
  I_1 \;\;\leqslant\;\; C \int_X | \nabla^p_{g_t}  \dot{g}^{\ast}_t |^2_{g_t}\,
  \Omega \;.
\end{eqnarray*}
In a similar way we can estimate the integral
\begin{eqnarray*}
  I_2 \;\; \assign \;\; \sum_{r = 2}^{p - 1} \,\int_X | \nabla_{g_t}^{p - r - 1}
  \nabla^{\ast_{\Omega}}_{g_t} \mathcal{R}_{g_t} |_{g_t} | \nabla_{g_t}^r 
  \dot{g}^{\ast}_t |_{g_t} | \nabla^p_{g_t}  \dot{g}^{\ast}_t |_{g_t} \,\Omega \;.
\end{eqnarray*}
Indeed as before we obtain the inequality
\begin{eqnarray*}
  &  & \int_X | \nabla_{g_t}^{p - r - 1} \nabla^{\ast_{\Omega}}_{g_t}
  \mathcal{R}_{g_t} |_{g_t} | \nabla_{g_t}^r  \dot{g}^{\ast}_t |_{g_t} |
  \nabla^p_{g_t}  \dot{g}^{\ast}_t |_{g_t} \,\Omega\\
  &  & \\
  & \leqslant &  \left[ \int_X | \nabla^{p - r - 1}_{g_t}
  \nabla^{\ast_{\Omega}}_{g_t} \mathcal{R}_{g_t} |^{2 p / (p - r)}_{g_t}\,
  \Omega \right]^{(p - r) / 2 p}  \times
\\
\\
&\times&\left[ \int_X | \nabla^r_{g_t} 
  \dot{g}^{\ast}_t |^{2 p / r}_{g_t} \,\Omega \right]^{r / 2 p}  \left[ \int_X |
  \nabla^p_{g_t}  \dot{g}^{\ast}_t |^2_{g_t} \,\Omega \right]^{1 / 2} .
\end{eqnarray*}
Moreover the trivial inequality
\begin{eqnarray*}
  \frac{p}{p - r} & \leqslant & \frac{p - 3}{p - 3 - r}\;,
\end{eqnarray*}
implies the estimates
\begin{eqnarray*}
&&\int_X | \nabla^{p - r - 1}_{g_t} \,\nabla^{\ast_{\Omega}}_{g_t}
  \mathcal{R}_{g_t} |^{2 p / (p - r)}_{g_t} \,\Omega 
\\
\\
& \leqslant & \int_X \Omega
  \;\,+\;\, \int_X | \nabla^{p - r - 1}_{g_t} \,\nabla^{\ast_{\Omega}}_{g_t}
  \mathcal{R}_{g_t} |^{2 (p - 3) / (p - 3 - r)}_{g_t} \,\Omega\\
  &  & \\
  & \leqslant & \int_X \Omega \;\,+\;\, \int_X | \nabla^{p - 3}_{g_t}\,
  \nabla^{\ast_{\Omega}}_{g_t} \mathcal{R}_{g_t} |^2_{g_t} \,\Omega\;,
\end{eqnarray*}
by (\ref{interpI}) since $| \nabla^{\ast_{\Omega}}_{g_t} \mathcal{R}_{g_t}
|_{g_t} \leqslant C$ thanks to the uniform $C^1$-bound on $g_t$.
Using again (\ref{interpI}) we infer the estimate
\begin{eqnarray*}
  I_2 \;\; \leqslant \;\; C\, \sum_{r = 2}^{p - 1}  \,\left[ 1 \;+\; \int_X | \nabla^{p -
  3}_{g_t} \nabla^{\ast_{\Omega}}_{g_t} \mathcal{R}_{g_t} |^2_{g_t} \,\Omega
  \right]^{(p - r) / 2 p}  \left[ \int_X | \nabla^p_{g_t}  \dot{g}^{\ast}_t
  |^2_{g_t} \,\Omega \right]^{1 / 2 + r / 2 p} .
\end{eqnarray*}
We estimate finally the integral
\begin{eqnarray*}
  I_3 & \assign & \sum_{r = 2}^{p - 1} \int_X | \nabla_{g_t}^{p - r}
  \mathcal{R}_{g_t} |_{g_t} | \nabla_{g_t}^r  \dot{g}^{\ast}_t |_{g_t} |
  \nabla^p_{g_t}  \dot{g}^{\ast}_t |_{g_t} \,\Omega\; .
\end{eqnarray*}
As before using the trivial inequality
\begin{eqnarray*}
  \frac{p}{p - r} & \leqslant & \frac{p - 2}{p - 2 - r}\;,
\end{eqnarray*}
we obtain the estimates
\begin{eqnarray*}
  \int_X | \nabla^{p - r}_{g_t} \mathcal{R}_{g_t} |^{2 p / (p - r)}_{g_t}
  \,\Omega & \leqslant & \int_X \Omega \;\,+\;\, \int_X | \nabla^{p - r}_{g_t}
  \mathcal{R}_{g_t} |^{2 (p - 2) / (p - 2 - r)}_{g_t} \,\Omega\\
  &  & \\
  & \leqslant & \int_X \Omega \;\,+\;\, \int_X | \nabla^{p - 2}_{g_t}
  \mathcal{R}_{g_t} |^2_{g_t} \,\Omega\;,
\end{eqnarray*}
by (\ref{interpI}). Thus
\begin{eqnarray*}
  I_3 \;\; \leqslant \;\; C \,\sum_{r = 2}^{p - 1}  \left[ 1 \;+\; \int_X | \nabla^{p -
  2}_{g_t} \mathcal{R}_{g_t} |^2_{g_t} \,\Omega \right]^{(p - r) / 2 p}  \left[
  \int_X | \nabla^p_{g_t}  \dot{g}^{\ast}_t |^2_{g_t} \,\Omega \right]^{1 / 2 +
  r / 2 p} .
\end{eqnarray*}
In conclusion for all integers $p > 1$ hold the estimate
\begin{eqnarray*}
 && 2\, \frac{d}{d t} \int_X | \nabla^p_{g_t}  \dot{g}^{\ast}_t |^2_{g_t} \,\Omega 
\\
\\
&\leqslant & - \;\,2 \int_X | \nabla^{p + 1}_{g_t}  \dot{g}^{\ast}_t |^2_{g_t}
  \,\Omega \;\,+\;\, C \int_X | \nabla^p_{g_t}  \dot{g}^{\ast}_t |^2_{g_t} \,\Omega\\
  &  & \\
  & + & C \sum_{r = 2}^{p - 1} \left[ 1 \;+\; \int_X | \nabla^{p - 3}_{g_t}\,
  \nabla^{\ast_{\Omega}}_{g_t} \mathcal{R}_{g_t} |^2_{g_t} \,\Omega \right]^{(p
  - r) / 2 p}  \left[ \int_X | \nabla^p_{g_t}  \dot{g}^{\ast}_t |^2_{g_t}
  \,\Omega \right]^{1 / 2 + r / 2 p}\\
  &  & \\
  & + & C \sum_{r = 2}^{p - 1}  \left[ 1 \;+\; \int_X | \nabla^{p - 2}_{g_t}
  \mathcal{R}_{g_t} |^2_{g_t} \,\Omega \right]^{(p - r) / 2 p}  \left[ \int_X |
  \nabla^p_{g_t}  \dot{g}^{\ast}_t |^2_{g_t} \,\Omega \right]^{1 / 2 + r / 2
  p}\\
  &  & \\
  & + & C \,e^{- t}  \left[ \int_X | \nabla_{g_t}^{p - 2}\,
  \nabla^{\ast_{\Omega}}_{g_t} \mathcal{R}_{g_t} |^2_{g_t} \,\Omega \right]^{1 /
  2}  \left[ \int_X | \nabla^p_{g_t}  \dot{g}^{\ast}_t |^2_{g_t} \Omega
  \right]^{1 / 2} \;.
\end{eqnarray*}
We set $\Gamma_t \assign \nabla_{g_t} - \nabla_{g_0}$, and we observe the
inequality
\begin{eqnarray*}
  | \nabla^p_{g_t} \mathcal{R}_{g_t} |_{g_t} \;\; \leqslant \;\; C \;\,+\;\, C \,\sum_{h =
  1}^{p - 1}  \,\sum_{q = 1}^h \, \sum_{r_1 + \ldots + r_{h - q + 1} = q} \prod^{h
  - q + 1}_{j = 1} | \nabla_{g_0}^{r_j} \Gamma_t |_{g_0}\;,
\end{eqnarray*}
with $r_j = 0, \ldots, q$. Thus by using Jensen's inequality we obtain
\begin{eqnarray*}
  | \nabla^p_{g_t} \mathcal{R}_{g_t} |^2_{g_t} \;\; \leqslant \;\; C\;\, +\;\, C\, \sum_{h =
  1}^{p - 1} \, \sum_{q = 1}^h \, \sum_{r_1 + \ldots + r_{h - q + 1} = q} \prod^{h
  - q + 1}_{j = 1} | \nabla_{g_0}^{r_j} \Gamma_t |^2_{g_0}\;,
\end{eqnarray*}
with $r_j = 0, \ldots, q$. Moreover using H\"older's inequality, the
$C^1$-uniform estimate on $g_t$ and the inequality (\ref{interpI}) we infer the estimates
\begin{eqnarray*}
  \int_X \prod_{j = 1}^{h - q + 1} | \nabla_{g_0}^{r_j} \Gamma_t |^2_{g_0}
  \,\Omega & \leqslant & \prod_{j = 1}^{h - q + 1} \left[ \int_X |
  \nabla^{r_j}_{g_0} \Gamma_t |^{2 q / r_j}_{g_0} \,\Omega \right]^{r_j / q}\\
  &  & \\
  & \leqslant & C \int_X | \nabla^q_{g_0} \Gamma_t |^2_{g_0} \,\Omega\; .
\end{eqnarray*}
In its turn for all $q > 1$ hold the inequalities
\begin{eqnarray*}
&&  \int_X | \nabla^{q - 1}_{g_0} \Gamma_t |^2_{g_0} \,\Omega 
\\
& \leqslant &
  \sum^q_{r, h = 1}  \int_X | \nabla^{q - r}_{g_0} g_t |_{g_0} |
  \nabla^r_{g_0} g_t |_{g_0} | \nabla^{q - h}_{g_0} g_t |_{g_0} |
  \nabla^h_{g_0} g_t |_{g_0} \,\Omega\\
  &  & \\
  & \leqslant & \sum^q_{r, h = 1} \left[ \int_X | \nabla^{q - r}_{g_0} g_t
  |^{2 q / (q - r)}_{g_0} \,\Omega \right]^{(q - r) / 2 p}  \left[ \int_X |
  \nabla^r_{g_0} g_t |^{2 q / r}_{g_0} \,\Omega \right]^{r / 2 q} \times\\
  &  & \\
  & \times & \left[ \int_X | \nabla^{q - h}_{g_0} g_t |^{2 q / (q - h)}_{g_0}
  \,\Omega \right]^{(q - h) / 2 q}  \left[ \int_X | \nabla^h_{g_0} g_t |^{2 q /
  h}_{g_0} \,\Omega \right]^{h / 2 q}\\
  &  & \\
  & \leqslant & C \int_X | \nabla^q_{g_0} g_t |^2_{g_0} \,\Omega\; .
\end{eqnarray*}
We deduce the estimate
\begin{eqnarray*}
  \int_X | \nabla^p_{g_t} \mathcal{R}_{g_t} |^2_{g_t} \,\Omega \; \;\leqslant \;\; C\;\, +\;\,
  C\, \sum_{r = 1}^p \int_X | \nabla^r_{g_0} g_t |^2_{g_0} \,\Omega\; .
\end{eqnarray*}
In a similar way we obtain the estimate
\begin{eqnarray*}
  \int_X | \nabla_{g_t}^{p - 1}\, \nabla^{\ast_{\Omega}}_{g_t} \mathcal{R}_{g_t}
  |^2_{g_t} \,\Omega \;\;\leqslant \;\; C \;\,+\;\, C\, \sum_{r = 1}^p \int_X | \nabla^r_{g_0}
  g_t |^2_{g_0} \,\Omega \;.
\end{eqnarray*}

\subsection{Exponential decay of the $H^p$-norms}

We assume by induction the uniform exponential estimates
\begin{eqnarray*}
  \int_X | \nabla^r_{g_0}  \dot{g}_t |^2_{g_0} \,\Omega & \leqslant & C_r \,e^{-
  \theta_p t}\,,
\end{eqnarray*}
for all $r = 0, \ldots p - 1$, with $C_r, \theta_r > 0$. We deduce from the
previous subsection that for all $q = p, p + 1$ hold the estimate
\begin{eqnarray*}
  2\, \frac{d}{d t} \int_X | \nabla^q_{g_t}  \dot{g}^{\ast}_t |^2_{g_t} \,\Omega &
  \leqslant & - \;\,2 \int_X | \nabla^{q + 1}_{g_t}  \dot{g}^{\ast}_t |^2_{g_t}
  \,\Omega\;\, +\;\, C \int_X | \nabla^q_{g_t}  \dot{g}^{\ast}_t |^2_{g_t} \,\Omega\\
  &  & \\
  & + & C\, \sum_{r = 2}^{q - 1} \,\left[ \int_X | \nabla^q_{g_t} 
  \dot{g}^{\ast}_t |^2_{g_t} \,\Omega \right]^{1 / 2 + r / 2 q}\\
  &  & \\
  & + & C\, e^{- t}  \left[ 1 \;+\; \int_X | \nabla_{g_0}^{q - 1} g_t |^2_{g_0}\,
  \Omega \right]^{1 / 2}  \left[ \int_X | \nabla^q_{g_t}  \dot{g}^{\ast}_t
  |^2_{g_t} \,\Omega \right]^{1 / 2} .
\end{eqnarray*}
Thus for $q = p$ hold
\begin{eqnarray*}
  2\, \frac{d}{d t} \int_X | \nabla^p_{g_t}  \dot{g}^{\ast}_t |^2_{g_t} \,\Omega &
  \leqslant & - 2 \int_X | \nabla^{p + 1}_{g_t}  \dot{g}^{\ast}_t |^2_{g_t}
  \,\Omega \;\,+\;\, C \int_X | \nabla^p_{g_t}  \dot{g}^{\ast}_t |^2_{g_t}\, \Omega\\
  &  & \\
  & + & C\, \sum_{r = 0}^{p - 1} \,\left[ \int_X | \nabla^p_{g_t} 
  \dot{g}^{\ast}_t |^2_{g_t} \,\Omega \right]^{1 / 2 + r / 2 p} .
\end{eqnarray*}
Moreover Hamilton's inequality (\ref{interpII}) implies
\begin{equation}
  \label{Hamil-I}  \int_X | \nabla^p_{g_t}  \dot{g}^{\ast}_t |^2_{g_t} \,\Omega
  \;\;\leqslant\;\; C\, \left[ \int_X | \nabla^{p + 1}_{g_t}  \dot{g}^{\ast}_t |^2_{g_t}
  \,\Omega \right]^{p / (p + 1)} \left[ \int_X | \dot{g}^{\ast}_t |^2_{g_t}
  \,\Omega \right]^{1 / (p + 1)} .
\end{equation}
Now for any $\varepsilon > 0$ and all $x, y > 0$ hold the inequality
\begin{eqnarray*}
  x^p \,y \;\; \leqslant \;\; C\, \varepsilon \,x^{p + 1} \;\,+\;\, C \,\varepsilon^{- p}\, y^{p + 1}\;,
\end{eqnarray*}
and applying this above gives
\begin{eqnarray*}
  \int_X | \nabla^p_{g_t}  \dot{g}^{\ast}_t |^2_{g_t} \,\Omega \;\; \leqslant \;\; C\,
  \varepsilon \int_X | \nabla^{p + 1}_{g_t}  \dot{g}^{\ast}_t |^2_{g_t} \,\Omega
  \;\,+\;\, C\, \varepsilon^{- p} \int_X | \dot{g}^{\ast}_t |^2_{g_t} \,\Omega\;,
\end{eqnarray*}
and also
\begin{eqnarray}
  \label{HamiltI}  \left[ \int_X | \nabla^p_{g_t}  \dot{g}^{\ast}_t |^2_{g_t}
  \,\Omega \right]^{\alpha} \;\leqslant\; C\, \varepsilon \left[ \int_X | \nabla^{p +
  1}_{g_t}  \dot{g}^{\ast}_t |^2_{g_t} \,\Omega \right]^{\alpha} \;\,+\;\, C\,
  \varepsilon^{- p} \left[ \int_X | \dot{g}^{\ast}_t |^2_{g_t} \,\Omega
  \right]^{\alpha},\quad
\end{eqnarray}
with $\alpha \assign 1 / 2 + r / 2 p$. If we choose $\varepsilon$ sufficiently
small we deduce
\begin{eqnarray*}
  2 \,\frac{d}{d t} \int_X | \nabla^p_{g_t}  \dot{g}^{\ast}_t |^2_{g_t} \Omega &
  \leqslant & - \;\int_X | \nabla^{p + 1}_{g_t}  \dot{g}^{\ast}_t |^2_{g_t}
  \,\Omega \;\,+\;\, C \,e^{- \,\delta t}\\
  &  & \\
  & + & C \,\sum_{r = 0}^{p - 1} \,\left[ \int_X | \nabla^p_{g_t} 
  \dot{g}^{\ast}_t |^2_{g_t} \,\Omega \right]^{1 / 2 \,+\, r / 2 p} .
\end{eqnarray*}
thanks to (\ref{uni-exp-est}). In order to estimate the last integral therm we
distinguish two cases. If for some $t > 0$
\begin{eqnarray*}
  \int_X | \nabla^{p + 1}_{g_t}  \dot{g}^{\ast}_t |^2_{g_t} \,\Omega \;\;\leqslant
  \;\; 1\;,
\end{eqnarray*}
then
\begin{eqnarray*}
  C\, \sum_{r = 0}^{p - 1} \left[ \int_X | \nabla^p_{g_t}  \dot{g}^{\ast}_t
  |^2_{g_t} \,\Omega \right]^{1 / 2 + r / 2 p} & \leqslant & C' \,e^{-\, \delta t /
  (p + 1)} \;.
\end{eqnarray*}
thanks to (\ref{Hamil-I}) and (\ref{uni-exp-est}). Thus at this time hold the
estimate
\begin{equation}
  \label{L2estimate} 2 \,\frac{d}{d t} \int_X | \nabla^p_{g_t}  \dot{g}^{\ast}_t
  |^2_{g_t} \,\Omega \;\;\leqslant\;\; C\, e^{- \,\delta t / (p + 1)} \;.
\end{equation}
In the case $t > 0$ satisfies
\begin{eqnarray*}
  \int_X | \nabla^{p + 1}_{g_t}  \dot{g}^{\ast}_t |^2_{g_t} \,\Omega \;\; > \;\; 1\;,
\end{eqnarray*}
then
\begin{eqnarray*}
  C \,\sum_{r = 0}^{p - 1} \left[ \int_X | \nabla^p_{g_t}  \dot{g}^{\ast}_t
  |^2_{g_t} \,\Omega \right]^{1 / 2 + r / 2 p} \;\;\leqslant \;\; C' \,\varepsilon
  \int_X | \nabla^{p + 1}_{g_t}  \dot{g}^{\ast}_t |^2_{g_t} \,\Omega \;\,+\;\, C'\,
  \varepsilon^{- p} e^{- \,\delta t / 2}\;,
\end{eqnarray*}
thanks to (\ref{HamiltI}) and (\ref{uni-exp-est}). Thus choosing $\varepsilon
> 0$ sufficiently small we infer at this time the estimate
\begin{eqnarray*}
  2\, \frac{d}{d t} \int_X | \nabla^p_{g_t}  \dot{g}^{\ast}_t |^2_{g_t} \,\Omega \;\;
  \leqslant \;\; C\, e^{-\, \delta t / 2} \;.
\end{eqnarray*}
We deduce that the estimate (\ref{L2estimate}) hold for all times $t > 0$.
This implies the uniform estimate
\begin{equation}
  \label{unif-int-est}  \int_X | \nabla^p_{g_t}  \dot{g}_t |^2_{g_t} \,\Omega \;\;=\;\;
  \int_X | \nabla^p_{g_t}  \dot{g}^{\ast}_t |^2_{g_t} \,\Omega \;\;\leqslant\;\; C_p \;.
\end{equation}
We observe now the inequality
\begin{eqnarray*}
  | \nabla^p_{g_0}  \dot{g}_t |_{g_0}  & \leqslant & | \nabla^p_{g_t} 
  \dot{g}_t |_{g_0}
\\
\\
& +& C \,\sum_{h = 0}^{p - 1} \, \sum_{q = 0}^h  \,\sum_{r_1 +
  \ldots + r_{h - q + 1} = q} \prod^{h - q + 1}_{j = 1} | \nabla_{g_0}^{r_j}
  \Gamma_t |_{g_0} | \nabla^{p - 1 - h}_{g_0}  \dot{g}_t |_{g_0}\;,
\end{eqnarray*}
with $r_j = 0, \ldots, q$. Thus using Jensen's inequality we obtain
\begin{eqnarray*}
  | \nabla^p_{g_0}  \dot{g}_t |^2_{g_0}  & \leqslant & | \nabla^p_{g_t} 
  \dot{g}_t |^2_{g_0} 
\\
\\
&+&
 C\, \sum_{h = 0}^{p - 1} \, \sum_{q = 0}^h \, \sum_{r_1 +
  \ldots + r_{h - q + 1} = q} \prod^{h - q + 1}_{j = 1} | \nabla_{g_0}^{r_j}
  \Gamma_t |^2_{g_0} | \nabla^{p - 1 - h}_{g_0}  \dot{g}_t |^2_{g_0} \;.
\end{eqnarray*}
Let $k \assign p - 1 - h$ and $m \assign k + q \leqslant p - 1$. Then
H\"older's inequality combined with the $L^2$ estimate of $| \nabla^{p -
1}_{g_0} \Gamma_t |_{g_0}$ given in the previous subsection and combined with
Hamilton's interpolation inequality (\ref{interpI}) provides the estimate
\begin{eqnarray*}
  &  & \int_X \prod^{h - q + 1}_{j = 1} | \nabla_{g_0}^{r_j} \Gamma_t
  |^2_{g_0} | \nabla^{p - 1 - h}_{g_t}  \dot{g}_t |^2_{g_0} \,\Omega\\
  &  & \\
  & \leqslant & \prod^{h - q + 1}_{j = 1}  \left[ \int_X | \nabla_{g_0}^{r_j}
  \Gamma_t |^{2 m / r_j}_{g_0} \,\Omega \right]^{r_j / m}  \left[ \int_X |
  \nabla^k_{g_0}  \dot{g}_t |^{2 m / k}_{g_0} \,\Omega \right]^{k / m}\\
  &  & \\
  & \leqslant & C \left[ \int_X | \nabla_{g_0}^m \Gamma_t |^2_{g_0} \,\Omega
  \right]^{q / m}  \left[ \int_X | \nabla^m_{g_0}  \dot{g}_t |^2_{g_0} \,\Omega
  \right]^{k / m}\\
  &  & \\
  & \leqslant & C \left[ \int_X | \nabla_{g_0}^{m + 1} g_t |^2_{g_0} \,\Omega
  \right]^{q / m} e^{- \theta_{k, m} t}\;,
\end{eqnarray*}
$\theta_{k, m} > 0$, thanks to the inductive assumption. (When $r_j = 0$ or $k
= 0$ in the previous estimate it means just that we are taking the
corresponding $L^{\infty}$-norms, which are bounded by the uniform
$C^1$-estimate.) Thus for some $\tau_p, \rho_p > 0$, hold the estimate
\begin{eqnarray*}
  \int_X | \nabla^p_{g_0}  \dot{g}_t |^2_{g_0} \,\Omega \;\; \leqslant \;\; C\, e^{-
  \tau_p t} \int_X | \nabla^p_{g_0} g_t |^2_{g_0} \,\Omega \;\,+\;\, C \left( 1 \;+\; e^{-
  \rho_p t} \right)\;,
\end{eqnarray*}
thanks to the uniform estimate (\ref{unif-int-est}). We set
$$
N_{p, t} \;\;\assign\;\; \int_X | \nabla^p_{g_0} g_t |^2_{g_0} \,\Omega\; . 
$$
We infer the first order differential inequality
\begin{eqnarray*}
  \dot{N}_{p, t} \;\; \leqslant \;\; C \,N_{p, t}\, e^{- \tau_p t} \;\,+\;\, C \left( 1 \;+\; e^{-
  \rho_p t} \right)\;,
\end{eqnarray*}
We deduce by Gronwall's inequality
\begin{eqnarray*}
  N_{p, t} \;\; \leqslant \;\; e^{C \int^t_0 e^{- \tau_p s} d s}  \left[ N_{p, 0} \;\,+\;\,
  C \int^t_0 \left( 1 \;+\; e^{- \rho_p s} \right) d s \right] \;\;\leqslant\;\; C' (1 \;+\;
  t) \;.
\end{eqnarray*}
The fact that the function $t^{1 / 2} e^{- t}$ is bounded for $t \geqslant 0$
implies that for $q = p + 1$ hold the estimate
\begin{eqnarray*}
  2\, \frac{d}{d t} \int_X | \nabla^q_{g_t}  \dot{g}^{\ast}_t |^2_{g_t} \,\Omega &
  \leqslant & - \;\,2 \int_X | \nabla^{q + 1}_{g_t}  \dot{g}^{\ast}_t |^2_{g_t}
  \,\Omega \;\,+\;\, C \int_X | \nabla^q_{g_t}  \dot{g}^{\ast}_t |^2_{g_t} \,\Omega\\
  &  & \\
  & + & C \sum_{r = 0}^{q - 1}\, \left[ \int_X | \nabla^q_{g_t} 
  \dot{g}^{\ast}_t |^2_{g_t} \,\Omega \right]^{1 / 2 + r / 2 q} \;.
\end{eqnarray*}
We deduce from the same argument showing (\ref{unif-int-est}), the uniform
estimate
$$ \int_X | \nabla^{p + 1}_{g_t}  \dot{g}^{\ast}_t |^2_{g_t} \,\Omega \;\;\leqslant\;\;
   C_{p + 1}\;, 
$$
and thus
\begin{eqnarray*}
  \int_X | \nabla^p_{g_t}  \dot{g}_t |^2_{g_t} \,\Omega \;\;=\;\; \int_X |
  \nabla^p_{g_t}  \dot{g}^{\ast}_t |^2_{g_t} \,\Omega \;\; \leqslant \;\; C_p\, e^{-\,
  \delta t / (p + 1)}\;,
\end{eqnarray*}
thanks to (\ref{Hamil-I}) and (\ref{uni-exp-est}). Applying the previous
argument to this improved estimate we infer
\begin{eqnarray*}
  \dot{N}_{p, t} \;\;\leqslant \;\; C\, N_{p, t} \,e^{- \,\tau_p t} \;\,+\;\, C\, e^{-\, \rho_p t}\;,
\end{eqnarray*}
and thus $N_{p, t} \leqslant C$ thanks to Gronwall's inequality. We deduce the
conclusion of the induction.
\begin{eqnarray*}
  \dot{N}_{p, t} \;\;=\;\; \int_X | \nabla^p_{g_0}  \dot{g}_t |^2_{g_0} \,\Omega \;\;
  \leqslant \;\; C_p\, e^{- \theta_p t} \;.
\end{eqnarray*}
Thus using the Sobolev estimate we infer for all times $t > 0$ the inequality
\begin{eqnarray*}
  | \nabla^p_{g_0}  \dot{g}_t |_{g_0} \;\; \leqslant \;\; C_p\, e^{- \varepsilon_p t}\;,
\end{eqnarray*}
which implies the convergence of the integral
\begin{eqnarray*}
  \int^{+ \infty}_0 | \nabla^p_{g_0}  \dot{g}_t |_{g_0} \,d t \;\; < \;\; +\; \infty\;,
\end{eqnarray*}
and thus the existence of the tensor
\begin{eqnarray*}
  A_p \;\;\assign \;\; \int^{+ \infty}_0 \nabla^p_{g_0}  \dot{g}_t \,d t\;,
\end{eqnarray*}
thanks to Bochner's theorem. Moreover hold the exponential estimate
\begin{eqnarray*}
  |A_p \;-\; \nabla^p_{g_0} g_t |_{g_0} \;\;\leqslant \;\; \int^{+ \infty}_t |
  \nabla^p_{g_0}  \dot{g}_s |_{g_0} \,d s \;\;\leqslant\;\; C_p \,e^{- \,\varepsilon_p t} \;.
\end{eqnarray*}
A basic calculus fact combined with an induction on $p$ implies that $A_p =
\nabla^p_{g_0} g_{\infty}$. This concludes the proof of the exponential
convergence of the $\Omega$-SRF.

\section{Appendix}

\subsection{Weitzenb\"ock type formulas}

\begin{lemma}
  \label{Endo-Div}For any $g \in \mathcal{M}$ and $u \in C^{\infty} (X, S^2
  T^{\ast}_X)$ hold the formula
  \begin{eqnarray*}
    -\;\,\left( \nabla^{\ast_{_{\Omega}}}_g \mathcal{D}_g \,u 
    \right)_g^{\ast}  \;\; = \;\; \nabla^{\ast_{_{\Omega}}}_g \nabla_{_{T_X, g}}
    u^{\ast}_g \;\,+\;\, \left( \nabla^{\ast_{_{\Omega}}}_g \nabla_{_{T_X, g}}
    u^{\ast}_g \right)_g^T \;\,-\;\, \Delta^{^{_{_{\Omega}}}}_g u^{\ast}_g \;.
  \end{eqnarray*}
\end{lemma}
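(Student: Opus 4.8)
The plan is to compute the left-hand side directly from the definition $\mathcal{D}_g = \hat\nabla_g - 2\nabla_g$ and match it with the right-hand side, all the while exploiting the $\Omega$-adjunction formula $P^{\ast_\Omega}_g = e^f P^\ast_g(e^{-f}\bullet)$ together with the identity $\Delta^\Omega_g = \nabla^{\ast_\Omega}_g\nabla_g$. First I would unwind the operator $\mathcal{D}_g u = \hat\nabla_g u - 2\nabla_g u$ acting on a symmetric $2$-tensor $u$; since $\hat\nabla_g$ symmetrizes $\nabla_g$ on the two covariant slots, $\mathcal{D}_g u$ is (up to the sign convention built into $\hat\nabla_g$) essentially the failure of $\nabla_g u$ to be totally symmetric, i.e. it encodes $\nabla_{_{T_X,g}}u^\ast_g$ after raising an index with $g^{-1}$. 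The first real step is therefore to establish the pointwise algebraic relation between $\mathcal{D}_g u$ and $\nabla_{_{T_X,g}}u^\ast_g$, keeping careful track of which slot is the form-slot and which is the $T_X$-valued slot, and of the transpose operation $A \mapsto A^T_g$ introduced earlier in the paper.

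Next I would take the $\Omega$-adjoint. Applying $\nabla^{\ast_\Omega}_g$ to the expression from the previous step produces three kinds of terms: one that is literally $\nabla^{\ast_\Omega}_g\nabla_{_{T_X,g}}u^\ast_g$, one that is its $g$-transpose $\big(\nabla^{\ast_\Omega}_g\nabla_{_{T_X,g}}u^\ast_g\big)^T_g$ (coming from the transposed slot in $\mathcal{D}_g u$), and a ``trace'' term in which the divergence contracts against the same slot that $\nabla_g$ differentiates, which reassembles into the rough Laplacian $\Delta^\Omega_g u^\ast_g = \nabla^{\ast_\Omega}_g\nabla_g u^\ast_g$. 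The bookkeeping here is the crux: one must verify that the divergence $\nabla^{\ast_\Omega}_g$ distributes over the three summands of $\nabla_{_{T_X,g}}u^\ast_g = \nabla_g u^\ast_g - (\text{symmetrizations})$ so that exactly the combination $\nabla^{\ast_\Omega}_g\nabla_{_{T_X,g}}u^\ast_g + (\cdot)^T_g - \Delta^\Omega_g u^\ast_g$ survives, with all Christoffel and $\nabla_g f$ contributions absorbed correctly into the $\Omega$-versions of the operators (this is the point where the $e^f(\cdot)e^{-f}$ conjugation must be handled uniformly, which is cleanest if one works from the start with $\nabla^{\ast_\Omega}$ rather than $\nabla^\ast$ and only at the end notes $\Delta^\Omega_g = \Delta_g + \nabla_g f\neg\nabla_g$).

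An alternative, perhaps cleaner, route is to test the claimed identity against an arbitrary $v \in C^\infty(X,S^2T^\ast_X)$ using the global scalar product $G_g(\cdot,\cdot) = \int_X\langle\cdot,\cdot\rangle_g\,\Omega$: then $\nabla^{\ast_\Omega}_g$ moves back onto $v$ as $\nabla_{_{T_X,g}}$ or $\nabla_g$, the transpose becomes harmless by symmetry of $u^\ast_g$ and $v^\ast_g$, and the identity reduces to a purely algebraic equality of $T_X$-valued $1$-forms under the integral, which can be checked fiberwise in a $g$-orthonormal frame. I would likely present this integration-by-parts version, since it sidesteps delicate sign tracking and localizes the difficulty to one algebraic lemma. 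The main obstacle in either approach is precisely that algebraic step --- getting the symmetrization/transpose combinatorics and the sign in front of each of the three terms exactly right --- rather than anything analytically deep; once the three $T_X$-valued pieces are identified, the Weitzenböck identity falls out immediately.
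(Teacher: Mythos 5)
Your primary plan is exactly the paper's proof: after raising an index one shows the pointwise algebraic identity $\left( \bullet \;\neg\; \mathcal{D}_g \,u \right)_g^{\ast} = -\,\nabla_{_{T_X, g}} u^{\ast}_g - \left( \nabla_{_{T_X, g}} u^{\ast}_g \right)_g^T + \nabla_g\, u^{\ast}_g$, then applies the $\Omega$-divergence and checks in a suitable frame that the divergence commutes with the $g$-transpose — precisely the symmetrization/transpose bookkeeping you single out as the crux. The integration-by-parts variant you say you would present is a harmless repackaging resting on the same algebraic lemma (and is legitimate here because both sides of the claimed formula are $g$-symmetric, so testing against symmetric tensors suffices), so the approach coincides with the paper's.
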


\begin{proof}
  We \ need to show first that for any $h \in C^{\infty} \left( X,
  (T_X^{\ast})^{\otimes 3} \right)$ hold the identity
  \begin{equation}
    \label{div-star}  \left( \nabla_g^{\ast} \,h \right)_g^{\ast} \;\;=\;\;
    \nabla_g^{\ast}  \left( \bullet \;\neg\; h \right)_g^{\ast}\;,
  \end{equation}
  where the section $\left( \bullet \;\neg\; h \right)_g^{\ast} \in C^{\infty} \left( X,
  (T_X^{\ast})^{\otimes 2} \otimes T_X \right)$ is given by the formula
$$
\left( \bullet \;\neg\; h \right)_g^{\ast} (\xi, \eta) \;\;\assign\;\; \left( \xi \;\neg\;
  h \right)_g^{\ast} \eta\;.
$$
In fact let $(e_l)$ be a smooth local tangent frame and let $\xi, \eta$ be arbitrary smooth vector fields 
defined in a neighborhood of an arbitrary point $x_0$ such that 
$\nabla_g \,e_l (x_0)=\nabla_g \,\xi (x_0)= \nabla_g \,\eta (x_0)=0$. Then at the point $x_0$ hold the equalities
\begin{eqnarray*}
    \nabla_g^{\ast} \,h (\xi, \eta) & = & - \;\,\nabla_g \,h\, (e_l, e_l, \xi, \eta)\\
    &  & \\
    & = & - \;\,e_l \,.\, h\, (e_l, \xi, \eta)\\
    &  & \\
    & = & - \;\,e_l \,.\, g\, \left( \left( e_l \;\neg\; h \right)_g^{\ast} \xi, \eta
    \right)
\end{eqnarray*}
\begin{eqnarray*}  
    & = & - \;\,g \left( \nabla_{g, e_l}  \left[ \left( e_l \;\neg\; h
    \right)_g^{\ast} \xi \right], \eta \right)\\
    &  & \\
    & = & - \;\,g \left( \nabla_{g, e_l}  \left( \bullet \;\neg\; h \right)_g^{\ast}
    (e_l, \xi), \eta \right)\\
    &  & \\
    & = & g \left( \nabla_g^{\ast}  \left( \bullet \;\neg\; h \right)_g^{\ast}
    \xi, \eta \right) \;.
\end{eqnarray*}
  Applying the identity (\ref{div-star}) to $h = \mathcal{D}_g u$ and using
  the expression
  \begin{eqnarray*}
    \nabla^{\ast_{_{\Omega}}}_g\mathcal{D}_g \,u  \;\; = \;\; 
    \nabla_g^{\ast} \, \mathcal{D}_g \,u \;\,+\;\, \nabla_g f \;\neg\; \mathcal{D}_g \,u\;,
  \end{eqnarray*}
 we deduce the
  formula
  \begin{eqnarray*}
    \left( \nabla^{\ast_{_{\Omega}}}_g \mathcal{D}_g \,u 
    \right)^{\ast}_g & = &  \nabla_g^{\ast}  \left( \bullet \;\neg\;
    \mathcal{D}_g \,u \right)_g^{\ast} \;\,+\;\, \left( \nabla_g f \;\neg\; \mathcal{D}_g \,u
    \right)_g^{\ast} \;.
  \end{eqnarray*}
  In order to explicit this expression, for any $\mu \in C^{\infty} (X, T_X),$
  we expand the therm
  \begin{eqnarray*}
    \mathcal{D}_g \,u \,(\mu, \xi, \eta) & = & \nabla_g \,u \,(\xi, \mu, \eta) \;\,+\;\,
    \nabla_g \,u\, (\eta, \mu, \xi) \;\,-\;\, \nabla_g \,u\; (\mu, \xi, \eta)\\
    &  & \\
    & = & g \left( \nabla_g \,u^{\ast}_g \,(\xi, \mu), \eta \right) \;\,+\;\, g \left(
    \nabla_g \,u^{\ast}_g \,(\eta, \mu), \xi \right) 
\\
\\
&-&
g \left( \nabla_g\,
    u^{\ast}_g \,(\mu, \xi), \eta \right)\\
    &  & \\
    & = & g \left( \nabla_{_{T_X, g}} u^{\ast}_g \,(\xi, \mu), \eta \right) \;\,+\;\, g
    \left( \nabla_{_{T_X, g}} u^{\ast}_g \,(\eta, \mu), \xi \right) 
\\
\\
&+& 
g \left(
    \nabla_g \,u^{\ast}_g \,(\mu, \eta), \xi \right)\\
    &  & \\
    & = & - \;\,g \left( \nabla_{_{T_X, g}} u^{\ast}_g \,(\mu, \xi), \eta \right) \;\,-\;\,
    g \left( \nabla_{_{T_X, g}} u^{\ast}_g \,(\mu, \eta), \xi \right) 
\\
\\
&+& 
g \left(
    \nabla_g \,u^{\ast}_g \,(\mu, \eta), \xi \right)\\
    &  & \\
    & = & - \;\,g \left( \nabla_{_{T_X, g}} u^{\ast}_g \,(\mu, \xi), \eta \right) \;\,-\;\,
    g \left( \left( \mu \;\neg\; \nabla_{_{T_X, g}} u^{\ast}_g \right)_g^T \xi,
    \eta \right)
\\
\\
& +& g \left( \nabla_g \,u^{\ast}_g \,(\mu, \xi), \eta \right) \;.
  \end{eqnarray*}
  We deduce the identity
  \begin{eqnarray*}
    \left( \bullet \;\neg\; \mathcal{D}_g \,u \right)_g^{\ast} \;\; = \;\; -\;\,
    \nabla_{_{T_X, g}} u^{\ast}_g \;\,-\;\, \left( \nabla_{_{T_X, g}} u^{\ast}_g
    \right)_g^T \;\,+\;\, \nabla_g \,u^{\ast}_g\;,
  \end{eqnarray*}
  and thus the expression
  \begin{eqnarray*}
    -\;\,\left(\nabla^{\ast_{_{\Omega}}}_g\mathcal{D}_g \,u 
    \right)^{\ast}_g & = & \nabla^{\ast_{_{\Omega}}}_g \nabla_{_{T_X, g}}
    u^{\ast}_g \;\,+\;\, \nabla_g^{\ast}  \left( \nabla_{_{T_X, g}} u^{\ast}_g
    \right)_g^T \\
    &  & \\
    & + & \left( \nabla_g f \;\neg\; \nabla_{_{T_X, g}} u^{\ast}_g \right)_g^T \;\,-\;\,
    \nabla_g^{\ast_{_{\Omega}}} \nabla_g \,u^{\ast}_g \;.
  \end{eqnarray*}
  We observe now that at the point $x_0$ hold the following equalities
  \begin{eqnarray*}
    \nabla_g^{\ast}  \left( \bullet \;\neg\; \nabla_{_{T_X, g}} u^{\ast}_g
    \right)_g^T \xi & = & - \;\,\nabla_{g, e_l} \left( \nabla_{_{T_X, g}}
    u^{\ast}_g \right)_g^T (e_l, \xi)\\
    &  & \\
    & = & - \;\,\nabla_{g, e_l} \left[ \left( e_l \;\neg\; \nabla_{_{T_X, g}}
    u^{\ast}_g \right)_g^T \xi \right]\\
    &  & \\
    & = & - \;\,\left[ \nabla_{g, e_l} \left( e_l \;\neg\; \nabla_{_{T_X, g}}
    u^{\ast}_g \right)_g^T \right] \xi\\
    &  & \\
    & = &  \left( \nabla_g^{\ast} \,\nabla_{_{T_X, g}} u^{\ast}_g \right)_g^T
    \xi\;,
  \end{eqnarray*}
  since at this point hold the identities
  \begin{eqnarray*}
    \nabla_{g, e_l} \left( e_l \;\neg\; \nabla_{_{T_X, g}} u^{\ast}_g \right) \eta
    & = & \nabla_{g, e_l} \left[ \nabla_{_{T_X, g}} u^{\ast}_g \,(e_l, \eta)
    \right]\\
    &  & \\
    & = & \nabla_{g, e_l} \nabla_{_{T_X, g}} u^{\ast}_g \,(e_l, \eta) \;.
  \end{eqnarray*}
  We infer the required formula.
\end{proof}

We define the Hodge Laplacian (resp. the $\Omega$-Hodge Laplacian) operators acting on $T_X$-valued
$q$-forms by the formulas
\begin{eqnarray*}
  \Delta_{_{T_{X, g}}} & : = & \nabla_{_{T_{X, g}}} \nabla^{\ast}_g \;\,+\;\,
  \nabla_g^{\ast} \,\nabla_{_{T_{X, g}}}\;,\\
  &  & \\
  \Delta^{^{_{_{\Omega}}}}_{_{T_{X, g}}} & \assign & \nabla_{_{T_{X, g}}}
  \nabla^{\ast_{_{\Omega}}}_g \;\,+\;\, \nabla^{\ast_{_{\Omega}}}_g \nabla_{_{T_{X,
  g}}} \;.
\end{eqnarray*}
\begin{lemma}
  \label{TX-Lap-RmLap}Let $(X, g)$ be a Riemannian manifold and let $H \in
  C^{\infty} (X, \tmop{End} (T_X))$. Then hold the identity
\begin{eqnarray*}
    \Delta_{_{T_{X, g}}} H \;\; = \;\; \Delta_g \,H \;\,-\;\,\mathcal{R}_g \ast H \;\,+\;\, H
    \tmop{Ric}^{\ast}_g\;,
\end{eqnarray*}
  Where $\left( \mathcal{R}_g \ast H \right) \xi \assign \tmop{Tr}_g \left[
  \left( \xi \;\neg\; \mathcal{R}_g \right) H \right]$ for all $\xi \in T_X$.
\end{lemma}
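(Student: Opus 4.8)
The plan is to establish this Weitzenb\"ock identity by a pointwise computation at an arbitrary $x_0 \in X$, carried out in the same style as the proof of Lemma \ref{Endo-Div}: choose a local $g$-orthonormal frame $(e_k)_k$ with $\nabla_g e_k(x_0) = 0$ and an auxiliary vector field $\eta$ with $\nabla_g\eta(x_0) = 0$, and view $H$ as a $T_X$-valued $1$-form, so that $(\nabla_{T_X,g}H)(\xi,\eta) = (\nabla_{g,\xi}H)\eta - (\nabla_{g,\eta}H)\xi$ and $\nabla^*_g H = -\sum_k (\nabla_{g,e_k}H)e_k$ at $x_0$. The goal is to compute $\Delta_{T_X,g}H = \nabla_{T_X,g}\nabla^*_g H + \nabla^*_g\nabla_{T_X,g}H$ and to recognise the curvature remainder.

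First I would expand $(\nabla^*_g\nabla_{T_X,g}H)(\eta) = -\sum_k(\nabla_{g,e_k}\nabla_{T_X,g}H)(e_k,\eta)$. Because the frame and $\eta$ are parallel at $x_0$, the terms involving $\nabla_g e_k$ and $\nabla_g\eta$ drop out, leaving $-\sum_k\nabla_{g,e_k}\big((\nabla_{g,e_k}H)\eta\big) + \sum_k\nabla_{g,e_k}\big((\nabla_{g,\eta}H)e_k\big)$. The first sum equals $(\Delta_g H)\eta$, since $\Delta_g = \nabla^*_g\nabla_g = -\sum_k\nabla_{g,e_k}\nabla_{g,e_k}$ at $x_0$. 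In the second sum I would commute the covariant derivatives by formula (\ref{com-cov}), $\nabla_{g,e_k}\nabla_{g,\eta}H = \nabla_{g,\eta}\nabla_{g,e_k}H + [\mathcal{R}_g(e_k,\eta),H]$ at $x_0$, and use $\sum_k(\nabla_{g,\eta}\nabla_{g,e_k}H)e_k = -\nabla_{g,\eta}(\nabla^*_g H)$ at $x_0$ (this is just the $\nabla_{g,\eta}$-derivative of the $g$-trace defining $\nabla^*_g H$, valid because $\nabla g = 0$). This yields $(\nabla^*_g\nabla_{T_X,g}H)(\eta) = (\Delta_g H)\eta - \nabla_{g,\eta}(\nabla^*_g H) + \sum_k[\mathcal{R}_g(e_k,\eta),H]e_k$.

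Next, since $\nabla^*_g H$ is a $T_X$-valued $0$-form, $(\nabla_{T_X,g}\nabla^*_g H)(\eta) = \nabla_{g,\eta}(\nabla^*_g H)$, so summing the two contributions cancels the term $\nabla_{g,\eta}(\nabla^*_g H)$ and gives $\Delta_{T_X,g}H = \Delta_g H + \big(\eta \mapsto \sum_k[\mathcal{R}_g(e_k,\eta),H]e_k\big)$. It then remains to identify the curvature term. Expanding the commutator, $\sum_k[\mathcal{R}_g(e_k,\eta),H]e_k = \sum_k\mathcal{R}_g(e_k,\eta)He_k - H\sum_k\mathcal{R}_g(e_k,\eta)e_k$; using the antisymmetry in the first two curvature slots the first sum is $-\sum_k\mathcal{R}_g(\eta,e_k)He_k = -(\mathcal{R}_g\ast H)\eta$, by the definition of $\ast$ in the statement, while $\sum_k\mathcal{R}_g(e_k,\eta)e_k = -\tmop{Ric}^*_g\eta$ with the sign convention $\tmop{Ric}^*_g\eta = \sum_k\mathcal{R}_g(\eta,e_k)e_k$ already in force in the paper, so the second term is $H\tmop{Ric}^*_g\eta$. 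Substituting gives $\Delta_{T_X,g}H = \Delta_g H - \mathcal{R}_g\ast H + H\tmop{Ric}^*_g$, as claimed.

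I do not expect an essential obstacle: this is a standard Weitzenb\"ock identity, and the only points requiring care are the consistent tracking of signs, the bookkeeping of whether the curvature endomorphism acts on the form index or on the $T_X$-value of $H$ (which is precisely what distinguishes the $\mathcal{R}_g\ast H$ term from the $H\tmop{Ric}^*_g$ term), and checking that the two $\nabla\nabla^*$ cross-terms cancel exactly. Unlike in Lemma \ref{Endo-Div}, the transpose symmetry of the $(4,0)$-curvature operator is not needed here.
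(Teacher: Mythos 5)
Your proof is correct and follows essentially the same route as the paper's: a pointwise computation in a frame parallel at $x_0$, commuting derivatives via (\ref{com-cov}) to produce $\sum_k\left[\mathcal{R}_g(e_k,\xi),H\right]e_k$, and then identifying this with $-\,\mathcal{R}_g\ast H+H\tmop{Ric}^{\ast}_g$ under the paper's conventions. The only difference is organizational — you treat $\nabla^{\ast}_g\nabla_{_{T_X,g}}H$ and $\nabla_{_{T_X,g}}\nabla^{\ast}_g H$ separately and cancel the $\nabla_{g,\eta}\nabla^{\ast}_g H$ terms explicitly, whereas the paper expands both at once — which is immaterial.
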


\begin{proof}
  Let $(e_k)_k$ and $\xi$ as in the proof of lemma \ref{Endo-Div}. Then at
  the point $x_0$ hold the equalities
  \begin{eqnarray*}
    \Delta_{_{T_{X, g}}} H \,\xi & = & \nabla_{g, \xi} \nabla^{\ast}_g \,H \;\,-\;\,
    \nabla_{g, e_k}  \left( \nabla_{_{T_{X, g}}} H \right) (e_k, \xi)\\
    &  & \\
    & = & - \;\,\nabla_{g, \xi} \nabla_{g, e_k} H \,e_k \;\,-\;\, \nabla_{g, e_k}  \left(
    \nabla_{g, e_k} H \,\xi \;\,-\;\, \nabla_{g, \xi}\, H\, e_k  \right)\\
    &  & \\
    & = & \Delta_g H \,\xi \;\,+\;\, \nabla_{g, e_k} \nabla_{g, \xi} \,H\, e_k \;\,-\;\, \nabla_{g,
    \xi} \nabla_{g, e_k} H \,e_k\\
    &  & \\
    & = & \Delta_g H \,\xi \;\,+\;\,\mathcal{R}_g (e_k, \xi) \,H \,e_k \;\,-\;\, H\,\mathcal{R}_g
    (e_k, \xi) \,e_k\;,
  \end{eqnarray*}
  which shows the required formula.
\end{proof}

\begin{lemma}
  \label{OmTX-Lap-RmLap}Let $(X, g)$ be a orientable Riemannian manifold, let
  $\Omega > 0$ be a smooth volume form and let $H \in C^{\infty} (X,
  \tmop{End} (T_X))$. Then
  \begin{eqnarray*}
    \Delta^{^{_{_{\Omega}}}}_{_{T_{X, g}}} H & = & \Delta^{^{_{_{\Omega}}}}_g
    H \;\,-\;\,\mathcal{R}_g \ast H \;\,+\;\, H \tmop{Ric}^{\ast}_g (\Omega)\;.
  \end{eqnarray*}

\end{lemma}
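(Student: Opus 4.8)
The plan is to reduce everything to the already-established Weitzenböck identity of Lemma~\ref{TX-Lap-RmLap} by peeling off the weight $f\assign\log\frac{dV_g}{\Omega}$ from both $\Omega$-Hodge Laplacians. First I would record the elementary fact, underlying the definitions used in the paper, that on $T_X$-valued forms one has $\nabla^{\ast_{\Omega}}_g=\nabla^{\ast}_g+\nabla_g f\,\neg\,$ (this is the same computation $e^{f}\nabla^{\ast}_g(e^{-f}\bullet)$ that gives $\Delta^{\Omega}_g=\Delta_g+\nabla_g f\,\neg\,\nabla_g$). Substituting this into $\Delta^{\Omega}_{T_X,g}=\nabla_{T_X,g}\nabla^{\ast_{\Omega}}_g+\nabla^{\ast_{\Omega}}_g\nabla_{T_X,g}$ and isolating the $f$-independent part yields
\[
\Delta^{\Omega}_{T_X,g}H=\Delta_{T_X,g}H+\nabla_{T_X,g}\!\left(\nabla_g f\,\neg\,H\right)+\nabla_g f\,\neg\,\nabla_{T_X,g}H .
\]

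Next I would apply Lemma~\ref{TX-Lap-RmLap} to replace $\Delta_{T_X,g}H$ by $\Delta_g H-\mathcal{R}_g\ast H+H\,\tmop{Ric}^{\ast}_g$, and then use $\Delta^{\Omega}_g H=\Delta_g H+\nabla_{g,\nabla_g f}H$ together with $\tmop{Ric}^{\ast}_g(\Omega)=\tmop{Ric}^{\ast}_g+(\nabla_g df)^{\ast}_g$, which is just the definition of $\tmop{Ric}_g(\Omega)$ pushed up by $g$. After these substitutions the asserted formula becomes equivalent to the purely first-order identity
\[
\nabla_{T_X,g}\!\left(\nabla_g f\,\neg\,H\right)+\nabla_g f\,\neg\,\nabla_{T_X,g}H-\nabla_{g,\nabla_g f}H=H\,(\nabla_g df)^{\ast}_g .
\]
This I would verify pointwise at an arbitrary $x_0$, writing $\phi\assign\nabla_g f$, using $\nabla_{T_X,g}H(\xi,\eta)=(\nabla_{g,\xi}H)\eta-(\nabla_{g,\eta}H)\xi$, the Leibniz rule $\nabla_{g,\eta}(H\phi)=(\nabla_{g,\eta}H)\phi+H\,\nabla_{g,\eta}\phi$, and $\nabla_{g,\eta}\phi=(\nabla_g df)^{\ast}_g\eta$. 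Evaluating the left-hand side on $\eta$ gives $(\nabla_{g,\eta}H)\phi+H(\nabla_g df)^{\ast}_g\eta+(\nabla_{g,\phi}H)\eta-(\nabla_{g,\eta}H)\phi-(\nabla_{g,\phi}H)\eta$: the two $(\nabla_{g,\eta}H)\phi$ terms cancel, the two $(\nabla_{g,\phi}H)\eta$ terms cancel, and only $H(\nabla_g df)^{\ast}_g\eta$ survives, which is exactly the right-hand side.

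Collecting the pieces, $\Delta_g H+\nabla_{g,\nabla_g f}H=\Delta^{\Omega}_g H$ and $H\,\tmop{Ric}^{\ast}_g+H(\nabla_g df)^{\ast}_g=H\,\tmop{Ric}^{\ast}_g(\Omega)$, which gives $\Delta^{\Omega}_{T_X,g}H=\Delta^{\Omega}_g H-\mathcal{R}_g\ast H+H\,\tmop{Ric}^{\ast}_g(\Omega)$ as claimed. The only delicate point — mild, but the place where a slip would be fatal — is the bookkeeping in the first-order identity: one must be consistent with the contraction conventions ($\neg$ removes a form slot, so $\nabla_g f\,\neg\,H=H\nabla_g f$ is a section of $T_X$, whereas $\nabla_g f\,\neg\,\nabla_g H=\nabla_{g,\nabla_g f}H$ keeps the endomorphism slot), and with the sign $\nabla^{\ast_{\Omega}}_g=\nabla^{\ast}_g+\nabla_g f\,\neg\,$; getting either wrong reverses a sign and destroys the cancellation. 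Alternatively one could mimic the coordinate proof of Lemma~\ref{TX-Lap-RmLap} verbatim with the $\Omega$-operators, but the reduction above keeps the computation minimal.
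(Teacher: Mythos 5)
Your proposal is correct and follows essentially the same route as the paper: expand $\Delta^{^{_{_{\Omega}}}}_{_{T_{X, g}}}$ using $\nabla^{\ast_{_{\Omega}}}_g=\nabla^{\ast}_g+\nabla_g f\,\neg\,$, invoke lemma \ref{TX-Lap-RmLap} for the unweighted part, and check that the remaining first-order terms collapse to $\nabla_{g,\nabla_g f}H+H\,\nabla^2_g f$, yielding $\Delta^{^{_{_{\Omega}}}}_g H$ and $H\tmop{Ric}^{\ast}_g(\Omega)$. The only difference is cosmetic: you isolate the cancellation as a separate pointwise identity, whereas the paper performs it inline in a single displayed computation.
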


\begin{proof}
  We expand the Laplacian
  \begin{eqnarray*}
    \Delta^{^{_{_{\Omega}}}}_{_{T_{X, g}}} H & = & \nabla_{_{T_{X, g}}}
    \nabla^{\ast}_g \,H \;\,+\;\, \nabla_{_{T_{X, g}}} \left( H \nabla_g f \right) 
\\
\\
&+&
    \nabla_g^{\ast}\, \nabla_{_{T_{X, g}}} H \;\,+\;\, \nabla_g f \;\neg\; \nabla_{_{T_{X,
    g}}} H\\
    &  & \\
    & = & \Delta_{_{T_{X, g}}} H \;\,+\;\, \nabla_g \,H \,\nabla_g f \;\,+\;\, H \,\nabla^2_g f 
\\
\\
&+& \nabla_g f \;\neg\; \nabla_g\, H \;\,-\;\, \nabla_g \,H\, \nabla_g f\\
    &  & \\
    & = & \Delta^{^{_{_{\Omega}}}}_g H \;\,-\;\,\mathcal{R}_g \ast H \;\,+\;\, H
    \tmop{Ric}^{\ast}_g (\Omega)\;,
  \end{eqnarray*}
  thanks to lemma \ref{TX-Lap-RmLap}.
\end{proof}

\subsection{The first variation of the pre-scattering operator}

\begin{lemma}
  \label{var-pre-scat}For any smooth family of metrics $(g_t)_{t \in
  \mathbbm{R}} \subset \mathcal{M}$ hold the total variation formula
  
  \begin{eqnarray*}
    2 \,\frac{d}{d t}  \left[ \nabla_{_{T_X, g_t}} \tmop{Ric}^{\ast}_{g_t}
    (\Omega) \right] & = & \nabla_{_{T_X, g_t}} \left(
    \nabla^{\ast_{_{\Omega}}}_{g_t} \nabla_{_{T_X, g_t}}  \dot{g}^{\ast}_t
    \right)_{g_t}^T \;\,+\;\, \underline{\tmop{div}}^{^{_{_{\Omega}}}}_{g_t}  \left[
    \mathcal{R}_{g_t}, \dot{g}^{\ast}_t \right] \\
    &  & \\
    & + & \tmop{Alt} \left[ \mathcal{R}_{g_t} \circledast \left( \nabla_{g_t} -
    \nabla_{_{T_X, g_t}} \right)  \dot{g}^{\ast}_t 
    \right] \\
    &  & \\
&+&\tmop{Alt} \left[\left( \nabla_{_{T_X,
    g_t}}  \dot{g}^{\ast}_t \right)_{g_t}^T \tmop{Ric}^{\ast}_{g_t} (\Omega)
    \right]
\\
\\
& - & \tmop{Ric}^{\ast}_{g_t} (\Omega) \;\neg\; \nabla_{_{T_X, g_t}} 
    \dot{g}^{\ast}_t \;\,-\;\, 2\, \dot{g}_t^{\ast}\, \nabla_{_{T_X, g_t}}
    \tmop{Ric}^{\ast}_{g_t} (\Omega) \;.
  \end{eqnarray*}
\end{lemma}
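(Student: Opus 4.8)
The plan is to differentiate $\nabla_{T_X, g_t}\tmop{Ric}^{\ast}_{g_t}(\Omega)$ as a product, separating the variation of the operator $\nabla_{T_X, g_t}$ from that of the section it acts on. Writing $\rho_t\assign\tmop{Ric}_{g_t}(\Omega)$, one has $2\,\frac{d}{dt}[\nabla_{T_X, g_t}\rho^{\ast}_t]=(2\,\dot\nabla_{T_X, g_t})\rho^{\ast}_t+\nabla_{T_X, g_t}(2\,\frac{d}{dt}\rho^{\ast}_t)$, where the first summand means the variation of $\nabla_{T_X, g_t}$ applied to the frozen endomorphism $\rho^{\ast}_t$. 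For this first summand I would quote the variation formula \eqref{var-extD} with $H=\rho^{\ast}_t$ (valid for an arbitrary smooth family, so no structural hypothesis is needed); it contributes exactly $-\,\rho^{\ast}_t\,\neg\,\nabla_{T_X, g_t}\dot g^{\ast}_t$, $\tmop{Alt}[(\nabla_{T_X, g_t}\dot g^{\ast}_t)^T_{g_t}\rho^{\ast}_t]$, $\nabla_{T_X, g_t}(\dot g^{\ast}_t\rho^{\ast}_t)$ and $-\,\dot g^{\ast}_t\,\nabla_{T_X, g_t}\rho^{\ast}_t$. The first two of these are already two of the target terms.

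For the second summand I would first establish the general first variation $2\,\frac{d}{dt}\rho^{\ast}_t=\nabla^{\ast_{\Omega}}_{g_t}\nabla_{T_X, g_t}\dot g^{\ast}_t+(\nabla^{\ast_{\Omega}}_{g_t}\nabla_{T_X, g_t}\dot g^{\ast}_t)^T_{g_t}-\Delta^{\Omega}_{g_t}\dot g^{\ast}_t-2\,\dot g^{\ast}_t\rho^{\ast}_t$, obtained by combining \eqref{var-Om-Ric} with the Weitzenb\"ock-type identity of Lemma \ref{Endo-Div} and the elementary variation of the raising operator $g_t^{-1}$; this is the un-specialized form of \eqref{col-vrEnOmRc}, to which it reduces when $\dot g_t\in\mathbbm{F}_{g_t}$. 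Applying $\nabla_{T_X, g_t}$ yields at once the target term $\nabla_{T_X, g_t}(\nabla^{\ast_{\Omega}}_{g_t}\nabla_{T_X, g_t}\dot g^{\ast}_t)^T_{g_t}$, together with $-2\,\nabla_{T_X, g_t}(\dot g^{\ast}_t\rho^{\ast}_t)$. Adding the $+\nabla_{T_X, g_t}(\dot g^{\ast}_t\rho^{\ast}_t)$ from the previous paragraph and applying the Leibniz rule $\nabla_{T_X, g_t}(\dot g^{\ast}_t\rho^{\ast}_t)=\tmop{Alt}[(\nabla_{g_t}\dot g^{\ast}_t)\rho^{\ast}_t]+\dot g^{\ast}_t\nabla_{T_X, g_t}\rho^{\ast}_t$ for the covariant exterior derivative produces the second copy of $-\,\dot g^{\ast}_t\,\nabla_{T_X, g_t}\rho^{\ast}_t$ (giving the target term $-2\,\dot g^{\ast}_t\nabla_{T_X, g_t}\rho^{\ast}_t$) plus a leftover term $-\tmop{Alt}[(\nabla_{g_t}\dot g^{\ast}_t)\rho^{\ast}_t]$.

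It then remains to show that $\nabla_{T_X, g_t}(\nabla^{\ast_{\Omega}}_{g_t}\nabla_{T_X, g_t}\dot g^{\ast}_t-\Delta^{\Omega}_{g_t}\dot g^{\ast}_t)$, together with that leftover term, assembles into $\underline{\tmop{div}}^{\Omega}_{g_t}[\mathcal{R}_{g_t},\dot g^{\ast}_t]+\tmop{Alt}(\mathcal{R}_{g_t}\circledast(\nabla_{g_t}-\nabla_{T_X, g_t})\dot g^{\ast}_t)$. I would rewrite the second-order piece as $\nabla^{\ast_{\Omega}}_{g_t}\nabla_{T_X, g_t}\dot g^{\ast}_t-\Delta^{\Omega}_{g_t}\dot g^{\ast}_t=-\,\nabla^{\ast_{\Omega}}_{g_t}((\nabla_{g_t}-\nabla_{T_X, g_t})\dot g^{\ast}_t)$ (using $\Delta^{\Omega}_{g_t}=\nabla^{\ast_{\Omega}}_{g_t}\nabla_{g_t}$), then commute $\nabla_{T_X, g_t}$ past $\nabla^{\ast_{\Omega}}_{g_t}$ and past the metric trace, working at the center of geodesic coordinates. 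The tools are the commutation identities \eqref{com-cov} and \eqref{gen-com-der}, the algebraic Bianchi identity \eqref{curv-alg-id}, the identity \eqref{ext-alg-prod}, the $\Omega$-contracted Bianchi identity \eqref{Om-cntr-Bianc}, and the expansion \eqref{div-id-curv} of $\underline{\tmop{div}}^{\Omega}_{g_t}[\mathcal{R}_{g_t},\dot g^{\ast}_t]$; each second-order expression is thereby turned into a curvature contraction and the curvature contributions are reorganized into the stated pair of terms.

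The main obstacle is precisely this last reorganization: the careful bookkeeping of the various generalized products ($\neg$, $\hat{\neg}$, $\ast$, $\circledast$, the $g_t$-transpose and $\tmop{Alt}$) while commuting covariant derivatives, and the verification that all curvature terms collect into exactly $\underline{\tmop{div}}^{\Omega}_{g_t}[\mathcal{R}_{g_t},\dot g^{\ast}_t]+\tmop{Alt}(\mathcal{R}_{g_t}\circledast(\nabla_{g_t}-\nabla_{T_X, g_t})\dot g^{\ast}_t)$ with no residue. A useful consistency check — and a convenient way to organize the argument — is that on specializing to $\dot g_t\in\mathbbm{F}_{g_t}$ every occurrence of $\nabla_{T_X, g_t}\dot g^{\ast}_t$ vanishes, so the first target term and the $\tmop{Alt}$ in \eqref{var-extD} drop out, $(\nabla_{g_t}-\nabla_{T_X, g_t})\dot g^{\ast}_t$ becomes $\nabla_{g_t}\dot g^{\ast}_t$, and the identity must collapse to Lemma \ref{F-var-pre-scat}; one can in fact set up the computation so that it \emph{reduces to} that lemma rather than reproving it. An equivalent route, directly paralleling the proof of Lemma \ref{F-var-pre-scat}, is to differentiate the $\Omega$-contracted Bianchi identity $\nabla_{T_X, g_t}\rho^{\ast}_t=-\,\underline{\tmop{div}}^{\Omega}_{g_t}\mathcal{R}_{g_t}$ in $t$ while retaining all the $\nabla_{T_X, g_t}\dot g^{\ast}_t$ terms that were suppressed there by the $\mathbbm{F}$-hypothesis; that version additionally requires the general forms of $2\,\dot{\mathcal{R}}_{g_t}$ and of $\frac{d}{dt}\nabla_{g_t}f_t$, read off from \eqref{Bess-var-LC-conn} without invoking \eqref{Pal-var-LC-conn}.
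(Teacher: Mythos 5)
Your decomposition is the paper's own: the paper likewise differentiates the product, applying (\ref{var-extD}) with $H=\tmop{Ric}^{\ast}_{g_t}(\Omega)$ for the variation of the operator $\nabla_{_{T_X,g_t}}$, and obtains exactly your un-specialized form of (\ref{col-vrEnOmRc}) by combining (\ref{var-Om-Ric}) with Lemma \ref{Endo-Div} and the variation of $g_t^{-1}$; your subsequent bookkeeping (the net $-\nabla_{_{T_X,g_t}}(\dot g^{\ast}_t\,\tmop{Ric}^{\ast}_{g_t}(\Omega))$, split by Leibniz into $-\dot g^{\ast}_t\nabla_{_{T_X,g_t}}\tmop{Ric}^{\ast}_{g_t}(\Omega)$ plus the leftover $-\tmop{Alt}[(\nabla_{g_t}\dot g^{\ast}_t)\tmop{Ric}^{\ast}_{g_t}(\Omega)]$) matches the paper's, and the residual identity you reduce to is indeed the correct one. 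The only real divergence is how that residual identity gets proved: you propose raw commutation of $\nabla_{_{T_X,g_t}}$ past $\nabla^{\ast_{_{\Omega}}}_{g_t}$ via (\ref{gen-com-der}) in geodesic coordinates, whereas the paper channels the second-order term through the Weitzenb\"ock formula of Lemma \ref{OmTX-Lap-RmLap}, writing $\nabla_{_{T_X,g_t}}\nabla^{\ast_{_{\Omega}}}_{g_t}\nabla_{_{T_X,g_t}}\dot g^{\ast}_t=\nabla_{_{T_X,g_t}}\big[\Delta^{^{_{_{\Omega}}}}_{g_t}\dot g^{\ast}_t-\mathcal{R}_{g_t}\ast\dot g^{\ast}_t+\dot g^{\ast}_t\tmop{Ric}^{\ast}_{g_t}(\Omega)\big]-\mathcal{R}_{g_t}\nabla^{\ast_{_{\Omega}}}_{g_t}\dot g^{\ast}_t$, then evaluates $\nabla_{_{T_X,g_t}}(\mathcal{R}_{g_t}\ast\dot g^{\ast}_t)$ by the differential Bianchi identity together with (\ref{curv-alg-id}) and (\ref{ext-alg-prod}), and closes with (\ref{div-id-curv}) and (\ref{Om-cntr-Bianc}). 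Since (\ref{gen-com-der}) at the center of geodesic coordinates is precisely what proves that Weitzenb\"ock formula, your route is workable and essentially equivalent; but the step you yourself flag as ``the main obstacle'' is left as an assertion in your write-up, so to turn the plan into a proof you should either carry out that commutation or simply invoke Lemmas \ref{TX-Lap-RmLap} and \ref{OmTX-Lap-RmLap}, after which the collection of the curvature terms into $\underline{\tmop{div}}^{^{_{_{\Omega}}}}_{g_t}[\mathcal{R}_{g_t},\dot g^{\ast}_t]+\tmop{Alt}\big[\mathcal{R}_{g_t}\circledast(\nabla_{g_t}-\nabla_{_{T_X,g_t}})\dot g^{\ast}_t\big]$ is a short verification rather than open-ended bookkeeping; your consistency check against Lemma \ref{F-var-pre-scat}, and the alternative route via differentiating (\ref{Om-cntr-Bianc}), are both sound.
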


\begin{proof}
  Lemma \ref{Endo-Div} combined with the
  variation formulas (\ref{var-Om-Ric}) and (\ref{var-extD}) implies the equality
  \begin{eqnarray*}
    2\, \frac{d}{d t}  \left[ \nabla_{_{T_X, g_t}} \tmop{Ric}^{\ast}_{g_t}
    (\Omega) \right] & = & \nabla_{_{T_X, g_t}} \left[
    \nabla^{\ast_{_{\Omega}}}_{g_t} \nabla_{_{T_X, g_t}}  \dot{g}^{\ast}_t \;\,+\;\,
    \left( \nabla^{\ast_{_{\Omega}}}_{g_t} \nabla_{_{T_X, g_t}} 
    \dot{g}^{\ast}_t \right)_{g_t}^T \right] \\
    &  & \\
    & - & \nabla_{_{T_X, g_t}}  \Big[ \Delta^{^{_{_{\Omega}}}}_{g_t} 
    \dot{g}^{\ast}_t \;\,+\;\, \dot{g}_t^{\ast} \tmop{Ric}^{\ast}_{g_t} (\Omega)
    \Big] 
\\
\\
&-& \dot{g}_t^{\ast} \,\nabla_{_{T_X, g_t}} \tmop{Ric}^{\ast}_{g_t}
    (\Omega)
\;\, - \;\, \tmop{Ric}^{\ast}_{g_t} (\Omega) \;\neg\; \nabla_{_{T_X, g_t}} 
    \dot{g}^{\ast}_t 
\\
\\
&+& \tmop{Alt} \left[ \left( \nabla_{_{T_X, g_t}} 
    \dot{g}^{\ast}_t \right)_{g_t}^T \tmop{Ric}^{\ast}_{g_t} (\Omega) \right]\;.
  \end{eqnarray*}
Moreover using lemma \ref{OmTX-Lap-RmLap} we deduce the identities
  \begin{eqnarray*}
    \nabla_{_{T_X, g_t}} \nabla^{\ast_{_{\Omega}}}_{g_t} \nabla_{_{T_X, g_t}} 
    \dot{g}^{\ast}_t & = & \nabla_{_{T_X, g_t}}
    \Delta^{^{_{_{\Omega}}}}_{_{T_X, g_t}}  \dot{g}^{\ast}_t \;\,-\;\,
    \nabla^2_{_{T_X, g_t}} \nabla^{\ast_{_{\Omega}}}_{g_t}  \dot{g}^{\ast}_t\\
    &  & \\
    & = & \nabla_{_{T_X, g_t}}  \Big[ \Delta^{^{_{_{\Omega}}}}_{g_t} 
    \dot{g}^{\ast}_t \;\,-\;\,\mathcal{R}_{g_t} \ast \dot{g}^{\ast}_t \;\,+\;\,
    \dot{g}_t^{\ast} \tmop{Ric}^{\ast}_{g_t} (\Omega) \Big]
\\
\\
&-&\mathcal{R}_{g_t} \nabla^{\ast_{_{\Omega}}}_{g_t}  \dot{g}^{\ast}_t \;.
  \end{eqnarray*}
  We pick now an arbitrary space time point $(x_0, t_0)$
  and let $(e_k)_k$ and $\xi, \eta$ as in the proof of lemma \ref{Endo-Div} with respect to $g_{t_0}$.
  We expand at the space time point $(x_0, t_0)$ the therm
  \begin{eqnarray*}
    \left[ \nabla_{_{T_X, g_t}} (\mathcal{R}_{g_t} \ast \dot{g}^{\ast}_t)
    \right] (\xi, \eta) & = & \nabla_{g_t, \xi}  \Big[ (\mathcal{R}_{g_t}
    \ast \dot{g}^{\ast}_t) \,\eta \Big] \;\,-\;\, \nabla_{g_t, \eta}  \Big[
    (\mathcal{R}_{g_t} \ast \dot{g}^{\ast}_t) \,\xi \Big]\\
    &  & \\
    & = & \nabla_{g_t, \xi}\, \mathcal{R}_{g_t} (\eta, e_k)\, \dot{g}^{\ast}_t
    e_k \;\,+\;\,\mathcal{R}_{g_t} (\eta, e_k) \nabla_{g_t, \xi} \, \dot{g}^{\ast}_t
    e_k\\
    &  & \\
    & - & \nabla_{g_t, \eta} \,\mathcal{R}_{g_t} (\xi, e_k) \,\dot{g}^{\ast}_t
    e_k \;\,-\;\,\mathcal{R}_{g_t} (\xi, e_k) \nabla_{g_t, \eta} \, \dot{g}^{\ast}_t e_k\;
    .
  \end{eqnarray*}
  Thus using the differential Bianchi identity we infer the equalities
  \begin{eqnarray*}
    \nabla_{_{T_X, g_t}} (\mathcal{R}_{g_t} \ast \dot{g}^{\ast}_t) & = & -\;\,
    \nabla_{g_t, e_k} \mathcal{R}_{g_t}  \,\dot{g}^{\ast}_t e_k \;\,+\;\, \tmop{Alt}
    \left[ \mathcal{R}_{g_t} \ast \left( \nabla_{_{T_X, g_t}} - \nabla_{g_t}
    \right)  \dot{g}^{\ast}_t \right] \\
    &  & \\
    & = & - \;\,\nabla_{g_t, e_k} \mathcal{R}_{g_t}\,  \dot{g}^{\ast}_t e_k \;\,+\;\,
    \tmop{Alt} \left[ \mathcal{R}_{g_t} \circledast \left( \nabla_{_{T_X, g_t}} -
    \nabla_{g_t} \right)  \dot{g}^{\ast}_t \right] 
\\
\\
&+& \nabla_{g_t} 
    \dot{g}^{\ast}_t \ast \mathcal{R}_{g_t}\;,
  \end{eqnarray*}
  by the identities (\ref{curv-alg-id}) and (\ref{ext-alg-prod}). We infer the
  expression
  \begin{eqnarray*}
    \nabla_{_{T_X, g_t}} \nabla^{\ast_{_{\Omega}}}_{g_t} \nabla_{_{T_X, g_t}} 
    \dot{g}^{\ast}_t & = & \nabla_{_{T_X, g_t}}  \Big[
    \Delta^{^{_{_{\Omega}}}}_{g_t}  \dot{g}^{\ast}_t \;\,+\;\, \dot{g}_t^{\ast}
    \tmop{Ric}^{\ast}_{g_t} (\Omega) \Big] \\
    &  & \\
    & + & \tmop{Alt} \left[ \mathcal{R}_{g_t} \circledast \left( \nabla_{g_t} -
    \nabla_{_{T_X, g_t}} \right)  \dot{g}^{\ast}_t \right]\\
    &  & \\
    & + & \nabla_{g_t, e_k} \mathcal{R}_{g_t} \, \dot{g}^{\ast}_t e_k \;\,-\;\,
    \nabla_{g_t}  \dot{g}^{\ast}_t \ast \mathcal{R}_{g_t} \;\,-\;\,\mathcal{R}_{g_t}
\nabla^{\ast_{_{\Omega}}}_{g_t}  \dot{g}^{\ast}_t \;.
  \end{eqnarray*}
  This combined with the identity (\ref{div-id-curv}) and with the Bianchi
  type identity (\ref{Om-cntr-Bianc}) implies the expression
  \begin{eqnarray*}
    \nabla_{_{T_X, g_t}} \nabla^{\ast_{_{\Omega}}}_{g_t} \nabla_{_{T_X, g_t}} 
    \dot{g}^{\ast}_t & = & \nabla_{_{T_X, g_t}}  \Big[
    \Delta^{^{_{_{\Omega}}}}_{g_t}  \dot{g}^{\ast}_t \;\,+\;\, \dot{g}_t^{\ast}
    \tmop{Ric}^{\ast}_{g_t} (\Omega) \Big] \\
    &  & \\
    & + & \tmop{Alt} \left[ \mathcal{R}_{g_t} \circledast \left( \nabla_{g_t} -
    \nabla_{_{T_X, g_t}} \right)  \dot{g}^{\ast}_t \right]\\
    &  & \\
    & + & \underline{\tmop{div}}^{^{_{_{\Omega}}}}_{g_t}  \left[
    \mathcal{R}_{g_t}, \dot{g}^{\ast}_t \right] \;\,-\;\, \dot{g}_t^{\ast}
    \,\nabla_{_{T_X, g_t}} \tmop{Ric}^{\ast}_{g_t} (\Omega) \;.
  \end{eqnarray*}
  This combined with the previous variation formula for $\nabla_{_{T_X, g_t}}
  \tmop{Ric}^{\ast}_{g_t} (\Omega)$ implies the required conclusion.
\end{proof}

\subsection{An direct proof of the variation formula (\ref{col-vr-OmRc})}

We observe that (\ref{col-vr-OmRc}) it is equivalent to the variation formula
(\ref{col-vrEnOmRc}) that we show now. Let $(e_k)_k$ be a local tangent frame. Using
the identity (\ref{col-vr-Rm}) we compute the variation
\begin{eqnarray*}
  2\, \frac{d}{d t} \tmop{Ric}^{\ast}_{g_t} \xi & = & 2 \,\frac{d}{d t}\,
  \mathcal{R}_{g_t} (\xi, e_k) \,g^{- 1}_t e^{\ast}_k\\
  &  & \\
  & = & \left[ \mathcal{R}_{g_t} (\xi, e_k), \dot{g}^{\ast}_t \right] e_k \;\,-\;\,
  2\,\mathcal{R}_{g_t} (\xi, e_k) \,\dot{g}^{\ast}_t e_k \;.
\end{eqnarray*}
We deduce the variation formula
\begin{equation}
  \label{vr-End-Ric} 2 \,\frac{d}{d t} \tmop{Ric}^{\ast}_{g_t} \;\;=\;\; -\;\,
  \dot{g}^{\ast}_t \tmop{Ric}^{\ast}_{g_t} \;\,-\;\,\mathcal{R}_{g_t} \ast
  \dot{g}^{\ast}_t \;.
\end{equation}
On the other hand using the identity (\ref{col-vr-Grad}) we can compute the
variation of the Hessian
\begin{eqnarray*}
  2\, \frac{d}{d t} \,\nabla^2_{g_t} f_t \,\xi & = & 2\, \dot{\nabla}_{g_t, \xi}
  \nabla_{g_t} f_t \;\,-\;\, \nabla_{g_t, \xi}  \left( \nabla_{g_t}^{\ast} 
  \dot{g}^{\ast}_t \;\,+\;\, 2\, \dot{g}^{\ast}_t  \,\mathcal{\nabla}_{g_t} f_t \right)\\
  &  & \\
  & = & \nabla_{g_t, \xi} \, \dot{g}^{\ast}_t \,\nabla_{g_t} f_t \;\,-\;\, \nabla_{g_t,
  \xi} \nabla^{\ast}_{g_t}  \dot{g}^{\ast}_t 
\\
\\
&-& 2\, \nabla_{g_t, \xi} \,
  \dot{g}^{\ast}_t \,\nabla_{g_t} f_t \;\,-\;\, 2\, \dot{g}^{\ast}_t \,
  \mathcal{\nabla}^2_{g_t} f_t \,\xi
\end{eqnarray*}
\begin{eqnarray*}
  & = & - \;\,\nabla_{g_t, \xi} \, \dot{g}^{\ast}_t \,\nabla_{g_t} f_t \;\,-\;\, \Delta_{g_t}
  \dot{g}^{\ast}_t\, \xi 
\\
\\
&+& (\mathcal{R}_{g_t} \ast \dot{g}^{\ast}_t) \,\xi \;\,-\;\,
  \dot{g}^{\ast}_t \tmop{Ric}^{\ast}_{g_t}\xi \;\,-\;\, 2\, \dot{g}^{\ast}_t \,
  \mathcal{\nabla}^2_{g_t} f_t \,\xi\;,
\end{eqnarray*}
thanks to lemma \ref{TX-Lap-RmLap}. We infer the variation identity
\begin{eqnarray*}
  2 \,\frac{d}{d t}\, \nabla^2_{g_t} f_t \;\; = \;\; -\;\, \Delta^{^{_{_{\Omega}}}}_{g_t} 
  \dot{g}^{\ast}_t \;\,+\;\,\mathcal{R}_{g_t} \ast \dot{g}^{\ast}_t \;\,-\;\, \dot{g}^{\ast}_t
  \tmop{Ric}^{\ast}_{g_t} \;\,-\;\, 2\, \dot{g}^{\ast}_t \, \mathcal{\nabla}^2_{g_t} f_t\; .
\end{eqnarray*}
This combined with the variation formula (\ref{vr-End-Ric}) implies the
formula (\ref{col-vrEnOmRc}) and thus the variation identity
(\ref{col-vr-OmRc}).

\vspace{1cm}
\noindent
Nefton Pali
\\
Universit\'{e} Paris Sud, D\'epartement de Math\'ematiques 
\\
B\^{a}timent 425 F91405 Orsay, France
\\
E-mail: \textit{nefton.pali@math.u-psud.fr}


\begin{thebibliography}{000000}
\bibitem[Bes]{Bes} \textsc{Besse, A.L.,} 
\emph{Einstein Manifolds}, Springer-Verlag, 2007.
\bibitem[Ch-Kn]{Ch-Kn} \textsc{Chow, B., Knopf, D.,} 
\emph{The Ricci flow: An introduction}, AMS, Providence, RI, 2004.
\bibitem[Fri]{Fri} \textsc{Friedon, D.H.,} 
\emph{Nonlinear models in $2+\varepsilon$ dimensions}, Ann. Physics 163 (1985), no. 2, 318-419.
\bibitem[Pal1]{Pal1} \textsc{Pali, N.,} 
\emph{The total second variation of Perelman's $\mathcal{W}$-functional}, arXiv:math.
\bibitem[Pal2]{Pal2} \textsc{Pali, N.,} 
\emph{The Soliton-K\"ahler-Ricci Flow over Fano Manifolds}, arXiv:math
\bibitem[Per]{Per} \textsc{Perelman, G.,} 
\emph{The entropy formula for the Ricci flow 
and its geometric applications}, arXiv:math/0211159.
\bibitem[Ham]{Ham} \textsc{Hamilton, R.S,} 
\emph{Three-manifolds with positive Ricci curvature}, J. differential Geom. 17 (1982), no. 2, 255-306.
\end{thebibliography}
\end{document}